\documentclass[oneside,english]{article}
\usepackage{fullpage}
\usepackage{amsthm}
\usepackage{amsmath,amssymb,amsfonts,nicefrac}
\usepackage{xspace}
\usepackage{color}
\usepackage{url}
\usepackage{hyperref}
\usepackage{bm}
\usepackage{bbm}
\usepackage{times}

\usepackage{enumitem}

\makeatletter
\theoremstyle{plain}
\newtheorem{thm}{\protect\theoremname}[section]
\theoremstyle{definition}
\newtheorem{problem}[thm]{\protect\problemname}
\theoremstyle{definition}
\newtheorem{defn}[thm]{\protect\definitionname}
\theoremstyle{plain}
\newtheorem{lem}[thm]{\protect\lemmaname}
\theoremstyle{plain}
\newtheorem{prop}[thm]{\protect\propositionname}
\theoremstyle{plain}
\newtheorem{cor}[thm]{\protect\corollaryname}
\theoremstyle{remark}
\newtheorem{claim}[thm]{\protect\claimname}

\makeatother

\usepackage{babel}
\providecommand{\claimname}{Claim}
\providecommand{\corollaryname}{Corollary}
\providecommand{\definitionname}{Definition}
\providecommand{\lemmaname}{Lemma}
\providecommand{\problemname}{Problem}
\providecommand{\propositionname}{Proposition}
\providecommand{\theoremname}{Theorem}

\title{On the largest product-free subsets of the alternating groups}

\date{}

\author{Peter Keevash\thanks{Mathematical Institute, University of Oxford, UK.
Supported by ERC Advanced Grant 883810.} \and Noam Lifshitz
\thanks{Einstein Institute of Mathematics, Hebrew University, Jerusalem, Israel.} \and Dor Minzer
\thanks{Department of Mathematics, Massachusetts Institute of Technology, Cambridge, USA.
Supported by a Sloan Research Fellowship.}}


\begin{document}
\global\long\def\matA{\mathrm{A}}%
\global\long\def\matB{\mathrm{B}}%
\global\long\def\matC{\mathrm{C}}%

\global\long\def\Astruc{\mathrm{A}_{\text{struc}}}%

\global\long\def\Bstruc{\mathrm{B}_{\text{struc}}}%
\global\long\def\Cstruc{\mathrm{C}_{\text{struc}}}%

\global\long\def\Asstruc{\mathrm{A}_{\text{struc,\ensuremath{\star}}}}%

\global\long\def\Bsstruc{\mathrm{B}_{\text{struc,\ensuremath{\star}}}}%
\global\long\def\Csstruc{\mathrm{C}_{\text{struc,\ensuremath{\star}}}}%

\global\long\def\Atstruc{\mathrm{A}_{\text{struc,\ensuremath{\star}}}^{t}}%

\global\long\def\Btstruc{\mathrm{B}_{\text{struc},\star}^{t}}%
\global\long\def\Ctstruc{\mathrm{C}_{\text{struc,\ensuremath{\star}}}^{t}}%

\global\long\def\sf#1{\mathsf{#1}}%
\global\long\def\norm#1{\left\Vert #1\right\Vert }%

\global\long\def\Apseudo{\mathrm{A}_{\text{rand}}}%
\global\long\def\Bpseudo{\mathrm{B}_{\text{rand}}}%
\global\long\def\Cpseudo{\mathrm{C}_{\text{rand}}}%
\global\long\def\Expect#1#2{\stackrel[#1]{}{\mathbb{E}}\left[#2\right]}%
\global\long\def\T{\mathrm{T}}%
\global\long\def\inner#1#2{\left\langle #1,#2\right\rangle }%
\global\long\def\eps{\epsilon}%

\global\long\def\comment#1{\text{\fbox{{\tt #1}}}}%

\global\long\def\E{\mathop{\mathbb{E}}}%

\global\long\def\card#1{\left|#1\right|}%
\global\long\def\ee{\asymp}%
\global\long\def\sett#1#2{\left\{  #1:\,#2\right\}  }%
\global\long\def\es{\varnothing}%
\global\long\def\TT{\Theta}%
\global\long\def\sub{\subseteq}%
\global\long\def\sm{\backslash}%

\newcommand{\mb}[1]{\mathbb{#1}}

\newcommand{\aA}{\alpha}
\newcommand{\bB}{\beta}
\newcommand{\gG}{\gamma}
\newcommand{\dD}{\delta}
\newcommand{\iI}{\iota}
\newcommand{\kK}{\kappa}
\newcommand{\zZ}{\zeta}
\newcommand{\lL}{\lambda}
\newcommand{\tT}{\theta}
\newcommand{\sS}{\sigma}
\newcommand{\oO}{\omega}
\newcommand{\GG}{\Gamma}
\newcommand{\DD}{\Delta}
\newcommand{\OO}{\Omega}
\newcommand{\Ss}{\Sigma}
\newcommand{\LL}{\Lambda}
\newcommand{\Ups}{\Upsilon}

\maketitle

\begin{abstract}
A subset $A$ of a group $G$ is called product-free if there is no solution to $a=bc$ with $a,b,c$ all in $A$.
It is easy to see that the largest product-free subset of the symmetric group $S_n$ is obtained by taking
the set of all odd permutations, i.e.\ $S_n \sm A_n$, where $A_n$ is the alternating group.
By contrast, it is a long-standing open problem to find the largest product-free subset of $A_n$.
We solve this problem for large $n$, showing that the maximum size is achieved
by the previously conjectured extremal examples, namely families of the form
$\sett{\pi}{\pi(x)\in I, \pi(I)\cap I=\es}$ and their inverses.
Moreover, we show that the maximum size is only achieved by these extremal examples,
and we have stability: any product-free subset of $A_n$ of nearly maximum size
is structurally close to an extremal example.
Our proof uses a combination of tools from Combinatorics and Non-abelian Fourier Analysis,
including a crucial new ingredient exploiting some recent theory
developed by Filmus, Kindler, Liftshitz and Minzer
for global hypercontractivity on the symmetric group.
\end{abstract}

\section{Introduction}

The long-standing problem of determining the largest product-free
set in the alternating group $A_{n}$ has been recently highlighted
by Ben Green \cite{Green}, who credits Edward Crane for the conjectured
extremal examples, which are families of the form
\[
F_{I}^{x}:=\sett{\pi}{\pi(x)\in I,\pi(I)\cap I=\es}
\]
 and their inverses. Writing $\mu$ for uniform measure on $A_{n}$
and $|I|=t\sqrt{n}$, one can calculate $\mu(F_{I}^{x})\approx te^{-t^{2}}n^{-1/2}$,
which suggests the conjecture that the maximum measure should be $\TT(n^{-1/2})$,
and more precisely that it should be $\sim1/\sqrt{2en}$. Improving
earlier bounds of Kedlaya \cite{kedlaya1997large} and Gowers \cite{Gowers},
the conjecture was proved up to logarithmic factors by Eberhard \cite{Eberhard},
who showed that any product-free $A\sub A_{n}$ has $\mu(A)=O(n^{-1/2}\log^{7/2}n)$.
Our main result here answers the question completely, as follows.
\begin{thm}
\label{thm:main} Suppose $n$ is sufficiently large and $A\sub A_{n}$
is a product-free subset of maximum size. Then $A$ or $A^{-1}$ is
some $F_{I}^{x}$.
\end{thm}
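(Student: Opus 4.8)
The plan is to combine a spectral/Fourier-analytic upper bound on the measure of product-free sets with a stability analysis that pins down the extremal configuration. First I would set up the problem measure-theoretically: writing $\mu$ for uniform measure on $A_n$, a set $A$ being product-free means that the equation $a = bc$ has no solution with $a,b,c \in A$, which is equivalent to saying that the convolution $\mathbf{1}_A * \mathbf{1}_A$ vanishes on $A$, i.e.\ $\langle \mathbf{1}_A * \mathbf{1}_A, \mathbf{1}_A\rangle = 0$. By non-abelian Fourier analysis on $A_n$ (equivalently, on $S_n$ after inducing), this inner product expands as a sum over irreducible representations $\rho$ of $\mathrm{tr}(\widehat{\mathbf{1}_A}(\rho)^3)$ weighted by dimensions; the trivial representation contributes $\mu(A)^3$, so the whole burden is to show the non-trivial spectrum cannot cancel a term of size $\sim n^{-3/2}$ unless $\mu(A)$ is itself $O(n^{-1/2})$. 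The key new ingredient, as advertised in the abstract, is global hypercontractivity on $S_n$ in the form developed by Filmus--Kindler--Lifshitz--Minzer: this controls the Fourier mass of $\mathbf{1}_A$ at ``high'' levels (representations indexed by partitions with small first row) in terms of how ``global'' (spread-out) the function $\mathbf{1}_A$ is, i.e.\ how small its restrictions to cosets of small-support stabilizers are.

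The argument then splits $A$ into a part that is captured by low-level / dictatorship-type structure and a ``pseudorandom'' remainder. For the pseudorandom part one uses global hypercontractivity to bound its convolution contribution, showing it is genuinely negligible: a globally-small function cannot be product-free at density much above $n^{-1/2}$ because its autocorrelations spread out too uniformly (this is the mixing phenomenon underlying Gowers's original argument, but now applied at the right scale thanks to the sharper hypercontractive inequality). So I would next argue that the low-level structure must dominate: $\mathbf{1}_A$ is close, in $L^2$, to a function depending only on the images/preimages of a bounded set of coordinates — morally, $A$ is approximately of the form $\{\pi : \pi(x) \in I\}$ or an intersection of a couple of such conditions, plus their inverses. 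This is a ``junta-type'' structure theorem, obtained by iterating: if $A$ is not pseudorandom, restrict to the coset where it has abnormal density, record the coordinate, and repeat; global hypercontractivity guarantees the process terminates after boundedly many steps with a genuinely pseudorandom remainder.

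Having reduced to approximately $F_I^x$-type structure, the final step is an exact stability / bootstrapping argument: among all product-free sets that are $o(1)$-close to a family of the form $\{\pi : \pi(x)\in I, \pi(I)\cap I = \es\}$, show that the true maximizer is exactly one such $F_I^x$ (or its inverse), and optimize over $|I| = t\sqrt n$ to get the constant $1/\sqrt{2en}$. Concretely I would show that if $A$ is product-free and agrees with some $F_I^x$ up to a small density error, then every element of $A \setminus F_I^x$ would create a forbidden product with the bulk of $A \cap F_I^x$, forcing $A \subseteq F_I^x$, and dually that one cannot delete elements and still be maximal; a short direct combinatorial computation then shows no $F_I^x$ can be enlarged while staying product-free, and the one of maximum size corresponds to the optimal choice of $t$. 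The main obstacle I expect is the second paragraph: making global hypercontractivity on $S_n$ actually yield a clean junta structure theorem with small enough error terms to feed into an \emph{exact} (not just approximate) extremal conclusion — controlling the boundedly-many ``levels'' simultaneously, and ensuring the inverse symmetry ($A$ vs.\ $A^{-1}$) is handled correctly since $F_I^x$ is not inverse-closed, is where the real work lies. The transition from ``approximately extremal'' to ``exactly extremal'' in the last paragraph is comparatively routine once the stability statement is in hand, but it does require the approximate structure theorem to be quantitatively strong, which circles back to the hypercontractivity input.
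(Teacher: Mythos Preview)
Your three-step outline (hypercontractivity for the pseudorandom part, extract structure, bootstrap to exact) matches the paper's architecture, but your middle step has a genuine gap. The iteration you describe---``restrict to the coset with abnormal density, record the coordinate, and repeat''---naturally produces a density bump inside a bounded $t$-umvirate; this is precisely the paper's Theorem~\ref{thm:global}, which is far weaker than what is needed. The extremal family $F_I^x$ with $|I|=\Theta(\sqrt n)$ is \emph{not} a bounded junta: it has $\Theta(1)$ density inside each of $\Theta(\sqrt n)$ dictators $\mathcal{D}_{x\to i}$, no single one of which accounts for more than an $O(1/\sqrt n)$ fraction of $A$. The real task is to discover that all these dictators share a common centre $x$, and a standard restriction iteration does not see this. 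The paper does not iterate at all; instead it shows (Proposition~\ref{Prop:Most weight is on the first two levels}) that only the level-$1$ part of $1_A$ matters, writes $f^{=1}=\sum a_{ij}x_{i\to j}$, decomposes the coefficient matrix as $\matA=\matA_-+\Apseudo+\Astruc$, applies the level-$1$ inequality only to $\Apseudo$ and $\matA_-$, and then runs a combinatorial ``associated stars'' argument (Section~\ref{sec:star}, especially Lemma~\ref{lem:Product-freeness is explained by a single star}) to prove that the large entries of $\Astruc$ concentrate in a single row or column. This row/column concentration is the step your plan is missing.

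You also underestimate the bootstrapping. It is not true that every element of $A\setminus F_I^x$ creates a forbidden product with the bulk; Lemma~\ref{lem:bootstrapping lemma } instead partitions $A\setminus F_I^x$ into three bad events, uses Markov-type bounds on restriction densities, and invokes Lemmas~\ref{lem:relative product-freensess with large set implies that the third is empty} and~\ref{lem:large and  extremal means that the third is small} on carefully chosen product-free triples of restrictions to show $\mu(A\setminus F_I^x)\le\zeta n^{-2/3}\log^{O(1)}n$ when $\mu_{F_I^x}(A)=1-\zeta$. The proof of Theorem~\ref{thm:main+} then applies this lemma \emph{twice}: once to push $\zeta$ below $e^{-8000}/\sqrt n$, and once more to conclude $A\subseteq F_I^x$. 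This is not routine, and it relies on the quantitative output of the star-structure step being $\mu(A\setminus F_I^x)=O(n^{-2/3})$ rather than merely $o(\mu(A))$.
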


\subsection{99\% stability}

We also obtain the following `99\% stability' result, showing that
any large product-free subset of $A_{n}$ is essentially contained
in an $F_{I}^{x}$ or its inverse. Our stability result holds for
sets whose measure is much smaller than the extremal family.
\begin{thm}
\label{thm:99=000025 Stability} Suppose $n$ is sufficiently large
and $A\sub A_{n}$ is a product-free set with $\mu\left(A\right)\ge n^{-0.66}$.
Then there is some $F_{I}^{x}$ such that $\mu(A\sm F_{I}^{x})<n^{-0.66}$
or $\mu(A^{-1}\sm F_{I}^{x})<n^{-0.66}$.
\end{thm}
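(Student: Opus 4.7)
Set $f=\mathbf{1}_A$ and $\alpha=\mu(A)\ge n^{-0.66}$, and write $f=\alpha+g$ with $\mathbb{E}[g]=0$. The product-free hypothesis is equivalent to $\langle f,f*f\rangle=0$, which after isolating the trivial-representation contribution simplifies to
\[
\langle g,g*g\rangle=-\alpha^{3}.
\]
The plan is to use non-abelian Fourier analysis on $A_{n}$ together with the global hypercontractivity theory of Filmus--Kindler--Lifshitz--Minzer to show that almost all of the $L^{2}$ mass of $g$ is supported on \emph{level $1$}---i.e.\ on the span of the dictator functions $\mathbf{1}[\pi(x)=y]$---and then to read off the structural conclusion via an FKN-type theorem on $A_n$ combined with the product-free constraint.

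Further decompose $g=\sum_{d\ge1}g^{=d}$, where $g^{=d}$ is the projection of $g$ onto irreducibles indexed by Young diagrams $\lambda\vdash n$ with $n-\lambda_{1}=d$. The heart of the argument is a dichotomy on globalness. If $f$ is $\rho$-global---meaning every restriction $f_{S\to T}$ of $f$ to a coset of a bounded-codimension point-stabiliser satisfies $\|f_{S\to T}\|_{2}^{2}\le\rho^{|S|}\|f\|_{2}^{2}$---then the global hypercontractive inequality gives $\|g^{=d}\|_{4}\le C^{d}\rho^{d/4}\|g^{=d}\|_{2}$, which combined with H\"older estimates applied to $\langle g,g*g\rangle=-\alpha^{3}$ forces $\|g^{\ge 2}\|_{2}=o(\alpha)$ once $\rho$ is small enough relative to $\alpha$. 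If $f$ is \emph{not} $\rho$-global, there is a small injective partial assignment $S\to T$ on which $f$ has density $\gg\alpha$; one passes to the corresponding coset and iterates. The iteration terminates in $O(1)$ rounds, either reaching a global restriction (where the hypercontractive estimate applies) or exhibiting a bounded-codimension coset on which $A$ has density close to $1$, which by direct combinatorial analysis already forces the $F_I^x$ structure.

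Once $g$ is essentially supported at level $1$, the indicator $f$ is $L^{2}$-close to an affine combination of dictators $\mathbf{1}[\pi(x)=y]$. An FKN-type Boolean cleanup theorem for $A_n$ then shows $A$ (or $A^{-1}$, depending on whether the level-$1$ mass sits on the ``right-action'' or ``left-action'' dictators) is close to an indicator of the form $B_{x,I}:=\{\pi:\pi(x)\in I\}$. The product-free property then rules out any $\pi\in B_{x,I}$ with $\pi(I)\cap I\ne\es$: given such $\pi$ with $\pi(x)\in I$ and some $i_{0}\in I$ satisfying $\pi(i_{0})\in I$, one factors $\pi=\sigma\tau$ by choosing any $\tau$ that maps $x\mapsto i_{0}$ and $i_{0}\mapsto x$ (so $\tau\in B_{x,I}$, and $\sigma=\pi\tau^{-1}$ satisfies $\sigma(x)=\pi(i_{0})\in I$, hence $\sigma\in B_{x,I}$); since $A$ has density $\approx 1$ in $B_{x,I}$, many such factorisations have $\sigma,\tau\in A$, so $\pi\notin A$. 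Removing these bad permutations leaves $A\approx F_{I}^{x}$ with the required $n^{-0.66}$ error, yielding the desired stability statement.

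The main obstacle will be the quantitative control in the dichotomy: the $C^{d}$ loss in the global hypercontractive inequality must be balanced against the lower bound $\alpha\ge n^{-0.66}$, and the exponent $0.66$ almost certainly reflects the sharpest trade-off between these constants. A secondary delicate point is that the iteration in the non-global branch must preserve the product-free structure on each restricted coset and keep additive error below $n^{-0.66}$; this likely requires choosing $\rho$ as a suitable small power of $n^{-1}$ and carefully tracking the accumulated error over the $O(1)$ rounds. A final subtlety is the FKN-type cleanup: the standard $L^{2}$-approximation must be strengthened into a pointwise $L^{1}$-approximation of $\mathbf{1}_A$ by $\mathbf{1}_{B_{x,I}}$, at an error scale commensurate with $n^{-0.66}$ rather than with $\alpha=\mu(A)$ itself.
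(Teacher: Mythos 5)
Your plan diverges from the paper in two places where the required claims are simply false, so this is a gap, not just a different route.

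First, the claim that global hypercontractivity plus H\"older applied to $\langle g,g*g\rangle=-\alpha^3$ forces $\|g^{\ge 2}\|_2 = o(\alpha)$ has the level-$d$ picture upside down. For a Boolean indicator $f=\mathbf{1}_A$ of density $\alpha$, $\|f\|_2^2=\alpha$ while $\|f^{=0}\|_2^2=\alpha^2$, so the non-constant part carries $L^2$ mass $\approx\sqrt{\alpha}\gg\alpha$, and for every candidate product-free set (including the extremal $F_I^x$ itself) this mass is spread across all levels, with only $\Theta(1)$ fraction at level $1$. The global level-$d$ inequality (Theorem~\ref{thm:Level d inequality }) bounds $\|f^{\le d}\|_2$ in terms of the globalness parameter: it says a global $f$ has \emph{little} mass at \emph{low} levels, not the other way around. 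What the paper actually proves (Proposition~\ref{Prop:Most weight is on the first two levels}, Lemma~\ref{lem:Large degrees are negligible}) is that the high-degree \emph{contributions to the triple product} $\mathbb{E}[f(\sigma)g(\tau)h(\sigma\tau)]$ are negligible, via eigenvalue estimates on the Cayley operator $\mathrm{L}_f$ restricted to isotypic components --- a statement about $\langle g^{=\lambda},\mathrm{L}_f g^{=\lambda}\rangle$, never about $\|g^{\ge 2}\|_2$.

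Second, even granting that the relevant action happens at level $1$, an FKN-type cleanup cannot produce the structural conclusion. An FKN theorem tells you that a Boolean function whose $L^2$ mass concentrates at degree $\le 1$ is close to a bounded junta, i.e.\ depends essentially on $O(1)$ dictators. But the target structure $F_I^x$ with $|I|\asymp\sqrt{n}$ is a union of $\Theta(\sqrt{n})$ dictators further intersected with $1_{I\to\overline I}$; its level-$1$ coefficient matrix has $\Theta(\sqrt{n})$ entries of size $\Theta(\mu(A))$ in a single row, and its $L^2$ mass is by no means concentrated at level $1$. The paper's route is instead to write $f^{=1}$ in normalized form $\sum a_{ij}x_{i\to j}$, split the coefficient matrix $\matA=\matA_-+\Apseudo+\Astruc$ according to sign and magnitude, control $\|\matA_-\|_2$ and $\|\Apseudo\|_2$ via the level-$1$ inequality for global functions (Lemma~\ref{lem:level-1 inequality}), and then carry out a bespoke combinatorial analysis of the ``associated stars'' (Lemmas~\ref{lem:Simple lower bound}--\ref{lem:Explanation by associated stars}) showing that the surviving structural matrix $\Astruc$ must have one dominant row; none of this is captured by an off-the-shelf FKN statement. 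Finally, your last cleanup step (product-freeness forcing $\pi(I)\cap I=\es$) requires $A$ to have density close to $1$ inside $B_{x,I}$, which the hypothesis $\mu(A)\ge n^{-0.66}$ does not give; the paper instead passes to cross product-free triples of restrictions (Lemma~\ref{lem:eberhard for restrictions}, Lemma~\ref{lem:bootstrapping lemma 1}) precisely to handle the regime where $A$ is much smaller than the extremal family.
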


Moreover, if $A \sub A_n$ is a product-free set
with size very close to the maximum
we show that $A$ or $A^{-1}$
is contained in some $F_{I}^{x}$.
\begin{thm} \label{thm:main+}
There exists an absolute constant $c$ such that
if $n$ is sufficiently large
and $A\sub A_{n}$ is a product-free set with
$\mu(A)>\max_{I,x}\mu\left(F_{I}^{x}\right)-\frac{c}{n}$
then there is some $F_{I}^{x}$ such that $A\subseteq F_{I}^{x}$
or $A^{-1}\subseteq F_{I}^{x}$.
\end{thm}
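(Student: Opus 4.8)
The plan is to promote the $99\%$ stability of Theorem~\ref{thm:99=000025 Stability} to exact containment, via a combinatorial argument showing that every permutation outside the relevant family is ``expensive'' to include. Since $m:=\max_{I,x}\mu(F_I^x)=\Theta(n^{-1/2})$ dwarfs $n^{-0.66}$, Theorem~\ref{thm:99=000025 Stability} applies to $A$, so after possibly replacing $A$ by $A^{-1}$ we may fix $I,x$ with $\mu(A\setminus F)<n^{-0.66}$, where $F:=F_I^x$ and $|I|=t\sqrt n$. Put $S:=F\setminus A$ and $B:=A\setminus F$, so $A=(F\setminus S)\sqcup B$. As $\mu(F)\le m$ by definition of $m$, the hypothesis $\mu(A)=\mu(F)-\mu(S)+\mu(B)>m-c/n$ yields the basic inequality $\mu(S)<\mu(B)+c/n$; in particular $\mu(S)<2n^{-0.66}$ and $\mu(F)\ge\mu(A)-\mu(B)\ge m-2n^{-0.66}$, which pins $t$ to $1/\sqrt2+o(1)$ and keeps $\mu(F)=\Theta(n^{-1/2})$. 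It then suffices to prove $B=\emptyset$, for then $A\subseteq F$ as required. (This also recovers Theorem~\ref{thm:main}: a largest product-free set has measure at least $m$, hence lies within $c/n$ of the maximum, hence is contained in some $F_I^x$ after inversion, forcing equality with the optimal one.)

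The core is the estimate that there is an absolute constant $c_\ast>0$ such that for every $b\in A_n\setminus F$ (with $t$ as above) at least one of $bF\cap F$, $Fb\cap F$, $F^{-1}b\cap F$, $bF^{-1}\cap F$ has measure at least $c_\ast/n$. This is a short computation: a uniform $\pi\in A_n$ lies in $bF\cap F$ exactly when $\pi(x)\in I\cap b(I)$ and $\pi(I)\cap(I\cup b(I))=\emptyset$, an event of probability $\approx\big(|I\cap b(I)|/n\big)e^{-2t^2}$, so if $|I\cap b(I)|\ge1$ this is already $\Omega(1/n)$ (indeed $\Omega(n^{-1/2})$ once $|I\cap b(I)|=\Omega(\sqrt n)$); and if $I\cap b(I)=\emptyset$ then $b(x)\notin I$ (otherwise $b\in F$), and depending on the positions of $b(x),b^{-1}(x)$ relative to $x$ and $I$ one of the three remaining translates has measure $\Theta(1/n)$. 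On the other hand, if $b\in B$ then product-freeness of $A$ forbids $\pi$ and $b^{-1}\pi$ from lying in $A\cap F$ simultaneously (this would give the in-$A$ product $\pi=b\cdot(b^{-1}\pi)$), so $\pi\mapsto b^{-1}\pi$ maps $(A\cap F)\cap bF$ into $S$; hence $\mu(bF\cap F)\le2\mu(S)$, and analogously for the other three translates. Combining, $B\ne\emptyset$ forces $\mu(S)\ge c_\ast/(2n)$, which together with $\mu(S)<\mu(B)+c/n$ already settles the case of a small excess — for instance $|B|=1$, or more generally $\mu(B)<(c_\ast/2-c)/n$ — as soon as $c<c_\ast/2$.

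The remaining case, and the main obstacle, is $\mu(B)\gtrsim1/n$; here the previous paragraph yields only $\mu(S)\ge c_\ast/(2n)$, whereas a contradiction needs $\mu(S)\ge\mu(B)+\Omega(1/n)$, i.e.\ that a large excess cannot be accommodated ``for free''. The plan is first to extract the structure of $B$: the four-translate estimates force every $b\in B$ to be \emph{sticky} for $I$, meaning $|b(I)\cap I|=O(n^{0.34})$, $b(x)\notin I$, with $b^{-1}(x)$ and $b(x)$ further constrained relative to $x$ and $I$, and similarly to constrain $S$. One then aims to show that a sticky set $B$ of measure $\gtrsim1/n$ cannot coexist with the bulk $A\cap F$ — which is $(1-o(1))$-close to the ``$I$-junta-like'' family $F_I^x$ — while $A$ is product-free: the constraints $B\cdot(A\cap F)\cap A=\emptyset$, $(A\cap F)\cdot B\cap A=\emptyset$ and $(A\cap F)\cdot(A\cap F)\cap B=\emptyset$ should force $S$ to dominate $B$ by an additive $\Omega(1/n)$. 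I expect this step to be the technically hardest part. The natural ways to carry it out are either a delicate double-count leveraging the explicit descriptions of $F_I^x$ and of sticky permutations, or a further appeal to the global-hypercontractivity / structure-versus-pseudorandomness toolkit of Filmus--Kindler--Lifshitz--Minzer applied to the interaction between $B$ and $A\cap F$ — possibly packaged as a self-improving bootstrap of Theorem~\ref{thm:99=000025 Stability} for near-maximal sets. Granting it, $B=\emptyset$ and hence $A\subseteq F$; everything else is bookkeeping with the two quoted theorems and the elementary measure estimates for $F_I^x$ and its one-sided translates.
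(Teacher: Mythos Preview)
Your setup and the small-$B$ observation are correct and even elegant: the translate argument showing that a single $b\in A\setminus F$ forces $\mu(S)\ge c_\ast/(2n)$ is a nice elementary fact (though your case analysis for the ``core estimate'' when $I\cap b(I)=\emptyset$ is only sketched---it does work, but the several sub-cases on $b^{-1}(x)$ and the mixed $\pi/\pi^{-1}$ conditions for $F^{-1}b$, $bF^{-1}$ need to be written out).

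The genuine gap is exactly where you flag it: the regime $\mu(B)\gtrsim 1/n$. Your translate bound gives $\mu(S)\ge c_\ast/(2n)$ from a \emph{single} $b$, but it does not accumulate over many $b$'s, because distinct elements of $B$ can share the same ``witnesses'' in $S$. Your proposed remedies---a sticky-permutation double count, or a further hypercontractive step---are not developed, and I do not see how the sticky structure alone forces $\mu(S)\ge\mu(B)+\Omega(1/n)$; product-freeness is being used only through pairwise constraints $bF\cap(A\cap F)\hookrightarrow S$, which is too coarse to see collective effects.

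The paper bypasses this entirely by working with \emph{restrictions to dictators} rather than translates of $F$. The key engine is a bootstrapping lemma (Lemma~\ref{lem:bootstrapping lemma }): if $\mu_{F_I^x}(A)=1-\zeta$ with $\zeta$ small, then $\mu(A\setminus F_I^x)\le\zeta\, n^{-2/3}\log^{O(1)}n$, and moreover if $\zeta\le e^{-8000}n^{-1/2}$ then $A\subseteq F_I^x$. The proof partitions $A\setminus F_I^x$ into three pieces according to whether $\sigma(x)\in I$ (but $\sigma(I)\cap I\ne\emptyset$) or $\sigma(x)\notin I$, and bounds each piece by analysing product-free triples of the form $(A_{x\to j},A_{i\to j},A_{x\to i})$ inside compatible dictators, using the restriction machinery of Lemmas~\ref{lem:eberhard for restrictions}--\ref{lem:large and  extremal means that the third is small}. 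The point is that density of $A$ in $F$ means most restrictions $A_{x\to i}$ (for $i\in I$) are nearly all of $1_{I\to\overline I}$, which via product-freeness forces the offending restrictions $A_{i\to j}$ and $A_{x\to j}$ to be tiny or empty. The theorem then follows by applying this lemma twice: the first application turns $\zeta=O(n^{-0.16})$ into $\mu(A\setminus F_I^x)<\tfrac{\zeta}{2}\mu(F_I^x)$, which combined with $\mu(A)>\mu(F_I^x)-c/n$ forces $\zeta\le e^{-8000}n^{-1/2}$; the second application then gives $A\subseteq F_I^x$. This is precisely the ``self-improving bootstrap'' you gesture at in your last paragraph, but carried out through dictator restrictions rather than star translates.
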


\subsection{1\% stability}

We also study the following `1\% stability' problem.
\begin{problem}
Suppose that $A\subseteq A_{n}$ is a product-free set of density
$>1/n^{C}$ for an absolute constant $C$. What can be said about
the structure of $A$?
\end{problem}

\subsection*{Dictators and $t$-umvirates}

Here we describe the structures appearing in the answer to this problem.
A set of the form $\mathcal{D}_{i\to j}=\left\{ \sigma\in A_{n}:\,\sigma(i)=j\right\} $
is called a \emph{dictator}. Let $D_{1},\ldots D_{t}$ be distinct
dictators that have a nonempty intersection. Following Friedgut \cite{friedgut2008measure},
we call their intersection a $t$-\emph{umvirate}. Equivalently, a
$t$-umvirate corresponds to ordered sets $I=(i_{1},\ldots,i_{t}),J=(j_{1},\ldots,j_{t})$
and is given by letting $\mathcal{U}_{I\to J}$ be the set of permutations
that send each $i_{k}$ to $j_{k}$.

\subsection*{Densities}

Let $A,B\subseteq A_{n}.$ The \emph{density} of $A$ inside $B$,
denoted by $\mu_{B}\left(A\right)$ is $\frac{|A\cap B|}{|B|}$. We
may restrict a subset $A\subseteq S_{n}$ to $\mathcal{U}_{I\to J}$
by setting $A_{I\to J}=A\cap\mathcal{U}_{I\to J}.$ We view $A_{I\to J}$
as a set inside the ambient space $\mathcal{U}_{I\to J}$.
We therefore write $\mu\left(A_{I\to J}\right)
= \frac{|A \cap \mathcal{U}_{I\to J}|}{|\mathcal{U}_{I\to J}|}$.

\subsection*{Product-free sets correlate with $t$-umvirates}

Our next theorem shows that any product-free set
that is somewhat dense has some local structure.
This is analogous to the strong local structure exhibited by
the extremal families $F_{I}^{x}$,
which have $\Theta\left(1\right)$ density
inside each dictator $1_{x\to i}$ with $i \in I$,
as when $|I|=\Theta(\sqrt{n})$ a random permutation sends $I$
to its complement with probability $\Theta(1)$.
We show that a similar, albeit weaker, phenomenon holds
for product-free sets with any polynomial density
that can be much smaller than that in the extremal examples:
such sets have a density bump inside a $t$-umvirate.
\begin{thm}
\label{thm:global} Fix $r \in \mb{N}$, suppose $n$ is sufficiently large,
and $A\sub A_{n}$ is product-free with $\mu(A)>n^{-r}$.
Then there exists some $t$-umvirate with $t\le 4r$
in which $A$ has density at least $n^{t/4}\mu(A)$.
\end{thm}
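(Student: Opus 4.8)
The plan is to prove the contrapositive. Suppose $A\subseteq A_n$ is product-free with $\mu(A)>n^{-r}$, and suppose moreover that $A$ is \emph{global up to level $4r$}, by which we mean $\mu_{\mathcal{U}_{I\to J}}(A)<n^{t/4}\mu(A)$ for every $t$-umvirate with $1\le t\le 4r$; this is precisely the negation of the conclusion, and we aim for a contradiction. Write $\mathbbm{1}_A\in L^2(A_n)$ for the indicator of $A$, and use the level (degree) decomposition $\mathbbm{1}_A=\sum_{i\ge 0}\mathbbm{1}_A^{=i}$, where $\mathbbm{1}_A^{=i}$ is the projection onto the isotypic components of the irreducible representations of $A_n$ at distance $i$ from the trivial one, so that $\mathbbm{1}_A^{=0}=\mu(A)$. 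Product-freeness is equivalent to $\langle\mathbbm{1}_A*\mathbbm{1}_A,\mathbbm{1}_A\rangle=0$ for convolution $*$ on $A_n$. Since $\widehat{f*g}(\rho)=\hat f(\rho)\hat g(\rho)$, the convolution $\mathbbm{1}_A^{=i}*\mathbbm{1}_A^{=j}$ vanishes unless $i=j$ and then lives at level $i$, so the trilinear form diagonalizes: $\mu(A)^3+\sum_{i\ge 1}\langle\mathbbm{1}_A^{=i}*\mathbbm{1}_A^{=i},\mathbbm{1}_A^{=i}\rangle=0$. It therefore suffices to prove $\sum_{i\ge 1}\bigl|\langle\mathbbm{1}_A^{=i}*\mathbbm{1}_A^{=i},\mathbbm{1}_A^{=i}\rangle\bigr|<\mu(A)^3$.

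The basic per-level bound comes from quasirandomness of convolution: every irreducible representation of $A_n$ occurring at level $i$ has dimension at least $D_i=\Theta(n^i/i!)$, hence $\|\mathbbm{1}_A^{=i}*\mathbbm{1}_A^{=i}\|_2\le D_i^{-1/2}\|\mathbbm{1}_A^{=i}\|_2^2$ and so $\bigl|\langle\mathbbm{1}_A^{=i}*\mathbbm{1}_A^{=i},\mathbbm{1}_A^{=i}\rangle\bigr|\le D_i^{-1/2}\|\mathbbm{1}_A^{=i}\|_2^3$. An advantage of $A_n$ over $S_n$ is that there is no low-dimensional nontrivial representation hiding at a high level (no sign representation), so this estimate is effective at every level $1\le i\le n/2$. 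For $i>4r$ the trivial bound $\|\mathbbm{1}_A^{=i}\|_2^2\le\|\mathbbm{1}_A\|_2^2=\mu(A)$ is enough: $\sum_{i>4r}D_i^{-1/2}\mu(A)^{3/2}$ is dominated by its first term and is $O_r\bigl(n^{-(4r+1)/2}\mu(A)^{3/2}\bigr)=o(\mu(A)^3)$ since $\mu(A)>n^{-r}$ and $r<(4r+1)/3$.

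The heart of the proof is the low-level range $1\le i\le 4r$, where the trivial bound on $\|\mathbbm{1}_A^{=i}\|_2$ is far too weak — for the extremal families $F_I^x$ the level-$1$ weight is already of order $\mu(A)$ — and globalness must enter. Here we invoke the level-$d$ inequality for global functions, a consequence of the global hypercontractivity theory of Filmus, Kindler, Lifshitz and Minzer in its alternating-group form: the hypothesis that every $t$-umvirate density of $A$ with $t\le 4r$ lies below $n^{t/4}\mu(A)$ yields $\|\mathbbm{1}_A^{=i}\|_2^2\le n^{i/4+o(1)}\mu(A)^2$ for $1\le i\le 4r$, the factor $n^{i/4}$ reflecting the globalness parameter, with hypercontractivity preventing the naive $n$-fold loss from many simultaneously ``bad'' coordinates. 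Feeding this into the per-level bound gives $\bigl|\langle\mathbbm{1}_A^{=i}*\mathbbm{1}_A^{=i},\mathbbm{1}_A^{=i}\rangle\bigr|\le D_i^{-1/2}\bigl(n^{i/4+o(1)}\mu(A)^2\bigr)^{3/2}=n^{3i/8-i/2+o(1)}\sqrt{i!}\,\mu(A)^3=n^{-i/8+o(1)}\sqrt{i!}\,\mu(A)^3$, and summing over $1\le i\le 4r$ gives $o(\mu(A)^3)$. The decisive inequality is $\tfrac14\cdot\tfrac32<\tfrac12$: the $n^{-i/2}$ saving from convolution outweighs the $n^{3i/8}$ cost of the level-$d$ inequality. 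This explains the exponent $\tfrac14$ in the statement (any exponent $<\tfrac13$ would work) and why controlling umvirates up to level $4r$ is plenty — comfortably beyond the level $\approx 3r$ past which bare quasirandomness already suffices.

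The main obstacle is exactly the low-level step: formulating and applying the global level-$d$ inequality for the alternating group. One has to check that the comparatively weak globalness produced by negating the conclusion — umvirate densities permitted to be polynomially, rather than merely polylogarithmically, larger than $\mu(A)$ — nevertheless forces $\|\mathbbm{1}_A^{=i}\|_2^2$ down to $n^{i/4+o(1)}\mu(A)^2$, and that the FKLM hypercontractivity framework, developed on $S_n$, transfers to $A_n$ (matching levels under conjugation of Young diagrams, and handling the representations that split on restriction). A more routine technical point is to make the convolution-quasirandomness inequality and the dimension estimate $D_i=\Theta(n^i/i!)$ completely rigorous and uniform in $i$.
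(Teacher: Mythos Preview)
Your proposal is correct and takes essentially the same approach as the paper: diagonalise the product-counting trilinear form over isotypical components, bound the high-degree terms via the trace/dimension estimate, and bound the low-degree terms by combining that dimension estimate with the FKLM level-$d$ inequality under the globalness hypothesis. The one cosmetic difference is that you work directly on $A_n$ (where the sign representation disappears and the $\lambda\leftrightarrow\lambda^t$ duality collapses), whereas the paper works on $S_n$ with functions supported on $A_n$ and handles that duality explicitly by splitting into low-$d_\lambda$, low-$d_{\lambda^t}$, and high-$\tilde d_\lambda$ parts---this lets the paper quote the FKLM results for $S_n$ verbatim rather than porting them to $A_n$, which is exactly the technical transfer you flag as an obstacle.
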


This 1\% stability result will be deduced from a combination of the trace method,
a recent level-$d$ inequality due to Filmus, Kindler, Lifshitz and
Minzer \cite{FKLM}, and novel upper bounds on eigenvalues of Cayley
graphs over the symmetric group.

\subsection{Globalness\label{subsec:global}}

Theorem \ref{thm:global} naturally leads us to define the notion
of `globalness', which plays a crucial role throughout the paper.
This is intuitively a pseudorandomness property stating that membership
is not determined by local information, which one can think of as
being the polar extreme to dictators.

More precisely, we make the following definition, saying that small
restrictions do not have large measure. We include two versions of
the same concept with different parameterisations, as we need the
first version when referring to \cite{FKLM}, but the second version
is more natural for the applications in our paper.
\begin{defn} \label{dfn:globalset}
We say $A\subseteq A_{n}$ is $(t,\eps)$-global if the density of
$A$ inside each $t$-umvirate is $<\eps^{2}$. We say that $A$ is
relatively $(t,K)$-global if the density of $A$ inside each $t$-umvirate
is at most $K\mu(A)$.
\end{defn}

\subsection{Product-free triples}

We also consider the following cross version of the question. Given
$A,B,C\sub A_{n}$, we say that the triple $(A,B,C)$ is product-free
if there is a solution of\footnote{To clarify our notation for composition of permutations: we mean that
$a(b(x))=c(x)$ for all $x\in[n]$.} $ab=c$ with $a\in A$, $b\in B$, $c\in C$. In particular, $(A,A,A)$
is product-free if and only if $A$ is product-free. We consider the
following problem.
\begin{problem}
\label{prob:Cross problem} What are the possible sizes of product
free triples $\left(A,B,C\right).$
\end{problem}

Problem \ref{prob:Cross problem} incorporates the well-studied problem of
upper bounding sizes of independent sets inside Cayley graphs
(see e.g. Ellis, Filmus and Pilpel \cite{ellis2011intersecting}),
which concerns the special case $A=A^{-1}$ and $B=C.$

For simplicity we restrict ourselves to the question of maximising
$\min(\mu(A),\mu(B),\mu(C))$ when $(A,B,C)$ is product-free.
Gowers \cite{Gowers} established an upper bound of $\left(n-1\right)^{-1/3}$
by showing that if $(A,B,C)$ is product-free then $\mu(A)\mu(B)\mu(C)$
$\le\frac{1}{n-1}$. Eberhard \cite{Eberhard} improved this to
$O(n^{-1/2}\log^{3.5}n)$ by showing that one of $\mu(A)\mu(B)$,
$\mu(A)\mu(C)$, $\mu(B)\mu(C)$ is $O(n^{-1}\log^{7}n)$. We obtain
the following bound, which gives the correct order of magnitude, as
shown by examples of the form
\[
A=1_{I\to\overline{I}}:=\sett{\pi}{\pi(I)\cap I=\es}
\qquad
C=B=1_{x\to I}:=\sett{\pi}{\pi(x)\in I}.
\]

\begin{thm}
\label{thm:xmain} Let $n$ be sufficiently large and $A,B,C\sub A_{n}$
with $(A,B,C)$ product-free. Then one of $\mu(A)$, $\mu(B)$, $\mu(C)$
is at most $100\sqrt{\log n}/\sqrt{n}$.
\end{thm}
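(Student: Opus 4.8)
The plan is to prove the contrapositive: assume $\mu(A),\mu(B),\mu(C)$ are all at least $100\sqrt{\log n}/\sqrt n$, and derive a product. The overall strategy combines a Fourier/spectral argument with the globalness dichotomy that the paper has set up. First I would run the standard second-moment computation: for the three indicator functions, the number of solutions to $ab=c$ (suitably normalised) equals $\sum_{\rho} \frac{d_\rho}{|A_n|} \langle \hat{\mathbf 1}_A(\rho)\hat{\mathbf 1}_B(\rho), \hat{\mathbf 1}_C(\rho)\rangle$, where the $\rho=\mathrm{triv}$ term contributes $\mu(A)\mu(B)\mu(C)$ and the task is to show the remaining ``non-trivial spectrum'' contribution is smaller in absolute value. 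By Cauchy--Schwarz this error is bounded by $\lambda \cdot \sqrt{\mu(A)\mu(B)}\cdot\sqrt{\mu(C)}$-type quantities, where $\lambda$ measures the largest relevant non-trivial ``eigenvalue''; the trouble is that on $A_n$ the naive bound on $\lambda$ only gives Gowers's $\mu(A)\mu(B)\mu(C)\le 1/(n-1)$, which is off by a square root.

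To close the gap I would invoke the dichotomy supplied by globalness. If one of $A,B,C$ — say $A$ — is $(t,\eps)$-global for a suitable constant $t$ and $\eps$ comparable to $\sqrt{\mu(A)}$, then the key is that the nontrivial Fourier mass of a global set is spread across high-dimensional representations, where the relevant eigenvalues of the convolution operator are genuinely small; this is exactly where the level-$d$ inequality of Filmus--Kindler--Lifshitz--Minzer \cite{FKLM} and the novel eigenvalue bounds for Cayley graphs over $S_n$ (promised after Theorem \ref{thm:global}) do the work, improving the spectral bound by the missing $\sqrt n$ factor. If instead some set fails to be global, then by Theorem \ref{thm:global} (or directly by Definition \ref{dfn:globalset}) it has a density bump of at least $K\mu(\cdot)$ inside some $t$-umvirate $\mathcal U_{I\to J}$. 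Then I would pass to the restricted problem: intersecting $A,B,C$ with appropriate umvirates $\mathcal U_{I\to J}$, $\mathcal U_{J\to K}$, $\mathcal U_{I\to K}$ preserves product-freeness of the triple (composition of partial maps), while the densities inside the smaller ambient groups go up. Iterating this ``restrict until global'' step a bounded number of times — the number of coordinates fixed stays $O(1)$ because $\mu$'s are only polynomially large, or here even constant — I reach a configuration where all three restricted sets are global, and then the spectral argument applies inside $\mathcal U_{I\to J}\cong A_{n-t}$ (or $S_{n-t}$), yielding $ab=c$ there and hence in $A_n$.

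The main obstacle I expect is the eigenvalue estimate: bounding the operator norm of the convolution-by-$\mathbf 1_B$ operator restricted to the span of the non-trivial, low-level-truncated part of the regular representation, and doing so in a way that interacts cleanly with globalness of $A$ rather than just with $\mu(B)$. Concretely one wants something like: if $A$ is $(t,\eps)$-global then $\|\mathbf 1_A \ast \mathbf 1_B - \mu(A)\mu(B)\|_2$ is $O(\sqrt{\log n/n}\cdot \sqrt{\mu(A)\mu(B)})$ or better, which requires combining the FKLM level-$d$ inequality (to say global functions have little mass at low levels) with representation-theoretic bounds on characters/eigenvalues of normal Cayley graphs on $S_n$ at each level $d$ (to control the high levels). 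Balancing the level-$d$ mass bound against the per-level eigenvalue bound, optimising over the truncation level $d$, is the technical heart; the factor $100\sqrt{\log n}$ in the statement is exactly the slack coming out of that optimisation. The bookkeeping for the restriction step (checking that product-freeness of the triple survives passing to umvirates, and that the three umvirates can be chosen compatibly, i.e.\ domains and ranges match up) is routine but must be done carefully, since it is what lets us assume globalness without loss of generality.
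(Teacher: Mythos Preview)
Your global case is essentially what the paper does in Theorem \ref{thm:stronger 1=000025 stability result}: if all three sets are suitably global then the level-$d$ inequality plus the trace-method eigenvalue bounds yield product mixing. The trouble is your non-global case, where there is a genuine gap that is not ``routine bookkeeping''.

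Suppose $A$ has a density bump inside a dictator $\mathcal{D}_{j\to k}$. To pass to $S_{n-1}$ while preserving product-freeness you must simultaneously restrict $B$ to some $\mathcal{D}_{i\to j}$ and $C$ to $\mathcal{D}_{i\to k}$ for a \emph{common} index $i$ (this is exactly the compatibility in Lemma \ref{lem:eberhard for restrictions}). Averaging gives $\mathbb{E}_{i}[\mu(B_{i\to j})]=\beta$ and $\mathbb{E}_{i}[\mu(C_{i\to k})]=\gamma$, but nothing forces these two averages to be realised at the same $i$: if $B$ is concentrated in $\mathcal{D}_{1\to j}$ and $C$ in $\mathcal{D}_{2\to k}$, every compatible restriction kills one of them. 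So in general you cannot find a restriction where all three densities are maintained, and the iteration stalls after one step. For $t$-umvirates with $t>1$ the compatibility constraints only multiply.

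The paper sidesteps this entirely. It never iterates restrictions; instead it reduces via Proposition \ref{Prop:Most weight is on the first two levels} to the level-$1$ term $(n-1)^{-2}\langle \matB\matA,\matC\rangle$, decomposes each coefficient matrix as $\matA_{-}+\Apseudo+\Astruc$ (and likewise for $\matB,\matC$), kills the $\Apseudo$ and $\matA_{-}$ contributions with the level-$1$ inequality (Lemma \ref{lem:level-1 inequality}), and then analyses the surviving structured terms combinatorially to force $B$ and $C$ to be almost contained in stars $1_{x\to I}$, $1_{x\to J}$ with a common centre (Proposition \ref{prop:Explanation for cross product-freeness} and Theorem \ref{thm:Cross stability result}). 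The matrix identity $\langle \matB\matA,\matC\rangle$ is precisely what aligns the structures of the three sets --- the alignment you were hoping to get for free from the restriction step. Finally, the factor $\sqrt{\log n}$ in the statement does not come from optimising a truncation level against eigenvalue decay; that route, even if the iteration worked, only yields $\log^{O(1)}n/\sqrt{n}$ as in Corollary \ref{cor:eberhard}. It comes from the star structure: once $\beta\lesssim |I|/n$ and $\gamma\lesssim |J|/n$, the bound $|I||J|\le 10n\log n$ of Lemma \ref{lem:bootstrapping lemma 1} forces $\min(\beta,\gamma)=O(\sqrt{\log n/n})$.
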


In fact, we show corresponding stability results
that are a bit harder to state (see Theorem \ref{thm:Cross stability result}).
Roughly speaking, they say that any sufficiently dense product-free triple
has the approximate form $\left(1_{I\to\overline{J}},1_{x\to I},1_{x\to J}\right).$
Moreover, we also prove a version of Theorem \ref{thm:global} that finds
structure in product-free triples that are only polynomially dense.

As discussed later in the introduction, our approach can be viewed
as an non-abelian analogue of Roth's bound for sets of integers with
no three-term arithmetic progression, whereby we improve the earlier
approaches of Gowers and Eberhard by establishing a form of the `Structure
versus Randomness' dichotomy. We achieve this by exploiting some recent
theory developed by Filmus, Kindler, Lifshitz and Minzer \cite{FKLM}
for global hypercontractivity on the symmetric group, and by also developing
some further theory of the `Cayley operators' associated to global subsets.

\subsection{Techniques} \label{sec:tec}

We call a set of the form $1_{x\to I}=\left\{ \sigma:\sigma\left(x\right)\in I\right\} $
a \emph{star}. We call the set $\left\{ \sigma:\sigma^{-1}\left(x\right)\in I\right\} =1_{x\to I}^{-1}$
an \emph{inverse star.}

The main steps in the proof of our main theorem (Theorem \ref{thm:main})
are as follows.
\begin{enumerate}
\item Achieving dictatorial structure: We show that $A$ has large density
inside many dictators. In fact, we show that in some sense, the product
freeness of $A$ is completely explained by its densities inside dictators.
\item Achieving star structure: We then upgrade our dictatorial structure
into a tighter star structure. We find some $S$
that is either a star or an inverse star such that $|A \sm S|$ is small
and $A$ has significant density in each restriction defined by $S$.
\item Bootstrapping: Using the approximate star structure,
we deduce our exact results from further stability analysis
showing that any small deviation from the structure
leads to a suboptimal configuration.
\end{enumerate}

\subsection*{Gowers' approach: the second eigenvalue}

We start out along the path established by Gowers \cite{Gowers}.
His idea was to express the number of products in $A\sub A_{n}$
of density $\alpha$ as the sum of a `main term' and an `error term',
where the error term is smaller in magnitude than the
main term when $\alpha$ is large, so the number of products cannot
be zero. The main term $\alpha^{3}|A_{n}|^{2}$ is the expected number
of products in a random set of density $\mu\left(A\right)$, whereas
his bound for the error term has order $\left(n-1\right)^{-1/2}\alpha^{3/2}|A_{n}|^{2}$,
so when $A$ is product-free he obtained the bound $\alpha\le\left(n-1\right)^{-1/3}$.

We will now outline his argument. Let $f=1_{A}$ be the indicator
function of $A\sub A_{n}$ and $T$ be the linear operator on $L^{2}(A_{n})$
defined by
\[
(Tg)(\pi)=\mathbb{\mathbb{E}}_{\sigma\sim A_{n}}\left[f\left(\sigma\right)g(\sigma\circ\pi)\right].
\]
 Then $A$ contains a product if and only if $\inner f{Tf}>0$. Let
$V'$ be the space of functions of expectation 0. Then we have $\inner f{Tf}=\alpha^{3}+\inner{f'}{Tf'}$,
where $f'=f-\alpha\in V'$. Writing $T^{*}$ for the adjoint of $T$,
some Representation Theory of $S_{n}$ (discussed in more detail below)
tells us that the self-adjoint operator $T^{*}T$ acts on $V'$ with
all eigenvalues bounded absolutely by $\frac{\alpha}{n-1}$. We deduce
\begin{align*}
\inner{f'}{Tf'}^{2} & \le\|f'\|_{2}^{2}\|Tf'\|_{2}^{2}=\|f'\|_{2}^{2}\inner{f'}{T^{*}Tf'}\\
 & \le\|f'\|_{2}^{3}\|T^{*}Tf'\|_{2}\le\|f'\|_{2}^{4}\alpha/\left(n-1\right)\le\alpha^{3}/\left(n-1\right).
\end{align*}
 Thus we can deduce $\inner f{Tf}>0$ if $\alpha^{3}>\sqrt{\alpha^{3}/\left(n-1\right)}$,
i.e. if $\alpha>\left(n-1\right)^{-1/3}$.

\subsection{Beyond spectral gap: the degree decomposition}

This path was continued by Eberhard \cite{Eberhard}, whose improved
bound is based on a refined analysis of the main contribution to the
error term (he credits Ellis and Green for suggesting this approach),
replacing the basic decomposition $f=\alpha+f'$
by a refined `level' decomposition $f=\sum_{d=0}^{n-1}f^{=d}$.

Instead of working with functions on $A_{n}$,
henceforth it will be more convenient
to work with functions on $S_{n}$ that are supported on $A_{n}$,
so we will now let $\alpha = |A|/n!$
denote the density of $A$ inside $S_{n}$.
This allows us to import machinery developed for $S_n$
without reworking it for $A_n$, although it introduces
some slight inconvenience in keeping track
of factors of $2$ in the calculations.

For $d\le n$, let $W_{d}$ be the linear subspace of $L^{2}(S_{n})$
generated by indicators of $d$-umvirates. The degree of a nonzero
function $f$ is the minimal $d$ such that $f\in W_{d}$.
We define $V_{=d}=W_{d}\cap W_{d-1}^{\bot}$
and note that the spaces $V_{=d}$
form an orthogonal decomposition of $L^{2}(S_{n})$,
known as the \emph{degree} decomposition of $S_{n}$.
For a function $f$ on $S_{n}$ we write $f^{=d}$ for the projection
of $f$ onto $V_{=d}$.

To set up Eberhard's refined analysis,
we write $f=2\alpha+2f^{=1}+f''$ with
\[
f'' \in V''=\left\{ g-2\mathbb{E}\left[g\right]-2g^{=1}:\,\text{g is supported on }A_{n}\right\}.
\]
Again, some Representation Theory shows that $T^* T$ acts on $V''$
with all eigenvalues $O(\alpha/n^{2})$.
Similarly to the above argument this implies
\[
\inner f{Tf}=2\alpha^{3}+2\inner{f^{=1}}{Tf^{=1}}+\inner{f''}{Tf''},
\]
where $\inner{f''}{Tf''}=O(\alpha^{3/2}/n)$ is negligible when $n^{-2/3}=o\left(\alpha\right)$.
Thus it suffices to control the `linear term' $\inner{f^{=1}}{Tf^{=1}},$
which turns out to be equal to
\[
\mathbb{E}_{\sigma,\pi\sim S_{n}}\left[f^{=1}\left(\sigma\right)f^{=1}\left(\pi\right)f^{=1}\left(\sigma\pi\right)\right].
\]

However, it is not generally true that the linear term is small compared
with the main term. Indeed, this would imply `product
mixing', i.e. that the number of products is close to that in a random
set of the same density, but Eberhard \cite{Eberhard}
constructed examples with significantly
more products when $\alpha=o\left(n^{-1/3}\right)$.
The key to his approach is that the above counting
argument only needs a lower bound on the linear term, and that this
exhibits much nicer concentration properties than the upper bound.
Nevertheless, even these better estimates break down for densities
within a polylogarithmic factor of the optimum bound.

\subsection*{Our approach: structure versus randomness}

Our approach to stability can be viewed as a further refinement of
this method that is close in spirit to Roth's theorem, in that it
is analogous to the `structure versus randomness' dichotomy: for Roth's
theorem, if the error term is large then $A$ correlates with an arithmetic
progression, whereas in our setting, if the linear term cancels the
main term then $A$ correlates with dictators.

Our starting point for this strategy is the formula
\[
f^{=1}(\pi)=\sum_{i,j\in[n]}a_{ij}x_{i\to j},
\]
 where $x_{i\to j}\left(\pi\right)=1_{\pi\left(i\right)=j},$ and
each
\[
a_{ij}=(1-1/n)(\mu(A_{i\to j})-\mu(A))
\]
 measures the correlation of $f$ with a dictator. A large value of
$a_{i,j}$ corresponds to a large density inside a dictator. On the
other hand, having all $a_{i,j}$ of the same order of magnitude as
$\mu(A)$ can be interpreted as pseudorandomness. Some calculations
reveal that
\[
\|f^{=1}\|_{2}^{2}=\tfrac{1}{n-1}\sum_{i,j\in\left[n\right]}a_{ij}^{2},
\]
 and that
\begin{align*}
\mathbb{E}_{\sigma,\pi\sim S_{n}}\left[f^{=1}\left(\sigma\right)f^{=1}\left(\pi\right)f^{=1}\left(\sigma\pi\right)\right] & =\frac{1}{\left(n-1\right)^{2}}\sum_{i,j,k\in\left[n\right]}a_{ij}a_{jk}a_{ik}.
\end{align*}
 These formulae alone do not suggest any structural properties for
a contribution of $-\alpha^{3}$ to the left hand side;
in principle, $\TT(n/\alpha)$ values of $a_{i,j}=-\alpha$ could
contribute $\tfrac{1}{n^{2}}\TT(n/\alpha)^{3/2}(-\alpha)^{3}$, so
when $\alpha=o\left(n^{-1/3}\right)$ we seem to have no structure.

Such extreme situations can be ruled out by concentration of measure,
which is a key tool in Eberhard's approach, but seems doomed to give
up logarithmic factors. However, much stronger structure can be extracted
from a recent hypercontractive inequality of Filmus, Kindler, Lifshitz,
and Minzer \cite{FKLM}.

It will be convenient for us to put the coefficients $a_{ij}$ inside
a matrix $\matA=\left(a_{ij}\right)$ and to equip real valued $n\times n$
matrices with the inner product $\left\langle \left(m_{ij}\right),\left(n_{ij}\right)\right\rangle =\sum_{i=1}^{n}\sum_{j=1}^{n}m_{ij}n_{ij}.$
We then have
\[
\|f^{=1}\|_{2}^{2}=\frac{1}{n-1}\|\matA\|_{2}^{2}
\]
 and
\[
\mathbb{E}_{\sigma,\pi}\left[f^{=1}\left(\pi\right)f^{=1}\left(\sigma\right)f^{=1}\left(\sigma\pi\right)\right]
=\frac{1}{(n-1)^2}\left\langle \matA^{2},\matA\right\rangle .
\]
Our idea is to decompose $\matA=\Apseudo+\Astruc+\matA_{-}$ as follows.
We let the matrix $\text{\ensuremath{\matA}}_{-}$
consist of the negative coefficients $a_{ij}$, where the other coefficients
are replaced by 0. We then set the matrix $\Astruc$ to consist of
the `large' values $a_{ij}\ge\epsilon$, for some carefully chosen $\epsilon>0$.
Finally, we let $\Apseudo$
consist of the `small' positive coefficients $a_{ij}\in\left(0,\epsilon\right).$

We then expose the dictatorial structure of $A$ by showing that
\begin{align}
\mathbb{E}_{\sigma,\pi}\left[f^{=1}\left(\pi\right)f^{=1}\left(\sigma\right)f^{=1}\left(\sigma\pi\right)\right] & \ge\left\langle \Astruc\matA_{-},\Astruc\right\rangle +\left\langle \matA_{-}\Astruc,\Astruc\right\rangle \nonumber \\
 & \qquad
 +\left\langle \Astruc^{2},\matA_{-}\right\rangle +o\left(\alpha^{3}\right).\label{eq: dictatorial structure}
\end{align}
 This could be understood intuitively as follows. Writing $AB=\left\{ \sigma\tau:\sigma\in A,\tau\in B\right\} $,
we see that the dictators satisfy $\mathcal{D}_{j\to k}\cdot\mathcal{D}_{i\to j}=\mathcal{D}_{i\to k}.$
When expanding (\ref{eq: dictatorial structure}) in terms of the
coefficients $a_{ij}$, it shows that the only significant negative
contribution to
\[
\mathbb{E}_{\sigma,\pi}\left[f^{=1}\left(\pi\right)f^{=1}\left(\sigma\right)f^{=1}\left(\sigma\pi\right)\right]
\]
 comes from triples $a_{jk}a_{ij}a_{ik}$ corresponding to dictators
$\mathcal{D}_{j\to k}\cdot\mathcal{D}_{i\to j}=\mathcal{D}_{i\to k}$,
with the property that $A$ has a large density in two of the dictators
and a small density in the remaining one.

To establish (\ref{eq: dictatorial structure}) we will expand the
inner product $\left\langle \matA^{2},\matA\right\rangle $ in terms
of the matrices $\matA_{-},\Astruc,$ $\Apseudo$ and use the inequality
\[
\left\langle MN,S\right\rangle \le\|M\|_{2}\|N\|_{2}\|S\|_{2}
\]
to upper bound the undesirable terms. Our proof will thus crucially rely
on upper bounds for $\|\Apseudo\|_{2}$ and $\|\matA_{-}\|_{2}$,
which we will establish via a hypercontractive inequality for global functions,
as discussed in the next subsection.

\subsection*{Level-1 inequalities}

The relationship between hypercontractive inequalities and inequalities
that upper bound the `level-$1$ weight' $\|f^{=1}\|_{2}^{2}$ is well-known in
the context of Boolean functions $f\colon\left\{ 0,1\right\} ^{n}\to\left\{ 0,1\right\} .$
There an inequality of the form
\[
\|f^{=1}\|_{2}^{2}\le2\mathbb{E}^{2}\left[f\right]\log\left(\frac{1}{\mathbb{E}\left[f\right]}\right)
\]
 is true for all Boolean functions and is known as the \emph{level-$1$ inequality}
(see O'Donnell \cite[Chapter 5]{o2014analysis}). As
\[
\frac{1}{\left(n-1\right)^{2}}\left\langle \matA^{2},\matA\right\rangle \le\frac{\|\matA\|_{2}^{3}}{\left(n-1\right)^{2}}=\frac{\|f^{=1}\|_{2}^{3}}{\sqrt{n-1}},
\]
a similar level-$1$ inequality for the symmetric group would be most
desirable, as it would imply that
$\frac{\|f^{=1}\|_{2}^{3}}{\sqrt{n-1}}$ is negligible compared to $\alpha^{3}$.
However, such an inequality is not true in general, as it fails for dictators,
and more generally for $t$-umvirates with small $t$.

The local nature of the obstructions suggests that the approach
could be rescued by proving a level-$1$ inequality for global functions.
Indeed, this was achieved in the analogous setting of general product spaces
by Keevash, Lifshitz, Long and Minzer \cite{KLLM},
who developed a hypercontractivity theory for global functions
that has recently been a fruitful source of many applications.
The corresponding results in the setting of the symmetric group
have recently been established by Filmus, Kindler, Lifshitz and Minzer \cite{FKLM}.
Their level-$1$ inequality shows that if $f\colon S_{n}\to\left\{ 0,1\right\} $
is $\left(2,\epsilon\right)$-global then
\[
\|f^{=1}\|_{2}^{2}\le2\epsilon\mathbb{E}\left[f\right]\log^{O\left(1\right)}\left(\frac{1}{\epsilon}\right).
\]

This inequality cannot be applied directly to our setting,
as we cannot guarantee that $f$ has small density inside each 2-umvirate.
However, we are able to extend their approach to obtain the upper bounds
\[
\|\matA_{-}\|_{2}\le\mathbb{E}\left[f\right]\log^{O\left(1\right)}\left(2/\mathbb{E}\left[f\right]\right)
\]
 and
\[
\|\Apseudo\|_{2}^{2}\le\epsilon\mathbb{E}\left[f\right]\log^{O\left(1\right)}\left(\frac{2}{\epsilon}\right).
\]
 The key idea is to apply the hypercontractive result of \cite{FKLM}
to
\[
f_{-}:=\sum_{a_{ij}<0}a_{ij}\left(x_{i\to j}-\frac{1}{n}\right)
\]
 and to
\[
f_{\mathrm{rand}}:=\sum_{0<a_{ij}<\epsilon}a_{ij}\left(x_{i\to j}-\frac{1}{n}\right).
\]

These inequalities will establish (\ref{eq: dictatorial structure}).
As an expository simple case of our argument, in the next section
we will show that this can be used to reprove Eberhard's result,
by setting $\epsilon=1$, so that $\Astruc=0$.
Extracting the star structure for smaller $\epsilon$ is considerably more complicated,
so we defer an overview of this part of the argument to subsection \ref{sec:starover}.

\subsection{From star structure to extremal families} \label{subsec:restrict}

Once we know that $A$ is almost contained in a star or inverse star $S$, say $S=1_{x\to I}$,
then it is not too hard to show that it is in fact almost contained in $F_{I}^{x}$.
In other words, we wish to show for each $i,i'\in I$ that $A$ has small density
inside the dictator $\mathcal{D}_{i\to i'}$.
We accomplish this by inspecting the triple $(A_{i\to i'},A_{x\to i},A_{x\to i'})$.
Such triples should be intuitively considered as product-free
after factoring out the corresponding dictators with
\[
\mathcal{D}_{i\to i'}\mathcal{D}_{x\to i}=\mathcal{D}_{x\to i'}.
\]
To formalise this, we would like a variant of Eberhard's bound
that holds not only for product-free triples $A',B',C'\subseteq A_{n},$
but also for product-free sets $A',B',C'$ living inside compatible dictators.
This can be achieved by the following transformation.
We set $A'=\left(i'\,n\right)A\left(n\,i\right)$,
$B'=\left(i\,n\right)A\left(n\,x\right)$, and $C'=\left(i'\,n\right)A(x\,n).$
The transformation from $\left(A,B,C\right)$ to $\left(A',B',C'\right)$
preserves products. Moreover, the restrictions
\[
\left(A'_{n\to n},B'_{n\to n},C'_{n\to n}\right)
\]
correspond to the original restrictions
\[
\left(A_{i\to i'},B_{x\to i},C_{x\to i'}\right).
\]
We may then view the triple $\left(A'_{n\to n},B'_{n\to n},C'_{n\to n}\right)$
as subsets of $A_{n-1}$ and translate Eberhard's bounds inside $A_{n-1}$
to the densities of $A_{i\to i'}$, $A_{x\to i}$ and $A_{x\to i'}$.

Similar considerations combined with some more involved structural arguments
will show that $\mu\left(A\setminus F_{I}^{x}\right)$
is much smaller than $\mu\left(F_{I}^{x}\setminus A\right)$,
thereby showing that $F_{I}^{x}$ is extremal.

\subsection{Eigenvalues of global Cayley graphs}

The main tool for proving our 1\% stability result (Theorem \ref{thm:global})
is an upper bound on the eigenvalues of Cayley graphs $\mathrm{Cay}(S_{n},A)$
that correspond to global sets $A$. We believe that the result has independent interest
and will be beneficial in various other applications;
indeed, the study of Cayley graphs on $S_{n}$ and their eigenvalues
is the basis of an entire field of research
(see e.g. Diaconis \cite{diaconis1988group}).

Let $A=A^{-1}$ and let $T_{A}$ be the operator corresponding to
the random walk on $\mathrm{Cay}(G,A)$, given by $T_{A}f(x):=\mathbb{E}_{a\sim A}[f(ax)]$.
As mentioned above, the operator $T_{A}$ preserves each of the spaces $V_{=d}$.
We define
\[
r_{d}(T_A)=\sup_{f\in V_{=d}}\frac{\|\mathrm{T}_A f\|_{2}}{\|f\|_{2}},
\]
or equivalently $r_{d}\left(T_A\right)^2$ is the largest eigenvalue
of the self-adjoint operator $T_A^* T_{A}$ acting on $V_{=d}$.

A useful fact from representation theory gives lower bounds
on the dimension of each eigenspace of $T_{A}$ using the observation
that each is invariant under the action of $S_{n}$ from the right.
Indeed, this was a crucial ingredient in the seminal work of
Ellis, Filmus and Pilpel \cite{ellis2011intersecting},
who derived bounds on the dimensions from
well-known properties of invariant spaces;
in particular, the dimensions are $\TT(n^d)$ when $d=O(1)$.

The trace method provides a fundamental way to exploit this lower bound.
Specifically, writing $m_{\lambda}$ for the multiplicity of an eigenvalue $\lambda$ of
$T_{A}$, we have
\[
\frac{1}{\mu(A)}=\mathrm{tr}(T_{A}^{*}T_{A})
=\sum_{\lambda\in\mathrm{spec}\left(T_{A}\right)}m_{\lambda}\lambda^{2}.
\]
When $A$ is dense (i.e.\ $\mu(A)=\Theta(1)$) we thus obtain
$|\lambda|=O(1/\sqrt{m_{\lambda}})$,
and so $r_{d}(T_A)=O_d(1/n^{d/2})$.
Furthermore, if $A$ is closed under conjugation
then the operator $\mathrm{T}_{A}$
commutes with the action of $S_{n}$ from both sides, in which case
we have $m_{\lambda}\ge \OO(n^d)$,
giving the stronger bound $r_{d}(T_A)=O_{d}(1/n^{d})$.
We show that similar statements hold for global sets,
even when they are quite sparse.
\begin{thm}
\label{thm:radius} Let $C,d>0$ and suppose that $n$ is sufficiently
large. Suppose that $A\subseteq S_{n}$ is relatively $(2d,K)$-global
with $\mu\left(A\right)\ge n^{-C}$. Then
\[
r_{d}(T_A)\le\frac{\sqrt{K}\log^{O\left(d\right)}n}{n^{d/2}}.
\]
Furthermore, if  $A$ is also closed under conjugation then
\[
r_{d}\left(T_A\right)\le\frac{\sqrt{K}\log^{O\left(d\right)}(n)}{n^{d}}.
\]
\end{thm}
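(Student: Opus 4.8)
The plan is to deduce Theorem~\ref{thm:radius} from the level-$d$ inequality of \cite{FKLM}, using a small amount of representation theory to turn the trace-method heuristic sketched above into a quantitative bound. The engine is the identity
\[
\mathrm{tr}\!\left(T_A^*T_A\big|_{V_{=d}}\right)=\frac{\|1_{A}^{=d}\|_2^2}{\mu(A)^2}.
\]
To see it, note that $T_A$ commutes with the right regular representation of $S_n$ (since $T_A g(x)=\mathbb{E}_a g(ax)$ and right translation evidently commute), so on each isotypic block $\mathrm{End}(S^\lambda)\cong S^\lambda\otimes(S^\lambda)^*$ of $L^2(S_n)$ it acts as $M_\lambda\otimes I$ for a matrix $M_\lambda$ that is a suitably normalised Fourier coefficient of $1_A$; hence $T_A^*T_A$ acts on this block as $(M_\lambda^*M_\lambda)\otimes I$, with trace $(\dim S^\lambda)\|M_\lambda\|_{\mathrm{HS}}^2$. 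Since $V_{=d}$ is the sum of the blocks with $\lambda_1=n-d$, summing these traces and applying the Plancherel formula for $S_n$ to $1_{A}^{=d}$ gives the identity. (As a sanity check, $d=0$ gives $1$, and summing over all $d$ recovers $\mathrm{tr}(T_A^*T_A)=1/\mu(A)$.)

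Next I would exploit the multiplicities. The top eigenspace of $T_A^*T_A$ inside $V_{=d}$ is a nonzero submodule for the right $S_n$-action, hence contains a copy of some irreducible $S^\lambda$ with $\lambda_1=n-d$, and the standard dimension estimates (as used in \cite{ellis2011intersecting}) give $\dim S^\lambda\ge\Omega_d(n^d)$ for every such $\lambda$. Since $T_A^*T_A$ is positive semidefinite, its trace on $V_{=d}$ is at least $r_d(T_A)^2\cdot\Omega_d(n^d)$, whence
\[
r_d(T_A)^2\le\frac{\|1_{A}^{=d}\|_2^2}{\Omega_d(n^d)\,\mu(A)^2}.
\]
If moreover $A$ is closed under conjugation, then $T_A$ also commutes with the left $S_n$-action, so by Schur's lemma $T_A^*T_A$ is scalar on each block $\mathrm{End}(S^\lambda)$; every eigenvalue occurring on $V_{=d}$ then has multiplicity at least $(\dim S^\lambda)^2\ge\Omega_d(n^{2d})$, and the same argument gives $r_d(T_A)^2\le\|1_{A}^{=d}\|_2^2/(\Omega_d(n^{2d})\,\mu(A)^2)$.

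It remains to bound $\|1_{A}^{=d}\|_2^2$, which is where \cite{FKLM} enters. Relative $(2d,K)$-globalness says precisely that $A$ is $(2d,\epsilon)$-global with $\epsilon^2=K\mu(A)\le1$, and $\mu(A)\ge n^{-C}$ forces $\log(1/\epsilon)=O_C(\log n)$; feeding this into the level-$d$ inequality for global functions yields $\|1_{A}^{=d}\|_2^2\le K\,\mu(A)^2\log^{O(d)}n$. Substituting into the two displays completes the proof: $r_d(T_A)\le\sqrt{K}\log^{O(d)}n/n^{d/2}$ in general, and $r_d(T_A)\le\sqrt{K}\log^{O(d)}n/n^{d}$ when $A$ is conjugation-invariant.

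The hard part is this last step. One must check that the level-$d$ inequality of \cite{FKLM} is available in the regime where the globalness parameter $\epsilon\approx\sqrt{K\mu(A)}$ is only polynomially small (rather than a constant), and that it outputs the clean form $\|1_{A}^{=d}\|_2^2\lesssim K\mu(A)^2\log^{O(d)}n$ with implicit constants uniform in $C$; this also requires reconciling the $(t,\epsilon)$-global parameterisation of \cite{FKLM} with the relatively-$(t,K)$-global parameterisation used here, which is the reason the hypothesis is imposed at level $2d$ rather than $d$. By contrast, the representation-theoretic inputs --- the block decomposition of convolution operators, Plancherel, and the dimension lower bounds $\dim S^\lambda=\Omega_d(n^d)$ for $\lambda_1=n-d$ --- are classical, and the remaining bookkeeping with factors of $\mu(A)$ is routine.
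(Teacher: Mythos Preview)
Your proposal is correct and follows essentially the same route as the paper: compute the trace of $T_A^*T_A$ on $V_{=d}$ as $\|1_A^{=d}\|_2^2/\mu(A)^2$, lower-bound it by $r_d(T_A)^2$ times the minimal multiplicity $\Omega_d(n^d)$ (or $\Omega_d(n^{2d})$ in the conjugation-invariant case), and then bound $\|1_A^{=d}\|_2^2$ via the level-$d$ inequality of \cite{FKLM} with $\epsilon^2=K\mu(A)$. The only cosmetic difference is that the paper packages the trace identity by first passing to the operator $L_{f^{=d}}$ (via Lemma~\ref{lem:rd depends on f=00003Dd} and Lemma~\ref{lem:computing trace}) rather than invoking the Fourier/Plancherel block picture directly, but the content is the same.
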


Theorem \ref{thm:radius} is the crucial new ingredient for proving
our 1\% stability result for product-free sets of density $\ge n^{-C}$
(Theorem \ref{thm:global}).

\subsection{Organisation of the paper}

We start in the next section with some technical preliminaries
on the representation theory of $S_n$ and also the proofs
of the results discussed immediately above,
i.e.\ new eigenvalue estimates for global sets
and their application to the proof of our 1\% stability result.
Section \ref{sec:lin} considers the analysis of linear functions
on the symmetric group. The main result of this section
will be a level-$1$ inequality for the pseudorandom part of a function,
which in itself will suffice to reprove Eberhard's result.
We then move into more refined arguments that extract structural properties
of product-free sets,
first exposing the dictatorial structure in Section \ref{sec:dictate}
and then the precise star structure in Section \ref{sec:star}.
In Section \ref{sec:boot} we implement our bootstrapping arguments
that refine the approximate structure to deduce our main results,
giving the exact extremal result and strong stability results
for product-free sets in $A_n$. The final section contains some concluding remarks.

\section{1\% stability} \label{sec:1pc}

This section contains some background on the representation theory of $S_n$,
our new eigenvalue estimates for global sets
and their application to the proof of our 1\% stability result.

\subsection{Notation}

We write $X=O(Y)$ to say that there exists an absolute constant $C>0$
such that $X\leq C\cdot Y$, and similarly $X=\Omega(Y)$ to
say that there exists an absolute constant $c>0$ such that $X\geq c\cdot Y$.
We write $X=\Theta(Y)$ to say that $X=O\left(Y\right)$ and also
$X=\Omega\left(Y\right)$. We also write $X\le Y\log^{O\left(1\right)}n$
to say that there exists an absolute constant $C,$ such that $X\le Y\log^{C}n.$

We discuss the space of real-valued functions on $S_{n}$ equipped
with expectation inner product and $L^{p}$-norms. We write
\[
\mu_{A}\left(B\right)=\frac{\left|A\cap B\right|}{\left|A\right|}=\Pr_{a\sim A}\left[a\in B\right].
\]
 We write $\sigma\sim S_{n}$ to denote that $\sigma$ is uniformly
distributed inside $S_{n}$ and $\sigma,\tau\sim S_{n}$ to denote
that they are chosen independently out of $S_{n}$. For a function
$f\colon S_{n}\to\mathbb{R}$ we often write $\mathbb{E}\left[f\right]$
to denote $\mathbb{E}_{\sigma\sim S_{n}}\left[f\left(\sigma\right)\right]$,
although we caution the reader that this usage will depend on context,
as when we define $\mathbb{E}\left[f_{I\to J}\right]$ below
the expectation will be conditioned on the given restriction.

\subsection{Restrictions}

We define restrictions of functions in a manner that naturally generalizes
the notion of restrictions of subsets of $S_{n}$ used in the introduction.
\begin{defn}
For $t\le n$ and $I=\left(i_{1},\ldots,i_{t}\right),J=\left(j_{1},\ldots,j_{t}\right)\subseteq[n]$
ordered sets of size $t$. We denote by $\mathcal{U}_{I\rightarrow J}$
the $t$-umvirate of permutations sending each $i_{l}$ to $j_{l}$.

We define $x_{i\rightarrow j}\colon S_{n}\to\{0,1\}$ by
\[
x_{i\rightarrow j}(\pi)=1_{\pi\left(i\right)=j}.
\]
 Let $I\subseteq[n]$, $J\subseteq[n]$ be ordered sets of some size
$t$. We denote by $f_{I\rightarrow J}\colon\mathcal{U}_{I\rightarrow J}\to\mathbb{R}$
the restriction of $f$ to $\mathcal{U}_{I\rightarrow J}$. We write
\[
\mathbb{E}\left[f_{I\to J}\right]=\mathbb{E}_{\sigma\sim S_{n}}\left[f\left(\sigma\right)|\sigma\in\mathcal{U}_{I\to J}\right]
\]
 and
\[
\|f_{I\to J}\|_{2}^{2}=\mathbb{E}_{\sigma\sim S_{n}}\left[f^{2}\left(\sigma\right)|\sigma\in\mathcal{U}_{I\to J}\right].
\]
 We may identify $\mathcal{U}_{I\to J}$ with $S_{n-t}$ by choosing
a permutation $\sigma$ sending $n-t+l$ to $i_{l}$ for each $l\in\left[t\right]$
and $\pi$ sending $j_{l}$ to $n-t+l$. Then $\pi\mathcal{U}_{I\to J}\sS$
is the set of permutations on $S_{n}$ fixing $\left\{ n-t+1,\ldots,n\right\} $,
which can be identified with $S_{n-t}.$
We will use this identification to import results on the
symmetric group $S_{n-t}$ to the $t$-umvirate $\mathcal{U}_{I\to J}.$
\end{defn}

\subsection{Orthogonal decompositions}

Our proof will use spectral analysis over $S_{n}$,
so we need to recall its level decomposition
and the more refined representation theoretical decomposition
into isotypical components.

\subsection*{The level decomposition}

We start by decomposing according to degree,
as discussed in the introduction.

\begin{defn}
\label{def:level_space} The space $W_{d}$ is the linear span of
the indicators of the $d$-umvirates. We say that a real-valued function
on $S_{n}$ has \emph{degree} at most $d$ if it belongs to $W_{d}$.

By construction, $W_{d-1}\subseteq W_{d}$ for all $d\geq1$. We define
the space of functions of pure degree $d$ as $V_{=d}=W_{d}\cap W_{d-1}^{\perp}$.
It is easy to see that $V_{n}=V_{n-1}$, and so we can decompose each
real-valued function $f\colon S_{n}\to\mathbb{R}$ as $f=f^{=0}+f^{=1}+\ldots+f^{=n-1}$,
where $f^{=i}\in V_{=i}$. We often refer to this decomposition as
the \emph{level decomposition} of $f$.
\end{defn}

Parts of our argument require the following finer decomposition
that further decomposes $f^{=i}$ into more structured pieces.

\subsection*{The representation theoretic decomposition}

Here we will list the properties we require of
the finer decomposition of functions on $S_{n}$
into isotypical components; these can
be found e.g. in \cite[Section 7.2]{FKLM}.
We adopt the following standard notation.
A partition $\lambda$ uniquely corresponds to a Young diagram.
Its transpose $\lambda^{t}$ is obtained by swapping the roles
of the rows and the columns of the Young diagram. We write $\lambda\vdash n$
to denote that $\lambda$ is a partition of $n$. We let $S_{n}$
act on $L^{2}\left(S_{n}\right)$ from the left and from the right
by setting $\tau g=g\left(\tau\pi\right)$ and $g\tau\left(\pi\right)=g\left(\pi\tau^{-1}\right).$
\begin{lem}
\label{lem:Facts from representation theory} There exists an orthogonal
decomposition $L^{2}\left(S_{n}\right)=\bigoplus_{\lambda\vdash n}V_{=\lambda}$
with the following properties:
\begin{enumerate}
\item For some numbers $\dim\left(\lambda\right)$,
each $V_{=\lambda}$ is the direct sum of $\dim\left(\lambda\right)$
irreducible representations of dimension $\dim\left(\lambda\right)$.
\item If $\T\colon L^{2}\left(S_{n}\right)\to L^{2}\left(S_{n}\right)$
commutes with the action of $S_{n}$ either from the left or from
the right then $\mathrm{T}V_{=\lambda}\subseteq V_{=\lambda}$, and so
$\T V_{=d}\subseteq V_{=d}$. (See \cite[Claim 7.4]{FKLM}.)
\item  If $\mathrm{T}$ is self adjoint and commutes with the action from one
side then the dimension of each eigenspace of $\mathrm{T}$ inside
$V_{=\lambda}$ is at least $\dim\left(\lambda\right)$. If it
commutes with the action of $S_{n}$ from both sides then $V_{=\lambda}$
is contained in an eigenspace of $\mathrm{T}$.
(See \cite[Claim 7.5]{FKLM}.)
\item $V_{=\lambda}\le V_{=d}$ if and only if the first row of $\lambda$
is of length $n-d$. In which case we write $d_{\lambda}=d$ and $\tilde{d}_{\lambda}=\min\left(d_{\lambda},d_{\lambda^{t}}\right).$
\item Multiplication by the sign character sends $V_{\lambda}$ to $V_{\lambda^{t}}$.
(See \cite[Lemma 7.3]{FKLM}.)
\item If $n$ is sufficiently large, $d<n/10$ and $\tilde{d}_{\lambda}>d$
then $\dim\left(\lambda\right)>\left(\frac{n}{ed}\right)^{d}$.
(See \cite[Lemma 7.7]{FKLM}.)
\end{enumerate}
\end{lem}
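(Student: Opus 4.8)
The plan is to cite the classical representation theory of $S_n$ — specifically the decomposition of $L^2(S_n)$ into isotypic components indexed by partitions $\lambda \vdash n$ — and to verify each of the six listed properties either from standard facts or by a short argument. First I would recall that, by Peter--Weyl for finite groups, $L^2(S_n) = \bigoplus_{\lambda \vdash n} M_\lambda$ where $M_\lambda$ is the isotypic component of the irreducible Specht module $\mathrm{Sp}^\lambda$; since $L^2(S_n)$ is the regular representation, each irreducible appears with multiplicity equal to its dimension, so $M_\lambda$ is the direct sum of $\dim(\lambda) := \dim \mathrm{Sp}^\lambda$ copies of an irreducible of dimension $\dim(\lambda)$. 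Setting $V_{=\lambda} := M_\lambda$ gives property~(1), and orthogonality of the decomposition follows from Schur orthogonality.

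For property~(2): if $\mathrm{T}$ commutes with the left (or right) regular action, then $\mathrm{T}$ is a morphism of $S_n$-representations, so by Schur's lemma it preserves each isotypic component; the inclusion $\mathrm{T}V_{=d} \subseteq V_{=d}$ then follows once property~(4) is established, since $V_{=d}$ is a sum of certain $V_{=\lambda}$. For property~(3): an operator commuting with one side acts within $V_{=\lambda} \cong \mathrm{Sp}^\lambda \otimes \mathbb{C}^{\dim(\lambda)}$ (as a representation for the commuting side), so each eigenspace is a subrepresentation and hence has dimension a multiple of $\dim(\lambda)$, in particular at least $\dim(\lambda)$ if nonzero; if $\mathrm{T}$ commutes with both sides it is scalar on each $V_{=\lambda}$ by the double-centralizer/Schur argument, so $V_{=\lambda}$ lies in a single eigenspace. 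Property~(4) is the identification of the level filtration $W_d$ with the span of $V_{=\lambda}$ over $\lambda$ with first row $n-d$: one direction is that the indicator of a $d$-umvirate, viewed as an element of the permutation module $M^{(n-d,1^d)}$ (or its relevant truncation), decomposes into Specht modules $\mathrm{Sp}^\lambda$ with $\lambda_1 \geq n-d$ by Young's rule / the branching pattern; the equality $W_d \cap W_{d-1}^\perp = \bigoplus_{\lambda_1 = n-d} V_{=\lambda}$ then follows by a dimension count or by induction on $d$. Property~(5) is the standard fact that tensoring with the sign character sends $\mathrm{Sp}^\lambda$ to $\mathrm{Sp}^{\lambda^t}$. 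Property~(6) is a hook-length-formula estimate: if $\tilde d_\lambda > d$ then both $\lambda$ and $\lambda^t$ have first row of length $< n-d$, forcing the diagram to be "spread out" enough that $\dim(\lambda) \geq \binom{n}{d}/\text{something} > (n/ed)^d$ for $n$ large and $d < n/10$; this is a routine but slightly delicate counting argument with hook lengths.

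The main obstacle — and the reason to defer to the citations — is property~(4), the precise matching of the combinatorially-defined level spaces $W_d$ (spanned by indicators of $t$-umvirates) with the representation-theoretic spaces, together with the dimension bound in property~(6). Both require genuine computations inside the branching graph of the symmetric group and careful hook-length bookkeeping. Since all of these are proved in \cite[Section 7.2]{FKLM} (with the specific claims and lemma numbers listed alongside each item), the cleanest route is to state the lemma as a black box with those pointers rather than reproduce the arguments; I would only sketch the short Schur-lemma arguments for (1)--(3) and (5) inline, and cite \cite[Lemma 7.7]{FKLM} verbatim for the dimension bound in (6).
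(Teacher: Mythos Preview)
Your proposal is correct and matches the paper's approach: the paper gives no proof for this lemma at all, simply stating the properties as a black box and citing \cite[Section 7.2]{FKLM} (with the specific claim and lemma numbers you noted). Your sketches of the Schur-lemma arguments for (1)--(3) and (5) are accurate and in fact go beyond what the paper includes.
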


We write $f^{=\lambda}$ for the projection of $f$ onto $V_{\lambda}.$
We identify functions $f\colon A_{n}\to\left\{ 0,1\right\} $ with
function on $S_{n}$ whose value is 0 on the odd permutations. Such
functions satisfy $\mathsf{sign}\cdot f=f$. This gives rise to two
decompositions of $f$ as a sum of elements in $V_{\lambda}$, which
therefore must be equal. The first is $f=\sum_{\lambda\vdash n}f^{=\lambda}$
and the second is $\sum_{\lambda\vdash n}\sf{sign}\cdot f^{=\lambda^{t}}$.
This shows that $\mathsf{sign}\cdot f^{=\lambda}=f^{=\lambda^{t}}.$

\subsection{Operators from functions}

We write $\mathbb{E}\left[f\right]$ for $\mathbb{E}_{\pi\sim S_{n}}\left[f\right]$.
Thus if $f=1_A$ for $A \sub A_n$ then $\mb{E}[f]=|A|/n!$
denotes the density of $A$ in $S_n$, not $A_n$.

For $f\in L^{2}\left(S_{n}\right)$ we define operators
$\mathrm{\mathrm{L}}_{f}$ and $\mathrm{R}_{f}$ on $L^{2}\left(S_{n}\right)$ by
\[
\mathrm{L}_{f}g\left(\sigma\right)=\mathbb{E}_{\pi}\left[f\left(\pi\right)g(\pi\sigma)\right]
\quad \text{ and } \quad
\mathrm{R}_{f}g\left(\sigma\right)=\mathbb{E}_{\pi}\left[g(\sigma\pi)f\left(\pi\right)\right].
\]
When $f=\frac{1_{A}}{\mu\left(A\right)},$ the operator $\mathrm{L}_{f}$
is the operator corresponding to the random walk sending $\mathrm{\sigma}$
to $a\sigma$ for a random $a\in A$. Similarly, $\mathrm{R}_{f}$
corresponds to the random walk that sends $\sigma$ to $\sigma a$.
The operators $\mathrm{L}_{f}$ and $\mathrm{R}_{f}$ commute with
the actions of $S_{n}$ from one side. Indeed,
for any $g \in L^{2}\left(S_{n}\right)$
and $\sigma, \tau \in S_n$ we have
\[
\left(\mathrm{L}_{f}\left(g\right)\tau\right)\left(\sigma\right)=\mathrm{L}_{f}g\left(\sigma\tau^{-1}\right)=\mathbb{E}_{\pi}\left[f\left(\pi\right)g(\pi\sigma\tau^{-1})\right]=\mathrm{L}_{f}\left(g\tau\right)(\sigma).
\]
 Similarly, we have $\mathrm{R}_{h}\left(\tau g\right)=\tau\left(\mathrm{R}_{h}g\right).$
When $f$ is a class function (meaning that $f\left(\sigma\right)$
depends only on the conjugacy class of $\sigma$)
we have $\mathrm{L}_{f}=\mathrm{R}_{f}$.
Indeed, when $f$ is a class function we have
\[
\mathrm{L}_{f}g(\sigma)=\mathbb{E}_{\pi}\left[f\left(\pi\right)g\left(\sigma\pi\right)\right]=\mathbb{E}_{\pi}\left[f\left(\sigma^{-1}\pi\sigma\right)g\left(\pi\sigma\right)\right]=\mathbb{E}_{\pi}\left[f\left(\pi\right)g\left(\sigma\pi\right)\right]=R_{f}g(\sigma).
\]
As mentioned, the trace method plays a crucial role in our work, and
computing the trace of the operators $\mathrm{L}_{f}^{*}\mathrm{L}_{f}$
will allow us to upper bound its eigenvalues.
\begin{lem}
\label{lem:computing trace}
The operators $\mathrm{L}_{f}^{*}\mathrm{L}_{f}$
and $\mathrm{R}_{f}^{*}\mathrm{R}_{f}$
both have trace $\|f\|_{2}^{2}$.
\end{lem}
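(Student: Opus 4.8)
The plan is to compute the trace directly from the definition of the operators, using a convenient orthonormal basis. The natural choice is the ``Dirac'' basis $\{e_\tau\}_{\tau \in S_n}$, where $e_\tau(\pi) = \sqrt{n!}\, 1_{\pi = \tau}$, which is orthonormal for the expectation inner product since $\langle e_\tau, e_{\tau'}\rangle = \mathbb{E}_\pi[e_\tau(\pi) e_{\tau'}(\pi)] = \frac{1}{n!}\cdot n! \cdot 1_{\tau = \tau'} = 1_{\tau=\tau'}$. Recall that $\mathrm{tr}(\mathrm{L}_f^* \mathrm{L}_f) = \sum_{\tau} \langle \mathrm{L}_f^* \mathrm{L}_f e_\tau, e_\tau\rangle = \sum_\tau \|\mathrm{L}_f e_\tau\|_2^2$, so it suffices to compute $\|\mathrm{L}_f e_\tau\|_2^2$ for each $\tau$ and sum.

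First I would unwind $\mathrm{L}_f e_\tau$. By definition $\mathrm{L}_f e_\tau(\sigma) = \mathbb{E}_\pi[f(\pi) e_\tau(\pi\sigma)] = \frac{1}{n!}\sum_{\pi} f(\pi) e_\tau(\pi\sigma) = \frac{1}{n!}\sqrt{n!}\, f(\tau\sigma^{-1})$, since $\pi\sigma = \tau$ forces $\pi = \tau\sigma^{-1}$. Hence $\mathrm{L}_f e_\tau(\sigma) = \frac{1}{\sqrt{n!}} f(\tau\sigma^{-1})$, and therefore
\[
\|\mathrm{L}_f e_\tau\|_2^2 = \mathbb{E}_\sigma\left[\frac{1}{n!} f(\tau\sigma^{-1})^2\right] = \frac{1}{n!}\cdot \frac{1}{n!}\sum_\sigma f(\tau\sigma^{-1})^2 = \frac{1}{n!}\,\mathbb{E}_\sigma[f(\tau\sigma^{-1})^2] = \frac{1}{n!}\|f\|_2^2,
\]
where the last step uses that $\sigma \mapsto \tau\sigma^{-1}$ is a bijection of $S_n$, so the average of $f(\tau\sigma^{-1})^2$ over $\sigma$ equals the average of $f^2$, which is $\|f\|_2^2$. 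Summing over the $n!$ basis vectors $e_\tau$ gives $\mathrm{tr}(\mathrm{L}_f^* \mathrm{L}_f) = n! \cdot \frac{1}{n!}\|f\|_2^2 = \|f\|_2^2$. The computation for $\mathrm{R}_f$ is entirely analogous: one finds $\mathrm{R}_f e_\tau(\sigma) = \mathbb{E}_\pi[e_\tau(\sigma\pi) f(\pi)] = \frac{1}{\sqrt{n!}} f(\sigma^{-1}\tau)$, and the same change of variables gives $\|\mathrm{R}_f e_\tau\|_2^2 = \frac{1}{n!}\|f\|_2^2$, so $\mathrm{tr}(\mathrm{R}_f^*\mathrm{R}_f) = \|f\|_2^2$ as well.

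There is no real obstacle here; the only thing to be careful about is the bookkeeping of the $\frac{1}{n!}$ normalisation factors coming from the expectation inner product versus the counting sum, and correspondingly the $\sqrt{n!}$ in the definition of $e_\tau$. An alternative, essentially equivalent, route is to observe that $\mathrm{L}_f^* \mathrm{L}_f$ has a kernel representation, i.e.\ $(\mathrm{L}_f^*\mathrm{L}_f g)(\sigma) = \mathbb{E}_\rho[K(\sigma,\rho) g(\rho)]$ for some function $K$, and that the trace of such an operator is $\mathbb{E}_\sigma[K(\sigma,\sigma)]$; computing $K(\sigma,\sigma)$ directly yields $\|f\|_2^2$. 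Either way the argument is short, so I would simply present the basis computation above.
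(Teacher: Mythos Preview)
Your proof is correct and takes essentially the same approach as the paper: compute the trace via the Dirac basis of indicator functions, evaluate $\mathrm{L}_f$ on each basis element (obtaining a shifted copy of $f$), and sum the squared $2$-norms. The only cosmetic difference is that the paper uses the unnormalised indicators $1_\pi$ and carries an explicit factor of $n!$ in the trace formula, whereas you normalise to $e_\tau = \sqrt{n!}\,1_\tau$; the bookkeeping is identical.
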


\begin{proof}
We only consider $\T=\mathrm{L}_{f}$,
as the proof for $\mathrm{R}_{f}$ is similar.
We have
\begin{align*}
\mathrm{tr}\left(\mathrm{T}^{*}\mathrm{T}\right) & =\sum_{\pi\in S_{n}}n!\left\langle \mathrm{T}^{*}\mathrm{T}1_{\pi},1_{\pi}\right\rangle \\
 & =\sum_{\pi\in S_{n}}n!\left\langle \mathrm{T}1_{\pi},\mathrm{T}1_{\pi}\right\rangle
  =\sum_{\pi\in S_{n}}\sum_{\sigma\in S_{n}} \mathrm{T}1_{\pi}(\sigma)^2 \\
 & =\sum_{\pi\in S_{n}}\sum_{\sigma\in S_{n}}\mathbb{E}_{\tau\sim S_{n}}^{2}\left[f\left(\tau\right)1_{\pi}\left(\tau\sigma\right)\right]\\
 & =\left(\frac{1}{n!}\right)^{2}\sum_{\pi\in S_{n}}\sum_{\sigma\in S_{n}}f^{2}\left(\pi^{-1}\sigma\right)
  =\|f\|_{2}^{2}.
\end{align*}
\end{proof}

\subsection{Hypercontractivity of global functions}

In this subsection we state two inequalities from \cite{FKLM}.
To do so, we need the following natural extension
of the definition of globalness from sets to functions.

\begin{defn}
We say $f\colon S_{n}\to\mathbb{R}$ is $(d,\eps)$-global
if for every ordered $I,J\subseteq[n]$ of size $d$
we have $\norm{f_{I\rightarrow J}}_{2}\leq\eps$.
We say $f$ is relatively $(d,K)$-global
if $\norm{f_{I\rightarrow J}}_{2}^2 \leq K\|f\|_2^2$ for each such $I,J$.
\end{defn}

The hypercontractivity inequality of \cite{FKLM} takes the following form.

\begin{thm}
\label{thm:hyp} There exists an absolute constant $C$ such that
the following holds. Let $d,q\in\mathbb{N}$ with $q\geq2$ and
$n\geq q^{C\cdot d^{2}}$. If $f\colon S_{n}\to\mathbb{R}$ is $(2d,\eps)$-global then
\[
\norm f_{q}\leq q^{O(d^2)}\eps^{1-\frac{2}{q}}\norm f_{2}^{\frac{2}{q}}.
\]
\end{thm}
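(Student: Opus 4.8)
The plan is to prove this as a \emph{global hypercontractive inequality}, following the paradigm developed by Keevash, Lifshitz, Long and Minzer \cite{KLLM} for product spaces and transferring their machinery to the symmetric group. The first move is to reduce to $q$ an even integer: for general $q\in[2,\infty)$, log-convexity of $L^p$-norms lets one interpolate between the nearest even integers $2\lfloor q/2\rfloor$ and $2\lceil q/2\rceil$, and a short computation shows the exponents of $\eps$ and $\|f\|_2$ recombine exactly into $\eps^{1-2/q}\|f\|_2^{2/q}$ while the constant stays $q^{O(d^2)}$. For even $q$ one would like to bootstrap through $\|f\|_{2q}^{2}=\|f^{2}\|_{q}$, but the globalness of $f^2$ is governed by the quantities $\|f_{I\to J}\|_4$ rather than by the level-$2d$ globalness of $f$, so the even exponents do not cleanly reduce to $q=4$ one at a time; instead one sets up a single induction that proves all even $q$ simultaneously, with $q=4$ — i.e.\ $\mathbb{E}[f^4]\le q^{O(d^2)}\eps^{2}\|f\|_2^2$ — as the representative case to keep in mind.

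The structural input on $S_n$ is the fibration $\sigma\mapsto\sigma(1)$ onto $[n]$, whose fibre over $j$ is canonically a copy of $S_{n-1}$ (acting on $[n]\setminus\{j\}$). This presents $\sigma\sim S_n$ as ``a uniform $\sigma(1)\in[n]$, followed by an independent uniform element of $S_{n-1}$'', which is a genuine product up to the harmless relabelling of the fibre's ground set; consequently the KLLM inductive scheme — whose engine is their global two-function Bonami-type lemma, arranged so that the loss does not accumulate over the $n$ peeling steps — can be run on $S_n$ as if it were a product space. Concretely, from $\mathbb{E}[f^q]=\mathbb{E}_{j}\big[\|f_{1\to j}\|_q^q\big]$ and the analogous identities for the lower norms one peels off $\sigma(1)$ and applies the inductive hypothesis to the fibre functions $f_{1\to j}$ on $S_{n-1}$. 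An alternative, which I would in fact try first, is to avoid redoing any of \cite{KLLM} and instead quote their product-space hypercontractivity as a black box, spending the effort only on the transfer along this fibration; but one still faces the same obstacle below.

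The main obstacle is the behaviour of globalness under conditioning on $\sigma(1)$. The $2d$-umvirates of a fibre $S_{n-1}$ are $(2d{+}1)$-umvirates of $S_n$, and $\|f_{I'\to J'}\|_2$ for $|I'|=2d{+}1$ is \emph{not} pointwise controlled by the level-$2d$ globalness of $f$ — on an atypical fibre it can be far larger — so $f_{1\to j}$ need not be $(2d,O(\eps))$-global for every $j$. What one does have is the averaged identity $\mathbb{E}_{j}\big[\|f_{(1\to j,\,I\to J)}\|_2^2\big]=\|f_{I\to J}\|_2^2\le\eps^2$. The induction must therefore be made robust to this: either split the fibres into a ``good'' set, on which $f_{1\to j}$ is $(2d,O(\eps))$-global and the inductive hypothesis applies directly, and a ``bad'' set whose measure is bounded by Markov's inequality applied to the displayed average and on which one uses the trivial estimate $\|f_{1\to j}\|_q\le\|f_{1\to j}\|_\infty^{1-2/q}\|f_{1\to j}\|_2^{2/q}$ with $\|f_{1\to j}\|_\infty\le((n-1-2d)!)^{1/2}\eps$; or carry a formulation of the inductive statement that only ever refers to \emph{averaged} globalness of the fibres, as in \cite{KLLM}. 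Either way the delicate point — and the real content of the theorem — is the bookkeeping that keeps the $q^{O(d^2)}$ loss incurred once over the whole argument rather than once per level; this is precisely what forces the quadratic dependence on $d$ in the exponent and the hypothesis $n\ge q^{Cd^2}$, which is in any case needed for the global Bonami lemma itself. The base cases, $n$ bounded in terms of $q$ and $d$, are disposed of by the same crude $L^\infty$ bound.
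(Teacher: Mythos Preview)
The paper does not prove this theorem at all: it is quoted as a black box from \cite{FKLM} (see the sentence introducing the subsection, ``In this subsection we state two inequalities from \cite{FKLM}''). So there is no ``paper's own proof'' to compare against; you are attempting to reprove a cited result.

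As a sketch toward the FKLM theorem, your plan has a genuine gap in the treatment of the bad fibres. The $L^\infty$ bound you propose, $\|f_{1\to j}\|_\infty\le\sqrt{(n-2d)!}\,\eps$, is correct but useless: it is super-exponential in $n$, so for the bad-set contribution $\sum_{j\text{ bad}}\tfrac{1}{n}\|f_{1\to j}\|_q^q$ to be under control you would need the bad set to have density $\ll((n-2d)!)^{-q/2}$, and Markov applied to $\mathbb{E}_j\|f_{(1\to j,I\to J)}\|_2^2\le\eps^2$ gives nothing remotely that strong. The alternative you gesture at --- carrying only an \emph{averaged} globalness through the induction --- is the real issue, not a side remark: making that robust is precisely the content of the theorem, and it is not something one can import from \cite{KLLM} as a black box, because the $S_n$ fibration is not a product (the fibres over different $j$ live on different ground sets, so a $2d$-umvirate of one fibre is not a $2d$-umvirate of another, and the averaging identity you wrote mixes incomparable restrictions). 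The actual FKLM argument does not proceed by peeling $\sigma(1)$ in this way; it works with a tailored noise/derivative calculus on $S_n$ and controls the degree decomposition directly. Your reduction to even $q$ and the identification of the conditioning obstacle are fine, but as written the induction does not close.
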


We also require the level-$d$ inequality of \cite{FKLM},
which is a consequence (not immediate) of their hypercontractive inequality,
showing that global functions have low weight on the first $d$ levels.
\begin{thm}
\label{thm:Level d inequality }
There exists $C>0$ such that
if $n\ge2^{Cd^{3}}\log^{Cd}\left(\frac{1}{\|f\|_{2}}\right)$
and $f\colon S_{n}\to\mathbb{Z}$ is $\left(2d,\epsilon\right)$-global
then
\[
\|f^{\le d}\|_{2}\le 2^{Cd^{4}}\|f\|_{2}\epsilon\log^{Cd}\left(\frac{1}{\|f\|_{2}^{2}}\right).
\]
\end{thm}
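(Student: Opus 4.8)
The plan is to derive the level-$d$ inequality from the hypercontractive inequality (Theorem \ref{thm:hyp}) by a duality argument, in the spirit of the classical derivation of level-$1$ inequalities from hypercontractivity in the Boolean setting. First I would write $g = f^{\le d}$ for the projection of $f$ onto $W_d = \bigoplus_{e \le d} V_{=e}$. Since $g$ has degree at most $d$, it is itself $(2d,\eps')$-global for a suitable $\eps'$ controlled in terms of $\eps$ and $\|f\|_2$; the point is that restricting a low-degree function to a $2d$-umvirate cannot blow up its $L^2$-norm by more than a bounded (in $d$) combinatorial factor, because the restriction operator on $W_d$ is a bounded operator with norm depending only on $d$ and $n$. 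So the first step is a lemma: if $f$ is $(2d,\eps)$-global and integer-valued (so $\|f\|_2^2 = \mu(\mathrm{supp}(f))$), then $f^{\le d}$ is $(2d, O_d(\eps))$-global, using that low-degree functions have comparable norms on umvirates and the ambient space.

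The core step is then the duality/Hölder manoeuvre. We have
\[
\|g\|_2^2 = \langle f, g\rangle \le \|f\|_p \, \|g\|_q
\]
where $1/p + 1/q = 1$ and we take $q$ slightly bigger than $2$, say $q = 2 + \eta$ for $\eta$ a small parameter to be optimised (of order $1/\log(1/\|f\|_2^2)$). Since $f$ is integer-valued and supported on a set of measure $\|f\|_2^2$, we control $\|f\|_p$ crudely: if $f$ is an indicator this is just $\|f\|_p = \|f\|_2^{2/p}$, and in general a truncation argument reduces to this case up to harmless factors. For $\|g\|_q$, I would apply Theorem \ref{thm:hyp} to $g$ (which is $(2d, O_d(\eps))$-global by the first step), giving $\|g\|_q \le q^{O(d^2)} (O_d(\eps))^{1 - 2/q} \|g\|_2^{2/q}$. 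Substituting both bounds into the Hölder inequality and cancelling a factor of $\|g\|_2^{2/q}$ from both sides yields
\[
\|g\|_2^{2 - 2/q} \le q^{O(d^2)} \eps^{1-2/q} \, O_d(1) \, \|f\|_2^{2/p}.
\]
Now I would take $q$th powers appropriately, rearrange, and choose $\eta$ (hence $q$) optimally: the factor $q^{O(d^2)}$ raised to the relevant power contributes $2^{O(d^2)\log q}$, while $\|f\|_2^{2/p - \text{(stuff)}}$ contributes a power of $\|f\|_2$ whose exponent we want to be exactly $1$ (to match the claimed $\|g\|_2 \le \ldots \|f\|_2 \eps \log^{Cd}(1/\|f\|_2^2)$). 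Choosing $\eta \asymp 1/\log(1/\|f\|_2^2)$ makes $\|f\|_2^{2/p}$ differ from $\|f\|_2$ by a bounded factor, turns $\eps^{1-2/q}$ into $\eps^{1 - o(1)}$ (absorbed, or handled by noting $\eps \le 1$), and converts the $q^{O(d^2)}$ factor into $\log^{O(d)}(1/\|f\|_2^2)$ together with a $2^{O(d^4)}$-type constant from the $O(d^2)$ in the exponent interacting with the power. The hypothesis $n \ge 2^{Cd^3}\log^{Cd}(1/\|f\|_2)$ is exactly what is needed so that the constraint $n \ge q^{C d^2}$ in Theorem \ref{thm:hyp} is satisfied for this choice of $q$.

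The main obstacle, I expect, is the bookkeeping in the first step — showing that $f^{\le d}$ inherits globalness from $f$ with only a $d$-dependent loss — and the careful optimisation of $q$ at the end so that the exponent of $\|f\|_2$ comes out to be exactly $1$ rather than $1 - c/\log$ or $1 + c/\log$; getting the logarithmic powers and the $2^{O(d^4)}$ constant to land precisely as stated requires tracking how $q^{O(d^2)}$ behaves when raised to the power $q/(2q-2) = \tfrac12 + O(\eta)$ and how that combines with $\eps^{1-2/q}$. A secondary subtlety is the reduction from general integer-valued $f$ to the indicator case for bounding $\|f\|_p$: one should either truncate $f$ at a large level and handle the tail using that $\|f\|_2^2$ is an integer multiple of $1/n!$, or more simply observe that for the intended applications $f$ is a difference of indicators and a direct estimate suffices. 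I would also need to double-check that the globalness parameter for $g$ is measured with the right normalisation ($(2d,\cdot)$-global as a function, per the Definition preceding Theorem \ref{thm:hyp}), since Theorem \ref{thm:hyp} requires the absolute (not relative) version.
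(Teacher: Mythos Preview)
The paper does not prove this theorem: it is imported verbatim from \cite{FKLM}, with the remark that it is ``a consequence (not immediate)'' of the hypercontractive inequality (Theorem~\ref{thm:hyp}). So there is no in-paper proof to compare against. That said, your overall architecture --- pair $\|f^{\le d}\|_2^2 = \langle f^{\le d}, f\rangle$ with H\"older and bound $\|f^{\le d}\|_q$ via hypercontractivity --- is the right one, and it is exactly how the paper handles the $d=1$ case in Lemma~\ref{lem:level-1 inequality}. However, two steps in your plan would not go through as written.

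\emph{First, the globalness of $f^{\le d}$.} You claim $f^{\le d}$ is $(2d, O_d(\eps))$-global, arguing that restriction cannot blow up the $L^2$-norm of a low-degree function. But that argument would only give $\|(f^{\le d})_{I\to J}\|_2 \le C_d \|f^{\le d}\|_2$, which is circular since $\|f^{\le d}\|_2$ is the quantity you are bounding. The correct statement (cf.\ Lemma~\ref{lem:globalness of a function with small coefficients} for $d=1$) is that $f^{\le d}$ is $(2d,\eps')$-global with $\eps' = C_d(\eps + \|f^{\le d}\|_2)$: the globalness parameter involves $\|f^{\le d}\|_2$ itself. This necessitates a bootstrap/case-split, exactly as in the proof of Lemma~\ref{lem:level-1 inequality}: one case where $\|f^{\le d}\|_2 \le \eps$ so $\eps' \lesssim \eps$, and one where $\|f^{\le d}\|_2 > \eps$ so $\eps' \lesssim \|f^{\le d}\|_2$ and the resulting self-referential inequality can be solved for $\|f^{\le d}\|_2$.

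\emph{Second, the choice of $q$.} You propose $q = 2+\eta$ with $\eta \asymp 1/\log(1/\|f\|_2^2)$. This is the wrong regime: with $q$ close to $2$ the factor $(\eps')^{1-2/q}$ is essentially $(\eps')^0 = 1$ and you lose the $\eps$ entirely. Tracing the H\"older step, after cancelling $\|g\|_2^{2/q}$ and using $\|f\|_{q'} \le \|f\|_2^{2/q'}$ for integer-valued $f$, one gets
\[
\|g\|_2 \le q^{O(d^2)\cdot \frac{q}{2(q-1)}} (\eps')^{\frac{q-2}{2(q-1)}} \|f\|_2,
\]
and the exponent $(q-2)/(2(q-1))$ tends to $1/2$ as $q\to\infty$, never to $1$. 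So even with $\eps' = O_d(\eps)$ the straightforward argument yields only $\|g\|_2 \lesssim \sqrt{\eps}\,\|f\|_2$, which is weaker than the claim. The correct move (again, see the proof of Lemma~\ref{lem:level-1 inequality}) is to take $q$ of order $\log(1/\|f\|_2^2)$ and use the bootstrap: in the case $\eps' \lesssim \|g\|_2$ the self-referential inequality gives $\|g\|_2 \lesssim \|f\|_2^{2-2/q} \approx \|f\|_2^2 \le \|f\|_2\,\eps$ (since $\|f\|_2 \le \eps$ by averaging over umvirates), while in the case $\eps' \lesssim \eps$ one has $\|g\|_2 \le \eps$ directly, and the H\"older bound with large $q$ gives $\|g\|_2 \lesssim \eps^{1-1/q}\|f\|_2 \approx \eps\|f\|_2$ up to polylogs.
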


\subsection{Eigenvalues of global Cayley graphs}

We show the following stronger version of Theorem \ref{thm:radius}.
Let $\mathrm{T}\colon L^{2}\left(S_{n}\right)\to L^{2}\left(S_{n}\right)$
be an operator that commutes with the action of $S_{n}$ either from
the left or from the right. Then we write
\[
r_{d}\left(\mathrm{T}\right)=\mathrm{sup}_{f\in V_{d}}\frac{\|\mathrm{T}f\|_{2}}{\|f\|_{2}}.
\]
 When $\mathrm{T}$ is self-adjoint $r_{d}\left(\mathrm{T}\right)$
is the largest eigenvalue of $\mathrm{T}$ inside $V_{d}$;
in general, $r_{d}\left(\mathrm{T}\right)$ is the square root of the
largest eigenvalue of $\mathrm{T}^{*}\mathrm{T}$.
Theorem \ref{thm:radius} is immediate from the following result
applied with $f=\frac{1_{A}}{\mathbb{E}\left[1_{A}\right]}$,
as $\|f\|_2 = \mu(A)^{-1/2}$, $\epsilon = \sqrt{K\mu(A)}$,
and $\mathrm{L}_{f} = T_A$ is the random walk operator corresponding to $A$.

\begin{thm}
\label{thm:Actual radius}
There is an absolute constant $C$ such that
if $f\colon S_{n}\to\mathbb{Z}$ is $\left(2d,\epsilon\right)$-global
for some $\eps>0$ and $n \ge 2^{Cd^{3}}\log^{Cd}\left(\frac{1}{\|f\|_{2}}\right)$
then
\[
r_{d}\left(\mathrm{L}_{f}\right)\le\frac{2^{Cd^{4}}\|f\|_{2}\epsilon\log^{Cd}\left(\frac{1}{\|f\|_{2}^{2}}\right)}{n^{d/2}}.
\]
If moreover $f$ is a class function then
\[
r_{d}\left(\mathrm{L}_{f}\right)\le\frac{2^{Cd^{4}}\|f\|_{2}\epsilon\log^{Cd}\left(\frac{1}{\|f\|_{2}^{2}}\right)}{n^{d}}.
\]
\end{thm}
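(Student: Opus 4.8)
The plan is to combine the trace method with the level-$d$ inequality of \cite{FKLM} (Theorem \ref{thm:Level d inequality }) and the dimension lower bounds from representation theory (Lemma \ref{lem:Facts from representation theory}). First I would reduce to a statement about a single $t$-umvirate restriction. By definition $r_d(\mathrm{L}_f) = \sup_{g \in V_d} \|\mathrm{L}_f g\|_2/\|g\|_2$, so it suffices to bound $\|\mathrm{L}_f g\|_2$ for $g \in V_{=d}$ (in fact one can work level by level, since $\mathrm{L}_f$ preserves each $V_{=e}$ by Lemma \ref{lem:Facts from representation theory}(2), and take the worst level $e \le d$). The key point is that $\mathrm{L}_f$ commutes with the right action of $S_n$, hence preserves each isotypical component $V_{=\lambda}$, and $\mathrm{L}_f^* \mathrm{L}_f$ is self-adjoint; thus each of its eigenspaces inside $V_{=\lambda}$ has dimension at least $\dim(\lambda)$ by Lemma \ref{lem:Facts from representation theory}(3). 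Combined with Lemma \ref{lem:Facts from representation theory}(6), for $\lambda$ with $d_\lambda = e \le d$ we have $\dim(\lambda) \ge (n/(ed))^{e}$, unless $\tilde d_\lambda \le d$, i.e.\ $d_{\lambda^t} \le d$ as well — this is the class-function case and is handled separately below.

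Now apply the trace identity. By Lemma \ref{lem:computing trace}, $\mathrm{tr}(\mathrm{L}_f^* \mathrm{L}_f) = \|f\|_2^2$. Writing the trace as a sum $\sum_\lambda \sum_{\nu \in \mathrm{spec}(\mathrm{L}_f^*\mathrm{L}_f|_{V_{=\lambda}})} m_\nu^{(\lambda)} \nu$ with each multiplicity $m_\nu^{(\lambda)} \ge \dim(\lambda)$, we get that for every $\lambda$ with $d_\lambda = e$ and $\tilde d_\lambda > d$, the top eigenvalue of $\mathrm{L}_f^*\mathrm{L}_f$ on $V_{=\lambda}$ is at most $\|f\|_2^2 / \dim(\lambda) \le \|f\|_2^2 (ed/n)^{e}$. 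Taking square roots and maximising over $e \le d$ gives a bound of the shape $\|f\|_2 \cdot (Cd)^{d/2} / n^{?}$ — but with the wrong power of $n$ and, more importantly, with no $\epsilon$ factor. This naive bound does not use globalness and is too weak. The fix is the standard trick of combining the trace bound with the level-$d$ inequality: $\|\mathrm{L}_f g\|_2 \le \|f\|_2 \cdot (\text{top eigenvalue})^{1/2}$ is too lossy, so instead we must exploit that $\mathrm{L}_f g$ itself has controlled level-$d$ weight. Concretely, for $g \in V_{=e}$ with $e \le d$ we bound $\|\mathrm{L}_f g\|_2$ by a Cauchy–Schwarz/duality argument: $\|\mathrm{L}_f g\|_2^2 = \langle \mathrm{L}_f^* \mathrm{L}_f g, g\rangle$, and $\mathrm{L}_f^* \mathrm{L}_f$ corresponds to convolution by $f^* * f$ on the right, whose level-$\le d$ part governs its action on $V_{=e}$. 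Applying Theorem \ref{thm:Level d inequality } to (an integer-valued proxy for) $f$ — the hypothesis $f\colon S_n \to \mathbb{Z}$ is exactly what lets us invoke it — bounds $\|f^{\le d}\|_2 \le 2^{Cd^4}\|f\|_2 \epsilon \log^{Cd}(1/\|f\|_2^2)$, and this is what injects the crucial factor $\epsilon \log^{O(d)}(1/\|f\|_2^2)$.

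The cleanest route, which I would take, is: for $g \in V_{=e}$, write $\mathrm{L}_f g = \mathrm{L}_{f^{\le e}} g$ using that $\mathrm{L}_f$ preserves $V_{=e}$ together with the fact that convolution by the level-$>e$ part of $f$ kills $V_{=e}$ (this needs the observation that $V_{=e} = \bigoplus_{d_\lambda = e} V_{=\lambda}$ and that the component of $f$ acting nontrivially on $V_{=\lambda}$ lies in the matching isotypical block, so only $f^{=\lambda}$ with $d_\lambda \le e$ matters; more carefully, one shows $\mathrm{L}_f|_{V_{=e}} = \mathrm{L}_{f^{=e} + \text{lower}}|_{V_{=e}}$, and the lower levels only help). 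Then $r_e(\mathrm{L}_f) = r_e(\mathrm{L}_{f^{\le e}}) \le \|\mathrm{L}_{f^{\le e}}\|_{2\to 2}$, and one bounds the operator norm of convolution by a function $h$ with small $\|h\|_2$ supported on low levels by $\|h\|_2 \cdot (\text{something})^{1/2}$ — no, more precisely by going back to the trace method applied to $\mathrm{L}_{f^{\le e}}$: $\mathrm{tr}(\mathrm{L}_{f^{\le e}}^* \mathrm{L}_{f^{\le e}}) = \|f^{\le e}\|_2^2$, all eigenspaces inside $V_{=\lambda}$ ($d_\lambda \le e$) have dimension $\ge \dim(\lambda) \ge (n/(ed))^{e}$ when $\tilde d_\lambda > d$, so the top eigenvalue on $V_{=e}$ is at most $\|f^{\le e}\|_2^2 / \min_\lambda \dim(\lambda) \le \|f^{\le d}\|_2^2 (ed)^{e}/n^{e}$. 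Plugging in Theorem \ref{thm:Level d inequality } and taking square roots yields
\[
r_d(\mathrm{L}_f) \le \frac{2^{Cd^4}\|f\|_2 \epsilon \log^{Cd}(1/\|f\|_2^2)}{n^{d/2}},
\]
after absorbing $(ed)^{e/2} \le 2^{O(d^4)}$ into the constant and checking that $e \mapsto n^{-e/2}$ times the level-$e$ weight is maximised near $e = d$ (here one uses that the level-$d$ inequality already bounds all lower levels simultaneously via $\|f^{\le d}\|_2$). For the class-function case, $\mathrm{L}_f = \mathrm{R}_f$ commutes with the action of $S_n$ from \emph{both} sides, so by Lemma \ref{lem:Facts from representation theory}(3) each $V_{=\lambda}$ is contained in a single eigenspace; the dimension of that eigenspace is then $\dim(\lambda)^2 \ge (n/(ed))^{2e}$ by Lemma \ref{lem:Facts from representation theory}(1),(6), and moreover for a class function $f = \mathsf{sign}\cdot f$ is automatic when $f$ is supported on even permutations so that $V_{=\lambda}$ and $V_{=\lambda^t}$ carry the same eigenvalue, forcing $\tilde d_\lambda$ rather than $d_\lambda$ to control the dimension (this is the reason the class-function bound has $n^d$ rather than $n^{d/2}$). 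Repeating the trace computation with $\dim(\lambda)^2$ in place of $\dim(\lambda)$ gives the improved bound $r_d(\mathrm{L}_f) \le 2^{Cd^4}\|f\|_2 \epsilon \log^{Cd}(1/\|f\|_2^2)/n^{d}$.

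The main obstacle is the bookkeeping in the middle step: correctly justifying that the action of $\mathrm{L}_f$ on $V_{=e}$ depends only on $f^{\le d}$ (not just $f^{\le e}$, which would be cleanest but requires care about how isotypical blocks interact under one-sided convolution), so that one is entitled to replace $\|f\|_2$ by the much smaller $\|f^{\le d}\|_2$ before applying the trace identity. Getting this reduction exactly right — rather than losing the $\epsilon$ factor — is the crux; everything else is the now-routine marriage of the trace method with dimension lower bounds, as in \cite{ellis2011intersecting}, plus careful tracking of the $2^{O(d^4)}$ and $\log^{O(d)}$ factors coming from Theorem \ref{thm:Level d inequality }.
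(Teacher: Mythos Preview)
Your overall strategy is exactly the paper's: reduce $\mathrm{L}_f$ on $V_{=d}$ to $\mathrm{L}_{f^{=d}}$, apply the trace identity $\mathrm{tr}(\mathrm{L}_{f^{=d}}^*\mathrm{L}_{f^{=d}})=\|f^{=d}\|_2^2$, divide by the minimal $\dim(\lambda)$ over $d_\lambda=d$, and inject the $\epsilon$-factor via the level-$d$ inequality. Three clarifications dissolve the hedging.

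First, $r_d$ is a supremum over $V_{=d}$, not over $W_d$, so there is no need to range over levels $e\le d$ and optimise; you work at level exactly $d$.

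Second, the reduction you flag as the ``main obstacle'' is both sharper and easier than you suggest. For $g\in V_{=d}$ one has $\mathrm{L}_f g(\sigma)=\langle f, R_\sigma g\rangle$, and $R_\sigma$ commutes with the left $S_n$-action, hence preserves $V_{=d}$. Thus $R_\sigma g\in V_{=d}$ and the inner product sees only $f^{=d}$: one gets $\mathrm{L}_f|_{V_{=d}}=\mathrm{L}_{f^{=d}}|_{V_{=d}}$ exactly, not merely $\mathrm{L}_{f^{\le d}}$. This is a two-line lemma (Lemma~\ref{lem:rd depends on f=00003Dd} in the paper), not a bookkeeping obstacle.

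Third, your account of the class-function improvement is muddled: it has nothing to do with $\mathsf{sign}\cdot f=f$ or with $A_n$. When $f$ is a class function, $\mathrm{L}_f=\mathrm{R}_f$ commutes with the action from both sides, so by Lemma~\ref{lem:Facts from representation theory}(3) each $V_{=\lambda}$ lies in a single eigenspace, of dimension $\dim(\lambda)^2$. The trace bound then has $\dim(\lambda)^2$ in the denominator rather than $\dim(\lambda)$, which is precisely what turns $n^{d/2}$ into $n^{d}$.
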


To prove Theorem \ref{thm:Actual radius} we rely on the following lemma.
\begin{lem}
\label{lem:rd depends on f=00003Dd}
For each $d$ and $\lL$, the operator $\mathrm{L}_{f}$
agrees with $\mathrm{L}_{f^{=d}}$ on $V_{=d}$
and with $\mathrm{L}_{f^{=\lL}}$ on $V_{=\lL}$.
\end{lem}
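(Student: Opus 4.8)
The plan is to show that $\mathrm{L}_f - \mathrm{L}_{f^{=d}}$ annihilates $V_{=d}$, and symmetrically for $V_{=\lambda}$. The key observation is that $\mathrm{L}$ is linear in its subscript: $\mathrm{L}_f = \sum_{\lambda \vdash n} \mathrm{L}_{f^{=\lambda}}$, since $(\mathrm{L}_f g)(\sigma) = \mathbb{E}_\pi[f(\pi) g(\pi\sigma)]$ is linear in $f$. So it suffices to prove that for $\mu \ne \lambda$, the operator $\mathrm{L}_{f^{=\mu}}$ vanishes on $V_{=\lambda}$; summing over all $\mu$ with $d_\mu \ne d$ (equivalently all $\mu$ with $V_{=\mu} \not\le V_{=d}$) then gives the level-$d$ statement, and the same argument with $\mu \ne \lambda$ fixed gives the isotypic statement.

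First I would record that each $\mathrm{L}_{f^{=\mu}}$ commutes with the right action of $S_n$, as shown in the paragraph preceding Lemma \ref{lem:computing trace}; hence by part (2) of Lemma \ref{lem:Facts from representation theory} it preserves every $V_{=\nu}$. Thus for $g \in V_{=\lambda}$ we have $\mathrm{L}_{f^{=\mu}} g \in V_{=\lambda}$. Next I would invoke the fact that $\mathrm{L}_{f^{=\mu}}$ also interacts with the \emph{left} action in a controlled way: for any $h$, the function $\mathrm{L}_{f^{=\mu}} g$ lies in the span of left-translates of $f^{=\mu}$, i.e.\ $(\mathrm{L}_{f^{=\mu}} g)(\sigma) = \mathbb{E}_\pi[f^{=\mu}(\pi) g(\pi\sigma)]$, so writing $\pi = \tau$ and substituting shows this is an average of functions $\sigma \mapsto f^{=\mu}(\tau)g(\tau\sigma)$; more usefully, viewing $\mathrm{L}_{f^{=\mu}} g$ as a function of $\sigma$ and applying the left $S_n$-action, one checks $\rho \cdot (\mathrm{L}_{f^{=\mu}} g) = \mathrm{L}_{f^{=\mu}}(\rho \cdot g)$ fails in general, so instead the cleanest route is: the image of $\mathrm{L}_{f^{=\mu}}$ is contained in the left $S_n$-submodule generated by $f^{=\mu}$, which lies in $V_{=\mu}$ since $V_{=\mu}$ is left $S_n$-invariant. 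Combining, $\mathrm{L}_{f^{=\mu}} g \in V_{=\lambda} \cap V_{=\mu} = \{0\}$ whenever $\mu \ne \lambda$.

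To make the middle step rigorous I would verify directly that $\mathrm{L}_{f^{=\mu}} g$ lies in $V_{=\mu}$ as a function of $\sigma$: expand $(\mathrm{L}_{f^{=\mu}} g)(\sigma) = \frac{1}{n!}\sum_\pi f^{=\mu}(\pi) g(\pi \sigma)$ and note that for each fixed value of the product this is a linear combination, over $\pi$, of the right-translates $\pi' \mapsto f^{=\mu}(\pi' \sigma^{-1})\big|_{\pi'=\pi}$... more transparently, substitute $\pi \mapsto \pi \sigma^{-1}$ to get $\frac{1}{n!}\sum_\pi f^{=\mu}(\pi\sigma^{-1}) g(\pi)$, which exhibits $\mathrm{L}_{f^{=\mu}}g$ as $\sum_\pi g(\pi) \cdot (f^{=\mu} \text{ right-translated by } \sigma^{-1})$ evaluated appropriately — i.e.\ a linear combination of right-translates of $f^{=\mu}$. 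Since $V_{=\mu}$ is invariant under the right action of $S_n$ (part (2) of Lemma \ref{lem:Facts from representation theory}, applied to right-translation operators), every right-translate of $f^{=\mu}$ lies in $V_{=\mu}$, hence so does $\mathrm{L}_{f^{=\mu}} g$. The main obstacle is precisely keeping this bookkeeping straight — disentangling which side the relevant $S_n$-action acts on, and being careful that $f^{=\mu}$ (not $f$) is what appears after the substitution — but no hard analysis is needed: it is purely the representation-theoretic fact that both left- and right-translation preserve each isotypic block $V_{=\mu}$.
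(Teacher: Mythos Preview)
Your overall strategy is sound: decompose $\mathrm{L}_f=\sum_\mu \mathrm{L}_{f^{=\mu}}$ and show that $\mathrm{L}_{f^{=\mu}}$ maps $V_{=\lambda}$ into $V_{=\lambda}\cap V_{=\mu}=\{0\}$ for $\mu\ne\lambda$. The first inclusion ($\subseteq V_{=\lambda}$) is correctly justified via Lemma~\ref{lem:Facts from representation theory}(2). The problem is your justification of the second inclusion ($\subseteq V_{=\mu}$). After your substitution you obtain
\[
(\mathrm{L}_{f^{=\mu}}g)(\sigma)=\tfrac{1}{n!}\sum_\pi g(\pi)\,f^{=\mu}(\pi\sigma^{-1}),
\]
and you claim that for fixed $\pi$ the function $\sigma\mapsto f^{=\mu}(\pi\sigma^{-1})$ is a right-translate of $f^{=\mu}$. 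It is not: a right-translate has the form $\sigma\mapsto f^{=\mu}(\sigma\rho^{-1})$, whereas here the argument is $\pi\sigma^{-1}$, which involves inverting $\sigma$. In fact $\sigma\mapsto f^{=\mu}(\pi\sigma^{-1})$ is a right-translate of $\check f^{=\mu}$, where $\check h(\sigma):=h(\sigma^{-1})$. So to complete your argument you would additionally need that inversion preserves each $V_{=\mu}$; this is true for $S_n$ (all irreducible representations are self-dual), but it is an extra ingredient that you neither state nor prove, and your phrase ``right-translated by $\sigma^{-1}$'' (where $\sigma$ is the variable!) shows the bookkeeping has slipped.

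The paper avoids all of this with a one-line argument that you should adopt instead. Rather than analyse the \emph{image} of $\mathrm{L}_{f^{=\mu}}$, observe that $\mathrm{L}_f g(\sigma)$ is an inner product of $f$ with a translate of $g$: writing $R_\sigma g$ for the right-translate $\pi\mapsto g(\pi\sigma)$, one has $\mathrm{L}_f g(\sigma)=\langle f,\,R_\sigma g\rangle$. Since right-translation preserves $V_{=d}$ (Lemma~\ref{lem:Facts from representation theory}(2)), for $g\in V_{=d}$ the function $R_\sigma g$ lies in $V_{=d}$, and orthogonality gives $\langle f,R_\sigma g\rangle=\langle f^{=d},R_\sigma g\rangle=\mathrm{L}_{f^{=d}}g(\sigma)$. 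The same works verbatim with $\lambda$ in place of $d$. This uses exactly the same representation-theoretic input as your argument (invariance of isotypic blocks under translation) but needs neither the decomposition over $\mu$ nor any discussion of where the image of $\mathrm{L}_{f^{=\mu}}$ lands, and in particular no appeal to self-duality.
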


\begin{proof}
Let $R_{\sigma}$ be the operator sending $g$ to $g\sigma.$ Then
$R_{\sigma}$ commutes with the action of $S_{n}$ from the left.
By Lemma \ref{lem:Facts from representation theory} it therefore
preserves the spaces $V_{=d}$ and $V_{=\lambda}.$ Let $g\in V_{=d}$
and let $\sigma\in S_{n}$. Then as $R_{\sigma}g$ is in $V_{=d}$
and as $f^{=d}$ is the projection of $f$ onto $V_{=d}$ we have
\[
L_{f}\left(g\right)\left(\sigma\right)=\left\langle f,R_{\sigma}g\right\rangle =\left\langle f^{=d},R_{\sigma}g\right\rangle =\mathrm{L}_{f^{=d}}g\left(\sigma\right).
\]
 The proof that $\mathrm{L}_{f}$ agrees with $\mathrm{L}_{f^{=\lambda}}$
on $V_{=\lambda}$ is similar.
\end{proof}
\begin{proof}[Proof of Theorem \ref{thm:Actual radius}]
 Let $f\colon S_{n}\to\mathbb{Z}$ be $\left(2d,\epsilon\right)$-global.

The trace of the operator $\mathrm{L}_{f^{=d}}^{*}\mathrm{L}_{f^{=d}}$
is $\|f^{=d}\|_{2}^{2}$ by Lemma \ref{lem:computing trace}. By Lemma
\ref{lem:rd depends on f=00003Dd}, and standard linear algebra we
have
\[
r_{d}\left(\mathrm{L}_{f}\right)=r_{d}\left(\mathrm{L}_{f^{=d}}\right)=\sqrt{r_{d}\left(\mathrm{L}_{f^{=d}}^{*}\mathrm{L}_{f^{=d}}\right)}.
\]
 On the other hand, the trace of a self-adjoint operator is the sum of its eigenvalues.
 Applying Lemma \ref{lem:Facts from representation theory} gives
\[
\min_{\lambda:\,d_{\lambda}=d}\dim\left(\lambda\right)\cdot r_{d}\left(\mathrm{L}_{f^{=d}}^{*}\mathrm{L}_{f^{=d}}\right)\le\mathrm{tr}\left(\mathrm{L}_{f^{=d}}^{*}\mathrm{L}_{f^{=d}}\right)=\|f^{=d}\|_{2}^{2}.
\]
 Putting everything together, plugging in Theorem \ref{thm:Level d inequality }
and Lemma \ref{lem:Facts from representation theory} we obtain
\[
r_{d}\left(L_{f}\right)\le\frac{\|f^{=d}\|_{2}}{\sqrt{\min_{\lambda:\,d_{\lambda}=d}\dim\left(\lambda\right)}}\le\frac{\left(ed\right)^{d/2}2^{Cd^{4}}\|f\|_{2}\epsilon\log^{Cd}\left(\frac{1}{\|f\|_{2}}\right)}{n^{d/2}},
\]
 for some absolute constant $C.$ This implies that the theorem holds
with $2C$ replacing $C$. When $f$ is a class function the same
proof works with $\dim\left(\lambda\right)$ replaced by $\dim\left(\lambda\right)^{2}$
to give
\[
r_{d}\left(\mathrm{L}_{f}\right)\le\frac{2^{2Cd^{4}}\|f\|_{2}\epsilon\log^{Cd}\left(\frac{1}{\|f\|_{2}}\right)}{n^{d}}.
\]
\end{proof}

\subsection{The trace bound in high dimensions}

The above upper bound on $r_{d}\left(\mathrm{L}_{f}\right)$ is complemented
by the following simpler bound that
is more effective when $\tilde{d}_{\lambda}$ is large.
\begin{lem}
\label{lem:trace method  } Let $d<\frac{n}{10}$ and suppose that
$\tilde{d}_{\lambda}\ge d.$ Then
$r_{\lambda}\left(\mathrm{L}_{f}\right)\le\left(\frac{ed}{n}\right)^{d/2} \|f\|_{2}.$
\end{lem}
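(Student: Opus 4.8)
The plan is to mimic the trace-method argument from the proof of Theorem \ref{thm:Actual radius}, but to use the dimension lower bound from part (6) of Lemma \ref{lem:Facts from representation theory} instead of the level-$d$ inequality. First I would invoke Lemma \ref{lem:rd depends on f=00003Dd} to replace $\mathrm{L}_f$ by $\mathrm{L}_{f^{=\lambda}}$ on $V_{=\lambda}$, so that $r_\lambda(\mathrm{L}_f) = r_\lambda(\mathrm{L}_{f^{=\lambda}}) = \sqrt{r_\lambda(\mathrm{L}_{f^{=\lambda}}^*\mathrm{L}_{f^{=\lambda}})}$, the square root of the largest eigenvalue of the self-adjoint operator $\mathrm{L}_{f^{=\lambda}}^*\mathrm{L}_{f^{=\lambda}}$ on $V_{=\lambda}$.

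Next I would apply Lemma \ref{lem:computing trace} to get $\mathrm{tr}(\mathrm{L}_{f^{=\lambda}}^*\mathrm{L}_{f^{=\lambda}}) = \|f^{=\lambda}\|_2^2 \le \|f\|_2^2$. By part (3) of Lemma \ref{lem:Facts from representation theory}, since $\mathrm{L}_f$ (hence $\mathrm{L}_{f^{=\lambda}}$) commutes with the right action of $S_n$ and $\mathrm{L}_{f^{=\lambda}}^*\mathrm{L}_{f^{=\lambda}}$ is self-adjoint, every eigenspace of $\mathrm{L}_{f^{=\lambda}}^*\mathrm{L}_{f^{=\lambda}}$ inside $V_{=\lambda}$ has dimension at least $\dim(\lambda)$. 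Since the trace is the sum of eigenvalues with multiplicity and all eigenvalues are nonnegative, this gives $\dim(\lambda)\cdot r_\lambda(\mathrm{L}_{f^{=\lambda}}^*\mathrm{L}_{f^{=\lambda}}) \le \|f\|_2^2$. Then part (6) of the same lemma, using $d < n/10$ and $\tilde d_\lambda \ge d$, yields $\dim(\lambda) > (n/(ed))^d$ — here I would just note that the hypothesis is $\tilde d_\lambda > d$ in the cited statement, but the case $\tilde d_\lambda = d$ follows similarly (or one checks the cited bound is also valid with $\ge$, as the paper clearly intends). Combining, $r_\lambda(\mathrm{L}_{f^{=\lambda}}^*\mathrm{L}_{f^{=\lambda}}) \le \|f\|_2^2 / (n/(ed))^d = (ed/n)^d \|f\|_2^2$, and taking square roots gives $r_\lambda(\mathrm{L}_f) \le (ed/n)^{d/2}\|f\|_2$.

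I do not anticipate a serious obstacle: this is essentially the clean, global-free version of the trace argument, with the level-$d$ inequality (which was what introduced the $\log^{O(d)}n$ losses and the globalness hypothesis) simply removed. The only minor point to be careful about is the precise form of the dimension bound in part (6) — whether it applies with $\tilde d_\lambda \ge d$ or strictly $> d$ — but this boundary case is harmless and can be handled by a one-line remark. Everything else is assembling already-stated facts in the same pattern as the preceding proof.
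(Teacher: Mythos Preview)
Your proposal is correct and follows essentially the same trace-method approach as the paper. The only minor difference is that the paper works directly with $\mathrm{L}_f^*\mathrm{L}_f$ (whose trace is $\|f\|_2^2$) rather than first projecting to $\mathrm{L}_{f^{=\lambda}}$ via Lemma~\ref{lem:rd depends on f=00003Dd}, making the argument one step shorter; your detour through the projection is harmless but unnecessary.
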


\begin{proof}
The trace of $\mathrm{L}_{f}^{*}\mathrm{L}_{f}$ is $\|f\|_{2}^{2}$.
On the other hand, by Lemma \ref{lem:Facts from representation theory}
\[
\|f\|_{2}^{2}=\mathrm{tr}\left(\mathrm{L}_{f}^{*}\mathrm{L}_{f}\right)\ge\dim\left(\lambda\right)r_{\lambda}^{2}\left(\mathrm{L}_{f}\right)\ge\left(\frac{n}{ed}\right)^{d}r_{\lambda}\left(\mathrm{L}_{f}\right)^2.
\]
\end{proof}

\subsection{Proof of our 1\% stability results}

Here we prove a version of Theorem \ref{thm:global}
that is stronger in two ways:
we consider any triple of global sets
with density $\ge n^{-O\left(1\right)}$
and we establish the product mixing phenomenon.
For a function $f\colon S_{n}\to\mathbb{R}$
we write $\tilde{f}$ for $f\cdot\sf{sign}$.
Theorem \ref{thm:global} (in contrapositive form)
is immediate from the following by setting $f=g=h=1_A$,
noting that $\tilde{f}=f$ if $f$ is supported on $A_n$.

\begin{thm}
\label{thm:stronger 1=000025 stability result} Fix $C>0$ and suppose
that $n$ is sufficiently large. Let $f,g,h\colon S_{n}\to\left\{ 0,1\right\} $
with $\mathbb{E}\left[f\right]\mathbb{E}\left[g\right]\mathbb{E}\left[h\right]\ge n^{-C}$
be $\left(i,n^{\frac{i}{4}}\right)$-relatively
global for each $i\le4\left\lceil C\right\rceil $. Then
\begin{align*}
\mathbb{E}_{\sigma,\tau\sim S_{n}}\left[f\left(\sigma\right)g\left(\tau\right)h\left(\sigma\tau\right)\right] & =\mathbb{E}\left[f\right]\mathbb{E}\left[g\right]\mathbb{E}\left[h\right]\left(1\pm n^{-0.1}\right)+\mathbb{E}\left[\tilde{f}\right]\mathbb{E}\left[\tilde{g}\right]\mathbb{E}\left[\tilde{h}\right].
\end{align*}
\end{thm}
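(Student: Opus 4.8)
The plan is to expand the convolution-type expression $\mathbb{E}_{\sigma,\tau}[f(\sigma)g(\tau)h(\sigma\tau)]$ using the level decomposition and show that every term except the degree-$0$ and top-degree pieces is negligible. Writing $f=\sum_\lambda f^{=\lambda}$, and similarly for $g,h$, we have by orthogonality and the fact that the operator $\mathrm{R}_{f}$ preserves each $V_{=\lambda}$ that
\[
\mathbb{E}_{\sigma,\tau}\left[f(\sigma)g(\tau)h(\sigma\tau)\right]=\sum_{\lambda\vdash n}\left\langle \mathrm{R}_{g^{=\lambda}}f^{=\lambda},h^{=\lambda}\right\rangle ,
\]
so it suffices to understand the contribution of each isotypical component. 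The $\lambda=(n)$ term contributes exactly $\mathbb{E}[f]\mathbb{E}[g]\mathbb{E}[h]$, and the $\lambda=(1^n)$ term (first column of full length, i.e. the sign component) contributes $\mathbb{E}[\tilde f]\mathbb{E}[\tilde g]\mathbb{E}[\tilde h]$, using that multiplication by the sign character intertwines $V_{=\lambda}$ with $V_{=\lambda^{t}}$ and that $\widetilde{f^{=(1^n)}}=\mathbb{E}[\tilde f]$. For every other $\lambda$ I want to show the term is bounded by $n^{-0.1}\mathbb{E}[f]\mathbb{E}[g]\mathbb{E}[h]$ in total.

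The key estimate is $\left|\left\langle \mathrm{R}_{g^{=\lambda}}f^{=\lambda},h^{=\lambda}\right\rangle\right|\le r_\lambda(\mathrm{R}_g)\|f^{=\lambda}\|_2\|h^{=\lambda}\|_2$, where I bound the operator norm $r_\lambda(\mathrm{R}_g)$ using the two tools from the preceding subsections. For $\lambda$ with $\tilde d_\lambda$ small, say $\tilde d_\lambda = d \le 4\lceil C\rceil$, the component $\lambda$ lies either in $V_{=d}$ or in $V_{=d}$ after tensoring with sign, and I apply Theorem \ref{thm:Actual radius} (via the $(2d,\epsilon)$-globalness hypothesis with $\epsilon=n^{d/4}\|g\|_2^{?}$ — more precisely the relative globalness $\|g\|_2^2 \le n^{-\mathbb{E}[g]}$-type normalization) to get $r_\lambda(\mathrm{R}_g)\le 2^{O(d^4)}\mathbb{E}[g]\,n^{d/4}\log^{O(d)}n \cdot n^{-d/2} = \mathbb{E}[g]n^{-d/4}\log^{O(d)}n$; combined with the level-$d$ inequality (Theorem \ref{thm:Level d inequality }) bounds $\|f^{=\lambda}\|_2\le \|f^{\le d}\|_2 \le 2^{O(d^4)}\mathbb{E}[f]n^{d/4}\log^{O(d)}n$ and similarly for $h$, the whole $\lambda$-term is at most $\mathbb{E}[f]\mathbb{E}[g]\mathbb{E}[h]\cdot n^{d/4}\log^{O(d)}n$. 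That last bound is \emph{too weak} by itself — it grows in $n$ — so the real work is organizing the summation: I sum over all $\lambda$ with a fixed value $\tilde d_\lambda = d$, use that there are at most $n^{O(d)}$... no; instead I should bound $\sum_{\lambda: \tilde d_\lambda = d}\|f^{=\lambda}\|_2^2 = \|f^{=d}\|_2^2 + \|\widetilde{f^{=d}}\|_2^2$ directly by the level-$d$ inequality and apply Cauchy–Schwarz across the three functions, so that only \emph{one} factor of $r_\lambda(\mathrm{R}_g)$ survives as an operator norm and the other two are absorbed into $\ell^2$ sums of level weights. Carrying this out, the contribution of all $\lambda$ with $1\le \tilde d_\lambda \le 4\lceil C\rceil$, $\lambda\notin\{(n),(1^n)\}$, is at most $\mathbb{E}[f]\mathbb{E}[g]\mathbb{E}[h]\cdot n^{-1/2}\log^{O(C)}n$, say, which is $o(n^{-0.1}\mathbb{E}[f]\mathbb{E}[g]\mathbb{E}[h])$ since the polynomial gain $n^{-1/2}$ beats the polylog loss and the density is at least $n^{-C}$.

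For $\lambda$ with $\tilde d_\lambda > d_0 := 4\lceil C\rceil$, I use Lemma \ref{lem:trace method  } instead: $r_\lambda(\mathrm{R}_g)\le (ed_0/n)^{d_0/2}\|g\|_2 = (ed_0/n)^{d_0/2}\mathbb{E}[g]^{1/2}$ (here $\|g\|_2^2 = \mathbb{E}[g]$ since $g$ is $\{0,1\}$-valued). Then $\sum_{\lambda:\tilde d_\lambda > d_0}\left|\left\langle \mathrm{R}_{g^{=\lambda}}f^{=\lambda},h^{=\lambda}\right\rangle\right|\le (ed_0/n)^{d_0/2}\mathbb{E}[g]^{1/2}\sum_\lambda \|f^{=\lambda}\|_2\|h^{=\lambda}\|_2 \le (ed_0/n)^{d_0/2}\mathbb{E}[g]^{1/2}\|f\|_2\|h\|_2 = (ed_0/n)^{d_0/2}(\mathbb{E}[f]\mathbb{E}[g]\mathbb{E}[h])^{1/2}$. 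Since $\mathbb{E}[f]\mathbb{E}[g]\mathbb{E}[h]\ge n^{-C}$, we have $(\mathbb{E}[f]\mathbb{E}[g]\mathbb{E}[h])^{1/2}\le n^{C/2}\mathbb{E}[f]\mathbb{E}[g]\mathbb{E}[h]$, so this tail is at most $(ed_0)^{d_0/2}n^{C/2 - d_0/2}\mathbb{E}[f]\mathbb{E}[g]\mathbb{E}[h]$; with $d_0 = 4\lceil C\rceil \ge 4C$ the exponent $C/2 - d_0/2 \le -C/2 \cdot 3 = -3C/2 < -0.1$ for $C$ bounded away from $0$, and for small $C$ one takes $d_0$ a fixed large constant — in any case the tail is $\le n^{-0.1}\mathbb{E}[f]\mathbb{E}[g]\mathbb{E}[h]$, perhaps after choosing the implicit constant in $4\lceil C\rceil$ slightly larger, which the statement permits since it only asks for the stated globalness up to level $4\lceil C\rceil$. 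Combining the $\lambda=(n)$ term, the $\lambda=(1^n)$ term, the mid-range bound, and the tail bound gives the claimed identity with error $\pm n^{-0.1}\mathbb{E}[f]\mathbb{E}[g]\mathbb{E}[h]$.

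The main obstacle I anticipate is the bookkeeping in the mid-range, where naive term-by-term bounds lose factors of $n^{d/4}$ three times over; the resolution is to never apply the level-$d$ inequality and the eigenvalue bound to the \emph{same} function simultaneously without a compensating $\ell^2$-sum — one must set up the Cauchy–Schwarz so that the polynomial losses from $\|f^{=d}\|_2, \|h^{=d}\|_2$ (each $\approx \mathbb{E}[\cdot]n^{d/4}$) are defeated by the single polynomial gain $n^{-d/2}$ from $r_\lambda(\mathrm{R}_g)$ together with the $n^{d/4}$ built into the relative-globalness normalization of $g$, leaving a net $n^{-1/2}$ (or at least $n^{-\Omega(1)}$) after all polylog factors. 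A secondary subtlety is correctly matching $\tilde d_\lambda$ versus $d_\lambda$: a partition $\lambda$ with $d_\lambda$ large but $d_{\lambda^t}$ small contributes through the sign-twisted functions $\tilde f,\tilde g,\tilde h$, so one should split the sum according to $\tilde d_\lambda$ and, for each such $\lambda$, apply Theorem \ref{thm:Actual radius} to whichever of $g$ or $\tilde g$ has its component at low \emph{level} (not just low $\tilde d$); this is where the relation $\widetilde{f^{=\lambda}} = f^{=\lambda^t}$ from Lemma \ref{lem:Facts from representation theory} is used, and where the second ($\mathbb{E}[\tilde f]\mathbb{E}[\tilde g]\mathbb{E}[\tilde h]$) main term genuinely arises rather than being an error.
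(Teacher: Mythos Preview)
Your approach is essentially the same as the paper's: expand over isotypical components, extract the two main terms at $\lambda=(n)$ and $\lambda=(1^n)$ (the paper packages the sign-twist bookkeeping as Lemma~\ref{lem:decomposition }), bound the tail $\tilde d_\lambda>d_0$ via the trace-method estimate plus Cauchy--Schwarz (Lemma~\ref{lem:Large degrees are negligible}), and handle each mid-range level $i$ by applying Theorem~\ref{thm:Actual radius} to one function and Theorem~\ref{thm:Level d inequality } to the other two. The paper uses no additional Cauchy--Schwarz trick in the mid-range---it simply multiplies $r_i(\mathrm{L}_f)\,\|g^{=i}\|_2\,\|h^{=i}\|_2$ and records the bound $\mathbb{E}[f]\mathbb{E}[g]\mathbb{E}[h]\cdot n^{3i/8-i/2}(\log n)^{O(1)}$---so your concern about the exponent arithmetic is really about the precise constant $n^{i/4}$ in the relative-globalness hypothesis (a looseness the paper's own display shares) rather than about the method.
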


The first step of the proof is to separate the left hand side
$\mathbb{E}_{\sigma,\tau\sim S_{n}}\left[f\left(\sigma\right)g\left(\tau\right)h\left(\sigma\tau\right)\right]$
into low degree terms and high degree ones, as in the following lemma.
\begin{lem}
\label{lem:decomposition }Let $d<\frac{n}{2}-1.$ Then
\begin{align*}
\mathbb{E}_{\sigma,\tau\sim S_{n}}\left[f\left(\sigma\right)g\left(\tau\right)h\left(\sigma\tau\right)\right]
= & \sum_{i=0}^{d}\left\langle g^{=i},\mathrm{L}_{f}h^{=i}\right\rangle
+\sum_{i=0}^{d}\left\langle \tilde{g}^{=i},\mathrm{L}_{\tilde{f}}\tilde{h}^{=i} \right\rangle\\
 & +\sum_{\lambda:\,\tilde{d_{\lambda}}>d}\left\langle g^{=\lambda},\mathrm{L}_{f}h^{=\lambda}\right\rangle .
\end{align*}
\end{lem}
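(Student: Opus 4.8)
The plan is to use the representation-theoretic decomposition of $L^2(S_n)$ into the isotypical components $V_{=\lambda}$ (Lemma \ref{lem:Facts from representation theory}), expanding $g$ and $h$ along it, and then to observe that the only surviving cross-terms are the diagonal ones $\langle g^{=\lambda}, \mathrm{L}_f h^{=\lambda}\rangle$. First I would rewrite the trilinear form as an inner product: since $h(\sigma\tau)$ evaluated against $f(\sigma)$ averaged over $\sigma$ is precisely $(\mathrm{L}_f h)(\tau)$, we get
\[
\mathbb{E}_{\sigma,\tau}\left[f(\sigma)g(\tau)h(\sigma\tau)\right] = \mathbb{E}_{\tau}\left[g(\tau)\,(\mathrm{L}_f h)(\tau)\right] = \langle g, \mathrm{L}_f h\rangle.
\]
Now expand $g = \sum_{\lambda\vdash n} g^{=\lambda}$ and $h = \sum_{\mu\vdash n} h^{=\mu}$. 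Since $\mathrm{L}_f$ commutes with the right action of $S_n$, by Lemma \ref{lem:Facts from representation theory}(2) it preserves each $V_{=\mu}$, so $\mathrm{L}_f h^{=\mu} \in V_{=\mu}$; by orthogonality of the decomposition, $\langle g^{=\lambda}, \mathrm{L}_f h^{=\mu}\rangle = 0$ unless $\lambda = \mu$. Hence $\langle g, \mathrm{L}_f h\rangle = \sum_{\lambda\vdash n}\langle g^{=\lambda}, \mathrm{L}_f h^{=\lambda}\rangle$.

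The next step is to split this sum according to whether $d_\lambda \le d$, $d_{\lambda^t}\le d$, or $\tilde d_\lambda > d$ (these cases are exhaustive since $\tilde d_\lambda = \min(d_\lambda, d_{\lambda^t})$), and to identify the contribution of the first two. The partitions $\lambda$ with $d_\lambda \le d$ are exactly those with $V_{=\lambda}\le V_{=d}$ (Lemma \ref{lem:Facts from representation theory}(4)), and for each fixed $i\le d$ the sum over $\{\lambda: d_\lambda = i\}$ reassembles into $\langle g^{=i}, \mathrm{L}_f h^{=i}\rangle$ because $f^{=i}$ acts on $V_{=i}$ the same way $f$ does (this is essentially Lemma \ref{lem:rd depends on f=00003Dd}, or just orthogonality again). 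This accounts for $\sum_{i=0}^d \langle g^{=i}, \mathrm{L}_f h^{=i}\rangle$. For the partitions with $d_{\lambda^t}\le d$ but $d_\lambda > d$, I would use the sign-twist: by Lemma \ref{lem:Facts from representation theory}(5), multiplication by $\mathsf{sign}$ is an isometry sending $V_{=\lambda}$ to $V_{=\lambda^t}$, and it intertwines $\mathrm{L}_f$ with $\mathrm{L}_{\tilde f}$ (since $\widetilde{fg} = \tilde f\,\tilde g$ up to the cocycle identity for the sign character, i.e. $\mathsf{sign}(\sigma\tau)=\mathsf{sign}(\sigma)\mathsf{sign}(\tau)$). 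Thus $\langle g^{=\lambda}, \mathrm{L}_f h^{=\lambda}\rangle = \langle \tilde g^{=\lambda}, \mathrm{L}_{\tilde f}\tilde h^{=\lambda}\rangle$ where on the right $\tilde g^{=\lambda} = \widetilde{g^{=\lambda}} \in V_{=\lambda^t}$ has $d_{\lambda^t}\le d$; summing over such $\lambda$ with $d_{\lambda^t}=i$ reassembles into $\langle \tilde g^{=i}, \mathrm{L}_{\tilde f}\tilde h^{=i}\rangle$ for $i\le d$. The remaining partitions, those with $\tilde d_\lambda > d$, give the last sum verbatim. Care is needed with the constraint $d < n/2 - 1$: this ensures that no partition has both $d_\lambda \le d$ and $d_{\lambda^t}\le d$ (since $d_\lambda + d_{\lambda^t} \ge n - \text{(first row)} + \ldots$, and more simply because a partition of $n$ and its transpose cannot both have first row $> n - d$ when $2d < n$), so the three cases are genuinely disjoint and nothing is double-counted.

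The main obstacle I anticipate is bookkeeping rather than conceptual: making sure the reassembly $\sum_{\lambda: d_\lambda = i}\langle g^{=\lambda}, \mathrm{L}_f h^{=\lambda}\rangle = \langle g^{=i}, \mathrm{L}_f h^{=i}\rangle$ is stated cleanly (it follows from $V_{=i} = \bigoplus_{d_\lambda = i} V_{=\lambda}$ plus $\mathrm{L}_f$-invariance of each piece), and checking the sign-intertwining relation $\mathsf{sign}\cdot(\mathrm{L}_f h) = \mathrm{L}_{\tilde f}(\mathsf{sign}\cdot h)$ carefully from the definition $\mathrm{L}_f g(\sigma) = \mathbb{E}_\pi[f(\pi)g(\pi\sigma)]$ using multiplicativity of the sign. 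Both are short once set up. I would present the argument in the order: (i) rewrite as $\langle g, \mathrm{L}_f h\rangle$; (ii) diagonalize via the $V_{=\lambda}$-decomposition; (iii) partition the index set into the three cases using $d < n/2 - 1$; (iv) reassemble the first case into level-$\le d$ terms; (v) apply the sign-twist to the second case; (vi) collect the high-dimensional remainder.
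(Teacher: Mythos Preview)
Your proposal is correct and follows essentially the same route as the paper's proof: rewrite the trilinear form as $\langle g,\mathrm{L}_f h\rangle$, diagonalise over the isotypical components using that $\mathrm{L}_f$ preserves each $V_{=\lambda}$, split the index set into $d_\lambda\le d$, $d_{\lambda^t}\le d$, and $\tilde d_\lambda>d$ (disjoint since $d<n/2-1$), and handle the second case via the sign-twist. One notational caution: in the statement $\tilde g^{=i}$ means $(\tilde g)^{=i}$, the level-$i$ part of $\tilde g$, whereas in your step (v) you momentarily write $\tilde g^{=\lambda}$ for $\widetilde{g^{=\lambda}}$; these agree after the reassembly via $g^{=\lambda^t}=\mathsf{sign}\cdot(\tilde g)^{=\lambda}$, but it is worth keeping the two readings straight when you write it up.
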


\begin{proof}
As $\mathrm{L}_{f}$ commutes with the action of $S_{n}$ from the
right it preserves each $V_{=\lambda}$. We may therefore use
orthogonality to obtain the following expansion into isotypical parts:
\begin{align*}
\mathbb{E}_{\sigma,\tau\sim S_{n}}\left[f\left(\sigma\right)g\left(\tau\right)h\left(\sigma\tau\right)\right] & =\left\langle g,\mathrm{L}_{f}h\right\rangle
 =\sum_{\lambda\vdash n}\left\langle g^{=\lambda},\mathrm{L}_{f}h^{=\lambda}\right\rangle \\
 & =\sum_{\lambda:\,d_{\lambda}\le d}\left\langle g^{=\lambda},\mathrm{L}_{f}h^{=\lambda}\right\rangle +\sum_{\lambda:\,\tilde{d}_{\lambda}>d}\left\langle g^{=\lambda},\mathrm{L}_{f}h^{=\lambda}\right\rangle .
\end{align*}

\subsection*{Regrouping terms of small degree}

As $d<\frac{n}{2}-1$ at most one of $d_{\lambda^{t}},d_{\lambda}$
can be at most $d$, so
\[
\sum_{\tilde{d}_{\lambda}\le d}\left\langle g^{=\lambda},\mathrm{L}_{f}h^{=\lambda}\right\rangle =\sum_{\lambda:\,d_{\lambda}\le d}\left\langle g^{=\lambda},\mathrm{L}_{f}h^{=\lambda}\right\rangle +\sum_{\lambda:\,d_{\lambda}\le d}\left\langle g^{=\lambda^{t}},\mathrm{L}_{f}h^{=\lambda^{t}}\right\rangle .
\]
 As $V_{=i}=\sum_{\lambda:\,d_{\lambda}=i}V_{=\lambda}$ we have
\[
\sum_{\lambda:\,d_{\lambda}\le d}\left\langle g^{=\lambda},\mathrm{L}_{f}h^{=\lambda}\right\rangle =\sum_{i=0}^{d}\left\langle g^{=i},\mathrm{L}_{f}h^{=i}\right\rangle .
\]

\subsection*{Regrouping terms of small `dual' degree}

On the other hand, by Lemma \ref{lem:Facts from representation theory}
we have $g^{=\lambda^{t}}=\widetilde{\tilde{g}^{=\lambda}}$. Hence,
\begin{align*}
\sum_{d_{\lambda}\le d}\left\langle g^{=\lambda^{t}},\mathrm{L}_{f}h^{=\lambda^{t}}\right\rangle  & =\sum_{d_{\lambda}\le d}\left\langle \widetilde{\tilde{g}^{=\lambda}},\mathrm{L}_{f}\left(\widetilde{\tilde{h}^{=\lambda}}\right)\right\rangle \\
 & = \sum_{d_{\lambda}\le d} \mathbb{E}_{\pi,\sigma} \left[\tilde{f}\left(\sigma\right)\tilde{g}^{=\lambda}\left(\pi\right)\tilde{h}^{=\lambda}\left(\sigma\pi\right)\right]\\
 & =\sum_{d_{\lambda}\le d}\left\langle \mathrm{L}_{\tilde{f}}\tilde{h}^{=\lambda},\tilde{g}^{=\lambda}\right\rangle \\
 & =\sum_{i=0}^{d}\left\langle \mathrm{L}_{\tilde{f}}\tilde{h}^{=i},\tilde{g}^{=i}\right\rangle .
\end{align*}
\end{proof}
We now prove a lemma showing that the high degree terms are negligible.
\begin{lem}
\label{lem:Large degrees are negligible}
\[
\left|\sum_{\lambda:\,\tilde{d_{\lambda}}>d}\left\langle g^{=\lambda},\mathrm{L}_{f}h^{=\lambda}\right\rangle \right|\le\left(\frac{n}{e\left(d+1\right)}\right)^{-\frac{d+1}{2}}\|f\|_{2}\|g\|_{2}\|h\|_{2}.
\]
\end{lem}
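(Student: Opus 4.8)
The plan is to bound each term $\langle g^{=\lambda}, \mathrm{L}_f h^{=\lambda}\rangle$ in the high-degree sum individually using the trace bound of Lemma \ref{lem:trace method  }, and then to control the number of relevant partitions $\lambda$ together with a Cauchy--Schwarz over the $\lambda$'s so that nothing is lost beyond the $\left(\frac{n}{e(d+1)}\right)^{-(d+1)/2}$ factor. First I would apply Cauchy--Schwarz to each summand: since $\mathrm{L}_f$ preserves $V_{=\lambda}$,
\[
\left|\left\langle g^{=\lambda},\mathrm{L}_{f}h^{=\lambda}\right\rangle\right|
\le \|g^{=\lambda}\|_{2}\,\|\mathrm{L}_{f}h^{=\lambda}\|_{2}
\le r_{\lambda}(\mathrm{L}_{f})\,\|g^{=\lambda}\|_{2}\,\|h^{=\lambda}\|_{2}.
\]
For $\lambda$ with $\tilde d_{\lambda}>d$, i.e.\ $\tilde d_{\lambda}\ge d+1$, Lemma \ref{lem:trace method  } (applied with $d+1$ in place of $d$) gives $r_{\lambda}(\mathrm{L}_f)\le\left(\frac{e(d+1)}{n}\right)^{(d+1)/2}\|f\|_2$, provided $d+1<n/10$, which holds since $d<n/2-1$ is a hypothesis of the ambient Lemma \ref{lem:decomposition } — though I should double-check the range; if only $d<n/2-1$ is assumed I may need to treat $d+1\ge n/10$ separately, but in that regime $\left(\frac{n}{e(d+1)}\right)^{-(d+1)/2}$ is already $\le(10/e)^{-(d+1)/2}$ times a tiny factor and one can fall back on the crude bound $r_\lambda(\mathrm{L}_f)\le\|f\|_2$ coming directly from the trace identity $\mathrm{tr}(\mathrm{L}_f^*\mathrm{L}_f)=\|f\|_2^2$ in Lemma \ref{lem:computing trace}, which suffices since each eigenvalue is at most the trace.

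Having pulled out the uniform factor $\left(\frac{e(d+1)}{n}\right)^{(d+1)/2}\|f\|_2$, it remains to bound $\sum_{\lambda:\tilde d_\lambda>d}\|g^{=\lambda}\|_2\|h^{=\lambda}\|_2$. Here I would simply apply Cauchy--Schwarz over $\lambda$:
\[
\sum_{\lambda}\|g^{=\lambda}\|_{2}\|h^{=\lambda}\|_{2}
\le\Big(\sum_{\lambda}\|g^{=\lambda}\|_{2}^{2}\Big)^{1/2}\Big(\sum_{\lambda}\|h^{=\lambda}\|_{2}^{2}\Big)^{1/2}
\le\|g\|_{2}\|h\|_{2},
\]
using that the $V_{=\lambda}$ form an orthogonal decomposition (Lemma \ref{lem:Facts from representation theory}), so the sum of squared projection norms is at most the full squared norm. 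Multiplying the two displays gives exactly the claimed bound, since $\left(\frac{e(d+1)}{n}\right)^{(d+1)/2}=\left(\frac{n}{e(d+1)}\right)^{-(d+1)/2}$.

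\textbf{Main obstacle.} The only delicate point is ensuring the trace bound of Lemma \ref{lem:trace method  } is applicable with parameter $d+1$, i.e.\ that $d+1<n/10$; the lemma I am citing is stated for $d<n/10$ while Lemma \ref{lem:decomposition } only assumes $d<n/2-1$. I expect this is a non-issue in the actual application (where $d=4\lceil C\rceil$ is a constant and $n$ is large), but to state Lemma \ref{lem:Large degrees are negligible} at full generality one either restricts to $d+1<n/10$ or, as noted above, uses the trivial eigenvalue bound $r_\lambda(\mathrm{L}_f)^2\le\mathrm{tr}(\mathrm{L}_f^*\mathrm{L}_f)=\|f\|_2^2$ when $d+1\ge n/10$ and checks that $\left(\frac{n}{e(d+1)}\right)^{-(d+1)/2}$ absorbs the resulting constant — a routine verification. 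No hypercontractivity or globalness is needed for this lemma; it is purely the trace method plus orthogonality, and the globalness hypotheses only enter later when combining this with the low-degree terms via Theorem \ref{thm:Actual radius}.
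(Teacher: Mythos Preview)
Your proposal is correct and follows exactly the same route as the paper: bound each term by $r_\lambda(\mathrm{L}_f)\|g^{=\lambda}\|_2\|h^{=\lambda}\|_2$ via Cauchy--Schwarz and the definition of $r_\lambda$, pull out the uniform factor $\left(\frac{e(d+1)}{n}\right)^{(d+1)/2}\|f\|_2$ from Lemma~\ref{lem:trace method  }, and then Cauchy--Schwarz over $\lambda$ using the orthogonal decomposition to get $\|g\|_2\|h\|_2$. Your observation about the range $d+1<n/10$ is a fair point that the paper simply does not address; in the only application (proof of Theorem~\ref{thm:stronger 1=000025 stability result}) $d$ is a constant and $n$ is large, so the issue is moot there, and your suggested fallback via the trivial trace bound would handle the general case.
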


\begin{proof}
By Lemma \ref{lem:trace method  } and Cauchy--Schwarz we have
\begin{align*}
\sum_{\lambda:\,\tilde{d_{\lambda}}>d}\left\langle g^{=\lambda},\mathrm{L}_{f}h^{=\lambda}\right\rangle  & \le\sum_{\lambda:\,\tilde{d_{\lambda}}>d}r_{\lambda}\left(\mathrm{L}_{f}\right)
\|h^{=\lambda}\|_{2}\|g^{=\lambda}\|_{2}\\
 & \le\left(\frac{n}{e\left(d+1\right)}\right)^{-\frac{d+1}{2}}\|f\|_{2}
 \sum_{\lambda\vdash n}\|h^{=\lambda}\|_{2}\|g^{=\lambda}\|_{2},
\end{align*}
where $\sum_{\lambda\vdash n}\|h^{=\lambda}\|_{2}\|g^{=\lambda}\|_{2}
 \le \sqrt{\sum_{\lambda\vdash n}\|h^{=\lambda}\|_{2}^{2}\sum_{\lambda\vdash n}\|g^{=\lambda}\|_{2}^{2}}
  =\|h\|_{2}\|g\|_{2}$.
\end{proof}
We now prove the theorem by
combining the bound on $\|f^{\le d}\|_{2}^{2}$ from the level-$d$ inequality
with the bounds from Lemma \ref{lem:Large degrees are negligible}
on the eigenvalues of $\mathrm{L}_{f}$ that correspond to large degrees.
\begin{proof}[Proof of Theorem \ref{thm:stronger 1=000025 stability result}]
Let $d=\left\lceil 4C\right\rceil +1.$ We have
\begin{align*}
\left\langle \mathrm{L}_{f}g,h\right\rangle  & =\left\langle \mathrm{L}_{f}h^{=0},g^{=0}\right\rangle +\left\langle \mathrm{L}_{\tilde{f}}\tilde{h}^{=0},\tilde{g}^{=0}\right\rangle \\
 & \pm\sum_{i=1}^{d}\left|\left\langle g^{=i},\mathrm{L}_{f}h^{=i}\right\rangle +\left\langle \tilde{g}^{=i},\mathrm{L}_{\tilde{f}}\tilde{h}^{=i}\right\rangle \right|\\
 & \pm\sum_{\tilde{d_{\lambda}}>d}\left|\left\langle g^{=\lambda},\mathrm{L}_{f}h^{=\lambda}\right\rangle \right|.
\end{align*}
The main terms are
$\left\langle \mathrm{L}_{f}h^{=0},g^{=0}\right\rangle
=\mathbb{E}\left[f\right]\mathbb{E}\left[g\right]\mathbb{E}\left[h\right]$
and
$\left\langle L_{\tilde{f}}\tilde{h}^{=0},\tilde{g}^{=0}\right\rangle
=\mathbb{E}\left[\tilde{f}\right]\mathbb{E}\left[\tilde{g}\right]\mathbb{E}\left[\tilde{h}\right]$.

By Lemma \ref{lem:Large degrees are negligible}, using
 $\mathbb{E}\left[f\right]\mathbb{E}\left[g\right]\mathbb{E}\left[h\right]\ge n^{-C}$,
we bound the high-degree error terms as
\[
\sum_{\tilde{d_{\lambda}}>d}\left|\left\langle g^{=\lambda},\mathrm{L}_{f}h^{=\lambda}\right\rangle \right|\le\left(\frac{n}{e\left(d+1\right)}\right)^{-\frac{d+1}{2}}\|f\|_{2}\|g\|_{2}\|h\|_{2}\le\frac{\mathbb{E}\left[f\right]\mathbb{E}\left[g\right]\mathbb{E}\left[h\right]}{n}.
\]
By Theorems \ref{thm:Level d inequality } (for $g$ and $h$) and \ref{thm:Actual radius} (for $f$)
we bound the low-degree error terms as
\begin{align*}
\sum_{i=1}^{d} \left| \left\langle \mathrm{L}_{f}h^{=i},g^{=i}\right\rangle \right|
& \le\sum_{i=1}^{d}r_{i}\left(\mathrm{L}_{f}\right)\|g^{=i}\|_{2}\|h^{=i}\|_{2}\\
 & \le\sum_{i=1}^{d}\frac{n^{3i/8}(\log n)^{O\left(1\right)}\mathbb{E}\left[f\right]\mathbb{E}\left[g\right]\mathbb{E}\left[h\right]}{n^{i/2}}\\
 & \le n^{-1/8}(\log n)^{O\left(1\right)}
 \mathbb{E}\left[f\right]\mathbb{E}\left[g\right]\mathbb{E}\left[h\right].
\end{align*}
The same bounds hold replacing $f,g,h$ on the left-hand side
by $\tilde{f}$, $\tilde{g}$, $\tilde{h}$
(which have the same globalness properties)
so the theorem follows.
\end{proof}

\subsection{The linear terms dominate}

For future reference, we conclude this section by noting that the above arguments show that
when $\mathbb{E}\left[f\right]\mathbb{E}\left[g\right]\mathbb{E}\left[h\right]$ is large, the only significant contribution to $\mathbb{E}\left[f\right]\mathbb{E}\left[g\right]\mathbb{E}\left[h\right]$
comes from the linear terms. The following is immediate
from Lemmas \ref{lem:decomposition } and \ref{lem:Large degrees are negligible} with $d=1$.
\begin{prop}
\label{Prop:Most weight is on the first two levels}
Let $f,g,h\colon A_{n}\to\left\{ 0,1\right\} $
have densities $\alpha,\beta,\gamma$ in $S_{n}$.
Then
\[ \card{\Expect{\pi,\sigma\sim S_{n}}{f(\pi)g(\sigma\circ\pi)h(\sigma)}-2\alpha\beta\gamma-2\Expect{\pi,\sigma\sim S_{n}}{f^{=1}(\pi)g^{=1}(\sigma\circ\pi)h^{=1}(\sigma)}}\le\frac{e}{n}\sqrt{\alpha\beta\gamma}.\]
\end{prop}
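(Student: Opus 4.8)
The statement is a direct specialisation of Lemmas~\ref{lem:decomposition } and~\ref{lem:Large degrees are negligible} to $d=1$, so the plan is to take $d=1$ there, read off the level-$0$ and level-$1$ pieces explicitly, and check that everything else is of order $\tfrac1n\sqrt{\alpha\beta\gamma}$.

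First I would put the left-hand side into the normal form handled by Lemma~\ref{lem:decomposition }, which concerns expressions $\Expect{\sigma,\tau\sim S_n}{F(\sigma)G(\tau)H(\sigma\tau)}$. The measure-preserving bijection $(\pi,\sigma)\mapsto(\sigma\circ\pi,\pi^{-1})$ of $S_n\times S_n$ (which does not depend on the functions) rewrites $\Expect{\pi,\sigma}{f(\pi)g(\sigma\circ\pi)h(\sigma)}$ as $\Expect{\sigma,\tau}{g(\sigma)f^{*}(\tau)h(\sigma\tau)}$, where $f^{*}(\rho):=f(\rho^{-1})$ is again $\{0,1\}$-valued, supported on $A_n$, with $\E[f^{*}]=\alpha$ and $\|f^{*}\|_2^2=\alpha$; moreover, since $\rho\mapsto\rho^{-1}$ permutes the $d$-umvirates and is measure-preserving, the involution $\phi\mapsto\phi^{*}$ preserves each $V_{=d}$, so $(f^{*})^{=1}=(f^{=1})^{*}$. (Alternatively one can avoid this substitution by running the proof of Lemma~\ref{lem:decomposition } with $\mathrm{L}_f$ replaced by the right-translation operator $\mathrm{R}_f$, which also commutes with the left action of $S_n$ and satisfies $\mathrm{tr}(\mathrm{R}_f^{*}\mathrm{R}_f)=\|f\|_2^2$ by Lemma~\ref{lem:computing trace}.) Applying Lemma~\ref{lem:decomposition } with $d=1$ to the triple $(g,f^{*},h)$ then expresses $\Expect{\sigma,\tau}{g(\sigma)f^{*}(\tau)h(\sigma\tau)}$ as $\sum_{i=0}^{1}\inner{(f^{*})^{=i}}{\mathrm{L}_g h^{=i}}+\sum_{i=0}^{1}\inner{\widetilde{f^{*}}^{=i}}{\mathrm{L}_{\tilde g}\tilde h^{=i}}+\sum_{\lambda:\,\tilde d_\lambda>1}\inner{(f^{*})^{=\lambda}}{\mathrm{L}_g h^{=\lambda}}$.

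I would then evaluate the low-degree pieces. Because $f,g,h$ are supported on $A_n$, we have $\widetilde{f^{*}}=f^{*}$, $\tilde g=g$ and $\tilde h=h$, so the first two sums are equal and we gain a factor of $2$. In the level-$0$ term, $(f^{*})^{=0}=\alpha$, $h^{=0}=\gamma$, and $\mathrm{L}_g$ sends the constant $\gamma$ to $\beta\gamma$, so $\inner{(f^{*})^{=0}}{\mathrm{L}_g h^{=0}}=\alpha\beta\gamma$, contributing $2\alpha\beta\gamma$. In the level-$1$ term, Lemma~\ref{lem:rd depends on f=00003Dd} lets us replace $\mathrm{L}_g$ by $\mathrm{L}_{g^{=1}}$ on $V_{=1}$; applying the same (function-free) change of variables to the triple $(f^{=1},g^{=1},h^{=1})$ then identifies $\inner{(f^{*})^{=1}}{\mathrm{L}_g h^{=1}}$ with $\Expect{\pi,\sigma\sim S_n}{f^{=1}(\pi)g^{=1}(\sigma\circ\pi)h^{=1}(\sigma)}$, contributing twice this. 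These account for the three explicit terms in the statement.

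Finally I would bound the remaining sum over $\lambda$ with $\tilde d_\lambda>1$: by Lemma~\ref{lem:Large degrees are negligible} applied with $d=1$, together with $\|f\|_2^2=\E[f]=\alpha$ (as $f$ is $\{0,1\}$-valued) and the same identities for $g,h$, this sum has absolute value at most $\frac{e}{n}\sqrt{\alpha\beta\gamma}$, which is exactly the claimed error term. No step here is a genuine obstacle; the only points demanding care are in the first two paragraphs — turning the combinatorial product $\Expect{\pi,\sigma}{f(\pi)g(\sigma\circ\pi)h(\sigma)}$ into the normal form of Lemma~\ref{lem:decomposition } (equivalently, keeping track of the inverse $f\mapsto f^{*}$ and checking it commutes with the degree decomposition), and correctly generating the factor of $2$ that comes from the $\mathsf{sign}$-symmetry of functions supported on $A_n$.
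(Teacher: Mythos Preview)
Your proposal is correct and follows the paper's approach, which is simply to apply Lemmas~\ref{lem:decomposition } and~\ref{lem:Large degrees are negligible} with $d=1$ and use $\tilde f=f$, $\tilde g=g$, $\tilde h=h$ for functions supported on $A_n$. The detour through $f^{*}$ is unnecessary: the expression $\E_{\pi,\sigma}[f(\pi)g(\sigma\pi)h(\sigma)]$ already has the form $\E_{\sigma,\tau}[F(\sigma)G(\tau)H(\sigma\tau)]$ with $(F,G,H)=(h,f,g)$ after the relabelling $(\sigma,\tau)=(\sigma,\pi)$, so no inversion is needed.
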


\section{Analysis of linear functions over the symmetric group} \label{sec:lin}

The main result of this section is our level-$1$ inequality for the pseudorandom
part of a function. This in itself will suffice to reprove Eberhard's result
(for expository purposes we will give the argument at the end of the section).
We will start by describing a canonical way to represent $f^{=1}$
as a linear combination of the dictators $x_{i\to j}.$
This canonical representation $f^{=1}=\sum a_{ij}x_{i\to j}$
will naturally lead to a decomposition of $f^{=1}$
as a sum of its random part
$f_{\mathrm{rand}}:=\sum_{\left|a_{ij}\right|<\epsilon}a_{ij}\left(x_{i\to j}-\frac{1}{n}\right)$
and its structural part $f_{\mathrm{struc}}.$
Our level-$1$ inequality will bound $\|f_{\mathrm{rand}}\|_{2}$
by $\epsilon\|f\|_{2}$ up to logarithmic factors.

\subsection{The normalized form of linear functions}

We say that a linear function $\sum_{i,j}a_{ij}x_{i\to j}$ is in
\emph{normalized form} if for each $i$ we have $\sum_{j=1}^{n}a_{ij}=0$
and for each $j$ we have $\sum_{i=1}^{n}a_{ij}=0$. Every linear
function in normalized form has zero expectation, i.e.\ is in $V_{=1}$.
We will soon show the converse, i.e.\ that every $f\in V_{=1}$ has a normalized form.
First we give a simple formula for the inner product between two linear functions,
which holds when at least one of them is in normalized form.
\begin{lem}
\label{lem:Parseval } Let $f=\sum_{i,j\in\left[n\right]}a_{ij}x_{i\to j}$
be in normalized form. Let $g=\sum_{i,j}b_{ij}x_{i\to j}$ be an arbitrary
linear function. Then
\[
\left\langle f,g\right\rangle =\frac{\sum_{i,j}a_{ij}b_{ij}}{n-1}.
\]
\end{lem}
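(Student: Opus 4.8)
The plan is to compute $\langle f, g\rangle = \mathbb{E}_{\pi \sim S_n}\left[f(\pi) g(\pi)\right]$ directly by expanding the product and using linearity of expectation. First I would write
\[
\langle f, g\rangle = \sum_{i,j}\sum_{k,l} a_{ij} b_{kl}\, \mathbb{E}_{\pi \sim S_n}\left[x_{i\to j}(\pi) x_{k\to l}(\pi)\right].
\]
The key is to evaluate $\mathbb{E}_{\pi}\left[x_{i\to j}(\pi) x_{k\to l}(\pi)\right] = \Pr_{\pi \sim S_n}\left[\pi(i)=j \text{ and } \pi(k)=l\right]$. There are three cases: if $i=k$ and $j=l$ this is $1/n$; if $i=k$ but $j\neq l$ (or $i\neq k$ but $j=l$) it is $0$ since $\pi$ is a bijection; and if $i\neq k$ and $j\neq l$ it is $\frac{1}{n(n-1)}$.

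Plugging these in, the diagonal terms $i=k, j=l$ contribute $\frac{1}{n}\sum_{i,j} a_{ij}b_{ij}$, and the fully off-diagonal terms contribute $\frac{1}{n(n-1)}\sum_{i\neq k,\, j \neq l} a_{ij}b_{kl}$. The second step is to simplify this second sum using the normalized form of $f$. The point is that $\sum_{i\neq k,\, j\neq l} a_{ij} b_{kl} = \sum_{i,j,k,l} a_{ij}b_{kl} - \sum_{i,j,l} a_{ij}b_{il} - \sum_{i,j,k} a_{ij}b_{kj} + \sum_{i,j} a_{ij} b_{ij}$, where I have used inclusion-exclusion on the constraints $i=k$ and $j=l$. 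Now, since $\sum_j a_{ij} = 0$ for every $i$ and $\sum_i a_{ij} = 0$ for every $j$, we get $\sum_{i,j,k,l} a_{ij} b_{kl} = \left(\sum_{i,j} a_{ij}\right)\left(\sum_{k,l} b_{kl}\right) = 0$, and similarly $\sum_{i,j,l} a_{ij} b_{il} = \sum_{i,l} b_{il}\left(\sum_j a_{ij}\right) = 0$ and $\sum_{i,j,k} a_{ij}b_{kj} = \sum_{j,k} b_{kj}\left(\sum_i a_{ij}\right) = 0$. Hence the off-diagonal sum collapses to just $\sum_{i,j} a_{ij}b_{ij}$.

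Combining the two contributions gives
\[
\langle f, g\rangle = \frac{1}{n}\sum_{i,j} a_{ij}b_{ij} + \frac{1}{n(n-1)}\sum_{i,j} a_{ij}b_{ij} = \left(\frac{1}{n} + \frac{1}{n(n-1)}\right)\sum_{i,j} a_{ij}b_{ij} = \frac{1}{n-1}\sum_{i,j} a_{ij} b_{ij},
\]
which is exactly the claimed identity. Note that we only used that $f$ is in normalized form — the three vanishing sums above each used one of the row-sum or column-sum conditions on $a$ — so no assumption on $g$ is needed, matching the statement. There is no real obstacle here; the only mild subtlety is bookkeeping the inclusion–exclusion over the two independent constraints $i=k$ and $j=l$ correctly, and remembering that the ``mixed'' cases ($i=k, j\neq l$ and $i\neq k, j=l$) have probability zero rather than $\frac{1}{n(n-1)}$, which is why the clean cancellation occurs. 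I would present the case analysis for $\Pr[\pi(i)=j, \pi(k)=l]$ as a short displayed computation and then do the inclusion–exclusion in one or two lines.
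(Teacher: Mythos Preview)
Your proof is correct and follows essentially the same approach as the paper's: both compute the pairwise expectations $\mathbb{E}[x_{i\to j}x_{k\to l}]$ and use the row/column zero-sum conditions on $a_{ij}$ to collapse the off-diagonal contribution to $\sum_{i,j}a_{ij}b_{ij}$. The only cosmetic difference is that the paper first invokes linearity in $g$ to reduce to the case $g=x_{k\to l}$ for a single pair $(k,l)$, whereas you expand both sums at once and handle the resulting inclusion--exclusion over all four indices; the arithmetic is identical.
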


\begin{proof}
Consider the linear functionals $\varphi,\psi\colon\mathbb{R}^{n\times n}\to\mathbb{R}$
given by
\[
\varphi\left(\left(b_{ij}\right)_{i,j}\right)=\frac{1}{n-1}\sum_{i,j}a_{ij}b_{ij}
\]
 and
\[
\psi\left(\left(b_{ij}\right)_{i,j}\right)=\left\langle f,\sum b_{ij}x_{i\to j}\right\rangle .
\]
 As both $\varphi,\psi$ are linear it is enough to show that $\varphi=\psi$
on a basis. Hence, it is sufficient to prove the lemma when $g=x_{i\to j}.$
There we may use the fact that $f$ is in normalized form to deduce
that
\begin{align*}
\left\langle f,g\right\rangle  & =\frac{1}{n}a_{ij}+\frac{1}{n\left(n-1\right)}\sum_{i'\ne i,j'\ne j}a_{i'j'}\\
 & =\frac{1}{n}a_{ij}-\frac{1}{n\left(n-1\right)}\sum_{j'\ne j}a_{ij'}\\
 & =\frac{1}{n}a_{ij}+\frac{1}{n\left(n-1\right)}a_{ij}\\
 & =\frac{1}{n-1}a_{ij}.
\end{align*}
 This completes the proof.
\end{proof}
We are now ready to show that every function in $V_{=1}$ has a normalized
form. In fact, we give an explicit formula for the coefficients of
each $f^{=1}\in V_{=1}.$
\begin{lem}
\label{lem:Normalised form of f=00003D1} Let $f\colon S_{n}\to\mathbb{R}$.
Let
\[
a_{ij}=\frac{n-1}{n}\left(\mathbb{E}\left[f_{i\to j}\right]-\mathbb{E}\left[f\right]\right).
\]
 Then $\sum_{i,j}a_{ij}x_{i\to j}$ is a normalized form of $f^{=1}$.
\end{lem}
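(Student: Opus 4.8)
The plan is to verify directly that the linear function $g := \sum_{i,j} a_{ij} x_{i\to j}$, with $a_{ij} = \tfrac{n-1}{n}(\mathbb{E}[f_{i\to j}] - \mathbb{E}[f])$, is in normalized form and equals $f^{=1}$. First I would check the normalization conditions $\sum_j a_{ij} = 0$ and $\sum_i a_{ij} = 0$. For a fixed $i$, as $j$ ranges over $[n]$ the umvirates $\mathcal{U}_{i\to j}$ partition $S_n$, so $\mathbb{E}_j[\mathbb{E}[f_{i\to j}]] = \mathbb{E}[f]$, i.e.\ $\sum_j \mathbb{E}[f_{i\to j}] = n\,\mathbb{E}[f]$; hence $\sum_j a_{ij} = \tfrac{n-1}{n}(n\mathbb{E}[f] - n\mathbb{E}[f]) = 0$, and symmetrically for the column sums. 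By the paragraph before the lemma, this already shows $g \in V_{=1}$.

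It remains to show $f - g \perp V_{=1}$ (together with $\mathbb{E}[f-g] = \mathbb{E}[f] = \mathbb{E}[f^{=0}]$, which gives $(f-g)^{=1} = 0$, so $g = f^{=1}$ once we know $g$ itself lies in $V_{=1}$). Since $V_{=1}$ is spanned by functions of the form $x_{i\to j} - \tfrac1n$ (equivalently, by the normalized linear functions), it suffices to check $\langle f - g, h\rangle = 0$ for $h$ ranging over a spanning set of $V_{=1}$; in fact it is cleanest to show directly that $\langle f, h\rangle = \langle g, h\rangle$ for all linear $h$ in normalized form. The left side I would compute by noting that for $h = \sum_{i,j} b_{ij} x_{i\to j}$ in normalized form,
\[
\langle f, h \rangle = \sum_{i,j} b_{ij} \langle f, x_{i\to j}\rangle = \sum_{i,j} b_{ij}\, \tfrac1n\, \mathbb{E}[f_{i\to j}],
\]
using $\langle f, x_{i\to j}\rangle = \Pr[\pi(i)=j]\cdot \mathbb{E}[f_{i\to j}] = \tfrac1n\mathbb{E}[f_{i\to j}]$; then since $\sum_{i,j} b_{ij}\mathbb{E}[f] = 0$ (as $h$ is normalized, its coefficient sum vanishes), this equals $\sum_{i,j} b_{ij}\,\tfrac1n(\mathbb{E}[f_{i\to j}] - \mathbb{E}[f]) = \tfrac{1}{n-1}\sum_{i,j} b_{ij} a_{ij}$. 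The right side is handled by Lemma \ref{lem:Parseval } applied with the roles reversed: since $g$ is in normalized form, $\langle g, h\rangle = \tfrac{1}{n-1}\sum_{i,j} a_{ij} b_{ij}$. The two sides agree, so $\langle f - g, h\rangle = 0$ for every normalized linear $h$, hence $f - g \perp V_{=1}$.

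There is no serious obstacle here; the only mild subtlety is making sure the logical chain is airtight — namely that $g \in V_{=1}$, that $f - g$ is orthogonal to $V_{=1}$, and that these two facts together with uniqueness of the orthogonal decomposition force $g = f^{=1}$. One should also double-check the normalization constant $\tfrac{n-1}{n}$: it is exactly what is needed so that plugging $f = x_{i\to j}$ (whose own $a$-coefficients should reproduce a normalized form of $x_{i\to j}^{=1}$) is consistent with Lemma \ref{lem:Parseval }, and this consistency check is essentially the computation $\langle f, x_{i\to j}\rangle = \tfrac1n \mathbb{E}[f_{i\to j}]$ already used above. I would present the orthogonality computation as the main content and relegate the normalization identities to a sentence each.
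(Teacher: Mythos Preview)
Your proposal is correct and follows essentially the same strategy as the paper: verify the row/column sum conditions, then show $\langle f,h\rangle=\langle g,h\rangle$ for $h$ ranging over a spanning set of $V_{=1}$, invoking Lemma~\ref{lem:Parseval } for the $g$ side. The only cosmetic difference is the choice of test functions: the paper checks against each $h=x_{i\to j}-\tfrac1n$ (which manifestly span $V_{=1}$), whereas you check against general normalized linear $h$; your parenthetical that these are an equivalent spanning set is true but slightly circular here, so in a write-up it is safer to test against $x_{i\to j}-\tfrac1n$ directly, as the paper does.
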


\begin{proof}
First we note that $\sum_j \mb{E}[f_{i \to j}] = n\mb{E}f$ for each $i$,
so $\sum_{i,j}a_{ij}x_{i\to j}$ is a linear function in normalized form.
As $f^{=1}$ is the projection of $f$ onto the space $V_{=1}$
of linear functions with expectation 0, to prove the lemma it suffices
to show that for each linear function $g$ with $\mathbb{E}\left[g\right]=0$
we have $\left\langle f,g\right\rangle =\left\langle \sum_{i,j}a_{ij}x_{i\to j},g\right\rangle .$

By linearity, it is enough to show this when $g=x_{i\to j}-\frac{1}{n}$,
as such functions $g$ span $V_{=1}$. For such $g$ we have
\[
\left\langle f,g\right\rangle =\frac{1}{n}\mathbb{E}\left[f_{i\to j}\right]-\frac{1}{n}\mathbb{E}\left[f\right]=\frac{1}{n-1}a_{ij}.
\]
 On the other hand, by Lemma \ref{lem:Parseval } we have
\[
\left\langle \sum_{i,j}a_{ij}x_{i\to j},g\right\rangle =\frac{1}{n-1}a_{ij}
\]
 as $\sum_{i,j}a_{ij}x_{i\to j}$ is in normalized form.
\end{proof}

For $f=\sum_{i,j}a_{ij}x_{i\to j}$ in normalized form,
Lemma \ref{lem:Parseval } gives the Parseval formula
$\|f\|_2^2 = (n-1)^{-1} \sum_{i,j} a_{ij}^2$.
For any linear function, not necessarily in normalized form,
we still have the following upper bound,
which has the same form up to a constant factor.
\begin{lem}
\label{lem:one sided pars} Let $g=\sum_{i,j}a_{i,j}\left(x_{i\to j}-\frac{1}{n}\right)$
be a function in $V_{=1}.$ Then $\|g\|_{2}^{2}\le\frac{8}{n}\sum_{i,j}a_{ij}^{2}.$
\end{lem}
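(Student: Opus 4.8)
The plan is to reduce the general case to the normalized case, where we already have the exact Parseval identity $\|g\|_2^2 = (n-1)^{-1}\sum_{i,j} a_{ij}^2$ from Lemma \ref{lem:Parseval }. Given $g=\sum_{i,j}a_{ij}(x_{i\to j}-\tfrac1n)$ in $V_{=1}$, I would first replace the coefficient matrix $(a_{ij})$ by its ``doubly-centered'' version. Concretely, set $\bar a_{ij} = a_{ij} - \tfrac1n\sum_{i'} a_{i'j} - \tfrac1n\sum_{j'} a_{ij'} + \tfrac1{n^2}\sum_{i',j'} a_{i'j'}$, so that $(\bar a_{ij})$ has all row sums and all column sums equal to zero. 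Then $\sum_{i,j}\bar a_{ij}x_{i\to j}$ is in normalized form, and one checks that it represents the same element of $V_{=1}$ as $g$: indeed both have expectation $0$, and for each basis element $x_{i\to j}-\tfrac1n$ of $V_{=1}$, Lemma \ref{lem:Parseval } gives $\langle \sum \bar a_{kl}x_{k\to l}, x_{i\to j}-\tfrac1n\rangle = \tfrac1{n-1}\bar a_{ij}$, while a short direct computation gives $\langle g, x_{i\to j}-\tfrac1n\rangle = \tfrac1n a_{ij} - \tfrac1{n^2}\sum_{j'}a_{ij'} - \tfrac1{n^2}\sum_{i'}a_{i'j} + \tfrac1{n^2 \cdot \text{(something)}}\cdots$, which works out to $\tfrac1{n-1}\bar a_{ij}$ as well. (This last identity-matching calculation is routine but is the one place one must be careful with the $\tfrac1{n(n-1)}$-type cross terms, exactly as in the proof of Lemma \ref{lem:Parseval }.)

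Once we know $g = \sum_{i,j}\bar a_{ij}x_{i\to j}$ in normalized form, Parseval gives $\|g\|_2^2 = (n-1)^{-1}\sum_{i,j}\bar a_{ij}^2$. So it remains to bound $\sum_{i,j}\bar a_{ij}^2$ in terms of $\sum_{i,j} a_{ij}^2$. Writing the centering as the composition of two orthogonal projections on $\mathbb{R}^{n\times n}$ (projecting out the row-averages and the column-averages, which commute), the map $(a_{ij})\mapsto(\bar a_{ij})$ is itself an orthogonal projection, hence $\sum_{i,j}\bar a_{ij}^2 \le \sum_{i,j} a_{ij}^2$. Combining, $\|g\|_2^2 = (n-1)^{-1}\sum\bar a_{ij}^2 \le (n-1)^{-1}\sum a_{ij}^2 \le \tfrac{2}{n}\sum_{i,j}a_{ij}^2$ for $n\ge 2$, which is even stronger than the claimed $\tfrac8n$; the looser constant in the statement presumably just leaves room for a cruder argument, so the above suffices.

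The only genuine obstacle is verifying that the normalized form $\sum\bar a_{ij}x_{i\to j}$ really does equal $g$ as a function, i.e.\ the coefficient-matching computation $\langle g, x_{i\to j}-\tfrac1n\rangle = \tfrac1{n-1}\bar a_{ij}$. An alternative that sidesteps this: observe $g^{=1}=g$ since $g\in V_{=1}$, and apply Lemma \ref{lem:Normalised form of f=00003D1} to $f:=g$ to get the explicit normalized coefficients $a_{ij}^\star = \tfrac{n-1}{n}(\mathbb{E}[g_{i\to j}]-\mathbb{E}[g])$; expanding $\mathbb{E}[g_{i\to j}]$ for our linear $g$ expresses $a_{ij}^\star$ as a linear combination of the $a_{kl}$, and one checks this is exactly $\bar a_{ij}$. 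Either route, the remaining work is elementary linear algebra on $\mathbb{R}^{n\times n}$.
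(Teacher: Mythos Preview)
Your approach is correct and genuinely different from the paper's. The paper simply expands $\|g\|_2^2$ directly using the covariances $\mathbb{E}[(x_{i\to j}-\tfrac1n)(x_{k\to l}-\tfrac1n)]$, obtaining four sums, and bounds each by $\tfrac{2}{n}\sum a_{ij}^2$ via $2|ab|\le a^2+b^2$; this is how the constant $8$ arises. Your route---double-center the coefficient matrix, note that this is an orthogonal projection so does not increase the Frobenius norm, and then invoke the exact Parseval identity from Lemma~\ref{lem:Parseval }---is cleaner and yields the sharper constant $\tfrac{1}{n-1}\le\tfrac{2}{n}$.

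One remark: the step you flag as the ``only genuine obstacle'' is in fact the easiest part, and neither of your two proposed verifications is the shortest. Since $\sum_{j} x_{i\to j}=1=\sum_{i} x_{i\to j}$ identically as functions on $S_n$, adding to the coefficient matrix any matrix whose rows (or columns) are constant changes $\sum_{i,j}a_{ij}x_{i\to j}$ only by an additive constant. Subtracting the row averages, the column averages, and adding back the global average therefore changes the function by a constant; since both $g$ and $\sum\bar a_{ij}x_{i\to j}$ lie in $V_{=1}$ (hence have mean zero), they are equal. No inner-product matching or appeal to Lemma~\ref{lem:Normalised form of f=00003D1} is needed.
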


\begin{proof}
Computing, we get
\begin{align*}
\norm g_{2}^{2} & =\sum\limits _{i,j}a_{i,j}^{2}\frac{1}{n}\left(1-\frac{1}{n}\right)-\sum\limits _{i,j}\sum\limits _{i'\neq i}a_{i,j}a_{i',j}\frac{1}{n^{2}}-\sum\limits _{i,j}\sum\limits _{j'\neq j}a_{i,j}a_{i,j'}\frac{1}{n^{2}} \\
& +\sum\limits _{i,j}\sum\limits _{\substack{i'\neq i,j'\neq j}}a_{i,j}a_{i',j'}\frac{1}{n^{2}(n-1)}.
\end{align*}
We bound each term on the right hand side separately. Using $2\card{ab}\leq a^{2}+b^{2}$,
each of the sums on the right hand side is $\le\frac{2}{n}\sum\limits _{i,j}a_{i,j}^{2}$,
and so $\norm g_{2}^{2}\le\frac{8}{n}\sum\limits _{i,j}a_{i,j}^{2}$.
\end{proof}
Using Lemma \ref{lem:Parseval } we can now derive a useful formula
for the linear term in the count for the number of products
in terms of the coefficient matrices of the normalised forms.
\begin{lem}
\label{lem:convolution and parseval}
Let $f=\sum_{i,j}a_{ij}x_{i\to j},g=\sum_{i,j}b_{ij}x_{i\to j},h=\sum_{i,j}c_{ij}x_{i\to j}$
all be linear functions in normalized form. Let their coefficient
matrices be defined by $M_{f}=\left(a_{ij}\right)_{i,j},M_{g}=\left(b_{ij}\right)_{i,j},$
and $M_{h}=\left(c_{ij}\right)_{i,j}.$ Then
\[
\mathbb{E}_{\sigma,\tau\sim S_{n}}\left[f(\sigma)g(\tau)h(\sigma\tau)\right]=\frac{1}{\left(n-1\right)^{2}}\left\langle M_{g}M_{f},M_{h}\right\rangle .
\]
\end{lem}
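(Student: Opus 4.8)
The plan is to compute the left-hand side directly by expanding $f,g,h$ in the basis $\{x_{i\to j}\}$ and evaluating $\mathbb{E}_{\sigma,\tau}[x_{i\to j}(\sigma)\,x_{k\to l}(\tau)\,x_{p\to q}(\sigma\tau)]$, but it is cleaner to route the computation through the operator $\mathrm{L}_f$ and Lemma~\ref{lem:Parseval }. First I would observe that $\mathbb{E}_{\sigma,\tau\sim S_n}[f(\sigma)g(\tau)h(\sigma\tau)]=\langle g,\mathrm{L}_f h\rangle$ (substituting $\pi=\tau$, $\sigma\tau=\sigma'$ in the definition of $\mathrm{L}_f$, or just unwinding both sides). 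So it suffices to understand $\mathrm{L}_f h$ when $f$ and $h$ are linear, and then pair it against $g$.

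The key computation is that $\mathrm{L}_f$ applied to a dictator is again (a constant plus) a linear combination of dictators, in a way that mirrors matrix multiplication: since $\mathcal{D}_{k\to l}\cdot\mathcal{D}_{i\to k}=\mathcal{D}_{i\to l}$, one expects $\mathrm{L}_{x_{j\to k}}x_{i\to j}$ to be proportional to $x_{i\to k}$ up to lower-order terms. Concretely, for a uniform random $\pi$ with $\pi(j)=k$, the distribution of $\pi\sigma$ restricted to the event we care about forces the "$i\to k$" behaviour. I would compute $\mathrm{L}_{x_{j\to k}}(x_{i\to j})(\sigma)=\mathbb{E}_\pi[x_{j\to k}(\pi)\,x_{i\to j}(\pi\sigma)]=\Pr_\pi[\pi(j)=k,\ \pi(\sigma(i))=j]$, which is $\tfrac1n\cdot x_{i\to k}(\sigma)+$ an explicit correction of size $O(1/n^2)$ coming from the case $\sigma(i)\ne j$. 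Then, using that $g$ is in \emph{normalized form}, Lemma~\ref{lem:Parseval } kills the correction terms exactly (the row and column sums of $(b_{ij})$ vanish), leaving only the "diagonal" contribution. Summing over all $i,j,k$ gives $\frac{1}{(n-1)^2}\sum_{i,j,k} a_{ij}\,b_{ik}\,c_{jk}$ — wait, I should track indices carefully: with $f$ contributing $a_{jk}$ on the factor $x_{j\to k}$ and $h$ contributing $a_{ij}$-type coefficients, the surviving sum is $\sum_{i,j,k} b_{ik}a_{ij}c_{jk}$ up to the normalization, which is exactly $\langle M_g M_f, M_h\rangle$ by definition of matrix product and the chosen inner product on matrices. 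Matching this against the claimed formula will fix the precise index bookkeeping.

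An alternative, perhaps slicker, route avoids $\mathrm{L}_f$ entirely: directly expand
\[
\mathbb{E}_{\sigma,\tau}[f(\sigma)g(\tau)h(\sigma\tau)]=\sum_{i,j,k,l,p,q}a_{ij}b_{kl}c_{pq}\,\Pr_{\sigma,\tau}\big[\sigma(i)=j,\ \tau(k)=l,\ \sigma(\tau(p))=q\big],
\]
evaluate the joint probability (it is nonzero with the "expected" value $\tfrac1{n}\cdot\tfrac1{n}\cdot\tfrac1{n}$-type main term only when the constraints are consistent and "generic", with corrections when indices collide), and then invoke the normalized-form conditions $\sum_j a_{ij}=\sum_i a_{ij}=0$ (and likewise for $b,c$) to annihilate every collision/correction term. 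Either way, the main obstacle is purely bookkeeping: correctly enumerating the degenerate cases (e.g.\ $j=l$, $i=p$, $\tau(p)=k$, etc.) in the probability evaluation and verifying that each one is exactly cancelled by a vanishing row- or column-sum. I expect no conceptual difficulty — the identity $\mathcal{D}_{j\to k}\mathcal{D}_{i\to j}=\mathcal{D}_{i\to k}$ is the whole content — but care is needed to land on the constant $1/(n-1)^2$ rather than $1/n^2$, which is precisely where the normalized-form renormalization (as opposed to a naive $1/n$ scaling) pays off, consistent with the Parseval formula $\|f\|_2^2=(n-1)^{-1}\sum a_{ij}^2$.
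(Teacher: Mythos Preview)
Your proposal is correct and follows essentially the same route as the paper: write the expectation as $\langle g,\mathrm{L}_f h\rangle$, compute $\mathrm{L}_f h$ as a linear function, and apply Lemma~\ref{lem:Parseval } twice (once using that $f$ is normalized, once using that $g$ is). The paper's execution is slightly cleaner than your second paragraph suggests: rather than computing $\mathrm{L}_{x_{j\to k}}(x_{i\to j})$ as a probability with $O(1/n^2)$ corrections, it uses the exact identity $x_{i\to j}(\sigma\tau)=\sum_k x_{i\to k}(\tau)\,x_{k\to j}(\sigma)$ to express $R_\tau h$ as an honest linear function of $\sigma$, so Lemma~\ref{lem:Parseval } applies with no correction terms to track at all.
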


\begin{proof}
We have
\begin{align*}
R_{\tau}h\left(\sigma\right) & =\sum_{i,j\in\left[n\right]}c_{ij}x_{i\to j}\left(\sigma\tau\right)=\sum_{i,j}c_{ij}\sum_{k}x_{i\to k}\left(\tau\right)x_{k\to j}\left(\sigma\right)\\
 & =\sum_{k,j}d_{kj}\left(\tau\right)x_{k\to j}\left(\sigma\right),
\end{align*}
where $d_{kj}\left(\tau\right)=\sum_{i}c_{ij}x_{i\to k}\left(\tau\right).$
By Lemma \ref{lem:Parseval } we therefore have
\begin{align*}
\mathrm{L}_{f}h & \left(\tau\right)=\left\langle f,R_{\tau}h\right\rangle
=\frac{1}{n-1}\sum_{i,j\in\left[n\right]}a_{ij}d_{ij}(\tau)
=\frac{1}{n-1}\sum_{i,j,k}a_{ij}c_{kj}x_{k\to i}\left(\tau\right)=\\
 & =\frac{1}{n-1}\sum_{i,j} e_{ij}x_{i\to j}\left(\tau\right),
\end{align*}
 where $e_{ij}=\sum_{k}a_{jk}c_{ik}.$
 We deduce that
\begin{align*}
\mathbb{E}\left[f\left(\sigma\right)g\left(\tau\right)h\left(\sigma\tau\right)\right]
& =\left\langle g,\mathrm{L}_{f}h\right\rangle =\frac{1}{(n-1)^2}\sum_{i,j} e_{ij}b_{ij}\\
 & =\frac{1}{\left(n-1\right)^{2}}\sum_{i,j,k} b_{ij}a_{jk}c_{ik}
  =\frac{1}{\left(n-1\right)^{2}}\left\langle M_{g}M_{f},M_{h}\right\rangle .
\end{align*}
\end{proof}

\subsection{Global hypercontractivity and the level-$1$ inequality}

The following lemma shows that Theorem \ref{thm:hyp} may be applied
to linear functions with small coefficients. The lemma is applicable
for $f_{\mathrm{rand}}$ defined above.
\begin{lem}
\label{lem:globalness of a function with small coefficients}
Let $g=\sum_{i,j}a_{ij}\left(x_{i\to j}-\frac{1}{n}\right) \in V_{=1}$
with $|a_{ij}|<\eps$ for all $i,j$.
Then $g$ is $\left(2,\epsilon'\right)$-global
for $\epsilon'=9\epsilon+\sqrt{\frac{8}{n-2}\sum_{i,j}a_{ij}^{2}}.$
\end{lem}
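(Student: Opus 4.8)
The plan is to bound $\norm{g_{I\to J}}_2$ for an arbitrary pair $I=(i_1,i_2)$, $J=(j_1,j_2)$ of ordered $2$-sets and show it is at most $\epsilon'$. The key point is that restricting the dictator $x_{i\to j}$ to a $2$-umvirate $\mathcal{U}_{I\to J}$ behaves in one of a few simple ways: if $(i,j)$ is already fixed by the restriction (i.e.\ $i\in\{i_1,i_2\}$ with the matching target), then $x_{i\to j}$ becomes the constant $1$; if $i$ is fixed but $j$ is not the matching target (or $j\in J$ but $i$ is not the matching source), then $x_{i\to j}$ becomes the constant $0$; and otherwise $x_{i\to j}$ restricts to $x_{i\to j}$ viewed as a dictator on the copy of $S_{n-2}$ inside $\mathcal{U}_{I\to J}$, which has $L^2$-norm $O(1/\sqrt n)$ after centering.

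First I would write $g_{I\to J} = c + \sum_{(i,j)\in P} a_{ij}\bigl(x_{i\to j} - \tfrac1n\bigr)\big|_{\mathcal U_{I\to J}}$, where $P$ is the set of pairs $(i,j)$ with $i\notin I$ and $j\notin J$, and $c$ is the scalar collecting the contribution of all pairs that restrict to constants (plus the $-\tfrac1n$ shifts of the surviving pairs). The terms with $(i,j)\in P$ form, up to the relabelling of $\mathcal{U}_{I\to J}$ with $S_{n-2}$, a linear function on $S_{n-2}$ of the shape treated in Lemma \ref{lem:one sided pars} (with $n$ replaced by $n-2$), so its $L^2$-norm is at most $\sqrt{\tfrac{8}{n-2}\sum_{i,j}a_{ij}^2}$. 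It remains to bound $|c|$, and then $\norm{g_{I\to J}}_2 \le |c| + \sqrt{\tfrac{8}{n-2}\sum_{i,j}a_{ij}^2}$ by the triangle inequality.

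The main work is the estimate $|c| \le 9\epsilon$. Here I would use that $g\in V_{=1}$ has normalized form in the sense that for each fixed row $i$ the coefficients $a_{ij}$ sum to $0$ and similarly for columns. Wait --- the hypothesis only says $g = \sum a_{ij}(x_{i\to j}-\tfrac1n)$, not that $(a_{ij})$ is the normalized form, so I should instead directly compute $\mathbb{E}[g_{I\to J}]$. One has $\mathbb{E}[g_{I\to J}] = \sum_{i,j} a_{ij}\bigl(\Pr[\pi(i)=j\mid \pi\in\mathcal U_{I\to J}] - \tfrac1n\bigr)$, and for each pair this conditional probability is $1$, $0$, or $\tfrac{1}{n-2}$ depending on the three cases above. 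The contribution of the pairs landing on a constant $1$ is a sum of at most $2$ coefficients $a_{ij}$ (those with $i=i_1,j=j_1$ or $i=i_2,j=j_2$), each of absolute value $<\epsilon$, so that part is $<2\epsilon$; the constant-$0$ pairs contribute $-\tfrac1n\sum a_{ij}$ over the relevant index set, which is a sum of $O(n)$ coefficients each $<\epsilon$ but weighted by $\tfrac1n$, hence $O(\epsilon)$ --- one must count the index sets carefully (rows $i_1,i_2$ contribute $2(n-?)$ terms, columns $j_1,j_2$ similarly, with overlaps), and the total weighted count is bounded by a small constant times $n$, giving a contribution bounded by roughly $4\epsilon$ to $6\epsilon$; finally the surviving pairs contribute their $-\tfrac1n$ shifts plus the $\tfrac{1}{n-2}$ corrections, again $O(\epsilon)$. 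Then $|c| = |\mathbb{E}[g_{I\to J}]|$ together with $\norm{g_{I\to J}}_2^2 = \mathbb{E}[g_{I\to J}]^2 + \operatorname{Var}[g_{I\to J}]$ and the variance bound from Lemma \ref{lem:one sided pars} applied to the centered surviving part finishes the proof, with the constant $9$ absorbing the case analysis. The expected main obstacle is bookkeeping the index sets in the constant-$0$ case so that the weighted coefficient count is genuinely $O(n)$ with a small constant; this is where the exact value $9$ of the constant comes from, and it is purely a matter of careful (but routine) counting rather than any new idea.
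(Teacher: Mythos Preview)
Your approach is essentially the paper's: split $g_{I\to J}$ into a constant plus a linear function on the copy of $S_{n-2}$, bound the constant by $9\epsilon$ via the three-case analysis of how $x_{i\to j}$ restricts, and bound the linear piece by Lemma~\ref{lem:one sided pars}. The only wrinkle to tidy is that Lemma~\ref{lem:one sided pars} requires the surviving piece to lie in $V_{=1}(S_{n-2})$, hence centered at $\tfrac{1}{n-2}$ rather than $\tfrac{1}{n}$ (exactly as the paper does in defining $\tilde g$); the discrepancy $\sum_{(i,j)\in P}a_{ij}\bigl(\tfrac{1}{n-2}-\tfrac1n\bigr)$ is an additional $O(\epsilon)$ constant absorbed into the scalar part, after which your identification of that scalar with $\mathbb E[g_{I\to J}]$ becomes correct and the argument matches the paper's line by line.
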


\begin{proof}
We need to show $\|g_{I \to J}\|_2 \le \epsilon'$
for any restriction with $|I|=|J| \le 2$.
By averaging, it suffices to consider $|I|=|J|=2$.
Take distinct $i,j,k,\ell$,
and consider the restriction $i\rightarrow j,k\rightarrow\ell$, corresponding
to the duumvirate $\mathcal{U}_{i\to j,k\to l}.$
We apply the bound
\[
\norm{g_{i\rightarrow j,k\rightarrow\ell}}_{2}\leq\left|L\right|
+\left\Vert \tilde{g}_{i\to j,k\to l}\right\Vert _{2},
\]
where we define $\tilde{g}\colon\mathcal{U}_{i\to j,k\to l} \to \mb{R}$ by
\[
\tilde{g}(\pi)=\sum\limits _{t\neq i,j}\sum\limits _{q\neq k,\ell}a_{t,q}\left(x_{t\to q}-\frac{1}{n-2}\right),
\]
and let
\begin{align*}
L=g_{i\to j,k\to l}-\tilde{g} & =\left(1-\frac{1}{n}\right)(a_{i,j}+a_{k,\ell}),\\
 & -\frac{1}{n}\sum\limits _{t\neq j}a_{i,t}-\frac{1}{n}\sum\limits _{t\neq\ell}a_{k,t}-\frac{1}{n}\sum\limits _{t\neq i}a_{t,j}-\frac{1}{n}\sum\limits _{t\neq k}a_{t,\ell}\\
 & +\sum\limits _{t\neq i,j}\sum\limits _{q\neq k,\ell}a_{t,q}\left(\frac{1}{n-2}-\frac{1}{n}\right).
\end{align*}
By the triangle inequality we have $\card L\leq9\epsilon$.
For the second term above, we may use the aforementioned identification between
$\mathcal{U}_{i\to j,k\to l}$ and $S_{n-2}$ to apply Lemma \ref{lem:one sided pars},
deducing that
\[
\norm{\tilde{g}_{i\rightarrow j,k\rightarrow\ell}}_{2}^{2}
\le\frac{8}{n-2}\sum\limits _{t\neq i,j}\sum\limits _{q\neq k,\ell}a_{t,q}^{2}.
\]
Thus we obtain the required bound
$\norm{g_{i\rightarrow j,k\rightarrow\ell}}_{2} \le \epsilon'$.
\end{proof}

\subsection{A level-1 inequality for the pseudorandom part}

We now show the desired upper bound on the $L^2$-norm
$\|f_{\mathrm{rand}}\|_{2}$ of the pseudorandom part of $f^{=1}$;
in fact, we bound the right hand side of the bound
$\norm{f_{\mathrm{rand}}}_{2}^{2}\le 8X^2$
from Lemma \ref{lem:one sided pars},
where $X^2$ is as in the following statement.

\begin{lem}
\label{lem:level-1 inequality}
Let $\epsilon\in\left(0,\frac{1}{2}\right)$
and $f\colon S_{n}\to\left\{ 0,1\right\} $ with $\mathbb{E}\left[f\right]\le\frac{1}{2}.$
Write $f^{=1} = \sum_{i,j}a_{ij}x_{i\to j}$ in normalized form and let
$f_{\mathrm{rand}} = \sum_{i,j}a_{ij}\left(x_{i\to j}-\frac{1}{n}\right)1_{|a_{ij}|<\eps}$.
Denote $\epsilon''=\max\left(\epsilon,\mathbb{E}\left[f\right]\right).$
Then
\[
X^2 := \frac{1}{n-1} \sum_{i,j} a_{ij}^{2} 1_{|a_{ij}|<\eps}
\le\mathbb{E}\left[f\right] \epsilon''\log^{O\left(1\right)}\left(\frac{1}{\epsilon''}\right).
\]
\end{lem}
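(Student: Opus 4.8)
The plan is to apply the level-$1$ inequality for global functions (Theorem~\ref{thm:Level d inequality } with $d=1$) to the function
\[
f_{\mathrm{rand}} := \sum_{i,j} a_{ij}\left(x_{i\to j}-\tfrac{1}{n}\right)1_{|a_{ij}|<\eps},
\]
after first checking that $f_{\mathrm{rand}}$ is $(2,\eps')$-global for a suitable $\eps'$. The latter is exactly the content of Lemma~\ref{lem:globalness of a function with small coefficients}, applied with $\eps$ replaced by $\eps''=\max(\eps,\mb{E}[f])$ (note that the coefficients of $f_{\mathrm{rand}}$ are bounded by $\eps\le\eps''$ in absolute value): this gives globalness parameter
\[
\eps' = 9\eps'' + \sqrt{\tfrac{8}{n-2}\textstyle\sum_{i,j}a_{ij}^2 1_{|a_{ij}|<\eps}} = O(\eps'') + O(\sqrt{X^2}).
\]
Here I am using that $\sqrt{\frac{1}{n-1}\sum_{i,j}a_{ij}^2} = \|f^{=1}\|_2 \le \|f\|_2 = \sqrt{\mb E[f]} \le \sqrt{\eps''}$, so in particular the full level-$1$ weight is at most $\eps''$, and the truncated weight $X^2$ is no larger; thus $\eps' = O(\sqrt{\eps''})$. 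This will be the crude bound I carry into the level-$1$ inequality.

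The key point is that $f^{=1}=(f_{\mathrm{rand}})^{=1}$ up to the pieces with large coefficients, but more directly: since $f_{\mathrm{rand}}\in V_{=1}$, we have $(f_{\mathrm{rand}})^{\le 1}=f_{\mathrm{rand}}$, and Theorem~\ref{thm:Level d inequality } applied to $g=f$ is not quite what I want — instead I want to bound $\|(f_{\mathrm{rand}})^{=1}\|_2$ in terms of $\mathbb{E}[f]$. The cleaner route is to observe that by Lemma~\ref{lem:Parseval } and Lemma~\ref{lem:Normalised form of f=00003D1}, for each $i,j$ with $|a_{ij}|<\eps$ we have $a_{ij} = (n-1)\langle f, x_{i\to j}-\tfrac1n\rangle$, hence
\[
X^2 = \tfrac{1}{n-1}\sum_{i,j}a_{ij}^2 1_{|a_{ij}|<\eps} = (n-1)\sum_{i,j}\langle f, x_{i\to j}-\tfrac1n\rangle^2 1_{|a_{ij}|<\eps} = \langle f, f_{\mathrm{rand}}\rangle,
\]
using Lemma~\ref{lem:Parseval } once more (since $f_{\mathrm{rand}}$'s normalized form has the coefficients $a_{ij}1_{|a_{ij}|<\eps}$, which are not themselves in normalized form, so one must be slightly careful — I would instead write $X^2 = \langle f^{=1}, f_{\mathrm{rand}}\rangle = \langle f, f_{\mathrm{rand}}\rangle$ directly, the last equality because $f_{\mathrm{rand}}\in V_{=1}$). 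Now by Cauchy--Schwarz and then Hölder with a conjugate pair $(q, q')$, $\langle f, f_{\mathrm{rand}}\rangle \le \|f\|_{q'}\|f_{\mathrm{rand}}\|_q = \mb E[f]^{1/q'}\|f_{\mathrm{rand}}\|_q$ since $f$ is Boolean. Then apply the hypercontractive inequality Theorem~\ref{thm:hyp} (valid since $f_{\mathrm{rand}}$ is $(2,\eps')$-global, taking $d=1$): $\|f_{\mathrm{rand}}\|_q \le q^{O(1)}(\eps')^{1-2/q}\|f_{\mathrm{rand}}\|_2^{2/q}$. Combining, and using $\|f_{\mathrm{rand}}\|_2^2 \le 8X^2$ from Lemma~\ref{lem:one sided pars}, gives an inequality of the shape
\[
X^2 \le \mb E[f]^{1-1/q}\, q^{O(1)} (\eps')^{1-2/q} (8X^2)^{1/q},
\]
i.e. $X^2 \le \mb E[f]^{1-1/q} q^{O(1)} (\eps'')^{(1-2/q)/2}\cdot 8^{1/q} (X^2)^{1/q}$ after substituting $\eps'=O(\sqrt{\eps''})$. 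Rearranging (raising to the power $q/(q-1)$) yields $X^2 \le \mb E[f]\, q^{O(1)} (\eps'')^{\frac{q-2}{2(q-1)}}$, and optimizing the choice of $q$ — taking $q = \Theta(\log(1/\eps''))$ so that $(\eps'')^{\frac{q-2}{2(q-1)}} = \eps''^{1/2+o(1)}\cdot$ wait, this is not quite giving the claimed bound.

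I therefore expect the main obstacle to be \emph{getting the exponent of $\eps''$ right}: the naive application above loses a square root, producing $X^2 \lesssim \mb E[f]\sqrt{\eps''}\,\mathrm{polylog}$ rather than $X^2 \lesssim \mb E[f]\,\eps''\,\mathrm{polylog}$. The fix is not to feed the crude globalness bound $\eps'=O(\sqrt{\eps''})$ into the hypercontractive inequality, but rather to use the sharper form: $f_{\mathrm{rand}}$ has coefficients bounded by $\eps$ \emph{and} $L^2$-norm bounded by $\sqrt{8X^2}$, and one should bootstrap. Concretely, I would instead argue directly via the level-$1$ inequality Theorem~\ref{thm:Level d inequality } applied to a cleverly chosen integer-valued function closely tracking $f_{\mathrm{rand}}$, or — more robustly — set up a self-improving inequality: let $X^2$ denote the truncated weight, run the Hölder/hypercontractivity argument with the globalness parameter $\eps' = 9\eps'' + \sqrt{8X^2}$ kept symbolic, obtaining $X^2 \le \mb E[f]\cdot q^{O(1)}(\eps')^{1-2/q}(8X^2/\mb E[f])^{\cdots}$; if the $\sqrt{8X^2}$ term dominates in $\eps'$ then the inequality becomes $X^2 \le (\text{stuff})\cdot X^{2(1-1/q)}\cdot X^{\cdots}$ which, for $q$ large, forces $X^2$ to be small — small enough that in fact the $9\eps''$ term dominates $\eps'$, and then the bound $\eps'=O(\eps'')$ (no square root!) propagates through to give $X^2 \le \mb E[f]\eps'' \log^{O(1)}(1/\eps'')$ after optimizing $q\asymp\log(1/\eps'')$. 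The remaining steps — verifying the hypotheses of Theorems~\ref{thm:hyp} and~\ref{thm:Level d inequality } ($n$ large enough relative to $\log(1/\|f\|_2)$, which holds since we may assume $\mb E[f]$ is not superpolynomially small, else the claim is trivial as $X^2 \le \mb E[f]$ always), and collecting the polylogarithmic factors — are routine.
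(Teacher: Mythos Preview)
Your proposal is correct and takes essentially the same approach as the paper: both proofs establish $X^2 = \langle f, f_{\mathrm{rand}}\rangle$ via Lemma~\ref{lem:Parseval }, apply H\"older together with the hypercontractive inequality (Theorem~\ref{thm:hyp}) using the globalness parameter $\eps' = O(\eps) + O(X)$ from Lemma~\ref{lem:globalness of a function with small coefficients}, and then close the loop by a case split on whether $X \le \eps$ or $X > \eps$ (your ``bootstrapping'' is exactly this dichotomy). Your initial detour through Theorem~\ref{thm:Level d inequality } is a dead end since $f_{\mathrm{rand}}$ is not integer-valued, but you correctly abandon it; the paper goes straight to Theorem~\ref{thm:hyp} and performs the case analysis cleanly rather than phrasing it as a self-improving inequality, but the substance is identical.
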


\begin{proof}
We note that $f_{\mathrm{rand}} \in V_{=1}$ and by Lemma \ref{lem:Parseval } we have
\[
X^2 = \left\langle f_{\mathrm{rand}},f\right\rangle
=\left\langle f_{\mathrm{rand}},f^{=1}\right\rangle.
\]
Let $q=10\log\left(1/\epsilon''\right)$ and $\epsilon'=9\epsilon+3\|f_{\mathrm{rand}}\|_{2}$,
so that $f_{\mathrm{rand}}$ is $\left(2,\epsilon'\right)$-global
by Lemma \ref{lem:globalness of a function with small coefficients}.
Applying Theorem~\ref{thm:hyp} we obtain
\begin{align*}
X^2 & \le 8\inner{f_{\mathrm{rand}}}{f}
\leq\norm{f_{\mathrm{rand}}}_{q}\norm f_{q/(q-1)}\\
 & \leq q^{O\left(1\right)}(\eps')^{1-\frac{2}{q}}\norm{f_{\mathrm{rand}}}_{2}^{\frac{2}{q}}\norm f_{q/(q-1)}\\
 & =q^{O\left(1\right)}(\eps')^{1-\frac{2}{q}}\norm{f_{\mathrm{rand}}}_{2}^{\frac{2}{q}}\norm f_{2}^{2-\frac{2}{q}},
\end{align*}
where we used the fact that $f$ is $\left\{ 0,1\right\} $-valued.
Using $\norm{f_{\mathrm{rand}}}_{2}^{2}\le 8X^2$
by Lemma \ref{lem:one sided pars} and rearranging we get
\[
X^2 \le q^{O\left(1\right)}(\eps')^{\frac{q-2}{q-1}} \mb{E}[f].
\]
We next consider two cases according to which of $\epsilon$ and $X$ is larger.
\begin{enumerate}
\item If $X \leq\epsilon$ then $\eps'\le 20\epsilon$, so we obtain
\[
X^2 \le q^{O\left(1\right)}\epsilon^{\frac{q-2}{q-1}} \mb{E}[f].
\]
\item If $X>\epsilon$ then $\eps'\le 20X$,
so $X^2 \le q^{O\left(1\right)} X^{\frac{q-2}{q-1}} \mb{E}[f]$, yielding
\[
X^2  \le q^{O\left(1\right)} \mb{E}[f]^{2-\frac{2}{q}}.
\]
\end{enumerate}
In both cases the lemma follows by plugging in $q=10\log\left(1/\epsilon''\right)$.
\end{proof}

\subsection{Recovering Eberhard's result}

For expository purposes, we will now cash in on our level-$1$ inequalities
and reprove Eberhard's result (up to the polylog factor).

We will repeatedly use the following upper bound
on $\left|\left\langle MN,S\right\rangle \right|$
for three matrices $M,N,S.$
\begin{lem}
\label{lem:Csing matrices} Let $M,N,S\in\mathbb{R}^{n\times n}.$
Then we have
\[
\left|\left\langle MN,S\right\rangle \right|\le\|M\|_{2}\|N\|_{2}\|S\|_{2}.
\]
\end{lem}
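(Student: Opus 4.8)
The plan is to expand the inner product in coordinates and apply the Cauchy--Schwarz inequality twice, once for each of the two summed-over indices. Write $M=(m_{ik})$, $N=(n_{kj})$ and $S=(s_{ij})$. By the definitions of matrix multiplication and of the inner product on $\mathbb{R}^{n\times n}$,
\[
\langle MN,S\rangle=\sum_{i,j}\Big(\sum_k m_{ik}n_{kj}\Big)s_{ij}=\sum_{i,k}m_{ik}\Big(\sum_j n_{kj}s_{ij}\Big).
\]
First I would apply Cauchy--Schwarz with the pair $(i,k)$ playing the role of the summation variable, obtaining
\[
\big|\langle MN,S\rangle\big|\le\Big(\sum_{i,k}m_{ik}^2\Big)^{1/2}\Big(\sum_{i,k}\Big(\sum_j n_{kj}s_{ij}\Big)^2\Big)^{1/2}=\|M\|_2\,\Big(\sum_{i,k}\Big(\sum_j n_{kj}s_{ij}\Big)^2\Big)^{1/2}.
\]

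Then, for each fixed $(i,k)$, a second application of Cauchy--Schwarz (now in the variable $j$) gives $\big(\sum_j n_{kj}s_{ij}\big)^2\le\big(\sum_j n_{kj}^2\big)\big(\sum_j s_{ij}^2\big)$, and summing over $(i,k)$ the right-hand side factorises as $\big(\sum_{k,j}n_{kj}^2\big)\big(\sum_{i,j}s_{ij}^2\big)=\|N\|_2^2\|S\|_2^2$. Substituting this into the previous display yields the claimed inequality $|\langle MN,S\rangle|\le\|M\|_2\|N\|_2\|S\|_2$.

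There is no genuine obstacle here: this is just submultiplicativity of the Hilbert--Schmidt norm together with Cauchy--Schwarz for its associated inner product. An alternative, equally short route would be to note that Cauchy--Schwarz gives $|\langle MN,S\rangle|\le\|MN\|_2\|S\|_2$ and then use $\|MN\|_2\le\|M\|_{\mathrm{op}}\|N\|_2\le\|M\|_2\|N\|_2$, since the operator norm is bounded by the Hilbert--Schmidt norm. I would present the first, fully elementary computation so that the section stays self-contained, and since it is exactly the form in which the bound is used (contracting a chosen pair of indices) in the subsequent estimates of $\langle \matA^2,\matA\rangle$.
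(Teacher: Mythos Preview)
Your proof is correct and is essentially the same as the paper's: both are two applications of Cauchy--Schwarz, with the paper phrasing it column-by-column (writing $\langle MN,S\rangle=\sum_i\langle M(Ne_i),Se_i\rangle$ and using $\|Mv\|\le\|M\|_2\|v\|$) while you carry out the coordinatised version of the same computation. Your alternative route via $\|MN\|_2\le\|M\|_{\mathrm{op}}\|N\|_2\le\|M\|_2\|N\|_2$ is in fact even closer to the paper's argument.
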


\begin{proof}
Write $Ne_{i}=v_{i},Se_{i}=u_{i}$. By Cauchy--Schwarz
\begin{align*}
\left|\left\langle MN,S\right\rangle \right| & =\left|\sum_{i=1}^{n}\left\langle Mv_{i},u_{i}\right\rangle \right|\le\sum_{i=1}^{n}\|M\|_{2}\|v_{i}\|_{2}\|u_{i}\|_{2}\\
 & =\|M\|_{2}\sum_{i=1}^{n}\|v_{i}\|_{2}\|u_{i}\|_{2}\le\|M\|_{2}\sqrt{\sum_{i=1}^{n}\|v_{i}\|_{2}^{2}}\sqrt{\sum_{i=1}^{n}\|u_{i}\|_{2}^{2}}.\\
 & =\|M\|_{2}\|N\|_{2}\|S\|_{2}.
\end{align*}
\end{proof}
\begin{lem}
\label{lem:one sided Eberhard} Let $A,B,C\subseteq A_{n}$ and write
$\alpha=\frac{\left|A\right|}{n!}$, $\beta=\frac{\left|B\right|}{n!}$,
$\gamma=\frac{\left|C\right|}{n!}.$ Then
\begin{align}
& \Pr_{\sigma,\tau\sim S_{n}}\left[\sigma\in A,\tau\in B,\sigma\tau\in C\right]  \ge2\alpha\beta\gamma-\frac{32\sqrt{\alpha\beta\gamma}}{n}-\alpha\beta\gamma\frac{\log^{O\left(1\right)}\left(\alpha\beta\gamma\right)}{\sqrt{n}}\nonumber \\
 & \qquad -\alpha\log^{O\left(1\right)}\left(\frac{1}{\alpha}\right)\sqrt{\frac{\beta\gamma}{n}}-\beta\log^{O\left(1\right)}\left(\frac{1}{\beta}\right)\sqrt{\frac{\gamma\alpha}{n}}-\gamma\log^{O\left(1\right)}\left(\frac{1}{\gamma}\right)\sqrt{\frac{\alpha\beta}{n}}.\label{eq:one sided eberhard}
\end{align}
\end{lem}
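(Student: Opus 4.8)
The goal is a lower bound on $\Pr_{\sigma,\tau}[\sigma\in A,\tau\in B,\sigma\tau\in C]$, which is $\langle g, \mathrm{L}_f h\rangle$ for $f=1_A$, $g=1_B$, $h=1_C$. The plan is to start from Proposition~\ref{Prop:Most weight is on the first two levels}, which already tells us that
\[
\Pr_{\sigma,\tau\sim S_{n}}\left[\sigma\in A,\tau\in B,\sigma\tau\in C\right]
\ge 2\alpha\beta\gamma + 2\,\Expect{\pi,\sigma}{f^{=1}(\pi)g^{=1}(\sigma\pi)h^{=1}(\sigma)} - \frac{e}{n}\sqrt{\alpha\beta\gamma},
\]
so it remains only to lower bound the linear term $L:=\Expect{}{f^{=1}(\pi)g^{=1}(\sigma\pi)h^{=1}(\sigma)}$. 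By Lemma~\ref{lem:convolution and parseval}, writing $M_f,M_g,M_h$ for the coefficient matrices of the normalized forms of $f^{=1},g^{=1},h^{=1}$, we have $L = \frac{1}{(n-1)^2}\langle M_g M_f, M_h\rangle$. Since we want a lower bound and $\langle M_gM_f,M_h\rangle$ could be negative, the natural move is the crude bound $|L| \le \frac{1}{(n-1)^2}\|M_f\|_2\|M_g\|_2\|M_h\|_2$ from Lemma~\ref{lem:Csing matrices}, and then to control each $\|M_f\|_2$ etc.\ via the level-$1$ inequality.

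\textbf{Controlling the matrix norms.} For the expository (Eberhard-strength) bound we take $\epsilon=1$ in Lemma~\ref{lem:level-1 inequality}; then $f_{\mathrm{rand}}$ captures \emph{all} coefficients with $|a_{ij}|<1$, and since $f$ is $\{0,1\}$-valued one checks that in fact every coefficient satisfies $|a_{ij}|<1$ (indeed $|a_{ij}| = \tfrac{n-1}{n}|\mb{E}[f_{i\to j}]-\mb{E}[f]| < 1$), so $f_{\mathrm{rand}}=f^{=1}$ and $M_f = M_{f_{\mathrm{rand}}}$. Thus Lemma~\ref{lem:level-1 inequality} with $\epsilon''=\max(1,\alpha)=1$... — more carefully, one should take $\epsilon$ a small constant or apply the lemma with $\epsilon''=\alpha$ using the bound $X^2\le\mb{E}[f]^{2-2/q}$-type conclusion. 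The cleanest route: apply Lemma~\ref{lem:level-1 inequality} with $\epsilon = \alpha$ (or just note $\epsilon''=\max(\epsilon,\alpha)$), obtaining $\frac{1}{n-1}\|M_f\|_2^2 = \|f^{=1}\|_2^2 \le \alpha^2\log^{O(1)}(1/\alpha)$, hence $\|M_f\|_2 \le \sqrt{n}\,\alpha\log^{O(1)}(1/\alpha)$ (absorbing the $\sqrt{n-1}$ vs $\sqrt n$ and the $\log$ powers). The same applies to $g$ and $h$. Feeding these into $|L| \le \frac{1}{(n-1)^2}\|M_f\|_2\|M_g\|_2\|M_h\|_2$ gives $|L| \le \frac{n^{3/2}}{(n-1)^2}\alpha\beta\gamma\log^{O(1)}(\cdots) = \frac{\alpha\beta\gamma}{\sqrt n}\log^{O(1)}(\alpha\beta\gamma)$ after a routine manipulation of the logarithmic factors (using $\log(1/\alpha)\log(1/\beta)\log(1/\gamma) \le \log^{O(1)}(\alpha\beta\gamma)$ or simply bounding each log by $\log^{O(1)}(1/(\alpha\beta\gamma))$).

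\textbf{A subtlety.} The bound just described only produces the $-\alpha\beta\gamma\frac{\log^{O(1)}(\alpha\beta\gamma)}{\sqrt n}$ error term, but the statement of Lemma~\ref{lem:one sided Eberhard} also has the three asymmetric terms $-\alpha\log^{O(1)}(1/\alpha)\sqrt{\beta\gamma/n}$ and its cyclic permutations. These arise from being more careful: rather than symmetrically bounding all three matrix norms via the level-$1$ inequality, one bounds $\langle M_g M_f, M_h\rangle \le \|M_f\|_2\|M_g\|_2\|M_h\|_2$ and then, for instance, uses the \emph{trivial} Parseval-type bound $\|M_h\|_2^2 = (n-1)\|h^{=1}\|_2^2 \le (n-1)\|h\|_2^2 = (n-1)\gamma$ for one factor while keeping the level-$1$ estimates $\|M_f\|_2 \le \sqrt{n-1}\|f^{=1}\|_2$, $\|f^{=1}\|_2 \le \alpha\log^{O(1)}(1/\alpha)$ for the others (since a plain $\{0,1\}$ set need not be global, one cannot always afford the level-$1$ bound on all three; but here $f^{=1}$ always equals $f_{\mathrm{rand}}$ for $\epsilon$ slightly above... — actually for $\{0,1\}$-valued $f$ with $\mb E[f]\le 1/2$ the level-$1$ inequality does apply to all three, so one could in principle drop the asymmetric terms, but keeping them makes the lemma more flexibly reusable, e.g.\ for the restricted triples in Section~\ref{subsec:restrict}). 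So the proof should record three cases: bound one, two, or three of the factors by the level-$1$ inequality and the rest by $\sqrt{\beta\gamma}$-type Parseval bounds, take the worst case, and collect the resulting error terms. Finally add the $-\frac{e}{n}\sqrt{\alpha\beta\gamma}$ (which we generously write as $-\frac{32\sqrt{\alpha\beta\gamma}}{n}$) from Proposition~\ref{Prop:Most weight is on the first two levels}.

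\textbf{Main obstacle.} The conceptual content is entirely in the already-proved ingredients (Proposition~\ref{Prop:Most weight is on the first two levels} and Lemma~\ref{lem:level-1 inequality}); the only real work is bookkeeping — correctly tracking which of the $2\alpha\beta\gamma$, $\sqrt{\alpha\beta\gamma}/n$, $\alpha\beta\gamma\log^{O(1)}/\sqrt n$ and the three asymmetric terms each piece of the estimate contributes to, and verifying that the logarithmic factors combine into the stated $\log^{O(1)}$ expressions. I expect the trickiest point to be the clean handling of the logarithmic factors when $\alpha,\beta,\gamma$ are of very different sizes, and making sure the $\{0,1\}$-valuedness (hence $|a_{ij}|<1$, hence $f^{=1}=f_{\mathrm{rand}}$ at threshold $\epsilon=1$, and $\|f\|_q^q \le \mb E[f]$) is invoked at the right moments.
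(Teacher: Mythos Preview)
Your proposal has a genuine gap: the claimed bound $\|f^{=1}\|_2 \le \alpha\log^{O(1)}(1/\alpha)$ (equivalently $\|M_f\|_2 \le \sqrt{n}\,\alpha\log^{O(1)}(1/\alpha)$) is \emph{false} in general. Lemma~\ref{lem:level-1 inequality} only bounds the pseudorandom part $X^2=\tfrac{1}{n-1}\sum_{|a_{ij}|<\epsilon}a_{ij}^2$, not the full $\|f^{=1}\|_2^2$. Taking $\epsilon$ close to $1/2$ gives $\epsilon''\approx 1/2$ and you recover nothing better than Parseval $\|f^{=1}\|_2^2\le\alpha$; taking $\epsilon=\alpha$ makes $f_{\mathrm{rand}}\ne f^{=1}$ whenever some $a_{ij}\ge\alpha$. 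A dictator $A=\mathcal D_{1\to 1}\cap A_n$ has $\alpha\approx 1/(2n)$ but $a_{11}\approx 1$, giving $\|f^{=1}\|_2\approx n^{-1/2}\approx\sqrt\alpha$, not $\alpha\log^{O(1)}(1/\alpha)$. With only the Parseval bound on all three factors your crude estimate gives $|L|\le\tfrac{1}{(n-1)^2}(n\alpha\cdot n\beta\cdot n\gamma)^{1/2}\approx\sqrt{\alpha\beta\gamma}/\sqrt n$, which against the main term $2\alpha\beta\gamma$ only recovers Gowers' $n^{-1/3}$, not the stated lemma.

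The missing idea, which the paper uses, is a \emph{sign decomposition} $\matA=\matA_{+}+\matA_{-}$ (and likewise for $\matB,\matC$). The point is that negative coefficients automatically satisfy $|a_{ij}|\le\alpha$, so Lemma~\ref{lem:level-1 inequality} genuinely applies to $\matA_-$ and gives $\|\matA_-\|_2^2\le n\alpha^2\log^{O(1)}(1/\alpha)$, while for $\matA_+$ one only has Parseval $\|\matA_+\|_2^2\le n\alpha$. Expanding $\langle\matB\matA,\matC\rangle$ into eight terms, those with an \emph{even} number of minus-factors are nonnegative and may be dropped for a lower bound; the four terms with an odd number of minuses are exactly what produce the three asymmetric errors $\alpha\log^{O(1)}(1/\alpha)\sqrt{\beta\gamma/n}$ etc.\ (one minus each) and the symmetric error $\alpha\beta\gamma\log^{O(1)}/\sqrt n$ (three minuses), via Lemma~\ref{lem:Csing matrices}. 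Your ``mix and match'' paragraph gestures toward the right shape of the error terms but never introduces this sign splitting, so the argument as written does not go through.
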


\begin{proof}
Write $f=1_{A}$, $f^{=1}=\sum a_{ij}x_{i\to j}$, $\matA=\left(a_{ij}\right)$,
$\matA_{-}=\left(a_{ij}1_{a_{ij}<0}\right)$,
$\matA_{+}=\left(a_{ij}1_{a_{ij}>0}\right)$.
We use analogous notation for $g=1_{B}$ and $h=1_{C}$,
with $g=\sum b_{ij}x_{i\to j}$ and $h=\sum c_{ij}x_{i\to j}$.

Then by Proposition \ref{Prop:Most weight is on the first two levels}
\begin{align*}
& \Pr_{\sigma,\tau\sim A_{n}}\left[\sigma\in A,\tau\in B,\sigma\tau\in C\right]
= \mathbb{E}_{\sigma,\tau\sim S_{n}}\left[f\left(\sigma\right)g\left(\tau\right)h\left(\sigma\tau\right)\right]\\
 & \ge \qquad 2\alpha\beta\gamma+2\mathbb{E}\left[f^{=1}\left(\sigma\right)g^{=1}\left(\tau\right)h^{=1}\left(\sigma\tau\right)\right]-\frac{e\sqrt{\alpha\beta\gamma}}{n}.
\end{align*}
 By Lemma \ref{lem:convolution and parseval} we have
\[
\mathbb{E}\left[f^{=1}\left(\sigma\right)g^{=1}\left(\tau\right)h^{=1}\left(\sigma\tau\right)\right]=\frac{\left\langle \matB\matA,\matC\right\rangle }{\left(n-1\right)^{2}}.
\]
 By our level $1$ inequality (Lemma \ref{lem:level-1 inequality}) we have
\[
\|\matA_{-}\|_{2}^{2}\le n\alpha^{2}\log^{O\left(1\right)}\left(1/\alpha\right),
\]
with analogous statements for $\matB_{-}$ and $\matC_{-}$.
By our Parseval lemma (Lemma \ref{lem:Parseval }) we have
\[
\|\matA_{+}\|_{2}^{2}\le\|\matA\|_{2}^{2}=\left(n-1\right)\alpha,
\]
 and similarly for $\matB$ and $\matC.$

We may now expand $\left\langle \matB\matA,\matC\right\rangle $ by
writing $\matA=\matA_{+}+\matA_{-}$ and similarly for $B$ and $C$.
After discarding the terms with a positive contribution
to $\left\langle \matB\matA,\matC\right\rangle $
we are left with the four terms
\[
\left\langle \matB_{+}\matA_{-},\matC_{+}\right\rangle ,\left\langle \matB_{-}\matA_{+},\matC_{-}\right\rangle ,\left\langle \matB_{+}\matA_{+},\matC_{-}\right\rangle ,\left\langle \matB_{-}\matA_{-},\matC_{-}\right\rangle .
\]
 The lemma now follows from Lemma \ref{lem:Csing matrices}.
\end{proof}
When $\left(A,B,C\right)$ is product-free the above
immediately implies the following bounds on their densities,
as if $\min\left(\alpha\beta,\beta\gamma,\gamma\alpha\right)\ge\frac{\log^{R}n}{n}$
for sufficiently large $R$ then all terms bar $2\alpha\beta\gamma$
in the right hand side of (\ref{eq:one sided eberhard}) are $o\left(\alpha\beta\gamma\right)$.

\begin{cor}
\label{cor:eberhard}Let $\left(A,B,C\right)$ be product-free. Write
$\alpha=\frac{\left|A\right|}{n!},\beta=\frac{\left|B\right|}{n!}$
and $\gamma=\frac{\left|C\right|}{n!}.$ Then
\[
\min\left(\alpha\beta,\beta\gamma,\gamma\alpha\right)\le\frac{\log^{O\left(1\right)}n}{n}.
\]
\end{cor}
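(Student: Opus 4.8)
The plan is to read the corollary off directly from Lemma~\ref{lem:one sided Eberhard}. Since $(A,B,C)$ is product-free, the probability on the left hand side of~(\ref{eq:one sided eberhard}) is $0$, so the right hand side is $\le 0$. Arguing by contradiction, I would assume $m:=\min(\alpha\beta,\beta\gamma,\gamma\alpha)>\log^{R}n/n$ for a large absolute constant $R$ to be fixed at the end, and show that this forces the right hand side of~(\ref{eq:one sided eberhard}) to be strictly positive. The corollary then holds with implied exponent $R$.

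The first step is to note that the assumption controls $\alpha,\beta,\gamma$ individually: since each of $\alpha\beta,\beta\gamma,\gamma\alpha$ is $\ge m$ and each of $\alpha,\beta,\gamma$ is $\le 1$, we get $\alpha,\beta,\gamma\ge m>\log^{R}n/n$. Consequently $1/\alpha,1/\beta,1/\gamma<n$, so every logarithmic factor $\log^{O(1)}(1/\alpha)$, $\log^{O(1)}(1/\beta)$, $\log^{O(1)}(1/\gamma)$ and $\log^{O(1)}(\alpha\beta\gamma)$ occurring in~(\ref{eq:one sided eberhard}) is at most $\log^{O(1)}n$. I would also record that $(\alpha\beta\gamma)^{2}=(\alpha\beta)(\beta\gamma)(\gamma\alpha)\ge m^{3}$, whence $\alpha\beta\gamma\ge m^{3/2}$.

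The second step is the routine comparison of each of the five negative terms in~(\ref{eq:one sided eberhard}) against the main term $2\alpha\beta\gamma$. Dividing through by $\alpha\beta\gamma$: the term $32\sqrt{\alpha\beta\gamma}/n$ contributes $O(1/(n\sqrt{\alpha\beta\gamma}))=O(1/(n\,m^{3/4}))$; the term $\alpha\beta\gamma\log^{O(1)}(\alpha\beta\gamma)/\sqrt n$ contributes $\log^{O(1)}n/\sqrt n$; and each term of the form $\alpha\log^{O(1)}(1/\alpha)\sqrt{\beta\gamma/n}$ contributes $\log^{O(1)}(1/\alpha)/\sqrt{n\beta\gamma}\le\log^{O(1)}n/\sqrt{nm}\le\log^{O(1)}n/\log^{R/2}n$. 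Plugging in $m>\log^{R}n/n$, all of these tend to $0$ provided $R$ exceeds twice the largest exponent appearing in these $O(1)$'s (a finite absolute quantity). Hence the right hand side of~(\ref{eq:one sided eberhard}) is at least $2\alpha\beta\gamma(1-o(1))>0$, contradicting product-freeness.

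I do not anticipate a genuine obstacle: the only point requiring a moment's thought is that the terms $\alpha\log^{O(1)}(1/\alpha)\sqrt{\beta\gamma/n}$ a priori contain a logarithm of the possibly tiny quantity $1/\alpha$, but this is neutralised by the first step, which shows $\alpha$ (and likewise $\beta,\gamma$) is polynomially bounded below. The remaining work is just the bookkeeping of fixing $R$ large enough to dominate all the finitely many absolute logarithmic exponents in~(\ref{eq:one sided eberhard}).
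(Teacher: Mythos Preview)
Your proposal is correct and follows exactly the paper's approach: the paper simply remarks that if $\min(\alpha\beta,\beta\gamma,\gamma\alpha)\ge\log^{R}n/n$ for sufficiently large $R$ then all negative terms in~(\ref{eq:one sided eberhard}) are $o(\alpha\beta\gamma)$, contradicting product-freeness. Your write-up just spells out this comparison term by term, including the observation that $\alpha,\beta,\gamma\ge m$ individually so all logarithms are $\log^{O(1)}n$.
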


For future reference, we also note the following slightly stronger bound
for the regime $\bB, \gG \ge \eps > n^{-o(1)}$, where we can replace
the factor $\log^{O\left(1\right)}n$ by
$\log^{O\left(1\right)}\left(\frac{1}{\epsilon}\right)$.

\begin{cor}
\label{cor:unbalanced eberhard}
Let $\left(A,B,C\right)$ be product-free.
Write $\alpha=\frac{\left|A\right|}{n!},\beta=\frac{\left|B\right|}{n!}$
and $\gamma=\frac{\left|C\right|}{n!}.$
Suppose $\min\left(\beta,\gamma\right)\ge\epsilon$ with $\epsilon \in (0,1/2)$.
Then
\[
\alpha\le\frac{1}{\epsilon n}\log^{O\left(1\right)}\left(\frac{1}{\epsilon}\right).
\]
\end{cor}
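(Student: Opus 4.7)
The plan is to refine the proof of Lemma~\ref{lem:one sided Eberhard} under the hypothesis $\min(\beta,\gamma)\geq\epsilon$, replacing every occurrence of $\log(1/\alpha),\log(1/\beta),\log(1/\gamma)$ by $\log(1/\epsilon)$ through a more careful choice of the truncation parameter in Lemma~\ref{lem:level-1 inequality}.

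For $\matB$ and $\matC$, the hypothesis immediately gives $\log(1/\beta),\log(1/\gamma)\leq\log(1/\epsilon)$, so the estimates used in the proof of Lemma~\ref{lem:one sided Eberhard} yield $\|\matB_-\|_2^2\leq n\beta^2\log^{O(1)}(1/\epsilon)$ and $\|\matC_-\|_2^2\leq n\gamma^2\log^{O(1)}(1/\epsilon)$. For $\matA$ we split into two regimes. If $\alpha\geq\epsilon$ we likewise get $\|\matA_-\|_2^2\leq n\alpha^2\log^{O(1)}(1/\epsilon)$ directly. If $\alpha<\epsilon$, every negative coefficient $a_{ij}$ of $\matA$ automatically satisfies $|a_{ij}|\leq\alpha<\epsilon$, so the indicator $1_{|a_{ij}|<\epsilon}$ in Lemma~\ref{lem:level-1 inequality} captures all of $\matA_-$; applying that lemma with parameter $\epsilon$ (so that $\epsilon''=\epsilon$) gives the improved bound $\|\matA_-\|_2^2\leq(n-1)\alpha\epsilon\log^{O(1)}(1/\epsilon)$.

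Plugging these into the expansion of $\langle\matB\matA,\matC\rangle$ by sign patterns, applying Lemma~\ref{lem:Csing matrices} to bound the four non-positive contributions, and using product-freeness of $(A,B,C)$ together with Proposition~\ref{Prop:Most weight is on the first two levels}, we obtain an inequality $2\alpha\beta\gamma\leq E_1+\cdots+E_5$ in which every $E_i$ carries only a $\log^{O(1)}(1/\epsilon)$ factor. At least one $E_i$ is $\geq\tfrac{2}{5}\alpha\beta\gamma$; a short case analysis shows that in every case one deduces $\alpha\leq\log^{O(1)}(1/\epsilon)/(n\epsilon)$ after using $\beta,\gamma\geq\epsilon$. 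Specifically, the terms $\beta\sqrt{\alpha\gamma/n}\log^{O(1)}(1/\epsilon)$ and $\gamma\sqrt{\alpha\beta/n}\log^{O(1)}(1/\epsilon)$ give the bound directly (yielding $\alpha\gamma$ or $\alpha\beta\leq\log^{O(1)}(1/\epsilon)/n$), while the remaining terms yield it after combining with $\beta\gamma\geq\epsilon^2$.

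The main subtlety, and the reason it does not suffice to quote Lemma~\ref{lem:one sided Eberhard} as a black box, is the regime $\alpha<\epsilon$: one cannot just replace $\log(1/\alpha)$ by $\log(1/\epsilon)$ in the error term since $\log(1/\alpha)$ is the larger quantity. The fix is that in this regime we may widen the truncation threshold in the level-$1$ inequality from $\alpha$ to $\epsilon$ without discarding any negative coefficients, and the reshaped error term $\beta\gamma\sqrt{\alpha\epsilon/n}\log^{O(1)}(1/\epsilon)$ that results, combined with $\beta\gamma\geq\epsilon^2$ and $\epsilon<1/2$, still produces the claimed bound on $\alpha$.
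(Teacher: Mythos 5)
Your proof is correct, and the key observation you have isolated is genuine. The paper does not give an explicit proof of this corollary; it is introduced only with the remark that one can replace $\log^{O(1)}n$ by $\log^{O(1)}(1/\epsilon)$ in Corollary~\ref{cor:eberhard}. That remark glosses over precisely the issue you flag: in the regime $\alpha<\epsilon$ the factor $\log^{O(1)}(1/\alpha)$ in Lemma~\ref{lem:one sided Eberhard} cannot simply be replaced by $\log^{O(1)}(1/\epsilon)$, and the crude Parseval bound $\|\matA_-\|_2^2\le(n-1)\alpha$ is not strong enough to close the $E_3$ term (where $\|\matA_-\|_2$ is paired with $\|\matB_+\|_2$ and $\|\matC_+\|_2$). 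Your fix --- reapplying Lemma~\ref{lem:level-1 inequality} with truncation threshold $\epsilon$ rather than $\alpha$, noting that when $\alpha<\epsilon$ every negative coefficient (which has modulus $<\alpha$) is automatically captured, and accepting the weaker polynomial factor $\alpha\epsilon$ in exchange for the controlled $\log^{O(1)}(1/\epsilon)$ --- is exactly the right move, and your subsequent five-term case analysis together with $\beta\gamma\ge\epsilon^2$ does deliver the claimed bound. One minor cosmetic point: in a couple of the five cases you should note that if the resulting inequality forces $n\lesssim\log^{O(1)}(1/\epsilon)$ or $\epsilon n\lesssim 1$, the conclusion holds trivially since $\alpha\le 1/2<1$ (this is implicitly absorbed by the $O(1)$ constant in the exponent), but this is a standard bookkeeping matter and does not affect the substance of the argument.
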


\section{Dictatorial structure} \label{sec:dictate}

In this section we will expose the dictatorial structure
of product-free sets and triples that are not too sparse.
Throughout the remainder of the paper we adopt the following notation.
We let $A,B,C\subseteq S_{n}$, let $f=1_{A}, g=1_{B}, h=1_{C}$, and
\[
f^{=1}=\sum a_{ij}x_{i\to j}, \quad
g^{=1}=\sum_{i,j}b_{ij}x_{i\to j}, \quad
h^{=1}=\sum_{i,j}c_{ij}x_{i\to j}.
\]
We write $\matA=\left(a_{ij}\right),$ $\matB=\left(b_{ij}\right)$
and $\matC=\left(c_{ij}\right).$

Proposition \ref{Prop:Most weight is on the first two levels} shows
that in order to understand
\[
\mathbb{E}_{\sigma,\tau\sim S_{n}}\left[f\left(\sigma\right)g\left(\tau\right)h\left(\sigma\tau\right)\right]
\]
it is sufficient to understand the linear part
\[
\mathbb{E}_{\sigma,\tau\sim S_{n}}\left[f^{=1}\left(\sigma\right)g^{=1}\left(\tau\right)h^{=1}\left(\sigma\tau\right)\right],
\]
for which Lemma \ref{lem:convolution and parseval} gives the formula
\[
\mathbb{E}_{\sigma,\tau\sim S_{n}}\left[f^{=1}\left(\sigma\right)g^{=1}\left(\tau\right)h^{=1}\left(\sigma\tau\right)\right]=\frac{1}{\left(n-1\right)^{2}}\left\langle \matB\matA,\matC\right\rangle .
\]
We will decompose the matrix $\matA$ (and similarly $\matB,\matC$) into three parts:
\begin{enumerate}
\item The matrix $\matA_{-}$ contains the negative coefficients of $\matA$,
and so represents the negative correlations that $A$ has with dictators.
\item The matrix $\Apseudo$ contains the small positive coefficients of $\matA$,
and so represents the pseudorandom part of $f^{=1}.$
\item The matrix $\Astruc$ contains the large coefficients of $\matA$,
and so corresponds to the dictators with which $A$ is heavily correlated.
\end{enumerate}
We expand $\left\langle \matB\matA,\matC\right\rangle $
according to this decomposition and show that most
of the negative contributions come from the terms
\[
\left\langle \Bstruc\matA_{-},\Cstruc\right\rangle
+\left\langle \matB_{-}\Astruc,\Cstruc\right\rangle
+\left\langle \Bstruc\Astruc,\matC_{-}\right\rangle,
\]
namely those compatible triples of dictators
for which two of the matrices have a strong positive correlation
and the third has a negative correlation.

\subsection{Parameters}

The following parameters will be used throughout the remainder of the paper.
We let
\[  \mu\left(A\right)=\alpha, \quad
\mu\left(B\right)=\beta, \quad
\mu\left(C\right)=\gamma. \]
We fix $R$ much larger than all the absolute constants
implicitly appearing in our $O\left(1\right)$ notation
and suppose that $n$ is sufficiently large with respect to $R$.
We let
\[ \delta=\log^{-R}n, \quad
\epsilon_{A}=n\delta\alpha\min\left(\beta,\gamma\right), \quad
\epsilon_{B}=n\delta\beta\min\left(\alpha,\gamma\right), \quad
\epsilon_{C}=n\delta\gamma\min\left(\alpha,\beta\right).
\]
Note that in our exact result, which is the case of most interest,
we have $A=B=C$ and $\alpha=\beta=\gamma=\Theta\left(\frac{1}{\sqrt{n}}\right)$,
so $\epsilon_{A}=\epsilon_{B}=\epsilon_{C}=\Theta\left(\delta\right)$.

\subsection{Our decomposition}

For a matrix $M=\left(m_{ij}\right)$ and an interval $I\subseteq\mathbb{R}$
we write $M_{I}=\left(a_{ij}1_{a_{ij}\in I}\right).$
As mentioned above,
our idea is to decompose our matrix $\matA$ as the sum
\[
\mathrm{A}=\matA_{-}+\Astruc+\Apseudo,
\]
where $\matA_{-}=\matA_{\left(-\infty,0\right)},$
$\Apseudo=\matA_{\left(0,\epsilon_{A}\right)}$
and $\Astruc=\matA_{[\epsilon_{A},\infty)}$.
We decompose $\matB$ and $\matC$ similarly with
$\epsilon_{B}$ and $\epsilon_{C}$ replacing $\epsilon_{A}$.

As mentioned in the introduction, the key to our approach is to combine
Lemma \ref{lem:Csing matrices} with the following upper bounds on
$\|\Apseudo\|_{2}^{2}$ and $\|\matA_{-}\|_{2}^{2}$,
which will follow easily from our level-$1$ inequalities in the previous section.
\begin{lem}
\label{Lem:Upper bounding two norms of matrices} Let $A\subseteq S_{n}$
and let $\mathrm{A}_{-},\Astruc,\Apseudo$ as above. Then
\begin{enumerate}
\item $\|\mathrm{A}_{-}\|_{2}^{2}\le n\alpha^{2}\log^{O\left(1\right)}\left(1/\alpha\right)$.
\item $\|\Apseudo\|_{2}^{2}\le n\alpha\epsilon_{A}\log^{O\left(1\right)}\left(1/\alpha\right)$.
\item $\|\Astruc\|_{2}^{2}\le n\alpha.$
\end{enumerate}
Analogous statements hold for $B$ and $C$.
\end{lem}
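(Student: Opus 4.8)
All three estimates are routine consequences of the Parseval identity (Lemma~\ref{lem:Parseval }) and the level-$1$ inequality (Lemma~\ref{lem:level-1 inequality}), so the plan is mostly bookkeeping. I would dispose of part~(3) first. Since $f^{=1}=\sum_{i,j}a_{ij}x_{i\to j}$ is in normalized form, Lemma~\ref{lem:Parseval } gives $\|\matA\|_2^2=(n-1)\|f^{=1}\|_2^2$; as $f^{=1}$ is the orthogonal projection of $f=1_A$ onto $V_{=1}$ and $f$ is $\{0,1\}$-valued we have $\|f^{=1}\|_2^2\le\|f\|_2^2=\mathbb{E}[f]=\alpha$, hence $\|\matA\|_2^2\le(n-1)\alpha\le n\alpha$. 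Since $\Astruc$ is obtained from $\matA$ by zeroing some entries, $\|\Astruc\|_2^2\le\|\matA\|_2^2\le n\alpha$, which is part~(3).

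For parts~(1) and~(2) the one substantive point is to locate the entries being summed. By the explicit formula $a_{ij}=\tfrac{n-1}{n}(\mathbb{E}[f_{i\to j}]-\alpha)$ of Lemma~\ref{lem:Normalised form of f=00003D1} together with $\mathbb{E}[f_{i\to j}]\ge0$, every negative entry of $\matA$ satisfies $|a_{ij}|\le\tfrac{n-1}{n}\alpha<\alpha$; and by construction every entry of $\Apseudo$ lies in $(0,\epsilon_A)$. Hence
\[
\|\matA_-\|_2^2\le\sum_{i,j}a_{ij}^2\,1_{|a_{ij}|<\alpha},
\qquad
\|\Apseudo\|_2^2\le\sum_{i,j}a_{ij}^2\,1_{|a_{ij}|<\epsilon_A}.
\]
I would then apply Lemma~\ref{lem:level-1 inequality} with $\epsilon=\alpha$ to the first sum (admissible since $\alpha<\tfrac12$), giving $\epsilon''=\alpha$ and $\|\matA_-\|_2^2\le(n-1)\alpha^2\log^{O(1)}(1/\alpha)\le n\alpha^2\log^{O(1)}(1/\alpha)$, which is part~(1); and with $\epsilon=\epsilon_A$ to the second, giving $\|\Apseudo\|_2^2\le(n-1)\alpha\,\epsilon''\log^{O(1)}(1/\epsilon'')$ with $\epsilon''=\max(\epsilon_A,\alpha)$. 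Since $\log(1/\epsilon'')\le\log(1/\alpha)$, this is at most $n\alpha\max(\epsilon_A,\alpha)\log^{O(1)}(1/\alpha)$, which is the asserted bound $n\alpha\,\epsilon_A\log^{O(1)}(1/\alpha)$ in the regime $\epsilon_A\ge\alpha$ relevant to our applications (for instance $\epsilon_A=\Theta(\delta)\gg\alpha$ when $\alpha=\Theta(n^{-1/2})$). The statements for $B$ and $C$ follow by the identical argument with $\epsilon_B$ and $\epsilon_C$ in place of $\epsilon_A$.

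I do not anticipate any real obstacle: all the difficulty has already been discharged into Lemma~\ref{lem:level-1 inequality}, and hence into the global hypercontractive inequality of \cite{FKLM} (Theorem~\ref{thm:hyp}). The present lemma just records that the negative entries of $\matA$, and all entries of $\Apseudo$, are small, and does the minor bookkeeping needed to match the $\epsilon''$ produced by the level-$1$ inequality against the claimed dependence on $\epsilon_A$ and $\alpha$.
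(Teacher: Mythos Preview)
Your proposal is correct and follows exactly the paper's approach: part~(3) via Parseval and $\|f^{=1}\|_2^2\le\|f\|_2^2=\alpha$, parts~(1) and~(2) as immediate consequences of Lemma~\ref{lem:level-1 inequality}. The paper's own proof is a two-line citation of these lemmas; you have simply supplied the bookkeeping (in particular the observation $|a_{ij}|<\alpha$ for negative entries, and the handling of $\epsilon''=\max(\epsilon_A,\alpha)$), and your caveat that part~(2) as literally stated requires $\epsilon_A\ge\alpha$ is an honest remark about a harmless imprecision that the paper leaves implicit.
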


\begin{proof}
Statements (1) and (2) follow immediately from Lemma \ref{lem:level-1 inequality}.
Statement (3) follows from Lemma \ref{lem:Parseval }
and the fact that $\|f^{=1}\|_{2}^{2}\le\|f\|_{2}^{2}=\alpha$.
\end{proof}
We are now ready to show that the only significant negative contributions
to $\left\langle \matB\matA,\matC\right\rangle $ come from two structure
matrices and one negative coefficient matrix.
\begin{lem}
\label{lem:Being left with struc} We have
\begin{align*}
\left\langle \matB\matA,\matC\right\rangle  & \ge\left\langle \Bstruc\matA_{-},\Cstruc\right\rangle +\left\langle \matB_{-}\Astruc,\Cstruc\right\rangle +\left\langle \Bstruc\Astruc,\matC_{-}\right\rangle \\
 & +\sqrt{\delta}n^{2}(\log n)^{O\left(1\right)}\alpha\beta\gamma.
\end{align*}
\end{lem}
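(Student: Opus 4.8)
The plan is to expand $\langle \matB\matA,\matC\rangle$ using the decompositions $\matA=\matA_-+\Astruc+\Apseudo$, $\matB=\matB_-+\Bstruc+\Bpseudo$, $\matC=\matC_-+\Cstruc+\Cpseudo$. This produces $27$ terms of the form $\langle M N, S\rangle$ with $M\in\{\matB_-,\Bstruc,\Bpseudo\}$, $N\in\{\matA_-,\Astruc,\Apseudo\}$, $S\in\{\matC_-,\Cstruc,\Cpseudo\}$. We want to show that all of them except the three named ones are $\ge -\sqrt{\delta}\,n^2(\log n)^{O(1)}\alpha\beta\gamma$ in total, so that discarding them only costs the claimed error. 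Each of the $24$ other terms will be bounded in absolute value using Lemma \ref{lem:Csing matrices}, namely $|\langle MN,S\rangle|\le\|M\|_2\|N\|_2\|S\|_2$, together with the norm bounds from Lemma \ref{Lem:Upper bounding two norms of matrices}.

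The key observation is an accounting one: which norm bounds apply to the three factors of each term. From Lemma \ref{Lem:Upper bounding two norms of matrices}, the ``struc'' matrices satisfy only $\|\Astruc\|_2^2\le n\alpha$, the ``$-$'' matrices satisfy the stronger $\|\matA_-\|_2^2\le n\alpha^2\log^{O(1)}(1/\alpha)$, and the ``pseudo'' matrices satisfy $\|\Apseudo\|_2^2\le n\alpha\epsilon_A\log^{O(1)}(1/\alpha) = n^2\delta\alpha^2\min(\beta,\gamma)\log^{O(1)}(1/\alpha)$ (using $\epsilon_A = n\delta\alpha\min(\beta,\gamma)$). The main point is that every one of the $24$ discarded terms contains \emph{at least one} pseudo-factor or \emph{at least two} negative-factors: the only products of three matrices where all factors are either ``struc'' or include at most one ``$-$'' and no ``pseudo'' are precisely $\langle\Bstruc\Astruc,\Cstruc\rangle$ (all struc, a positive term, so safely discarded from below) and the three terms with exactly one ``$-$'' factor and the rest ``struc'', which are exactly the three retained terms (plus the three with one ``$-$'' in a different slot — but by the cyclic/transpose symmetry of $\langle MN,S\rangle = \langle M, SN^t\rangle$ etc., the three slots $\langle\Bstruc\matA_-,\Cstruc\rangle$, $\langle\matB_-\Astruc,\Cstruc\rangle$, $\langle\Bstruc\Astruc,\matC_-\rangle$ cover all such configurations). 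For a term with a pseudo-factor, say $\|\Apseudo\|_2\le n\sqrt{\delta\alpha^2\min(\beta,\gamma)}\log^{O(1)}(1/\alpha)$, pairing it with $\|\Bstruc\|_2\le\sqrt{n\beta}$ and $\|\Cstruc\|_2\le\sqrt{n\gamma}$ gives a bound $n^2\sqrt{\delta}\cdot\alpha\sqrt{\beta\gamma\min(\beta,\gamma)}\log^{O(1)}(1/\alpha)\le n^2\sqrt{\delta}\,\alpha\beta\gamma\log^{O(1)}n$ (since $\min(\beta,\gamma)\le\sqrt{\beta\gamma}$); one checks that replacing any struc-factor by a ``$-$'' or pseudo-factor only makes the bound smaller (each ``$-$'' or ``pseudo'' norm is smaller than the corresponding ``struc'' norm, up to the $\log$ and $\sqrt{\delta}$ factors). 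For a term with two or three ``$-$''-factors and no pseudo-factor, we get an extra factor of $\sqrt{\alpha\log^{O(1)}(1/\alpha)}$ or similar per extra ``$-$'', which since $\alpha,\beta,\gamma\le 1$ again produces a bound of the form $n^2\alpha\beta\gamma\log^{O(1)}n\cdot(\text{something}\le\sqrt\delta)$ once we recall $\delta=\log^{-R}n$ with $R$ chosen much larger than all implicit constants, so any fixed power of $\log n$ is absorbed into $\sqrt\delta\cdot\log^{O(1)}n$.

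So the skeleton is: (i) expand into $27$ terms; (ii) identify the three retained terms and note $\langle\Bstruc\Astruc,\Cstruc\rangle\ge 0$ is discarded harmlessly; (iii) for each of the remaining $23$ terms apply Lemma \ref{lem:Csing matrices} and the three norm bounds, checking in each case that the product is at most $\sqrt\delta\,n^2\alpha\beta\gamma\log^{O(1)}n$; (iv) sum, using that there are only a constant number of terms and $R$ was chosen large enough that $\sqrt\delta\log^{O(1)}n = \log^{-R/2+O(1)}n = o(1)\cdot$ — more precisely still of the form $\sqrt\delta\,n^2(\log n)^{O(1)}\alpha\beta\gamma$ after renaming the exponent. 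The main obstacle, and the only place requiring genuine care rather than bookkeeping, is step (iii): verifying that the worst case among the $23$ terms (which is the all-struc-but-one-pseudo configuration) really does come out to $\sqrt\delta\,n^2\alpha\beta\gamma\log^{O(1)}n$ and not something larger — this hinges on the precise choice $\epsilon_A=n\delta\alpha\min(\beta,\gamma)$ making $\|\Apseudo\|_2\|\Bstruc\|_2\|\Cstruc\|_2$ land exactly at the target, and on the inequality $\min(\beta,\gamma)\le\sqrt{\beta\gamma}$ to symmetrize. Everything else is routine substitution and the observation that $\log^{O(1)}(1/\alpha),\log^{O(1)}(1/\beta),\log^{O(1)}(1/\gamma)\le\log^{O(1)}n$ for the density range in question.
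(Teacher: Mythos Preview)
Your overall plan is right, but there is a genuine gap in step (iii). You only use the sign argument for the single term $\langle \Bstruc\Astruc,\Cstruc\rangle$, and then try to bound all $23$ remaining terms in absolute value. For the terms with a pseudo-factor but \emph{no} minus-factor (e.g.\ $\langle \Bstruc\Apseudo,\Cstruc\rangle$), your arithmetic step
\[
\alpha\sqrt{\beta\gamma\,\min(\beta,\gamma)} \;\le\; \alpha\beta\gamma
\quad\text{``since }\min(\beta,\gamma)\le\sqrt{\beta\gamma}\text{''}
\]
is false: from $\min(\beta,\gamma)\le\sqrt{\beta\gamma}$ you only get $\sqrt{\beta\gamma\,\min(\beta,\gamma)}\le(\beta\gamma)^{3/4}$, which is \emph{larger} than $\beta\gamma$ when $\beta\gamma<1$. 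Concretely, for $\alpha=\beta=\gamma\sim n^{-1/2}$ the norm product for this term is $\sim n^{2}\sqrt{\delta}\,n^{-5/4}$ while the target is $\sim n^{2}\sqrt{\delta}\,n^{-3/2}$, so you are off by $n^{1/4}$. The same issue arises for the terms with exactly two minus-factors and one struc-factor.

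The fix is exactly what the paper does: note that \emph{every} term with an even number of factors from $\{\matA_-,\matB_-,\matC_-\}$ is a sum $\sum_{i,j,k} b_{ij}a_{jk}c_{ik}$ in which each summand is a product of an even number of nonpositive and the rest nonnegative reals, hence $\ge 0$; discard all $14$ such terms for the lower bound. What remains are the three retained terms, the single $3$-minus term $\langle \matB_-\matA_-,\matC_-\rangle$ (bounded by $n^{3/2}\alpha\beta\gamma\log^{O(1)}n$), and the nine terms with exactly one minus-factor and at least one pseudo-factor. In the latter, the minus norm contributes an extra $\sqrt{\alpha}$ (or $\sqrt{\beta}$, $\sqrt{\gamma}$) which, combined with the $\sqrt{\min(\cdot,\cdot)}$ from the pseudo norm, produces the full $\alpha\beta\gamma$: e.g.\ for $\langle\Bpseudo\matA_-,\Cstruc\rangle$ one gets $n^{1.5}\sqrt{\epsilon_B}\,\alpha\sqrt{\beta\gamma}\log^{O(1)}n = n^{2}\sqrt{\delta}\,\alpha\beta\sqrt{\gamma\min(\alpha,\gamma)}\log^{O(1)}n \le n^{2}\sqrt{\delta}\,\alpha\beta\gamma\log^{O(1)}n$, since now $\min(\alpha,\gamma)\le\gamma$ gives the inequality in the right direction.
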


\begin{proof}
We expand the left hand side according to the decomposition
$\mathrm{A}=\matA_{-}+\Astruc+\Apseudo$ and similarly for $\matB$, $\matC$.
For a lower bound we can discard all the terms that involve
an even number of $\matA_{-},\matB_{-},\matC_{-}$ as those have a
non-negative contribution to $\left\langle \matB\matA,\matC\right\rangle$.
For the remaining terms not listed on the right hand side above we may
apply Lemmas \ref{lem:Csing matrices} and \ref{Lem:Upper bounding two norms of matrices}
to deduce that they have absolute value at most
\begin{align*}
 & <n^{1.5}(\log n)^{O\left(1\right)}\left(\left(\sqrt{\epsilon_{B}}+\sqrt{\epsilon_{C}}\right)\alpha\sqrt{\beta\gamma}\right)\\
 & +n^{1.5}(\log n)^{O\left(1\right)}\left(\left(\sqrt{\epsilon_{A}}+\sqrt{\epsilon_{C}}\right)\beta\sqrt{\alpha\gamma}\right)\\
 & +n^{1.5}(\log n)^{O\left(1\right)}\left(\left(\sqrt{\epsilon_{A}}+\sqrt{\epsilon_{B}}\right)\gamma\sqrt{\alpha\beta}\right)\\
 & +\alpha\beta\gamma n^{1.5} (\log n)^{O\left(1\right)}\\
 & \le (\log n)^{O\left(1\right)} \left(\sqrt{\delta}+\frac{1}{\sqrt{n}}\right)n^{2}\alpha\beta\gamma.\\
 & =\sqrt{\delta}n^{2}(\log n)^{O\left(1\right)} \alpha\beta\gamma
\end{align*}
 Here, the first three lines corresponds to terms such as $\left\langle \Bpseudo\matA_{-},\Cstruc\right\rangle $
and $\left\langle \Bpseudo\matA_{-},\Cpseudo\right\rangle $, while
the fourth line corresponds to the term $\left\langle \matB_{-}\matA_{-},\matC_{-}\right\rangle $.
The second inequality follows by plugging in the values of $\epsilon_{A},\epsilon_{B},\epsilon_{C}.$
\end{proof}

\section{Star structure} \label{sec:star}

In this section we will refine the dictatorial structure established
in the previous section to extract a strong star structure
that explains how some set or triple in $A_n$
can be quite dense yet product-free.
These stability results will then be refined by bootstrapping
arguments in the next section to deduce our exact and strong stability results.

\subsection*{Equivalence and inversion}

The equation $ab=c$ can be written in $6$ equivalent ways, e.g.\
we may write $ca^{-1}=b$ and $b^{-1}a^{-1}=c^{-1}$.
Thus if the triple $\left(A,B,C\right)$ is product-free
then we have $6$ equivalent product-free triples such as
$\left(C,A^{-1},B\right)$ and $\left(B^{-1},A^{-1},C^{-1}\right)$.
The structure explaining this product-freeness
may appear in any of $6$ different forms,
so to avoid cumbersome statements,
we will say that a certain structural statement
for $(A,B,C)$ holds \emph{up to equivalence}
if it holds when $(A,B,C)$ is replaced
by one of its $6$ equivalent triples.
Similarly, for a single product-free set $A$,
the structural statement for $A$ may apply to $A$ or $A^{-1}$,
so we will say that it holds \emph{up to inversion}.

\subsection{Goals of this section}

Our first main result of this section will show that
any product-free set has a strong star structure
under a fairly mild assumption on its density
(recall that $\delta^{-1} = \log^R n$).
\begin{prop}
\label{prop:Explanation for product-freeness}
Suppose that $A$ is product-free
with $\mu\left(A\right)\ge\delta^{-2}n^{-2/3}$.
Then up to inversion there exist $x\in\left[n\right]$
and $I\subseteq\left[n\right]$ such that
\[
\mu\left(A\setminus1_{x\to I}\right)\le O(\dD^{-2}) n^{-\frac{2}{3}}.
\]
 Moreover, for each $i\in I$ we have $\mu\left(A_{x\to i}\right)\ge n^{-1/3}.$
\end{prop}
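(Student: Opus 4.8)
The plan is to bootstrap from the dictatorial structure established in Lemma~\ref{lem:Being left with struc}, specialized to the case $A=B=C$ (so $\alpha=\beta=\gamma=\mu(A)$, and $\matA=\matB=\matC$, and $\epsilon_A=\epsilon_B=\epsilon_C=\Theta(\delta)$). Since $A$ is product-free, $\inner f{\mathrm{L}_f f}=0$, so by Proposition~\ref{Prop:Most weight is on the first two levels} the linear term $\frac{1}{(n-1)^2}\inner{\matA^2}{\matA}$ must cancel the main term $2\alpha^3$ up to an error of order $\frac{e}{n}\sqrt{\alpha^3}$, which is $o(\alpha^3)$ when $\alpha \gg n^{-2/3}$. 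Hence $\inner{\matA^2}{\matA} \le -(1-o(1))\,2(n-1)^2\alpha^3$, and Lemma~\ref{lem:Being left with struc} forces
\[
\inner{\Astruc \matA_- }{\Astruc} + \inner{\matA_- \Astruc}{\Astruc} + \inner{\Astruc^2}{\matA_-} \le -\bigl(2 - O(\sqrt{\delta}\log^{O(1)}n)\bigr) n^2 \alpha^3.
\]
In particular $\|\Astruc\|_2^2 \|\matA_-\|_2^2 \gtrsim n^4\alpha^6 / \|\Astruc\|_2^2$ via Lemma~\ref{lem:Csing matrices}, and combined with the bounds $\|\Astruc\|_2^2 \le n\alpha$ and $\|\matA_-\|_2^2 \le n\alpha^2\log^{O(1)}(1/\alpha)$ from Lemma~\ref{Lem:Upper bounding two norms of matrices}, this pins down $\|\Astruc\|_2^2 = \Theta(n\alpha)$ up to polylog factors: the structural part carries essentially all the weight of $f^{=1}$, and $A$ is heavily correlated with a substantial family of dictators.

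The next step is to extract combinatorial structure from the matrix $\Astruc$, whose entries are all $\ge \epsilon_A = \Theta(\delta)$. Writing $\Astruc$ as a nonnegative matrix supported on a set of pairs $(i,j)$, the inequality $\inner{\Astruc^2}{\matA_-} \lesssim -n^2\alpha^3$ says that there is a large "mass" of paths $i\to j\to k$ through the support of $\Astruc$ such that $a_{ik}$ is substantially negative. Since every entry of $\Astruc$ is at least $\delta$ while $\sum_{ij} a_{ij}^2 \le (n-1)\alpha$, the support of $\Astruc$ has size at most $O(n\alpha/\delta^2)$, and moreover (using $\sum_j a_{ij} = 0$ from the normalized form, i.e.\ each row and column sums to zero) a row $i$ with large $\sum_j a_{ij}^2$ contribution has its positive mass concentrated on relatively few columns. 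The goal is to show that the support of $\Astruc$ is, up to a small number of exceptional pairs, contained in a "star" pattern $\{(x,j) : j \in I\}$ for a single row index $x$ (or, after transposing, a single column — this is the "up to inversion" dichotomy). I would argue this by a counting/Cauchy--Schwarz argument on the path structure: if the support were spread over many rows and many columns without a dominant row or column, the negative contributions $a_{ik}$ could not be made large enough in absolute value (this is where $a_{ij} \ge \epsilon_A$, which controls path multiplicities, is essential), contradicting the lower bound $\|\Astruc\|_2^2 = \Theta(n\alpha)$ together with the cancellation requirement.

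Once we know that the mass of $\Astruc$ lies (up to inversion) in row $x$ on a column set $I$, we translate back: $a_{xj} \ge \epsilon_A = \Theta(\delta)$ for $j \in I$ means $\mu(A_{x\to j}) - \mu(A) \ge \frac{n}{n-1}\epsilon_A$, so in particular $\mu(A_{x\to i}) \gtrsim \delta \gg n^{-1/3}$ for each $i\in I$, giving the "moreover" clause once we slightly enlarge/trim $I$ to exactly the set of $i$ with $\mu(A_{x\to i}) \ge n^{-1/3}$; the few pairs outside the star pattern contribute only $O(\alpha \cdot \text{(small count)})$ to $\|f^{=1}\|_2^2$, and hence only a small amount of measure of $A$ can lie outside $\bigcup_{i\in I} \mathcal D_{x\to i} = 1_{x\to I}$. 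To make "$\mu(A\setminus 1_{x\to I})$ is small" quantitative at the level $O(\delta^{-2})n^{-2/3}$, I would bound $\mu(A \setminus 1_{x\to I})$ by noting that mass of $A$ on dictators $\mathcal{D}_{x\to j}$ with $j\notin I$ contributes to the small positive / negative parts of the matrix, whose total weight is controlled by Lemma~\ref{Lem:Upper bounding two norms of matrices}, and crucially by re-running the product-free constraint on the restriction (via the transformation to triples in $S_{n-1}$ described in subsection~\ref{subsec:restrict} and Corollary~\ref{cor:unbalanced eberhard}) to kill any residual mass. The main obstacle I anticipate is precisely this last quantitative step: upgrading the "support of $\Astruc$ is essentially a star" statement into the sharp measure bound $O(\delta^{-2})n^{-2/3}$ on $\mu(A\setminus 1_{x\to I})$, since a naive bound only controls the $\ell^2$-mass of the deviation and one must feed the product-free hypothesis back in — once — to convert that into an $\ell^1$/measure bound at the right scale. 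The earlier steps (the spectral cancellation and the norm bounds) are essentially bookkeeping with tools already in hand; the combinatorial "one dominant row" dichotomy and this final conversion are where the real work lies.
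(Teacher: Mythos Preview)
Your overall direction is right --- the linear term must cancel the main term, and the structural matrix $\Astruc$ must carry essentially all the weight --- but there are two genuine gaps where your sketch diverges from what actually works.

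First, the ``one dominant row'' step. You propose a counting/Cauchy--Schwarz argument on paths $i\to j\to k$ in the support of $\Astruc$, but this does not obviously force concentration on a single row, and the support bound $O(n\alpha/\delta^2)$ is too crude by itself. The paper's route is different and cleaner: using $(\matA_-)_{jk}\ge -\alpha$ entrywise, each of the three structural terms is bounded below by $-\alpha$ times a quadratic in the \emph{row and column sums} $s_A(i)=\tfrac{1}{n-1}\sum_{j}(\Astruc)_{ij}$ and $s'_A(i)=\tfrac{1}{n-1}\sum_{j}(\Astruc)_{ji}$, yielding
\[
\alpha^3\bigl(1-O(\delta^{1/4})\bigr)\ \le\ \alpha\sum_i\bigl(s_A(i)^2+s_A(i)s'_A(i)+s'_A(i)^2\bigr).
\]
A separate combinatorial argument (the associated stars and inverse stars are essentially disjoint) gives $\sum_i\bigl(s_A(i)+s'_A(i)\bigr)\le(1+O(\delta))\alpha$. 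The key point is then the elementary inequality: for nonnegative vectors $v,u$ with $\|v\|_1+\|u\|_1=1$ and all entries at most $1-\zeta$, one has $\|v\|_2^2+\|u\|_2^2+\langle v,u\rangle\le 1-\zeta(1-\zeta)$. This immediately forces some single $s_A(x)$ or $s'_A(x)$ to equal $(1-O(\delta^{1/4}))\alpha$. Your path-counting heuristic does not reproduce this reduction to row/column sums and the quadratic-form inequality.

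Second, the final conversion. After finding the dominant star $S=S_A(x)$, you only know $\mu(A\setminus S)\le O(\delta^{1/4})\alpha$, which for $\alpha\approx n^{-1/2}$ is far from $O(\delta^{-2})n^{-2/3}$. The paper does not use restriction transforms or Corollary~\ref{cor:unbalanced eberhard} here; instead it sets $I=\{i:\mu(A_{x\to i})\ge n^{-1/3}\}$, $C:=A\setminus 1_{x\to I}$, and applies the \emph{cross} stability (Proposition~\ref{prop:Explanation for cross product-freeness}) to the product-free triple $(A,A,C)$. If $\mu(C)>\delta^{-2}n^{-2/3}$ then the hypotheses of that proposition are met, and its conclusion forces (after ruling out the other equivalence cases using the fact that $A$ already has a dominant star at $x$) an associated star $S_C(x)$ for $C$ with $\mu(C_{x\to j})\ge\epsilon_C>n^{-1/3}$ for some $j$ --- contradicting the definition of $I$. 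Your proposed route controls individual dictator densities, not the total residual measure, and would not close this gap.
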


Our second main result of the section describes the star structure
for product-free triples under mild density assumptions:
up to equivalence $B$ and $C$ must be strongly correlated
with stars at some common vertex $x$.

\begin{prop}
\label{prop:Explanation for cross product-freeness}
Suppose that $\left(A,B,C\right)$ is product-free with
\begin{equation} \label{eq:5.2}
\alpha\min\left(\beta,\gamma\right)^{2},\beta\min\left(\alpha,\gamma\right)^{2},\gamma\min\left(\alpha,\beta\right)^{2}\ge\delta^{-5}n^{-2}.
\end{equation}
Then up to equivalence there exist $x\in\left[n\right]$
and $I,J\subseteq\left[n\right]$ such that the following hold:
\begin{enumerate}
\item We have $\mu_{B}\left(1_{x\to I}\right)\ge\frac{1}{100}$
and $\mu_{C}\left(1_{x\to J}\right)\ge\frac{1}{100}$.
\item For each $i\in I$ and $j\in J$ we have $\mu\left(B_{x\to i}\right)\ge\epsilon_{B}$
and $\mu\left(C_{x\to j}\right)\ge\epsilon_{C}.$
\end{enumerate}
\end{prop}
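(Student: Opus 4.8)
The plan is to start from the dictatorial inequality already isolated in Lemma \ref{lem:Being left with struc}, combined with the lower bound on the number of products from Proposition \ref{Prop:Most weight is on the first two levels}. Since $(A,B,C)$ is product-free, $\mathbb{E}_{\sigma,\tau}[f(\sigma)g(\tau)h(\sigma\tau)]=0$, so $2\alpha\beta\gamma + \frac{2}{(n-1)^2}\langle\matB\matA,\matC\rangle$ is negative up to an $O(\sqrt{\alpha\beta\gamma}/n)$ error; under hypothesis \eqref{eq:5.2} this error is negligible compared to $\alpha\beta\gamma$, so we get $\langle\matB\matA,\matC\rangle \le -(1-o(1))(n-1)^2\alpha\beta\gamma$. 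Feeding this into Lemma \ref{lem:Being left with struc} (whose error term $\sqrt{\delta}n^2(\log n)^{O(1)}\alpha\beta\gamma$ is also negligible, since $R$ was chosen large) forces
\[
\langle\Bstruc\matA_{-},\Cstruc\rangle + \langle\matB_{-}\Astruc,\Cstruc\rangle + \langle\Bstruc\Astruc,\matC_{-}\rangle \le -\Omega(n^2\alpha\beta\gamma).
\]
Hence at least one of these three inner products is $\le -\Omega(n^2\alpha\beta\gamma)$; up to equivalence (cycling $A,B,C$ and passing to inverses, which transposes the matrices) we may assume it is $\langle\Bstruc\Astruc,\matC_{-}\rangle$ that is very negative, i.e. $|\langle\Bstruc\Astruc,\matC_{-}\rangle| \ge \Omega(n^2\alpha\beta\gamma)$.

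The next step is to extract from this a single column (i.e. a single value $x$) carrying most of the mass. Write out $\langle\Bstruc\Astruc,\matC_{-}\rangle = \sum_{i,j,k} (\matB_{\mathrm{struc}})_{ij}(\matA_{\mathrm{struc}})_{jk}(\matC_{-})_{ik}$. The entries of $\Bstruc,\Astruc$ are nonnegative and those of $\matC_{-}$ nonpositive; the matching of indices means that a large contribution comes from triples of dictators $\mathcal{D}_{j\to k}\cdot\mathcal{D}_{i\to j}=\mathcal{D}_{i\to k}$. I would group the sum according to the ``pivot'' index $j$ (which plays the role of $x$ in both $1_{x\to I}$ for $B$ and implicitly for $C$ after accounting for the composition). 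Using Lemma \ref{lem:Csing matrices} on the $j$-th slice together with the norm bounds of Lemma \ref{Lem:Upper bounding two norms of matrices} — in particular $\|\Cstruc\|_2^2\le n\gamma$ (really $\|\matC_-\|_2^2 \le n\gamma^2\log^{O(1)}(1/\gamma)$) and the rows of $\Astruc$, $\Bstruc$ — a dyadic/averaging argument should pin down one value $x$ such that the rows/columns of $\Astruc$ and $\Bstruc$ indexed by $x$ already account for an $\Omega(1)$-fraction of the total. Concretely, let $I=\{i:(\matB_{\mathrm{struc}})_{x i}\ne 0\}=\{i:\mu(B_{x\to i})\ge\epsilon_B\}$ and similarly define $J$ from $\Astruc$'s row at $x$ composed appropriately, which immediately gives part (2) of the conclusion by the very definition of the $\mathrm{struc}$ thresholds $\epsilon_B,\epsilon_C$.

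For part (1), $\mu_B(1_{x\to I})\ge\frac{1}{100}$, I would argue as follows: by Lemma \ref{lem:Normalised form of f=00003D1}, $b_{xi} = \frac{n-1}{n}(\mu(B_{x\to i})-\beta)$, so $\sum_{i\in I} b_{xi} = \frac{n-1}{n}(\mu_B(1_{x\to I})\cdot(\text{something}) - |I|\beta)$ after unpacking; more usefully, $\sum_{i\in I}\mu(B_{x\to i}) = \mu_B(1_{x\to I})$ by definition of the star, and since each $i\in I$ has $\mu(B_{x\to i})\ge\epsilon_B$ while $i\notin I$ contributes little, the bound $\mu_B(1_{x\to I})\ge\frac{1}{100}$ will follow once we know the $x$-row of $\Bstruc$ carries enough $\ell_1$-mass — which is exactly what the averaging step delivers, after converting $\ell_2$-information to $\ell_1$-information using the uniform upper bound $|b_{xi}|\le 1$ (entries are bounded by $\frac{n-1}{n}$) so that $\|(\text{row})\|_1 \ge \|(\text{row})\|_2^2$.

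The main obstacle I anticipate is the bookkeeping in the averaging/pivoting step: the three-matrix inner product $\langle\Bstruc\Astruc,\matC_{-}\rangle$ does not factor through a single index, and one must be careful that localizing to one $x$ does not lose more than a constant factor — a naive Cauchy–Schwarz across the pivot index $j$ can cost a factor of $n$, which is fatal. The right move is presumably to observe that the *negative* matrix $\matC_-$ has small $\ell_2$-norm ($\|\matC_-\|_2^2 \le n\gamma^2\log^{O(1)}(1/\gamma)$, much smaller than $n\gamma$), so a large value of $\langle\Bstruc\Astruc,\matC_-\rangle$ in fact forces $\Bstruc\Astruc$ itself to be large in a concentrated way, and then the rank-one-per-pivot structure of matrix multiplication ($\Bstruc\Astruc = \sum_j (\text{col}_j \Bstruc)(\text{row}_j \Astruc)^{\top}$) lets one find the dominant $j=x$. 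Making this quantitative, with the logarithmic factors in $\delta = \log^{-R}n$ absorbing all the $\log^{O(1)}n$ losses, is the crux; everything else is substitution of the parameter values $\epsilon_A,\epsilon_B,\epsilon_C$ and unwinding definitions.
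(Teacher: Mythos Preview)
Your opening move is right: from product-freeness, Proposition~\ref{Prop:Most weight is on the first two levels} and Lemma~\ref{lem:Being left with struc} do force one of the three ``two struc, one negative'' terms to be $\le -\Omega(n^2\alpha\beta\gamma)$. But the pivoting step that follows has a genuine gap, and the workaround you sketch does not close it.

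Concretely, after you bound $(\matC_-)_{ik}\ge -\gamma$ pointwise (this is Lemma~\ref{lem:Simple lower bound}), the inequality $|\langle\Bstruc\Astruc,\matC_-\rangle|\ge\Omega(n^2\alpha\beta\gamma)$ becomes
\[
\sum_{j} s_A(j)\,s'_B(j)\;\ge\;\Omega(\alpha\beta),
\qquad
s_A(j)=\tfrac{1}{n-1}\sum_k(\Astruc)_{jk},\quad
s'_B(j)=\tfrac{1}{n-1}\sum_i(\Bstruc)_{ij}.
\]
To extract a single $j=x$ with $s_A(x)\ge\alpha/100$ and $s'_B(x)\ge\beta/100$, you need an $\ell_1$ bound of the form $\sum_j s_A(j)\lesssim\alpha$ and $\sum_j s'_B(j)\lesssim\beta$ (otherwise the mass could be spread over many $j$ with no common large coordinate). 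This $\ell_1$ bound is \emph{false} for $\Astruc$ in general: one can have $\|\Astruc\|_1$ as large as $\Theta(n^2\alpha)$, e.g.\ when every row has many entries just above the threshold $\epsilon_A$. Neither the smallness of $\|\matC_-\|_2$ nor the rank-one expansion $\Bstruc\Astruc=\sum_j(\text{col}_j\Bstruc)(\text{row}_j\Astruc)$ gives you this control.

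The paper obtains the $\ell_1$ bound by inserting two steps you skip. First (Lemma~\ref{lem:reducing to star structure}), it removes the rows of $\Astruc$ with small row-sum (``small associated stars''), using the level-$1$ inequality dyadically to show these rows contribute negligibly to $\langle\Astruc\vec{1},\Bstruc^t\vec{1}\rangle$; this replaces $\Astruc$ by the star-structure matrix $\matA_\star$. Second (Lemmas~\ref{lem:associated stars are nearly disjoint} and~\ref{lem:l1-bound}), a combinatorial argument shows that the remaining \emph{large} associated stars are nearly disjoint, yielding $\|\matA_\star\|_1+\|\matA'_\star\|_1\le(1+\delta)(n-1)\alpha$. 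Only then does the pigeonhole of Lemma~\ref{lem:finding a very large associated star } produce the common centre $x$. Your proposal needs both of these ingredients; the ``dyadic/averaging'' you allude to is pointing at the first, but the second (star near-disjointness) is an independent combinatorial fact that is not implied by any of the matrix-norm bounds in Lemma~\ref{Lem:Upper bounding two norms of matrices}.

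A minor separate issue: grouping $\langle\Bstruc\Astruc,\matC_-\rangle$ by the pivot $j$ picks out the $j$th \emph{column} of $\Bstruc$ and the $j$th \emph{row} of $\Astruc$, so the stars you find are an inverse star for $B$ and a star for $A$, not stars for $B$ and $C$; this is fine up to equivalence, but your explicit formula $I=\{i:(\Bstruc)_{xi}\ne 0\}$ takes a row where a column is meant.
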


\subsection{Associated stars}

We introduce some further notation that will be used throughout
the remainder of the paper to describe the stars associated
to the structured parts of $A$, $B$ and $C$. For each $i \in [n]$,
we define the \emph{associated star} for $(A,i)$ by
\[ S_{A}(i) = 1_{i\to L_{A}\left(i\right)},
\quad \text{ where } L_{A}\left(i\right)=\{j: a_{ij}>\epsilon_{A}\}. \]
(Recall that $\epsilon_{A},\epsilon_{B},\epsilon_{C}$
were defined in the previous section.)

Similarly,
we define the \emph{associated inverse star} for $(A,i)$ by
\[ S_{A}'(i) = 1_{L_{A}'\left(i\right) \to i},
\quad \text{ where } L_{A}'\left(i\right)=\{j: a_{ji}>\epsilon_{A}\}. \]
We write
\[
s_{A}\left(i\right)=\frac{\sum_{j\in L_{i}\left(A\right)}a_{ij}}{n-1}
\quad \text{ and } \quad
s_{A}'\left(i\right)=\frac{\sum_{j\in L_{A}'\left(i\right)}a_{ji}}{n-1}.
\]

We may interpret $s_{A}\left(i\right)$ and $s'_{A}\left(i\right)$
combinatorially as the correlation between $A$ and the corresponding
associated (inverse) star by noting that
\[
s_{A}\left(i\right)=\mu\left(A\cap S_{A}\left(i\right)\right)-\mu\left(A\right)\mu\left(S_{A}\left(i\right)\right)
\]
and similarly for $s_{A}'\left(i\right)$.
We also define the corresponding notions for $B$ and $C$ similarly.
We say that an associated star $S_{A}\left(i\right)$
is \emph{small }if $s_{A}\left(i\right)\le\delta\mu\left(A\right)$,
or otherwise we say that it is \emph{large}.
We use this terminology for associated stars
and associated inverse stars for each of $A,B,C$.

\subsection{Overview of proof} \label{sec:starover}

We will now give an overview of the arguments used to
extract star structure from the inequalities discussed above.
For simplicity in the overview we will concentrate
on the case of a single product-free set $A$,
which is analysed by applying the inequalities with $A=B=C$.

For terms such as $\left\langle \Astruc\matA_{-},\Astruc\right\rangle$
we use the fact that each coefficient of $\mathrm{A}_{-}$ is at least $-\alpha$,
which provides the following lower bound in terms of the associated stars:
\[
\frac{1}{\left(n-1\right)^{2}}\left\langle \Astruc\matA_{-},\Astruc\right\rangle \ge-\alpha\sum_{i=1}^{n}s_{A}\left(i\right)^{2}.
\]
Summing over all the similar terms,
we thus reduce our goal to a lower bound
\begin{equation}
-\alpha\sum_{i=1}^{n} \left( s_{A}\left(i\right)^{2} + s_{A}\left(i\right)s_{A}'\left(i\right)
+ s_{A}'\left(i\right)^{2} \right)
\ge -\alpha^3 + X,
\label{eq:associated stars}
\end{equation}
where $X$ dominates the error terms unless $A$ has strong star structure.

There will be two main ingredients in this bound.
The first ingredient concerns getting rid of the small associated stars:
we use the level-$1$ inequality (Lemma \ref{lem:level-1 inequality})
to show that the $i$th contribution to (\ref{eq:associated stars}) is negligible
unless either $S_{A}\left(i\right)$ or $S_{A'}\left(i\right)$ is large.

The second ingredient provides a combinatorial analysis
of large associated (inverse) stars, which is motivated
by the heuristic that such stars should be essentially disjoint.
Writing $s_A := \sum_i s_A(i)$ and $s'_A = \sum_i s_A'(i)$,
we should therefore expect $s_A + s'_A$ to be essentially bounded by $\alpha$.
In terms of the rescaled star sizes
\[ v_i := \frac{s_A(i)}{s_A + s'_A}
\quad \text{ and } \quad
v_i' := \frac{s_A'(i)}{s_A + s'_A},\]
our goal is then essentially reduced to showing that
\[
\sum_{i=1}^{n}v_{i}^{2}+v_{i}v_{i}'+v_{i}'^{2}\le 1,
\]
and moreover that if the sum is close to $1$
then some $v_{i}$ or $v_{i}'$ is close to $1$,
which is equivalent to the required
approximation of $A$ by an associated star or inverse star.

\subsection{Relating significant negative contributions to associated stars}

We start the implementation of the above overview
by expressing the significant negative contributions
to $\left\langle \matB\matA,\matC\right\rangle$
(as described in Lemma \ref{lem:Being left with struc})
in terms of associated stars.

We write $\overrightarrow{1}$ for the all-ones vector in $\mathbb{R}^{n}$.
The following inequalities are immediate from the facts that
each coefficient $\matA_{-}$ is at least $-\aA$, and similarly for $B$ and $C$,
using the identity
$\left\langle
X\overrightarrow{1},
Y\overrightarrow{1}
\right\rangle
= \sum_{i,j,k} x_{ij} y_{ik}$
for any matrices $X=(x_{ij})$, $Y=(y_{ij})$.

\begin{lem}
\label{lem:Simple lower bound}
We have the following inequalities.
\[
\frac{1}{\left(n-1\right)^{2}}\left\langle \matB_{-}\Astruc,\Cstruc\right\rangle \ge-\frac{\beta}{\left(n-1\right)^{2}}\left\langle \Astruc^{t}\overrightarrow{1},\Cstruc^{t}\overrightarrow{1}\right\rangle =-\beta\sum_{i=1}^{n}s_{A}'\left(i\right)s_{C}'\left(i\right)
\]
\[
\frac{1}{\left(n-1\right)^{2}}\left\langle \Bstruc\Astruc,\matC_{-}\right\rangle \ge\frac{-\gamma}{\left(n-1\right)^{2}}\left\langle \Astruc\overrightarrow{1},\Bstruc^{t}\overrightarrow{1}\right\rangle =-\gamma\sum_{i=1}s_{A}\left(i\right)s_{B}'\left(i\right)
\]
\[
\frac{1}{\left(n-1\right)^{2}}\left\langle \Bstruc\matA_{-},\Cstruc\right\rangle \ge\frac{-\alpha}{\left(n-1\right)^{2}}\left\langle \Bstruc\overrightarrow{1},\Cstruc\overrightarrow{1}\right\rangle =-\alpha\sum_{i=1}s_{B}\left(i\right)s_{C}\left(i\right)
\]
\end{lem}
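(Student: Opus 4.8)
The plan is to prove all three inequalities by the same elementary mechanism: expand the relevant matrix triple product in coordinates, use that the unique negative‑coefficient matrix appearing in each expression is bounded entrywise below by a constant ($-\alpha$, $-\beta$ or $-\gamma$) while the two ``struc'' matrices have nonnegative entries, and then recognise the leftover sum as an inner product of the form $\left\langle X\overrightarrow{1},Y\overrightarrow{1}\right\rangle$ via the stated identity $\left\langle X\overrightarrow{1},Y\overrightarrow{1}\right\rangle=\sum_{i,j,k}x_{ij}y_{ik}$.

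First I would record the entrywise lower bound. Since $f=1_{A}\ge 0$, Lemma~\ref{lem:Normalised form of f=00003D1} gives $a_{ij}=\tfrac{n-1}{n}\left(\mathbb{E}[f_{i\to j}]-\mathbb{E}[f]\right)\ge-\tfrac{n-1}{n}\alpha\ge-\alpha$, so every entry of $\matA_{-}$ lies in $[-\alpha,0]$; likewise every entry of $\matB_{-}$ is at least $-\beta$ and every entry of $\matC_{-}$ is at least $-\gamma$. Also, by construction every entry of $\Astruc,\Bstruc,\Cstruc$ is nonnegative, and $(\Astruc\overrightarrow{1})_i=\sum_j(\Astruc)_{ij}=(n-1)s_{A}(i)$ while $(\Astruc^{t}\overrightarrow{1})_i=\sum_j(\Astruc)_{ji}=(n-1)s_{A}'(i)$, with analogous identities for $B$ and $C$.

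For the first inequality I would write $\left\langle\matB_{-}\Astruc,\Cstruc\right\rangle=\sum_{i,j,k}(\matB_{-})_{ij}(\Astruc)_{jk}(\Cstruc)_{ik}$; since $(\Astruc)_{jk},(\Cstruc)_{ik}\ge 0$ and $(\matB_{-})_{ij}\ge-\beta$, each summand is at least $-\beta(\Astruc)_{jk}(\Cstruc)_{ik}$, so $\left\langle\matB_{-}\Astruc,\Cstruc\right\rangle\ge-\beta\sum_{i,j,k}(\Astruc)_{jk}(\Cstruc)_{ik}$. Contracting over $i$ and over $j$ turns this into $-\beta\left\langle\Astruc^{t}\overrightarrow{1},\Cstruc^{t}\overrightarrow{1}\right\rangle=-\beta(n-1)^{2}\sum_i s_{A}'(i)s_{C}'(i)$, which after dividing by $(n-1)^{2}$ is exactly the claim. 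The other two cases are identical up to which matrix slot carries the negative matrix: for $\left\langle\Bstruc\Astruc,\matC_{-}\right\rangle=\sum_{i,j,k}(\Bstruc)_{ij}(\Astruc)_{jk}(\matC_{-})_{ik}$ one factors out $-\gamma$ and contracts over $i$ and $k$ to obtain $\left\langle\Astruc\overrightarrow{1},\Bstruc^{t}\overrightarrow{1}\right\rangle=(n-1)^{2}\sum_i s_{A}(i)s_{B}'(i)$; for $\left\langle\Bstruc\matA_{-},\Cstruc\right\rangle=\sum_{i,j,k}(\Bstruc)_{ij}(\matA_{-})_{jk}(\Cstruc)_{ik}$ one factors out $-\alpha$ and contracts over $j$ and $k$ to obtain $\left\langle\Bstruc\overrightarrow{1},\Cstruc\overrightarrow{1}\right\rangle=(n-1)^{2}\sum_i s_{B}(i)s_{C}(i)$.

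There is no substantive obstacle here; the entire content is the sign bound on $\matA_{-},\matB_{-},\matC_{-}$ together with nonnegativity of the struc matrices. The one thing requiring genuine care is the bookkeeping of the contractions: which index is summed out in each case, and hence whether a matrix or its transpose appears in the resulting inner product, and correspondingly whether the star size $s$ or the inverse‑star size $s'$ shows up. I would verify each of the three cases directly against the coordinate identity $\left\langle XY,Z\right\rangle=\sum_{i,j,k}x_{ij}y_{jk}z_{ik}$ to be sure the transposes land correctly.
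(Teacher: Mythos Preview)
Your proof is correct and is exactly the argument the paper has in mind: the paper states that the inequalities are immediate from the entrywise bound $(\matA_{-})_{ij}\ge-\alpha$ (and similarly for $B,C$) together with the identity $\left\langle X\overrightarrow{1},Y\overrightarrow{1}\right\rangle=\sum_{i,j,k}x_{ij}y_{ik}$, and you have spelled out precisely this computation, including the index bookkeeping linking $\Astruc\overrightarrow{1},\Astruc^{t}\overrightarrow{1}$ to $(n-1)s_A(i),(n-1)s_A'(i)$.
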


\subsection{Small associated stars have a small contribution}

We will now bound the terms appearing
in Lemma \ref{lem:Simple lower bound} by replacing the matrices
such as $\Astruc$ that represent the dictatorial structure of $A$
with certain matrices $\matA_{\star}$ that represent the star structure of $A$.

We let $\matA_{\star}$ be obtained from $\Astruc$ as follows. Each
row $e_{i}\Astruc$ of $\Astruc$ either corresponds to a large associated
star if the sum of its coefficients is $>\delta$ and otherwise it
corresponds to a small associated star. We let $\matA_{\star}$ be
obtained from $\Astruc$ by replacing all the rows that correspond
to small associated stars with $0$. We define $\matB_{\star}$
and $\matC_{\star}$ similarly.

Similarly, we also define matrices $\matA'_{\star},\matB_{\star}',\matC_{\star}'$
that correspond to large associated inverse stars. We let $\matA_{\star}'$
be obtained from $\Astruc^{t}$ by replacing its rows that correspond
to small associated inverse stars with 0. We define $\matB'_{\star}$
and $\matC'_{\star}$ similarly.

The following lemma will be used to replace the terms
$\left\langle \Astruc^{t}\overrightarrow{1},\Cstruc^{t}\overrightarrow{1}\right\rangle ,$
$\left\langle \Astruc\overrightarrow{1},\Bstruc^{t}\overrightarrow{1}\right\rangle ,$
and $\left\langle \Bstruc\overrightarrow{1},\Cstruc\overrightarrow{1}\right\rangle $
by the corresponding terms $\left\langle \matA'_{\star}\overrightarrow{1},\matC{}_{\star}'\overrightarrow{1}\right\rangle ,$
$\left\langle \matA_{\star}\overrightarrow{1},\matB_{\star}'\overrightarrow{1}\right\rangle ,$
and $\left\langle \matB_{\star}\overrightarrow{1},\matC_{\star}\overrightarrow{1}\right\rangle$.
The applicability of the lemma to these terms
will follow from the level-$1$ inequality (Lemma \ref{lem:level-1 inequality}).

\begin{lem}
\label{lem:reducing to star structure}
Let $M=\left(m_{ij}\right),N=\left(n_{ij}\right)$
be matrices with entries in $[n^{-2},1]$.
Let $\eta_{1},\eta_{2}>0,$ and suppose for each $i$ that
\[
\|\mathrm{M}_{\left(2^{-i},2^{1-i}\right]}\|_{2}^{2}\le\eta_{1}2^{-i}\log^{O\left(1\right)}n,
\]
\[
\|\mathrm{N}_{\left(2^{-i},2^{1-i}\right]}\|_{2}^{2}\le\eta_{2}2^{-i}\log^{O\left(1\right)}n.
\]
Define matrices $M_{\star} = (m^\star_{ij})$ and $N_{\star} = (n^\star_{ij})$ by
\[
m^\star_{ij} = m_{ij} 1_{\sum_{j'} m_{ij'} >\delta\eta_{1}}
\quad \text{ and } \quad
n^\star_{ij} = n_{ij} 1_{\sum_{j'} n_{ij'} >\delta\eta_{2}}.
\]
Then we have
\begin{align*}
\left|\left\langle M\overrightarrow{1},N\overrightarrow{1}\right\rangle -\left\langle M_{\star}\overrightarrow{1},N_{\star}\overrightarrow{1}\right\rangle \right| & \le\delta\eta_{1}\eta_{2}\log^{O\left(1\right)}n\\
 & +\delta\eta_{1}\|N_{\star}\|_{1}+\delta\eta_{2}\|M_{\star}\|_{1}.
\end{align*}
\end{lem}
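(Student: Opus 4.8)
The plan is to expand the difference $\left\langle M\overrightarrow{1},N\overrightarrow{1}\right\rangle -\left\langle M_{\star}\overrightarrow{1},N_{\star}\overrightarrow{1}\right\rangle$ according to which rows of $M$ and $N$ survive the truncation. Writing $r_i = \sum_{j'} m_{ij'}$ and $q_i = \sum_{j'} n_{ij'}$ for the row sums, we have $\left\langle M\overrightarrow{1},N\overrightarrow{1}\right\rangle = \sum_i r_i q_i$ and $\left\langle M_{\star}\overrightarrow{1},N_{\star}\overrightarrow{1}\right\rangle = \sum_i r_i q_i 1_{r_i > \delta\eta_1} 1_{q_i > \delta\eta_2}$. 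So the difference is supported on those $i$ where at least one of the two rows is ``small,'' i.e.\ $r_i \le \delta\eta_1$ or $q_i \le \delta\eta_2$. I would split $\sum_i r_i q_i (1 - 1_{r_i>\delta\eta_1}1_{q_i>\delta\eta_2})$ into three groups: (a) both rows small; (b) the $M$-row small but the $N$-row large; (c) the $N$-row small but the $M$-row large.

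For group (a) I would bound $\sum_{i:\,r_i\le\delta\eta_1,\,q_i\le\delta\eta_2} r_i q_i \le \delta\eta_2 \sum_{i:\,r_i\le\delta\eta_1} r_i$, and then control $\sum_{i:\,r_i\le\delta\eta_1} r_i$ using the dyadic hypotheses on $\|M_{(2^{-i},2^{1-i}]}\|_2^2$. The key computation here is that for a single row $e_i M$ with row sum $r_i$, Cauchy--Schwarz gives $r_i \le \sqrt{n}\,\|e_i M\|_2$, but that is too lossy; instead I would bound the contribution of entries of size roughly $2^{-k}$ by combining the number of such entries in a row with the dyadic $L^2$ bound. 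Concretely, summing the hypothesis over the dyadic scales $k$ with $2^{-k}$ ranging over $(n^{-2},1]$, one gets $\sum_{\text{all entries}} m_{ij} \le \sum_k 2^{1-k}\cdot(\text{number of entries of size} \approx 2^{-k})$; bounding the count of entries of size $\ge 2^{-k}$ by $2^{2k}\|M_{(2^{-k},2^{1-k}]}\|_2^2 \le 2^k \eta_1 \log^{O(1)}n$ yields a total of $O(\eta_1 \log^{O(1)}n)$ summed geometrically — wait, this bounds $\|M\|_1$, not the restricted sum over small rows. The cleaner route: simply note $\sum_{i:\,r_i\le\delta\eta_1} r_i \le \|M\|_1 \le \eta_1\log^{O(1)}n$ (from the dyadic bounds as above) OR $\le n\cdot\delta\eta_1$; the first gives group (a) a bound of $\delta\eta_1\eta_2\log^{O(1)}n$ as desired. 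For groups (b) and (c): in group (b), $r_i\le\delta\eta_1$ while the row of $N$ survives (is counted in $N_\star$), so $\sum_{i:\,r_i\le\delta\eta_1,\,q_i>\delta\eta_2} r_i q_i \le \delta\eta_1 \sum_{i:\, q_i>\delta\eta_2} q_i = \delta\eta_1 \|N_\star\|_1$; symmetrically group (c) contributes $\delta\eta_2\|M_\star\|_1$.

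The main obstacle I anticipate is making the $\|M\|_1 \le \eta_1\log^{O(1)}n$ step (needed for group (a)) fully rigorous from the stated dyadic $L^2$-hypotheses, being careful that the entries lie in $[n^{-2},1]$ so there are only $O(\log n)$ relevant dyadic scales and the geometric sum over scales converges with only a $\log^{O(1)}n$ loss — the entrywise lower bound $n^{-2}$ is exactly what prevents infinitely many small scales from accumulating. Once that is in hand, assembling (a)+(b)+(c) via the triangle inequality gives precisely
\[
\left|\left\langle M\overrightarrow{1},N\overrightarrow{1}\right\rangle -\left\langle M_{\star}\overrightarrow{1},N_{\star}\overrightarrow{1}\right\rangle\right| \le \delta\eta_1\eta_2\log^{O(1)}n + \delta\eta_1\|N_\star\|_1 + \delta\eta_2\|M_\star\|_1,
\]
as claimed. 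A minor technical point to handle carefully is that the truncation threshold in the definition of $M_\star$ is $\delta\eta_1$ while the hypothesis is phrased per dyadic block; I would just verify that ``row sum $\le\delta\eta_1$'' is exactly the negation used above and that no boundary cases ($r_i = \delta\eta_1$ exactly) cause trouble, which they do not since the bounds are all inequalities.
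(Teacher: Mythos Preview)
Your decomposition into groups (a), (b), (c) is identical to the paper's: writing $\tilde M=M-M_\star$ and $\tilde N=N-N_\star$, the paper expands $\langle M\overrightarrow{1},N\overrightarrow{1}\rangle-\langle M_\star\overrightarrow{1},N_\star\overrightarrow{1}\rangle$ as $\langle \tilde M\overrightarrow{1},N_\star\overrightarrow{1}\rangle+\langle M_\star\overrightarrow{1},\tilde N\overrightarrow{1}\rangle+\langle \tilde M\overrightarrow{1},\tilde N\overrightarrow{1}\rangle$, which are exactly your (b), (c), (a). Your bounds for (b) and (c) via $\|(\tilde M\overrightarrow{1})\|_\infty\le\delta\eta_1$ and $\|(\tilde N\overrightarrow{1})\|_\infty\le\delta\eta_2$ match the paper's verbatim.

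Where you diverge is in handling (a). The paper bounds $\langle \tilde M\overrightarrow{1},\tilde N\overrightarrow{1}\rangle$ by a double dyadic expansion $\tilde M=\sum_k \tilde M_{(2^{-k},2^{1-k}]}$, $\tilde N=\sum_\ell \tilde N_{(2^{-\ell},2^{1-\ell}]}$ and then applies a Cauchy--Schwarz claim $|\langle M'\overrightarrow{1},N'\overrightarrow{1}\rangle|\le\sqrt{m'n'}\,\|M'\|_2\|N'\|_2$ to each of the $O(\log^2 n)$ cross terms, using that row $i$ of $\tilde M_{(2^{-k},2^{1-k}]}$ has at most $2^k\delta\eta_1$ nonzero entries. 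Your route is more direct: bound $\sum_{r_i\le\delta\eta_1,\,q_i\le\delta\eta_2} r_i q_i\le\delta\eta_2\|M\|_1$ and then observe that the dyadic hypothesis forces $\|M\|_1\le\eta_1\log^{O(1)}n$, since each block $M_{(2^{-k},2^{1-k}]}$ has at most $2^{2k}\|M_{(2^{-k},2^{1-k}]}\|_2^2\le 2^k\eta_1\log^{O(1)}n$ entries, each of size $\le 2^{1-k}$, giving $L^1$-mass $\le 2\eta_1\log^{O(1)}n$ per block and $O(\log n)$ blocks by the entrywise lower bound $n^{-2}$. This is correct and strictly simpler than the paper's argument for this piece; it avoids the auxiliary Cauchy--Schwarz claim entirely. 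The paper's approach would generalise better if one wanted a sharper dependence on $\delta$ (it exploits the row-sum constraint on \emph{both} sides), but for the stated bound your $L^1$ route is cleaner.
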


\begin{proof}
We have
\[
\left|\left\langle \left(M-M_{\star}\right)\overrightarrow{1},N_{\star}\overrightarrow{1}\right\rangle \right|
 \le\|\left(M-M_{\star}\right)\overrightarrow{1}\|_{\infty}\|N_{\star}\overrightarrow{1}\|_{1}
 \le\delta\eta_{1}\|N_{\star}\|_{1},
\]
and similarly
\[
\left|\left\langle M_{\star}\overrightarrow{1},N-N_{\star}\overrightarrow{1}\right\rangle \right|
\le\delta\eta_{2}\|M_{\star}\|_{1}.
\]
 Let $\tilde{M}=M-M_{\star}$ and $\tilde{N}=N-N_{\star}$. It remains
to show that
\[
\left\langle \tilde{M}\overrightarrow{1},\tilde{N}\overrightarrow{1}\right\rangle \le\delta\eta_{1}\eta_{2}\log^{O\left(1\right)}n.
\]
We will use the dyadic expansion
\[
\tilde{M}=\sum_{i=1}^{2\left\lceil \log n\right\rceil }\tilde{M}_{\left(2^{-i},2^{1-i}\right]}
\quad \text{ and } \quad
\tilde{N}=\sum_{i=1}^{2\left\lceil \log n\right\rceil }\tilde{N}_{\left(2^{-i},2^{1-i}\right]}.
\]
Each of the resulting terms will be bounded using the following claim.
\begin{claim}
Let $M',N'$ be matrices such that each row of $M'$
has at most $m'$ nonzero coefficients
and each row of $N'$ has at most $n'$
nonzero coefficients. Then
\[
\left|\left\langle M'\overrightarrow{1},N'\overrightarrow{1}\right\rangle \right|\le\sqrt{m'n'}\|M'\|_{2}\|N'\|_{2}.
\]
\end{claim}

\begin{proof}
This follows from Cauchy--Schwarz as
\begin{align*}
\left|\left\langle M'\overrightarrow{1},N'\overrightarrow{1}\right\rangle \right| & =\left|\sum_{i}\left\langle \left(M'\right)^{t}e_{i},\overrightarrow{1}\right\rangle \left\langle \left(N'\right)^{t}e_{i},\overrightarrow{1}\right\rangle \right|\\
 & \le\sum_{i}\sqrt{m'}\left\Vert \left(M'\right)^{t}e_{i}\right\Vert _{2}\sqrt{n'}\left\Vert \left(N'\right)^{t}e_{i}\right\Vert _{2}\\
 & \le\sqrt{m'n'}\left\Vert M'\right\Vert _{2}\left\Vert N'\right\Vert _{2}.
\end{align*}
\end{proof}
For each row $v$ of $\tilde{M}_{\left(2^{-i},2^{1-i}\right]}$,
all nonzero coefficients are $\ge 2^{-i}$
and $\left\langle v,1\right\rangle \le\delta\eta_{1}$,
so $v$ has $\le 2^{i}\delta\eta_{1}$ nonzero coefficients.
Similarly each row of $\tilde{N}_{\left(2^{-j},2^{1-j}\right]}$
has $\le 2^{-j}\delta\eta_{2}$ nonzero coefficients.
Applying the claim, we obtain
\begin{align*}
\left\langle \tilde{M}\overrightarrow{1},\tilde{N}\overrightarrow{1}\right\rangle = & \sum_{i=1}^{2\left\lceil \log n\right\rceil }\sum_{j=1}^{2\left\lceil \log n\right\rceil }\left\langle \tilde{M}_{\left(2^{-i},2^{1-i}\right]}\overrightarrow{1},\tilde{N}_{\left(2^{-j},2^{1-j}\right]}\overrightarrow{1}\right\rangle \\
\le & \sum_{i=1}^{2\left\lceil \log n\right\rceil }\sum_{j=1}^{2\left\lceil \log n\right\rceil }2^{\frac{i+j}{2}}\delta\sqrt{\eta_{1}\eta_{2}}\|\tilde{M}_{\left(2^{-i},2^{1-i}\right]}\|_{2}\|\tilde{N}_{\left(2^{-j},2^{1-j}\right]}\|_{2}\\
\le & 4\delta\eta_{1}\eta_{2}\log^{O\left(1\right)}n,
\end{align*}
where the final inequality uses the assumption of the lemma.
\end{proof}

\subsection{Combinatorial analysis of the star structure matrices}

In the next section we will use Lemma \ref{lem:reducing to star structure}
to complete our transition from dictatorial structure matrices $\matA_{\mathrm{struc}}$
to the star-structure matrices $\matA_{\star}$. To achieve this,
we first need to control the error terms that will arise
from applying Lemma \ref{lem:reducing to star structure},
namely the $L^1$-norms of the star structure matrices.

As discussed in the overview, this comes down to a combinatorial argument
showing that the corresponding large associated stars and inverse stars
are essentially disjoint, provided that $A,B,C$
(and so the parameters $\eps_A,\eps_B,\eps_C$) are sufficiently large.
This is captured by saying for each $E\in\left\{ A,B,C\right\}$
that $\sum\mu\left(E\cap S\right) \approx \mu\left(E\right)$,
where the sum is over all large associated stars and inverse stars of $E$.
\begin{lem}
\label{lem:associated stars are nearly disjoint}
Let $\epsilon>0,\delta>0$ and $E\subseteq S_{n}$.
Let $\mathcal{S}$ be a collection of stars
and inverse stars such that for each $S\in\mathcal{S}$
we have $\delta\mu\left(E\right)\le\mu\left(E\cap S\right)$
and $\frac{\epsilon}{2}\mu\left(S\right)\le\mu\left(E\cap S\right)$.
Suppose also that $
\frac{100}{\delta^{2}n}\le\mu\left(E\right)\le\frac{\delta^{2}\epsilon^{2}}{100}.$
Then we have
\[
\sum_{S\in\mathcal{S}}\mu\left(E\cap S\right)
\le\mu\left(E\right)+\frac{20\mu\left(E\right)^{2}}{\delta^{2}\epsilon^{2}}+\frac{4}{\delta^{2}n}.
\]
\end{lem}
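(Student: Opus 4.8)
The plan is to run a second-moment (Cauchy--Schwarz) argument on the multiplicity function $N(\sigma):=\#\{S\in\mathcal S:\sigma\in S\}$. Writing $\Sigma:=\sum_{S\in\mathcal S}\mu(E\cap S)$, one first observes that $\Sigma=\mathbb E_{\sigma\sim S_n}[1_E(\sigma)N(\sigma)]$, so Cauchy--Schwarz (using $1_E^2=1_E$) gives
\[
\Sigma^2\le\mu(E)\,\mathbb E_\sigma\!\left[1_E(\sigma)N(\sigma)^2\right]=\mu(E)\sum_{S_1,S_2\in\mathcal S}\mu(E\cap S_1\cap S_2).
\]
The diagonal terms $S_1=S_2$ contribute exactly $\mu(E)\Sigma$, so the task reduces to controlling the off-diagonal sum, which I would crudely bound by $\sum_{S_1\ne S_2}\mu(S_1\cap S_2)$; the heuristic that distinct large (inverse) stars are nearly disjoint should make this small.

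To make that precise I would let $P$ and $Q$ denote the stars and the inverse stars in $\mathcal S$, and use that $\mathcal S$ contains at most one star and at most one inverse star at each vertex (this is the structure available when $\mathcal S$ is a family of associated stars and inverse stars; the argument genuinely uses it, since two nested stars at a common vertex could each carry $E$-density close to $\mu(E)$). A direct count of permutations then shows $\mu(S_1\cap S_2)\le\frac{n}{n-1}\mu(S_1)\mu(S_2)$ whenever $S_1,S_2$ are two stars at distinct vertices, or two inverse stars at distinct vertices, and $\mu(S_1\cap S_2)\le\frac{n}{n-1}\mu(S_1)\mu(S_2)+\frac1n$ when one is a star and the other an inverse star (the extra $\frac1n$ comes from permutations sending the star's vertex directly to the inverse star's vertex). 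Summing over ordered pairs, with $T:=\sum_{S\in\mathcal S}\mu(S)$, this gives $\sum_{S_1\ne S_2}\mu(S_1\cap S_2)\le 2T^2+\frac{2|P||Q|}{n}$. The hypothesis $\frac\epsilon2\mu(S)\le\mu(E\cap S)$ yields $T\le\frac2\epsilon\Sigma$, and $\delta\mu(E)\le\mu(E\cap S)$ yields $|P|,|Q|\le\Sigma/(\delta\mu(E))$, hence $|P||Q|\le\Sigma^2/(4\delta^2\mu(E)^2)$ by AM-GM.

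Substituting everything back into the Cauchy--Schwarz estimate produces the self-referential inequality
\[
\Sigma^2\le\mu(E)\Sigma+\frac{8\mu(E)}{\epsilon^2}\Sigma^2+\frac{1}{2\delta^2\mu(E)n}\Sigma^2,
\]
that is $\Sigma(1-c)\le\mu(E)$ with $c:=\frac{8\mu(E)}{\epsilon^2}+\frac{1}{2\delta^2\mu(E)n}$. The hypotheses $\mu(E)\le\frac{\delta^2\epsilon^2}{100}$ and $\mu(E)\ge\frac{100}{\delta^2 n}$ (together with $\delta\le 1$) force $c\le\frac{8}{100}+\frac{1}{200}<\frac12$, so using $\frac1{1-c}\le 1+2c$ one gets
\[
\Sigma\le\mu(E)(1+2c)=\mu(E)+\frac{16\mu(E)^2}{\epsilon^2}+\frac1{\delta^2 n}\le\mu(E)+\frac{20\mu(E)^2}{\delta^2\epsilon^2}+\frac4{\delta^2 n},
\]
which is the claim. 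The step I expect to be the main obstacle is the off-diagonal estimate: one has to identify and invoke the ``at most one (inverse) star per vertex'' structure, compute the three cases of pairwise intersections correctly (including the stray $\frac1n$ term for mixed pairs), and then be careful enough with constants that the factor $\frac1{1-c}$ expands into an \emph{additive} error of exactly the claimed shape rather than just giving $\Sigma=O(\mu(E))$.
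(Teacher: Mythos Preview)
Your argument is correct and actually yields slightly sharper constants than the lemma claims. It is, however, a genuinely different route from the paper's proof.

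The paper does not use Cauchy--Schwarz. Instead it fixes a subcollection $\mathcal S'\subseteq\mathcal S$ of size $\min(2/\delta,|\mathcal S|)$ and applies one step of Bonferroni:
\[
\mu(E)\ \ge\ \sum_{S\in\mathcal S'}\mu(E\cap S)\ -\ \sum_{S_1\ne S_2\in\mathcal S'}\mu(S_1\cap S_2).
\]
The pairwise intersections are bounded uniformly by $\tfrac{n}{n-1}\mu(S_1)\mu(S_2)+\tfrac1n\le\tfrac{5}{\epsilon^2}\mu(E)^2+\tfrac1n$ (using $\mu(S)\le\tfrac{2}{\epsilon}\mu(E\cap S)\le\tfrac{2}{\epsilon}\mu(E)$), and with $|\mathcal S'|\le 2/\delta$ the double sum has at most $4/\delta^2$ terms, producing exactly the error $\tfrac{20\mu(E)^2}{\delta^2\epsilon^2}+\tfrac{4}{\delta^2 n}$. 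The resulting inequality also shows $\sum_{\mathcal S'}\mu(E\cap S)<2\mu(E)$, which together with $\mu(E\cap S)\ge\delta\mu(E)$ forces $|\mathcal S'|<2/\delta$ and hence $\mathcal S'=\mathcal S$.

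So where you run a second-moment argument and close a self-referential inequality $\Sigma(1-c)\le\mu(E)$, the paper truncates $\mathcal S$ a priori to size $O(1/\delta)$, applies inclusion--exclusion directly, and then observes the truncation was lossless. The truncation trick is what makes the paper's constants come out on the nose (the $4/\delta^2$ pairs literally give the $4/(\delta^2 n)$), whereas your route trades this for a more analytic argument that never needs to bound $|\mathcal S|$ first. Both proofs rest on the same pairwise-intersection estimate and hence share the implicit ``at most one (inverse) star per vertex'' assumption you flagged; the paper uses it tacitly in asserting $\mu(S_1\cap S_2)\le\tfrac{n}{n-1}\mu(S_1)\mu(S_2)+\tfrac1n$ for all distinct pairs.
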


\begin{proof}
Fix $\mathcal{S}' \sub \mathcal{S}$
with $|\mathcal{S}'| = \min\{\frac{2}{\delta},|\mathcal{S}|\}$.
By Inclusion--Exclusion we have
\begin{align*}
\mu\left(E\right) & \ge\sum_{S\in\mathcal{S}'}\mu\left(E\cap S\right)-\sum_{S_{1},S_{2}\in\mathcal{S}'}\mu\left(S_{1}\cap S_{2}\right).
\end{align*}
For any distinct $S_{1},S_{2}\in\mathcal{S}$ we have
\[
\mu\left(S_{1}\cap S_{2}\right)
\le\left(\frac{n}{n-1}\right)\mu\left(S_{1}\right)\mu\left(S_{2}\right)+\frac{1}{n}
\le\frac{5}{\epsilon^{2}}\mu\left(E\right)^{2}+\frac{1}{n}.
\]
Rearranging, we obtain
\begin{equation}
\sum_{S\in\mathcal{S}'}\mu\left(E\cap S\right)\le\mu\left(E\right)+\frac{20}{\delta^{2}\epsilon^{2}}\mu\left(E\right)^{2}+\frac{4}{\delta^{2}n},\label{eq:large associated stars}
\end{equation}
 which also yields
\[
\sum_{S\in\mathcal{S}'}\mu\left(E\cap S\right)<2\mu\left(E\right).
\]
To complete the proof we show that $\mathcal{S}'=\mathcal{S}.$ This
will follow once we show that $\left|\mathcal{S}'\right|<\frac{2}{\delta}.$
In fact we have
\[
\left|\mathcal{S}'\right|\delta\mu\left(E\right)\le\sum_{S\in\mathcal{S}'}\mu\left(E\cap S\right)<2\mu\left(E\right),
\]
 and hence $\left|\mathcal{S}'\right|<\frac{2}{\delta}.$
\end{proof}
Lemma \ref{lem:associated stars are nearly disjoint} can be restated
in terms of the star-structure matrices. It translates to the following
upper bound on their $1$-norms.
\begin{lem}
\label{lem:l1-bound} Suppose that
\[
\frac{1000}{\delta^{5}n^{2}}\le\alpha\min\left(\beta,\gamma\right)^{2}.
\]
Then we have
\[
\frac{1}{n-1}\left(\|\matA_{\star}\|_{1}+\|\matA'_{\star}\|_{1}\right)\le\alpha\left(1+\delta\right).
\]
Analogous statements hold for $B$ and $C$.
\end{lem}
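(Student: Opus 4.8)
The plan is to recognise the claimed bound as a translation of Lemma \ref{lem:associated stars are nearly disjoint}, applied to $E=A$ together with the collection of large associated stars and inverse stars of $A$. By construction $\matA_{\star}$ is obtained from $\Astruc$ by deleting the rows indexed by small associated stars; the surviving entries are positive, and the $i$-th row sum is $\sum_{j\in L_{A}(i)}a_{ij}=(n-1)s_{A}(i)$. Likewise $\matA'_{\star}$ is obtained from $\Astruc^{t}$ by deleting small associated inverse stars. Using $s_{A}(i)=\mu(A\cap S_{A}(i))-\alpha\mu(S_{A}(i))\le\mu(A\cap S_{A}(i))$ and its primed analogue, the first step is to record
\[
\frac{1}{n-1}\|\matA_{\star}\|_{1}=\sum_{i:\,S_{A}(i)\text{ large}}s_{A}(i)\le\sum_{i:\,S_{A}(i)\text{ large}}\mu(A\cap S_{A}(i)),
\]
with the identical inequality for $\matA'_{\star}$ in terms of the $S'_{A}(i)$. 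Hence $\tfrac{1}{n-1}(\|\matA_{\star}\|_{1}+\|\matA'_{\star}\|_{1})\le\sum_{S\in\mathcal{S}}\mu(A\cap S)$, where $\mathcal{S}$ is the family of all large associated stars and large associated inverse stars of $A$, and it suffices to show $\sum_{S\in\mathcal{S}}\mu(A\cap S)\le\alpha(1+\delta)$.

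The second step is to apply Lemma \ref{lem:associated stars are nearly disjoint} with $E=A$ and $\epsilon=2\epsilon_{A}$, verifying its hypotheses. The condition $\delta\mu(E)\le\mu(E\cap S)$ is precisely largeness, since $\mu(A\cap S_{A}(i))\ge s_{A}(i)>\delta\alpha$ (and similarly for inverse stars). For $\tfrac{\epsilon}{2}\mu(S)\le\mu(E\cap S)$ one uses that every $j\in L_{A}(i)$ satisfies $a_{ij}>\epsilon_{A}$, hence $\mu(A_{i\to j})>\epsilon_{A}$, so that $\mu(A\cap S_{A}(i))=\tfrac1n\sum_{j\in L_{A}(i)}\mu(A_{i\to j})>\epsilon_{A}\mu(S_{A}(i))$, with the same estimate for inverse stars. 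Finally, writing $m=\min(\beta,\gamma)$ so that $\epsilon_{A}=n\delta\alpha m$, squaring and using the hypothesis $\alpha m^{2}\ge1000\,\delta^{-5}n^{-2}$ gives $\epsilon_{A}^{2}=n^{2}\delta^{2}\alpha\cdot(\alpha m^{2})\ge1000\,\delta^{-3}\alpha$, whence $\tfrac{\delta^{2}\epsilon^{2}}{100}=\tfrac{\delta^{2}\epsilon_{A}^{2}}{25}\ge40\,\delta^{-1}\alpha\ge\alpha$; together with the mild lower bound $\alpha\ge100\,\delta^{-2}n^{-1}$ available in every density regime in which the lemma is invoked, this gives $\tfrac{100}{\delta^{2}n}\le\mu(E)\le\tfrac{\delta^{2}\epsilon^{2}}{100}$.

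With the hypotheses in place, Lemma \ref{lem:associated stars are nearly disjoint} yields $\sum_{S\in\mathcal{S}}\mu(A\cap S)\le\alpha+\tfrac{20\alpha^{2}}{\delta^{2}\epsilon^{2}}+\tfrac{4}{\delta^{2}n}=\alpha+\tfrac{5\alpha^{2}}{\delta^{2}\epsilon_{A}^{2}}+\tfrac{4}{\delta^{2}n}$, and the third step is to absorb the two error terms into the slack $\delta\alpha$. Here $\tfrac{5\alpha^{2}}{\delta^{2}\epsilon_{A}^{2}}=\tfrac{5}{\delta^{4}n^{2}m^{2}}\le\tfrac{\delta\alpha}{200}$ follows at once from $\alpha m^{2}\ge1000\,\delta^{-5}n^{-2}$, while $\tfrac{4}{\delta^{2}n}\le\tfrac{\delta\alpha}{2}$ again uses the density lower bound, so the two terms sum to less than $\delta\alpha$ and $\sum_{S\in\mathcal{S}}\mu(A\cap S)\le\alpha(1+\delta)$, as required. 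The statements for $B$ and $C$ follow by the same argument with $\beta,\epsilon_{B}$ and $\gamma,\epsilon_{C}$ replacing $\alpha,\epsilon_{A}$.

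I expect the only genuinely substantive content — the near-disjointness of the associated stars and inverse stars — to be already isolated in Lemma \ref{lem:associated stars are nearly disjoint}, so within the present lemma the main task is bookkeeping: checking that the threshold $\epsilon_{A}$ has been calibrated so that the $\delta^{-2}\epsilon^{-2}$-type error produced by near-disjointness is swamped by the $\delta\alpha$ slack in the target, and tracking the harmless-but-necessary side condition that $\alpha$ is not too small.
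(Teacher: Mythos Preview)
Your proof is correct and follows the same route as the paper: bound $\tfrac{1}{n-1}(\|\matA_\star\|_1+\|\matA'_\star\|_1)$ by $\sum_{S\in\mathcal S}\mu(A\cap S)$, apply Lemma~\ref{lem:associated stars are nearly disjoint} with $\epsilon$ on the order of $\epsilon_A$, and absorb the two error terms using $\epsilon_A^2/\alpha\ge1000\,\delta^{-3}$. One minor point (shared with the paper's own proof): the lower bound on $\alpha$ you invoke is not implied by the single stated hypothesis but only by the full context~\eqref{eq:5.2}, and for the term $\tfrac{4}{\delta^{2}n}\le\tfrac{\delta\alpha}{2}$ you in fact need $\alpha\ge 8\,\delta^{-3}n^{-1}$ rather than $100\,\delta^{-2}n^{-1}$.
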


\begin{proof}
Let $\mathcal{S}_{A}$ be the set of large associated stars for $A.$
For each $S\in\mathcal{S}_{A}$ we have $\mu\left(A\cap S\right)\ge\delta\mu\left(A\right)$
by definition. On the other hand, for each dictator $1_{i\to j}$
contained in $S$ we have
\[
\epsilon_A <a_{ij}=\frac{n-1}{n}\left(\mu\left(A_{i\to j}\right)-\alpha\right),
\]
which gives $\mu\left(A_{i\to j}\right)\ge \epsilon_A$,
and so $\mu\left(A\cap S\right)\ge \epsilon_A \mu\left(S\right)$.
Hence we can apply Lemma \ref{lem:associated stars are nearly disjoint}
with $\epsilon=\epsilon_{A}$, which gives
\[
\frac{1}{n-1}\left(\|\matA_{\star}\|_{1}+\|\matA'_{\star}\|_{1}\right)
\le \sum_{S\in\mathcal{S}_{A}}\mu\left(A\cap S\right)
\le \alpha+\frac{20\alpha^{2}}{\delta^{2}\epsilon_{A}^{2}}+\frac{4}{\delta^{2}n}.
\]
Substituting $\eps_A = n\dD \alpha\min(\beta,\gamma)$ and using
$\frac{1000}{\delta^{5}n^{2}}\le\alpha\min\left(\beta,\gamma\right)^{2}$
gives $\eps_A^2/\alpha \ge 1000 \dD^{-3}$, so the lemma follows.
\end{proof}

\subsection{Reducing to large associated stars}

The following lemma combines everything we proved so far.
It reduces us to upper bounding the star structure inner products
$\left\langle \matA'_{\star}\overrightarrow{1},\matC'_{\star}\overrightarrow{1}\right\rangle ,$
$\left\langle \matA_{\star}\overrightarrow{1},\matB'_{\star}\overrightarrow{1}\right\rangle $
and $\left\langle \matB_{\star}\overrightarrow{1},\matC_{\star}\overrightarrow{1}\right\rangle .$
\begin{lem}
\label{lem:wrapping up}
Suppose that $\left(A,B,C\right)$ are as in
Proposition \ref{prop:Explanation for cross product-freeness}.
Then we have
\begin{align*}
&\Pr_{\sigma,\tau\sim S_{n}}\left[\sigma\in A,\tau\in B,\sigma\tau\in C\right]
 \ge 2\alpha\beta\gamma(1-\dD^{1/4}) \\
 & \qquad - \frac{2}{(n-1)^2} \left| \beta\left\langle \matA'_{\star}\overrightarrow{1},\matC'_{\star}\overrightarrow{1}\right\rangle +\gamma\left\langle \matA_{\star}\overrightarrow{1},\matB'_{\star}\overrightarrow{1}\right\rangle +\alpha\left\langle \matB_{\star}\overrightarrow{1},\matC_{\star}\overrightarrow{1}\right\rangle \right|.
\end{align*}
\end{lem}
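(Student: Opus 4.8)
The plan is to push the product count through the chain of reductions already set up. I would first write $f=1_{A}$, $g=1_{B}$, $h=1_{C}$ and invoke Proposition~\ref{Prop:Most weight is on the first two levels} to get
\[
\Pr_{\sigma,\tau\sim S_{n}}\left[\sigma\in A,\tau\in B,\sigma\tau\in C\right]\ge 2\alpha\beta\gamma+2\,\mathbb{E}_{\sigma,\tau}\left[f^{=1}(\sigma)g^{=1}(\tau)h^{=1}(\sigma\tau)\right]-\frac{e}{n}\sqrt{\alpha\beta\gamma};
\]
since (\ref{eq:5.2}) gives $\alpha\beta\gamma\ge\alpha\min(\beta,\gamma)^{2}\ge\delta^{-5}n^{-2}$, the last error is far smaller than $\delta^{1/4}\alpha\beta\gamma$. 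Next, Lemma~\ref{lem:convolution and parseval} rewrites the linear term as $\frac{1}{(n-1)^{2}}\left\langle \matB\matA,\matC\right\rangle$, Lemma~\ref{lem:Being left with struc} lower-bounds this by $\frac{1}{(n-1)^{2}}\bigl(\left\langle \Bstruc\matA_{-},\Cstruc\right\rangle +\left\langle \matB_{-}\Astruc,\Cstruc\right\rangle +\left\langle \Bstruc\Astruc,\matC_{-}\right\rangle \bigr)$ up to an error $\sqrt{\delta}(\log n)^{O(1)}\alpha\beta\gamma$, and then Lemma~\ref{lem:Simple lower bound} bounds the three surviving inner products below, leaving me with
\[
-\frac{1}{(n-1)^{2}}\bigl(\alpha\left\langle \Bstruc\overrightarrow{1},\Cstruc\overrightarrow{1}\right\rangle +\beta\left\langle \Astruc^{t}\overrightarrow{1},\Cstruc^{t}\overrightarrow{1}\right\rangle +\gamma\left\langle \Astruc\overrightarrow{1},\Bstruc^{t}\overrightarrow{1}\right\rangle \bigr).
\]

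The core step is to trade each of these dictatorial inner products for its star-structure version, using Lemma~\ref{lem:reducing to star structure}. For $\left\langle \Astruc^{t}\overrightarrow{1},\Cstruc^{t}\overrightarrow{1}\right\rangle$ I would apply it with $M=\Astruc^{t}$, $N=\Cstruc^{t}$, $\eta_{1}=(n-1)\alpha$, $\eta_{2}=(n-1)\gamma$: the $i$-th row of $\Astruc^{t}$ has row sum $(n-1)s'_{A}(i)$, so the threshold $\delta\eta_{1}$ in the lemma picks out exactly the large associated inverse stars of $A$, giving $M_{\star}=\matA'_{\star}$ and likewise $N_{\star}=\matC'_{\star}$; the same works for $\left\langle \Astruc\overrightarrow{1},\Bstruc^{t}\overrightarrow{1}\right\rangle$ (yielding $\matA_{\star},\matB'_{\star}$) and $\left\langle \Bstruc\overrightarrow{1},\Cstruc\overrightarrow{1}\right\rangle$ (yielding $\matB_{\star},\matC_{\star}$). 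To check the hypotheses I would use (\ref{eq:5.2}), which forces $\eps_{A}=n\delta\alpha\min(\beta,\gamma)>\alpha$ and $\eps_{A}\ge n^{-2}$ (likewise for $\eps_{B},\eps_{C}$), so the nonzero entries of $\Astruc,\Bstruc,\Cstruc$ lie in $[n^{-2},1]$; and for each dyadic scale with $2^{1-i}\ge\eps_{A}$ the level-$1$ inequality (Lemma~\ref{lem:level-1 inequality}) applied just above the threshold $2^{1-i}$ has parameter $\eps''=\max(2^{1-i},\alpha)=2^{1-i}$, so it gives $\|\Astruc_{(2^{-i},2^{1-i}]}\|_{2}^{2}\le(n-1)\alpha\,2^{-i}\log^{O(1)}n$ (smaller scales contributing nothing). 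The $L^{1}$-norm bounds $\|\matA_{\star}\|_{1},\|\matA'_{\star}\|_{1}\le 2(n-1)\alpha$ required for the error term of Lemma~\ref{lem:reducing to star structure} follow from Lemma~\ref{lem:l1-bound}, whose hypothesis is part of (\ref{eq:5.2}). This yields $\bigl|\left\langle \Astruc^{t}\overrightarrow{1},\Cstruc^{t}\overrightarrow{1}\right\rangle -\left\langle \matA'_{\star}\overrightarrow{1},\matC'_{\star}\overrightarrow{1}\right\rangle \bigr|\le\delta(n-1)^{2}\alpha\gamma\log^{O(1)}n$, and symmetrically for the other two pairs.

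Finally I would substitute these in, multiply the three discrepancies by $\beta,\gamma,\alpha$ respectively and by $\frac{2}{(n-1)^{2}}$ (contributing at most $\delta\log^{O(1)}n\cdot\alpha\beta\gamma$ altogether), and collect this with the $\sqrt{\delta}(\log n)^{O(1)}\alpha\beta\gamma$ error from Lemma~\ref{lem:Being left with struc} and the $\frac{e}{n}\sqrt{\alpha\beta\gamma}$ error from Proposition~\ref{Prop:Most weight is on the first two levels}; since $R$ is chosen much larger than every implicit constant we have $\sqrt{\delta}(\log n)^{O(1)}<\delta^{1/4}$ for $n$ large, so the total error is below $2\delta^{1/4}\alpha\beta\gamma$. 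This gives
\[
\Pr_{\sigma,\tau\sim S_{n}}\left[\sigma\in A,\tau\in B,\sigma\tau\in C\right]\ge 2\alpha\beta\gamma(1-\delta^{1/4})-\frac{2}{(n-1)^{2}}\bigl(\beta\left\langle \matA'_{\star}\overrightarrow{1},\matC'_{\star}\overrightarrow{1}\right\rangle +\gamma\left\langle \matA_{\star}\overrightarrow{1},\matB'_{\star}\overrightarrow{1}\right\rangle +\alpha\left\langle \matB_{\star}\overrightarrow{1},\matC_{\star}\overrightarrow{1}\right\rangle \bigr),
\]
and since every star-structure matrix has nonnegative entries the parenthesised quantity is nonnegative, hence equal to its own absolute value, which is exactly the claimed bound. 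The main obstacle will be the middle step — getting Lemma~\ref{lem:reducing to star structure} to apply cleanly: confirming the dyadic level-$1$ estimates with the right normalisations $\eta_{i}$, and that its row-sum threshold $\delta\eta_{i}$ really is the cut-off defining a ``small'' associated (inverse) star; the rest is routine error tracking, with every error a small power of $\delta$.
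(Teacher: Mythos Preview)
Your proposal is correct and follows essentially the same route as the paper's proof: reduce to the linear term via Proposition~\ref{Prop:Most weight is on the first two levels}, pass to the three structured negative terms via Lemmas~\ref{lem:Being left with struc} and~\ref{lem:Simple lower bound}, and then replace each $\Astruc,\Bstruc,\Cstruc$ contribution by its star-structure analogue using Lemma~\ref{lem:reducing to star structure} with the level-$1$ inequality supplying the dyadic hypotheses and Lemma~\ref{lem:l1-bound} controlling the $L^1$ error terms. Your verification that $\eps_A>\alpha$ (so that the dyadic bounds from Lemma~\ref{lem:level-1 inequality} land with $\eps''=2^{1-i}$) and that the threshold $\delta\eta_i$ matches the definition of a large associated star is exactly the content the paper leaves implicit, and your final remark that the star-structure inner products are automatically nonnegative (so the absolute value is cosmetic) is a clean way to close.
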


\begin{proof}
By Proposition \ref{Prop:Most weight is on the first two levels}
(reducing to the linear term) we have
\begin{align*}
\Pr_{\sigma,\tau\sim S_{n}}\left[\sigma\in A,\tau\in B,\sigma\tau\in C\right]
& \ge2\alpha\beta\gamma+\frac{2}{\left(n-1\right)^{2}}\left\langle \matB\matA,\matC\right\rangle -O\left(\frac{\sqrt{\alpha\beta\gamma}}{n^{2}}\right).
\end{align*}
Plugging in Lemma \ref{lem:Being left with struc}
(reducing to the significant negative terms)
we obtain
\begin{align*}
\left\langle \matB\matA,\matC\right\rangle
& \ge\left\langle \matB_{-}\Astruc,\Cstruc\right\rangle
+\left\langle \Bstruc\matA_{-},\Cstruc\right\rangle
+\left\langle \Bstruc\Astruc,\matC_{-}\right\rangle \\
 & -\sqrt{\delta}n^{2} (\log n)^{O\left(1\right)} \aA\bB\gG.
\end{align*}
By Lemma \ref{lem:Simple lower bound}
(lower bounding the negative entries)
we then obtain
\begin{align*}
\left\langle \matB\matA,\matC\right\rangle  & \ge-\beta\left\langle \Astruc^{t}\overrightarrow{1},\Cstruc^{t}\overrightarrow{1}\right\rangle -\gamma\left\langle \Astruc\overrightarrow{1},\Bstruc^{t}\overrightarrow{1}\right\rangle -\alpha\left\langle \Bstruc\overrightarrow{1},\Cstruc\overrightarrow{1}\right\rangle \\
 & -\sqrt{\delta}n^{2} (\log n)^{O\left(1\right)} \aA\bB\gG.
\end{align*}
It remains to approximate the above matrix inner products
by the corresponding terms involving star structure matrices.
The calculations for the three terms are analogous,
so we only show the details for the first term
$-\beta\left\langle \Astruc^{t}\overrightarrow{1},\Cstruc^{t}  \overrightarrow{1}\right\rangle $.
We will apply Lemma \ref{lem:reducing to star structure}
to $M = \Astruc^{t}$ and $N = \Cstruc^{t}$.
The level-$1$ inequality (Lemma \ref{lem:level-1 inequality})
shows that its hypotheses are satisfied with
$\eta_1 = \aA n$ and $\eta_2 = \gG n$.
Thus we obtain
\[ \left|\left\langle \Astruc^{t} \overrightarrow{1}, \Cstruc^{t} \overrightarrow{1}\right\rangle
-\left\langle \matA'_{\star}\overrightarrow{1}, \matC'_{\star}\overrightarrow{1}\right\rangle \right|
 \le\delta\aA\gG n^2 (\log n)^{O\left(1\right)}
  +\delta \aA n \|\matC'_{\star}\|_{1} + \delta \gG n \|\matA'_{\star}\|_{1}.\]
By Lemma \ref{lem:l1-bound} we have $\|\matA'_{\star}\|_{1} \le \aA(1+\dD)n$
and $\|\matC'_{\star}\|_{1} \le \gG(1+\dD)n$.
Recalling that $\left\langle \matB\matA,\matC\right\rangle$
is multiplied by $\frac{2}{\left(n-1\right)^{2}}$ in the main calculation,
we see that replacing
$-\beta\left\langle \Astruc^{t}\overrightarrow{1},\Cstruc^{t}\overrightarrow{1}\right\rangle $
by $-\beta\left\langle \matA'_\star \overrightarrow{1}, \matC'_\star \overrightarrow{1}\right\rangle $
incurs an error term $\le \dD\aA\bB\gG (\log n)^{O\left(1\right)}$.
Similar reasoning applies to the other terms, so the lemma follows.
\end{proof}

\subsection{Product-free triples are somewhat explained by stars}

We have now prepared the two main ingredients
described in the proof overview earlier in the section:
we have shown that the contributions
from small associated stars are negligible
and reduced the analysis of large associated stars
to bounding the star structure terms that appear
in Lemma \ref{lem:wrapping up}.
Our final lemma in preparation for the proof of
Proposition \ref{prop:Explanation for cross product-freeness},
shows that the star structure terms are small
except where they have a common row with a large sum,
which corresponds to the associated stars with common centre required to prove
Proposition \ref{prop:Explanation for cross product-freeness}.

\begin{lem}
\label{lem:finding a very large associated star } Let $\eta_{1},\eta_{2}>0$.
Let $M_{\star},N_{\star}$ be matrices with nonnegative coefficients
and suppose that there is no coordinate $i$
with both $(M_{\star}\overrightarrow{1})_i > \frac{\eta_{1}}{100}$
and $(N_{\star}\overrightarrow{1})_i > \frac{\eta_{2}}{100}.$ Then
\[
\left\langle M_{\star}\overrightarrow{1},N_{\star}\overrightarrow{1}\right\rangle \le\frac{\eta_{2}}{100}\|M_{\star}\|_{1}+\frac{\eta_{1}}{100}\|N_{\star}\|_{1}.
\]
\end{lem}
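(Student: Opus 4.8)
The plan is to reduce the statement to a purely coordinatewise inequality for the two nonnegative vectors $u:=M_{\star}\overrightarrow{1}$ and $v:=N_{\star}\overrightarrow{1}$. First I would record the elementary identities: since $M_{\star}$ and $N_{\star}$ have nonnegative entries, $u$ and $v$ have nonnegative entries, and summing the entries of $u$ just recovers the $1$-norm, $\sum_{i}u_{i}=\sum_{i,j}(M_{\star})_{ij}=\|M_{\star}\|_{1}$, and likewise $\sum_{i}v_{i}=\|N_{\star}\|_{1}$. Moreover $\left\langle M_{\star}\overrightarrow{1},N_{\star}\overrightarrow{1}\right\rangle =\sum_{i}u_{i}v_{i}$. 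So it suffices to prove $\sum_{i}u_{i}v_{i}\le\frac{\eta_{1}}{100}\sum_{i}v_{i}+\frac{\eta_{2}}{100}\sum_{i}u_{i}$.

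Next I would split the index set according to the threshold in the hypothesis. Let $S=\{i:\,u_{i}\le\eta_{1}/100\}$. The hypothesis says there is no $i$ with $u_{i}>\eta_{1}/100$ and $v_{i}>\eta_{2}/100$ simultaneously, so every $i\notin S$ has $v_{i}\le\eta_{2}/100$. Using nonnegativity of $u$ and $v$ to bound each piece and then extend the sums back to all of $[n]$,
\[
\sum_{i}u_{i}v_{i}=\sum_{i\in S}u_{i}v_{i}+\sum_{i\notin S}u_{i}v_{i}\le\frac{\eta_{1}}{100}\sum_{i\in S}v_{i}+\frac{\eta_{2}}{100}\sum_{i\notin S}u_{i}\le\frac{\eta_{1}}{100}\sum_{i}v_{i}+\frac{\eta_{2}}{100}\sum_{i}u_{i}.
\]
Substituting $\sum_{i}u_{i}=\|M_{\star}\|_{1}$ and $\sum_{i}v_{i}=\|N_{\star}\|_{1}$ yields exactly the claimed bound.

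There is essentially no obstacle in this lemma; it is a one-line case split once the reformulation in terms of $u$ and $v$ is in place. The only points requiring a moment's care are that $\left\langle M_{\star}\overrightarrow{1},N_{\star}\overrightarrow{1}\right\rangle$ is a sum of \emph{products of row sums} (so that the coordinatewise split is legitimate), and that $\sum_{i}(M_{\star}\overrightarrow{1})_{i}$ coincides with $\|M_{\star}\|_{1}$, which uses that all entries of $M_{\star}$ (hence of $N_{\star}$) are nonnegative.
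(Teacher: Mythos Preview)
Your proof is correct and is essentially identical to the paper's own argument: the paper also splits the coordinates according to whether $(M_{\star}\overrightarrow{1})_i\le\eta_1/100$, bounds the contribution on that set by $\tfrac{\eta_1}{100}\|N_{\star}\overrightarrow{1}\|_1$, and uses the hypothesis to bound the remaining coordinates by $\tfrac{\eta_2}{100}\|M_{\star}\overrightarrow{1}\|_1$. Your explicit identification of $\sum_i(M_{\star}\overrightarrow{1})_i$ with $\|M_{\star}\|_1$ via nonnegativity just makes transparent a step the paper leaves implicit.
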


\begin{proof}
The terms of $\left\langle M_{\star}\overrightarrow{1},N_{\star}\overrightarrow{1}\right\rangle $
that correspond to a coordinate in which $M_{\star}\overrightarrow{1}$
is $\le\frac{\eta_{1}}{100}$ sum up to at most $\frac{\eta_{1}}{100}\|N_{\star}\overrightarrow{1}\|_{1}.$
In the rest of the terms the corresponding coordinate of $N_{\star}\overrightarrow{1}$
is at most $\eta_{2}/100.$ We therefore have
\begin{align*}
\left\langle M_{\star}\overrightarrow{1},N_{\star}\overrightarrow{1}\right\rangle  & \le\frac{\eta_{1}}{100}\|N_{\star}\overrightarrow{1}\|_{1}+\frac{\eta_{2}}{100}\|M^{\star}1\|_{1}\\
 & =\frac{\eta_{1}}{100}\|N_{\star}\|_{1}+\frac{\eta_{2}}{100}\|M^{\star}\|_{1}.
\end{align*}
\end{proof}
We are now ready to prove Proposition \ref{prop:Explanation for cross product-freeness}.
\begin{proof}[Proof of Proposition \ref{prop:Explanation for cross product-freeness}]
 By Lemma \ref{lem:wrapping up} we have
\[
\alpha\beta\gamma \le\frac{2}{n^{2}} \left(
\beta\left\langle \matA'_{\star}\overrightarrow{1},\matC'_{\star}\overrightarrow{1}\right\rangle
+\gamma\left\langle \matA_{\star}\overrightarrow{1},\matB'_{\star}\overrightarrow{1}\right\rangle
+\alpha\left\langle \matB_{\star}\overrightarrow{1},\matC_{\star}\overrightarrow{1}\right\rangle
.\right).
\]
If there is no coordinate $i$ in which
$(\matA'_{\star}\overrightarrow{1})_i > \aA n/100$ and
$(\matC'_{\star}\overrightarrow{1})_i > \gG n/100$
then Lemma \ref{lem:finding a very large associated star } and Lemma \ref{lem:l1-bound} bound
$\left\langle \matA'_{\star}\overrightarrow{1},\matC'_{\star}\overrightarrow{1}\right\rangle$
by $\| \matA'_{\star} \|_1 \gG n/100 + \| \matC'_{\star} \|_1 \aA n/100 < \frac{1}{40} \aA \gG n^2$.
Similar calculations apply to the other two terms.
However, these bounds cannot all hold, as then the inequality above
would give $\aA \bB \gG < 3 \cdot \frac{2}{40} \aA \bB \gG$, which is a contradiction.
Thus we have the required coordinate $i$ for one of the terms,
e.g.\ for the third term this would give
$(\matB_{\star}\overrightarrow{1})_i > \bB n/100$ and
$(\matC_{\star}\overrightarrow{1})_i > \gG n/100$,
so $\mu_B(S_B(i))>1/100$ and $\mu_C(S_C(i))>1/100$,
which corresponds to the stars described in the Proposition.
\end{proof}

\subsection{A dense product-free set is explained by a single star}

Now we will specialise the analysis of star structure terms
from product-free triples to a single product-free set,
where we will obtain the stronger structural conclusion
that a single star explains the lack of products.
We require the following lemma that bounds the star structure terms
under the assumption that no single star accounts for almost all
of the star structure matrices.

\begin{lem}
\label{lem:Product-freeness is explained by a single star}
Let $\zeta \in [0,1/2]$ and suppose that
\[
\max\left(\|\matA_{\star}\overrightarrow{1}\|_{\infty},\|\matA'_{\star}\overrightarrow{1}\|_{\infty}\right)\le\left(1-\zeta\right)\left(\|\matA_{\star}\|_{1}+\|\matA'_{\star}\|_{1}\right).
\]
 Then
\[
\left\langle \matA_{\star}\overrightarrow{1},\matA_{\star}\overrightarrow{1}\right\rangle +\left\langle \matA'_{\star}\overrightarrow{1},\matA'_{\star}\overrightarrow{1}\right\rangle +\left\langle \matA'_{\star}\overrightarrow{1},\matA{}_{\star}\overrightarrow{1}\right\rangle   \le
\left(1-\zeta/2\right)\left(\|\matA_{\star}\|_{1}+\|\matA'_{\star}\|_{1}\right)^{2}.
\]
\end{lem}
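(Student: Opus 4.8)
The plan is to recast everything in terms of the nonnegative row-sum vectors $u := \matA_{\star}\overrightarrow{1}$ and $v := \matA'_{\star}\overrightarrow{1}$ in $\mathbb{R}^{n}$, and then to finish with a short two-case comparison. Since $\matA_{\star}$ and $\matA'_{\star}$ have nonnegative entries, their $1$-norms coincide with the $\ell^{1}$-norms of these vectors, so setting $x=\|u\|_{1}=\|\matA_{\star}\|_{1}$, $y=\|v\|_{1}=\|\matA'_{\star}\|_{1}$ and $S=x+y$, the hypothesis reads $\|u\|_{\infty},\|v\|_{\infty}\le(1-\zeta)S$, and the left-hand side of the claimed inequality is exactly $\|u\|_{2}^{2}+\|v\|_{2}^{2}+\langle u,v\rangle=\sum_{i}(u_{i}^{2}+u_{i}v_{i}+v_{i}^{2})$.

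Next I would invoke the elementary bounds $\sum_{i}u_{i}^{2}\le\|u\|_{\infty}x$, $\sum_{i}v_{i}^{2}\le\|v\|_{\infty}y$ and $\langle u,v\rangle\le\min(\|u\|_{\infty}y,\|v\|_{\infty}x)$ (all valid because the coordinates are nonnegative), together with the trivial facts $\|u\|_{\infty}\le x$ and $\|v\|_{\infty}\le y$. By the evident symmetry in $\matA_{\star}\leftrightarrow\matA'_{\star}$ we may assume $x\ge y$, so that $x\ge S/2$ and — using $\zeta\le 1/2$ here — $y\le S/2\le(1-\zeta)S$. Taking the $\|v\|_{\infty}x$ branch of the minimum and applying $\|v\|_{\infty}\le y$ to the two $v$-terms yields the intermediate bound
\[
\sum_{i}(u_{i}^{2}+u_{i}v_{i}+v_{i}^{2}) \le \|u\|_{\infty}x+y^{2}+yx = \|u\|_{\infty}x+yS.
\]

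To conclude I would split on the size of $x$. If $x\le(1-\zeta)S$, then $\|u\|_{\infty}\le x$ gives the bound $x^{2}+yS=x^{2}+xy+y^{2}=S^{2}-xy$; since $x\ge S/2$ and $y=S-x\ge\zeta S$ we have $xy\ge\tfrac{\zeta}{2}S^{2}$, so the left-hand side is at most $(1-\tfrac{\zeta}{2})S^{2}$. If instead $x>(1-\zeta)S$, then the hypothesis $\|u\|_{\infty}\le(1-\zeta)S$ gives the bound $(1-\zeta)Sx+yS=S\big((1-\zeta)x+y\big)=S(S-\zeta x)\le S\big(S-\zeta(1-\zeta)S\big)=(1-\zeta+\zeta^{2})S^{2}$, which is at most $(1-\tfrac{\zeta}{2})S^{2}$ precisely because $\zeta\le 1/2$. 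This proves the lemma.

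I do not foresee a real obstacle, but it is worth flagging the one point where the argument must be careful: the crudest estimate — bounding each of the three terms by $(1-\zeta)S$ times an $\ell^{1}$-norm — loses an absolute factor of about $2$ and only yields $2(1-\zeta)S^{2}$, which is useless. The case split is exactly what lets us cash in the identity $x+y=S$: when the two $\ell^{1}$-masses are balanced their product is already $\Omega(\zeta S^{2})$, making the trivial bound $S^{2}-xy$ sufficient, whereas when one mass (say $x$) is close to $S$, the hypothesis forces $\|u\|_{\infty}$ to be genuinely smaller than $x$, and we gain there instead.
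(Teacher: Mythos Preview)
Your proof is correct. It takes a genuinely different route from the paper's, so a brief comparison is in order.

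The paper normalises by $S$ so that the two row-sum vectors satisfy $\|u\|_1+\|v\|_1=1$, then uses the pointwise identity $u_i^2+u_iv_i+v_i^2=(u_i+v_i)^2-u_iv_i$. Bounding $\sum_i(u_i+v_i)^2$ by $\max_j(u_j+v_j)\cdot\sum_i(u_i+v_i)$ and dropping all but one $u_iv_i$ term reduces the problem to maximising the two-variable function $f(a,b)=a+b-ab$ over $a,b\in[0,1-\zeta]$ with $a+b\le 1$; a convexity argument gives $f\le 1-\zeta(1-\zeta)\le 1-\zeta/2$. This is a single clean reduction with no case split, and it also identifies the equality profile (one coordinate of $u+v$ equal to $1$ with $\{a,b\}=\{\zeta,1-\zeta\}$).

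Your argument instead bounds the three terms separately via $\ell^\infty$--$\ell^1$ inequalities and then splits on whether the larger $\ell^1$-mass $x$ already satisfies $x\le(1-\zeta)S$ or not. This avoids any algebraic identity and is entirely mechanical; it reaches the same numerical bound $(1-\zeta+\zeta^2)S^2$ in the hard case. The trade-off is that you need the case split (and the observation you flag, that the na\"ive estimate loses a factor of $2$), whereas the paper's identity $(u_i+v_i)^2-u_iv_i$ handles both regimes at once.
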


\begin{proof}
The lemma is immediate from the following claim applied to
$v=\frac{\matA_{\star}\overrightarrow{1}}{\|\matA_{\star}\|_{1}+\|\matA'_{\star}\|_{1}}$
and $u=\frac{\matA'_{\star}\overrightarrow{1}}{\|\matA_{\star}\|_{1}+\|\matA'_{\star}\|_{1}}.$
\begin{claim}
\label{claim:inequality for stars}
Let $v,u \in [0,1-\zeta]^n$ with $\|v\|_{1}+\|u\|_{1}=1$. Then
\[
\|v\|_{2}^{2}+\|u\|_{2}^{2}+\left\langle v,u\right\rangle \le 1-\zeta(1-\zeta).
\]
\end{claim}

\begin{proof}
Suppose that $v_{1}+u_{1}\ge v_{i}+u_{i}$ for all other $i.$ Then
we have
\begin{align*}
\|v\|_{2}^{2}+\|u\|_{2}^{2}+\left\langle v,u\right\rangle  & =\sum_{i=1}^{n}\left(\left(v_{i}+u_{i}\right)^{2}-v_{i}u_{i}\right)\\
 & \le\sum_{i=1}^{n}\left(v_{1}+u_{1}\right)\left(v_{i}+u_{i}\right)-v_{1}u_{1}\\
 & =v_{1}+u_{1}-v_{1}u_{1}.
\end{align*}
The function $f(u_1,v_1) :=v_{1}+u_{1}-v_{1}u_{1}$ is increasing in both coordinates,
so is maximised when $u_1+v_1=1$. Now $g(v_1) := f(1-v_1,v_1)=1-v_1(1-v_1)$
is a convex function of $v_1$, so is maximised at the boundary of its domain
$[\zeta,1-\zeta]$. The claim follows.
\end{proof}
\end{proof}
Now we can show for any moderately dense product-free set $A \sub A_n$
that some associated star or inverse star explains $A$
up to a factor $1+O(\dD^{1/4})$.

\begin{lem}
\label{lem:Explanation by associated stars}
Suppose that $A\subseteq A_{n}$ is product-free
with $\mu\left(A\right)\ge \dD^{-2} n^{-2/3}$.
Then $\max_{i \in [n]} \max\{s_A(i), s'_A(i)\} = (1+O(\dD^{1/4}))\aA$.
\end{lem}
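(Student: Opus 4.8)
The plan is to specialise the triple analysis to the case $A=B=C$ and combine it with the combinatorial estimate of Lemma~\ref{lem:Product-freeness is explained by a single star}. Since $A$ is product-free, applying Proposition~\ref{Prop:Most weight is on the first two levels} and then the chain of reductions in the previous subsections (Lemma~\ref{lem:Being left with struc}, Lemma~\ref{lem:Simple lower bound}, and the star-structure replacement from Lemma~\ref{lem:wrapping up}, all with $B=C=A$, so $\beta=\gamma=\alpha$) yields
\[
0 \;=\; \Pr_{\sigma,\tau}[\sigma,\tau,\sigma\tau\in A]
\;\ge\; 2\alpha^3(1-\dD^{1/4})
-\frac{2\alpha}{(n-1)^2}\Bigl(\langle\matA'_\star\overrightarrow{1},\matA'_\star\overrightarrow{1}\rangle
+\langle\matA_\star\overrightarrow{1},\matA'_\star\overrightarrow{1}\rangle
+\langle\matA_\star\overrightarrow{1},\matA_\star\overrightarrow{1}\rangle\Bigr),
\]
so that the sum of the three star-structure inner products is at least $(1-\dD^{1/4})(n-1)^2\alpha^2$. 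First I would check that the density hypothesis $\mu(A)\ge\dD^{-2}n^{-2/3}$ is strong enough to meet the hypotheses of all the lemmas invoked along the way: with $\alpha=\beta=\gamma$ the condition~\eqref{eq:5.2} of Proposition~\ref{prop:Explanation for cross product-freeness} becomes $\alpha^3\ge\dD^{-5}n^{-2}$, which holds since $\alpha^3\ge\dD^{-6}n^{-2}$, and similarly the hypothesis $\alpha\min(\beta,\gamma)^2\ge 1000\dD^{-5}n^{-2}$ of Lemma~\ref{lem:l1-bound} holds; this also gives $\|\matA_\star\|_1+\|\matA'_\star\|_1\le\alpha(1+\dD)(n-1)$.

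Next I would run the contrapositive of Lemma~\ref{lem:Product-freeness is explained by a single star}. Suppose for contradiction that $\max_i\max\{s_A(i),s'_A(i)\}\le(1-\zeta)\alpha$ for some $\zeta$ which is, say, $C\dD^{1/4}$ for a suitable absolute constant $C$. Translating between the combinatorial quantities and the matrices, $s_A(i)=(\matA_\star\overrightarrow 1)_i/(n-1)$ on large-star rows and $0$ otherwise (and likewise $s'_A$ with $\matA'_\star$), so $\|\matA_\star\overrightarrow 1\|_\infty=(n-1)\max_i s_A(i)\le(1-\zeta)(n-1)\alpha$ and similarly for $\matA'_\star$. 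Combined with the lower bound $\|\matA_\star\|_1+\|\matA'_\star\|_1\ge(1-\dD^{1/4})^{1/2}(n-1)\alpha$ coming from the displayed inequality above (the sum of inner products is at least $(1-\dD^{1/4})(n-1)^2\alpha^2$, and each inner product is at most $(\|\matA_\star\|_1+\|\matA'_\star\|_1)^2$ termwise), one gets $\max(\|\matA_\star\overrightarrow 1\|_\infty,\|\matA'_\star\overrightarrow 1\|_\infty)\le(1-\zeta')(\|\matA_\star\|_1+\|\matA'_\star\|_1)$ for some $\zeta'$ comparable to $\zeta$ up to the $\dD$-error. Then Lemma~\ref{lem:Product-freeness is explained by a single star} forces
\[
\langle\matA_\star\overrightarrow 1,\matA_\star\overrightarrow 1\rangle+\langle\matA'_\star\overrightarrow 1,\matA'_\star\overrightarrow 1\rangle+\langle\matA'_\star\overrightarrow 1,\matA_\star\overrightarrow 1\rangle
\le(1-\zeta'/2)(\|\matA_\star\|_1+\|\matA'_\star\|_1)^2
\le(1-\zeta'/2)(1+\dD)^2(n-1)^2\alpha^2,
\]
which, plugged back into the product-free inequality, yields $2\alpha^3(1-\dD^{1/4})\le 2\alpha^3(1-\zeta'/2)(1+\dD)^2$, i.e.\ $(1-\dD^{1/4})\le(1-\zeta'/2)(1+\dD)^2$; for $\zeta'=C\dD^{1/4}$ with $C$ large this is false once $n$ is large, a contradiction. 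This forces $\max_i\max\{s_A(i),s'_A(i)\}\ge(1-O(\dD^{1/4}))\alpha$.

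The matching upper bound $\max_i\max\{s_A(i),s'_A(i)\}\le(1+O(\dD^{1/4}))\alpha$ is the easier direction: it follows directly from Lemma~\ref{lem:l1-bound}, since for any $i$, $s_A(i)\le\frac{1}{n-1}\|\matA_\star\|_1\le\alpha(1+\dD)$ and likewise $s'_A(i)\le\frac{1}{n-1}\|\matA'_\star\|_1\le\alpha(1+\dD)$ (or even more simply, $s_A(i)=\mu(A\cap S_A(i))-\mu(A)\mu(S_A(i))\le\mu(A)$ from the combinatorial interpretation). Putting the two bounds together gives $\max_i\max\{s_A(i),s'_A(i)\}=(1+O(\dD^{1/4}))\alpha$, as claimed. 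The main obstacle I anticipate is bookkeeping the $\dD$-error terms cleanly through the translation between the three inner products and $(\|\matA_\star\|_1+\|\matA'_\star\|_1)^2$ — in particular making sure the lower bound on $\|\matA_\star\|_1+\|\matA'_\star\|_1$ extracted from the product-free inequality is genuinely of the form $(1-o(1))(n-1)\alpha$ and not degenerate, which is what makes the $\ell_\infty$-versus-$\ell_1$ hypothesis of Lemma~\ref{lem:Product-freeness is explained by a single star} bite; this is where the precise choice of the exponent in $\mu(A)\ge\dD^{-2}n^{-2/3}$ and the $\sqrt{\dD}$ loss in Lemma~\ref{lem:Being left with struc} need to be reconciled, but all the pieces are already in place.
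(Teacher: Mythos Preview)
Your approach is correct and follows the same line as the paper's proof: specialise Lemma~\ref{lem:wrapping up} to $A=B=C$, feed the resulting lower bound on the three star-structure inner products into Lemma~\ref{lem:Product-freeness is explained by a single star}, and cap with the $\ell_1$-bound of Lemma~\ref{lem:l1-bound}. The paper does this slightly more directly by \emph{defining} $\zeta'$ via $\max(\|\matA_\star\overrightarrow 1\|_\infty,\|\matA'_\star\overrightarrow 1\|_\infty)=(1-\zeta')(\|\matA_\star\|_1+\|\matA'_\star\|_1)$ and reading off $\zeta'=O(\dD^{1/4})$ from the resulting chain of inequalities, rather than running a contradiction argument; but the two arguments are equivalent.

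One small point to tighten: your claim that the sum of the three inner products is at most $(\|\matA_\star\|_1+\|\matA'_\star\|_1)^2$ does hold, but not for the reason you give (``each inner product is at most \ldots'' would incur a factor of~$3$). The clean bound is
\[
\|\matA_\star\overrightarrow 1\|_2^2+\langle\matA_\star\overrightarrow 1,\matA'_\star\overrightarrow 1\rangle+\|\matA'_\star\overrightarrow 1\|_2^2
\le \|\matA_\star\overrightarrow 1+\matA'_\star\overrightarrow 1\|_2^2
\le \|\matA_\star\overrightarrow 1+\matA'_\star\overrightarrow 1\|_1^2
=(\|\matA_\star\|_1+\|\matA'_\star\|_1)^2,
\]
using nonnegativity of the entries. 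With this in hand your lower bound $\|\matA_\star\|_1+\|\matA'_\star\|_1\ge(1-\dD^{1/4})^{1/2}(n-1)\alpha$ is justified, and the rest of the bookkeeping goes through exactly as you outline. Your upper bound is also fine; as you note, the cleanest version is simply $s_A(i)\le\mu(A\cap S_A(i))\le\alpha$.
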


\begin{proof}
We define $\zeta' \in [0,1]$ by
\[ \max\left(\|\matA_{\star}\overrightarrow{1}\|_{\infty},\|\matA'_{\star}\overrightarrow{1}\|_{\infty}\right)
= (1-\zeta')( \|\matA_{\star}\|_{1}+\|\matA'_{\star}\|_{1} ) \]
and let $\zeta = \min(\zeta',1/2)$.
Applying Lemma \ref{lem:wrapping up} with $A=B=C$,
then Lemma \ref{lem:Product-freeness is explained by a single star},
and then Lemma \ref{lem:l1-bound}, we obtain
\begin{align*}
(n-1)^2 \alpha^{3} \left(1-O\left(delta^{1/4}\right)\right)
& \le \alpha\left(\left\langle \matA'_{\star}\overrightarrow{1},\matA'_{\star}\overrightarrow{1}\right\rangle +\left\langle \matA_{\star}\overrightarrow{1},\matA'_{\star}\overrightarrow{1}\right\rangle +\left\langle \matA_{\star}\overrightarrow{1},\matA_{\star}\overrightarrow{1}\right\rangle \right) \\
& \le \alpha\left(\|\matA_{\star}\|_{1}+\|\matA'_{\star}\|_{1}\right)^{2}\left(1-\zeta/2\right) \\
& \le n^2 \alpha^3 \left(1-\zeta/2\right)\left(1+O\left(\delta\right)\right).
\end{align*}
Thus $\zeta=O(\delta^{1/4})$, so $\zeta'=\zeta$,
and $\|\matA_{\star}\|_{1}+\|\matA'_{\star}\|_{1} = (1+O(\delta^{1/4})) \aA n$,
so by definition of $\zeta'$ we have
\[ (n-1) \max_{i \in [n]} \max\{s_A(i), s'_A(i)\} =
\max\left(\|\matA'_{\star}\overrightarrow{1}\|_{\infty},\|\matA'_{\star}\overrightarrow{1}\|\right)
= (1+O(\delta^{1/4})) \aA n.\]
This completes the proof.
\end{proof}
\subsection{A dense product-free set is close to a single star}

Now we will refine the star structure obtained in the previous subsection
to prove Proposition \ref{prop:Explanation for product-freeness},
which shows that any moderately dense product-free set
is closely approximated by a single star.
Our idea is to use the (inverse) star $S$ provided above
with the fact that $\left(A,A,A\setminus S\right)$ is a product-free triple
to which we can apply Proposition \ref{prop:Explanation for cross product-freeness}
to deduce that $A\setminus S$ is small.

\begin{proof}[Proof of Proposition \ref{prop:Explanation for product-freeness}]
By Lemma \ref{lem:Explanation by associated stars}
there is a large associated (inverse) star $S_{1}$
such that $\mu\left(A\setminus S_{1}\right)<O(\dD^{1/4}) \mu\left(A\right).$
Without loss of generality $S_{1}=1_{1\to I_{1}}.$ Let
\[
I=\left\{ i:\,\mu\left(A_{1\to i}\right)\ge n^{-\frac{1}{3}}\right\} ,
\]
 let $S=1_{1\to I}$, and let $C=A\setminus S.$

We assert that $\mu\left(C\right)\le\delta^{-2}n^{-2/3}.$
Suppose otherwise. Then the triple $\left(A,A,C\right)$ is product-free,
and we can apply Proposition \ref{prop:Explanation for cross product-freeness}
to deduce that its conclusion holds for some triple equivalent to $(A,A,C)$.
In principle, the possibilities are for some $i$ that
\begin{enumerate}
\item $\mu_A(S_A(i))>1/100$ and $\mu_A(S'_A(i))>1/100$, or
\item $\mu_A(S'_A(i))>1/100$ and $\mu_C(S'_C(i))>1/100$, or
\item $\mu_A(S_A(i))>1/100$ and $\mu_C(S_C(i))>1/100$.
\end{enumerate}
However, $\mu_A(S_A(1))$ is so large that it precludes
the existence of any other associated star or inverse star $S$
with $\mu_A(S)>1/100$, so the only possibility is that (3) holds with $i=1$.
However, by definition of $C$ we have
$\mu\left(C_{1\to j}\right)\le n^{-\frac{1}{3}}$ for all $i$,
but as $S_C(1) \ne \es$ we can choose $j$ with
\[ \mu(C_{1 \to j}) > c_{1j} > \eps_C
=n\delta\mu\left(C\right)\mu\left(A\right)>n^{-\frac{1}{3}}.
\]
This contradiction completes the proof.
\end{proof}

\subsection{Product-free triples when one set has no large associated stars}

We have now completed the proofs for the main goals of the section.
For future reference we will conclude the section by
proving a star-structure theorem for product-free triples $\left(A,B,C\right)$
where $A$ has no large associated stars and inverse stars.
The rationale is that after reordering we expect $B$ and $C$
to look like stars and $A$ to look like $1_{I\to\overline{J}}.$
We therefore expect $A$ to have no large associated stars.
Under this assumption, we strengthen the conclusion of
Proposition \ref{prop:Explanation for cross product-freeness}
(product-free triples are somewhat explained by stars)
to the same level of accuracy that we achieved for a single product-free set:
we show that $B$ and $C$ are each explained up to a factor $1+O(\dD^{1/4})$
by a single star with the same centre.

\begin{lem}
\label{lem:product-freeness in case A has no large associated stars}
Suppose that $\left(A,B,C\right)$ are as in
Proposition \ref{prop:Explanation for cross product-freeness}
and that $A$ has no associated stars and inverse stars.
Then there exists $i\in\left[n\right]$ such that
$\mu\left(B\setminus S_{B}\left(i\right)\right)\le O(\dD^{1/4})\beta$
and $\mu\left(C\setminus S_{C}\left(i\right)\right)\le O(\dD^{1/4})\gamma.$
\end{lem}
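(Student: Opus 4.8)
The plan is to run the argument behind Proposition~\ref{prop:Explanation for cross product-freeness}, but to exploit the extra hypothesis that $A$ has no large associated stars or inverse stars, so that $\matA_{\star}=\matA'_{\star}=0$. This kills two of the three star-structure inner products appearing in Lemma~\ref{lem:wrapping up} and forces the surviving one, $\langle\matB_{\star}\overrightarrow{1},\matC_{\star}\overrightarrow{1}\rangle$, to be nearly extremal; from this we can extract a single common centre for $B$ and $C$ by an elementary inequality playing the role of Claim~\ref{claim:inequality for stars} in the present ``cross'' setting.

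First I would apply Lemma~\ref{lem:wrapping up} to the product-free triple $(A,B,C)$. The left-hand side is $0$ by product-freeness, and the terms $\beta\langle\matA'_{\star}\overrightarrow{1},\matC'_{\star}\overrightarrow{1}\rangle$ and $\gamma\langle\matA_{\star}\overrightarrow{1},\matB'_{\star}\overrightarrow{1}\rangle$ vanish since $\matA_{\star}=\matA'_{\star}=0$; as $\matB_{\star},\matC_{\star}$ have nonnegative entries this yields
\[
\langle\matB_{\star}\overrightarrow{1},\matC_{\star}\overrightarrow{1}\rangle\ge(1-\dD^{1/4})\beta\gamma(n-1)^{2}.
\]
Write $u=\matB_{\star}\overrightarrow{1},\ w=\matC_{\star}\overrightarrow{1}\in\mb{R}_{\geq0}^{n}$. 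By Lemma~\ref{lem:l1-bound} we have $\|u\|_{1}\le(1+\dD)\beta(n-1)$ and $\|w\|_{1}\le(1+\dD)\gamma(n-1)$, and combining the trivial bound $\langle u,w\rangle\le\|u\|_{1}\|w\|_{\infty}\le\|u\|_{1}\|w\|_{1}$ with the displayed inequality also forces $\|u\|_{1}\ge(1-O(\dD^{1/4}))\beta(n-1)$ and $\|w\|_{1}\ge(1-O(\dD^{1/4}))\gamma(n-1)$.

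Next I would normalise: set $\hat u=u/\|u\|_{1}$ and $\hat w=w/\|w\|_{1}$, so that $\hat u,\hat w$ have nonnegative entries summing to $1$ and $\langle\hat u,\hat w\rangle=\langle u,w\rangle/(\|u\|_{1}\|w\|_{1})\ge1-O(\dD^{1/4})$. Let $i$ maximise $\hat w_{i}$. Since $\langle\hat u,\hat w\rangle\le\max_{j}\hat w_{j}$ we get $\hat w_{i}\ge1-O(\dD^{1/4})$, hence $\sum_{j\ne i}\hat w_{j}=O(\dD^{1/4})$; then, using $\hat w_{i}\le1$ and $\hat u_{j}\le1$,
\[
\langle\hat u,\hat w\rangle\le\hat u_{i}+\sum_{j\ne i}\hat w_{j},
\]
so $\hat u_{i}\ge\langle\hat u,\hat w\rangle-O(\dD^{1/4})\ge1-O(\dD^{1/4})$. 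Thus this one coordinate $i$ satisfies $(\matB_{\star}\overrightarrow{1})_{i}\ge(1-O(\dD^{1/4}))\|\matB_{\star}\|_{1}\ge(1-O(\dD^{1/4}))\beta(n-1)$ and, symmetrically, $(\matC_{\star}\overrightarrow{1})_{i}\ge(1-O(\dD^{1/4}))\gamma(n-1)$.

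It remains to translate this back to densities. As $(\matB_{\star}\overrightarrow{1})_{i}>0$, the associated star $S_{B}(i)$ is large, so row $i$ of $\matB_{\star}$ equals row $i$ of $\Bstruc$ and $(\matB_{\star}\overrightarrow{1})_{i}=(n-1)s_{B}(i)$, whence $s_{B}(i)\ge(1-O(\dD^{1/4}))\beta$. Using $s_{B}(i)=\mu(B\cap S_{B}(i))-\mu(B)\mu(S_{B}(i))\le\mu(B\cap S_{B}(i))$ we obtain $\mu(B\setminus S_{B}(i))=\beta-\mu(B\cap S_{B}(i))\le O(\dD^{1/4})\beta$, and repeating the same computation with $C$ in place of $B$ gives $\mu(C\setminus S_{C}(i))\le O(\dD^{1/4})\gamma$ for the same $i$. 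The only slightly delicate point is isolating a single coordinate that captures almost all of the mass of both $\matB_{\star}\overrightarrow{1}$ and $\matC_{\star}\overrightarrow{1}$ simultaneously --- the cross analogue of the single-set argument in Lemma~\ref{lem:Product-freeness is explained by a single star} --- and this crucially relies on the surviving inner product being within a factor $1-O(\dD^{1/4})$ of the product of the (now near-maximal) $L^{1}$-norms; everything else is bookkeeping of the $O(\dD^{1/4})$ errors.
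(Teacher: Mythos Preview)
Your proposal is correct and follows essentially the same approach as the paper: both use the vanishing of $\matA_{\star},\matA'_{\star}$ in Lemma~\ref{lem:wrapping up} to force $\langle\matB_{\star}\overrightarrow{1},\matC_{\star}\overrightarrow{1}\rangle$ close to its maximal value, then normalise by the $L^{1}$-norms (controlled via Lemma~\ref{lem:l1-bound}) and extract a common dominant coordinate. Your extraction of the common coordinate via the inequality $\langle\hat u,\hat w\rangle\le\hat u_{i}+\sum_{j\ne i}\hat w_{j}$ is a slightly more direct variant of the paper's argument, which instead bounds $\|v\|_{\infty},\|u\|_{\infty}$ separately and rules out distinct maximising coordinates by contradiction; the difference is cosmetic.
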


\begin{proof}
The proof is similar to that of Lemma \ref{lem:Explanation by associated stars}.
By assumption we have $\matA_{\star}$ and $\matA'_{\star}=0$,
so Lemma \ref{lem:wrapping up} reduces to
\[
(n-1)^2 \alpha\beta\gamma\left(1-O\left(\dD^{1/4}\right)\right)
=\alpha\left\langle \matB_{\star}\overrightarrow{1},\matC_{\star}\overrightarrow{1}\right\rangle .
\]
Let $v=\frac{\matB_{\star}\overrightarrow{1}}{\|\matB_{\star}\|_{1}}$
and $u=\frac{C_{\star}\overrightarrow{1}}{\|\matC_{\star}\|_{1}}$.
Then $\left\langle v,u\right\rangle > 1-O(\dD^{1/4})$,
as otherwise Lemma \ref{lem:l1-bound} would give
\[
(n-1)^2 \alpha\beta\gamma
< (1-O(\dD^{1/4})) \alpha  \|\matB_{\star}\|_{1} \|\matC_{\star}\|_{1}
< (1-O(\dD^{1/4})) \aA (1+\dD) \bB n (1+\dD) \gG n,
\]
which is a contradiction.
Now in the place of Claim \ref{claim:inequality for stars} we must
show that there is some coordinate $i$ with both $v_{i}$ and $u_{i}$
at least $1-O\left(\dD^{1/4}\right)$. As
\[ \left\langle v,u\right\rangle \le\|v\|_{\infty}\|u\|_{1}=\|v\|_{\infty} \]
we have $\|v\|_{\infty}\ge 1-O\left(\dD^{1/4}\right)$
and similarly $\|u\|_{\infty}\ge 1-O\left(\dD^{1/4}\right)$,
so each of $u$ and $v$ has one coordinate equal to $1-O\left(\dD^{1/4}\right)$
with the others all $O\left(\dD^{1/4}\right)$.
It remains to note that $\|v\|_{\infty}$ and $\|u\|_{\infty}$
must be achieved at the same coordinate, as otherwise
we would have
\[
\left\langle v,u\right\rangle \le O\left(\dD^{1/4}\right)\left(\|v\|_{1}+\|u\|_{1}\right)
= O\left(\dD^{1/4}\right),
\]
which contradicts $\left\langle v,u\right\rangle > 1-O(\dD^{1/4})$.
\end{proof}

\section{Bootstrapping} \label{sec:boot}

In this section we will use the star structure
established in the previous section to prove our main results,
which give exact extremal results and strong stability results
for product-free sets in $A_n$. The proofs will use the large
restrictions provided by the star structure to deduce that
other restrictions must be much smaller. This will allow us to progressively
tighten our approximate structure until it becomes exact.

\subsection{Product-free restrictions}

As discussed in subsection \ref{subsec:restrict},
if some product-free $A \sub A_n$ is well approximated by a star $1_{x\to I}$
then for each $i,i'\in I$ we will see that $A$ has small density
in $\mathcal{D}_{i\to i'}$ by inspecting the triple
$(A_{i\to i'},A_{x\to i},A_{x\to i'})$
and factoring out the corresponding dictators.
This is formalised by the following lemma.

\begin{lem}
\label{lem:eberhard for restrictions} Let $\epsilon>\frac{1}{\delta\sqrt{n}},$
and let $\left(A,B,C\right)$ be product-free.
Suppose that $\mu\left(B_{i\to j}\right) \ge \eps$
and $\mu\left(C_{i\to k}\right)\ge\epsilon.$
Then
\[
\mu\left(A_{j\to k}\right)\le\frac{\log^{O\left(1\right)}\left(1/\epsilon\right)}{\epsilon n}.
\]
\end{lem}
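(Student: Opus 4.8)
The plan is to factor out the three compatible dictators $\mathcal{D}_{j\to k}$, $\mathcal{D}_{i\to j}$, $\mathcal{D}_{i\to k}$ (which satisfy $\mathcal{D}_{j\to k}\cdot\mathcal{D}_{i\to j}=\mathcal{D}_{i\to k}$) using the transformation sketched in subsection \ref{subsec:restrict}, reducing to Eberhard's bound (Corollary \ref{cor:unbalanced eberhard}) inside $S_{n-1}$. First I would introduce permutations moving the relevant points to $n$, setting $A'=(k\,n)A(n\,j)$, $B'=(j\,n)B(n\,i)$ and $C'=(k\,n)C(n\,i)$; the key point is that this transformation preserves products, i.e.\ $(A',B',C')$ is product-free whenever $(A,B,C)$ is, and that the restrictions to $n\to n$ recover the original restrictions, so that $\mu(A'_{n\to n})=\mu(A_{j\to k})$, $\mu(B'_{n\to n})=\mu(B_{i\to j})$ and $\mu(C'_{n\to n})=\mu(C_{i\to k})$.

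Next I would observe that $\mathcal{U}_{n\to n}$ can be identified with $S_{n-1}$, and that $(A'_{n\to n},B'_{n\to n},C'_{n\to n})$, viewed inside $S_{n-1}$, is still a product-free triple — this is because a product $\sigma\tau$ of two permutations fixing $n$ again fixes $n$, so a solution of $\sigma\tau=\rho$ with $\sigma\in B'_{n\to n}$, $\tau\in ?$ ... more precisely one checks that $\sigma\in A'_{n\to n}$, $\tau\in B'_{n\to n}$ with $\sigma\tau\in C'_{n\to n}$ stays within the copy of $S_{n-1}$, so product-freeness of the ambient triple restricts. Then I would apply Corollary \ref{cor:unbalanced eberhard} to this triple inside $S_{n-1}$ with the roles suitably matched: since $\mu(B'_{n\to n})=\mu(B_{i\to j})\ge\eps$ and $\mu(C'_{n\to n})=\mu(C_{i\to k})\ge\eps$, the corollary (applied with the min of the two lower densities being $\ge\eps$) gives $\mu(A'_{n\to n})\le\frac{1}{\eps(n-1)}\log^{O(1)}(1/\eps)$, hence $\mu(A_{j\to k})\le\frac{\log^{O(1)}(1/\eps)}{\eps n}$ after absorbing the $n$ versus $n-1$ discrepancy into the constants (valid since $n$ is large).

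I should double-check that Corollary \ref{cor:unbalanced eberhard} is stated for product-free triples $A,B,C\subseteq A_{n}$ rather than $S_n$: indeed it is, so I need the restricted triple to lie in the alternating group on the $n-1$ moved points. If the conjugating transpositions are odd this may flip parity; this is harmless because one can either work in $S_{n-1}$ directly (Eberhard's bound, equivalently Corollary \ref{cor:eberhard}/\ref{cor:unbalanced eberhard}, holds just as well when the sets are arbitrary subsets of $S_{n-1}$, since the proof via Proposition \ref{Prop:Most weight is on the first two levels} and Lemma \ref{lem:one sided Eberhard} only used that $f,g,h$ are $\{0,1\}$-valued — though one must track the extra $\mathbb{E}[\tilde f]\mathbb{E}[\tilde g]\mathbb{E}[\tilde h]$ term), or one notes that composing with an even product of transpositions fixes parity and the odd case follows by also inverting. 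The main obstacle is precisely this bookkeeping: verifying that the transformation genuinely preserves the product structure in the right direction (the composition convention $a(b(x))=c(x)$ must be respected, so the order of the transpositions on the left versus right matters) and that the restricted triple is product-free as a triple in $S_{n-1}$ (or $A_{n-1}$), rather than merely each set being individually restricted. The hypothesis $\eps>\frac{1}{\dD\sqrt n}$ is exactly what guarantees $\eps\in(0,1/2)$ and that the log factor in Corollary \ref{cor:unbalanced eberhard} is $\log^{O(1)}(1/\eps)$ rather than $\log^{O(1)}n$, which is needed for the sharper statement.
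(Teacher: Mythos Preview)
Your approach is essentially identical to the paper's: conjugate by transpositions to move the three compatible dictators to $\mathcal{U}_{n\to n}$, observe that products are preserved, and apply Corollary~\ref{cor:unbalanced eberhard} inside $S_{n-1}$ (or $A_{n-1}$). The paper does exactly this, writing $A'=(nk)A_{j\to k}(nj)$, $B'=(jn)B_{i\to j}(ni)$, $C'=(nk)C_{i\to k}(ni)$, which agrees with your transformation after noting that restricting first and conjugating second gives the same result as conjugating first and restricting to $n\to n$.

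Your parity worry is easily resolved: each of $a',b',c'$ is an element of $A_n$ multiplied on both sides by a transposition, hence remains even, and an even permutation fixing $n$ is even as an element of $S_{n-1}$; so the restricted triple genuinely lives in $A_{n-1}$ and Corollary~\ref{cor:unbalanced eberhard} applies directly. Your final remark about the hypothesis $\eps>\tfrac{1}{\delta\sqrt{n}}$ is slightly off: this condition gives a \emph{lower} bound on $\eps$, not the upper bound $\eps<1/2$ needed in Corollary~\ref{cor:unbalanced eberhard} (the latter is harmless since for $\eps\ge 1/2$ one can apply the corollary with $\eps'=1/3$, say, and the resulting $O(1/n)$ bound is stronger than the stated conclusion). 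The paper does not explicitly use this lower bound in the proof either.
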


\begin{proof}
We apply the following transformation that preserves products:
let $B'=\left(jn\right)B_{i\to j}\left(ni\right),$ $A'=\left(nk\right)A_{j\to k}\left(nj\right)$
and $C'=\left(nk\right)C_{i\to k}\left(ni\right).$ Then the equation
$ab=c$ in $\left(A',B',C'\right)$ is equivalent to the corresponding
equation inside $\left(A_{j\to k},B_{i\to j},C_{i\to k}\right).$
As $\left(A',B',C'\right)$ can be viewed as product-free subsets
of $A_{n-1}$ the lemma follows from Corollary \ref{cor:unbalanced eberhard}.
\end{proof}

The following lemma shows that if $\left(A,B,C\right)$ are product-free
and $B$, $C$ are dense in stars $1_{x\to I}$, $1_{x\to J},$
then $A$ is sparse outside of
$1_{I\to\overline{J}} := \{ \sS: \sS(I) \cap J = \es \}$.
\begin{lem}
\label{lem:bootstrapping lemma 1} Let $\epsilon>\frac{1}{\delta\sqrt{n}}$
and let $\left(A,B,C\right)$ be a product-free triple. Let $x\in\left[n\right]$
and let $I,J\subseteq\left[n\right]$. Suppose that for each $i\in I$
and each $j\in J$ we have $\mu\left(B_{x\to i}\right),\mu\left(C_{x\to j}\right)\ge\epsilon$.
Suppose further that $\alpha\ge\frac{1}{\delta\epsilon n}.$
Then $\left|I\right|\left|J\right|\le 10 n\log n$
and
\[
\mu\left(A\setminus1_{I\to\overline{J}}\right)\le\frac{\left|I\right|\left|J\right|\log^{O\left(1\right)}\left(1/\epsilon\right)}{\epsilon n^{2}}.
\]
\end{lem}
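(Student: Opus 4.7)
The plan is to first establish the second conclusion via a direct union bound, and then deduce the first conclusion by contradiction, using the second conclusion applied to a carefully truncated subset $J'\subseteq J$.

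For the second conclusion, for each $(i,j)\in I\times J$ I invoke Lemma~\ref{lem:eberhard for restrictions} on the product-free triple $(A,B,C)$ with $x$ as the common index (so the roles of $i,j,k$ in that lemma are played by $x,i,j$ here). The hypotheses $\mu(B_{x\to i})\ge\epsilon$, $\mu(C_{x\to j})\ge\epsilon$, together with $\epsilon>1/(\delta\sqrt n)$, are exactly what the lemma requires, yielding $\mu(A_{i\to j})\le\log^{O(1)}(1/\epsilon)/(\epsilon n)$. Using the identity $\mu(A\cap\mathcal{D}_{i\to j})=\mu(A_{i\to j})/n$ for $A\subseteq A_n$ and summing over $(i,j)\in I\times J$ gives
\[
\mu(A\setminus 1_{I\to\overline J})\;\le\;\sum_{(i,j)\in I\times J}\frac{\mu(A_{i\to j})}{n}\;\le\;\frac{|I||J|\,\log^{O(1)}(1/\epsilon)}{\epsilon n^{2}},
\]
which is the second conclusion.

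For the first conclusion, I argue by contradiction. Suppose $|I||J|>10n\log n$. Since $|J|\le n$, this forces $|I|>10\log n$, so I can choose $J'\subseteq J$ of size $\lceil 10n\log n/|I|\rceil$, for which $|I||J'|\in[10n\log n,\,11n\log n]$. All hypotheses of the lemma remain valid for the pair $(I,J')$, so the argument of the previous paragraph, applied to $(I,J')$, yields $\mu(A\setminus 1_{I\to\overline{J'}})\le 11\log n\cdot\log^{O(1)}(1/\epsilon)/(\epsilon n)$. Combining this with the standard estimate $\mu(1_{I\to\overline{J'}})\le e^{-|I||J'|/n}\le n^{-10}$ and the hypothesis $\alpha\ge 1/(\delta\epsilon n)=\log^{R}n/(\epsilon n)$, after multiplying through by $\epsilon n$ and using $\log(1/\epsilon)\le\log n$ (which follows from $\epsilon>1/(\delta\sqrt n)$), we obtain $\log^{R}n\le\epsilon n^{-9}+O(\log^{1+O(1)}n)$. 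This contradicts the choice of $R$ as larger than all implicit $O(1)$ constants.

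The main subtlety lies in the first conclusion: a naive attempt to compare $\mu(1_{I\to\overline J})\le e^{-|I||J|/n}$ with the second conclusion on $(I,J)$ itself produces no contradiction when $|I||J|$ is so large that the union bound on $\mu(A\setminus 1_{I\to\overline J})$ already exceeds $\alpha$ and becomes trivial. The truncation to $J'$ is what makes the argument work: it positions $|I||J'|$ just above the threshold $10n\log n$, so that the $e^{-|I||J'|/n}$ contribution is negligibly small while the union-bound term remains only polylogarithmic, giving a sharp enough upper bound on $\alpha$ to violate the hypothesis.
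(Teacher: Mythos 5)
Your proof is correct and follows essentially the same route as the paper: a union bound over pairs $(i,j)\in I\times J$ combined with Lemma~\ref{lem:eberhard for restrictions} for the second conclusion, and then a truncation of $J$ to bring $|I||J'|$ just above the threshold $10n\log n$ so that $\mu(1_{I\to\overline{J'}})$ is negligibly small while the union-bound term stays polylogarithmic, contradicting $\alpha\ge 1/(\delta\epsilon n)$. The paper truncates by ``removing elements from either $I$ or $J$'' to land $|I||J|$ in $(10n\log n,20n\log n)$ rather than fixing $J'$ of a precise size, but this is an immaterial difference.
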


\begin{proof}
By a union bound and Lemma \ref{lem:eberhard for restrictions} we
have
\begin{equation}
\mu\left(A\setminus1_{I\to\overline{J}}\right)\le\sum_{i\in I,j\in J}\frac{1}{n}\mu\left(A_{i\to j}\right)\le\frac{\left|I\right|\left|J\right|}{n}\frac{\log^{O\left(1\right)}(1/\eps)}{\epsilon n}.\label{eq:union bound}
\end{equation}
To complete the proof we show that $\left|I\right|\left|J\right|\le10n\log n.$
Suppose otherwise. Then by removing elements from either $I$ or $J$
we may assume that $\left|I\right|\left|J\right|\in\left(10n\log n,20n\log n\right).$
In which case we have
\begin{align*}
\mu\left(1_{I\to\overline{J}}\right) & =\left(1-\frac{\left|J\right|}{n}\right)\left(1-\frac{\left|J\right|}{n-1}\right)\cdots\left(1-\frac{\left|J\right|}{n-\left|I\right|}\right)\\
 & \le\left(1-\frac{\left|J\right|}{n}\right)^{\left|I\right|}\le e^{-\left|J\right|\left|I\right|/n}\le n^{-3},
\end{align*}
 Together with (\ref{eq:union bound}) this yields
\[
\alpha\le\frac{\log^{O\left(1\right)}n}{\epsilon n}+n^{-3},
\]
 which contradicts the hypothesis $\alpha\ge\frac{1}{\delta\epsilon n}.$
This shows that $\left|I\right|\left|J\right|\le10n\log n.$
\end{proof}

\subsection{Stability result for product-free sets}

We now prove the following stronger version of Theorem \ref{thm:99=000025 Stability}.
\begin{thm}
\label{Lem:stability result}
Suppose that $A$ is product-free
with $\mu\left(A\right)\ge\delta^{-2}n^{-2/3}$.
Then up to inversion there exist $x\in\left[n\right]$
and $I\subseteq\left[n\right]$ with $|I|^2 \le 10n\log n$ such that
$\mu\left(A_{x\to i}\right)\ge n^{-\frac{1}{3}}$ for each $i\in I$ and
\[
\mu\left(A\setminus F_{I}^{x}\right)\le O\left(\delta^{-2}\right)n^{-2/3}.
\]
\end{thm}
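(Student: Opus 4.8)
The plan is to combine the approximate star structure from Proposition \ref{prop:Explanation for product-freeness} with the bootstrapping lemma (Lemma \ref{lem:bootstrapping lemma 1}). First I would apply Proposition \ref{prop:Explanation for product-freeness}: up to inversion there exist $x\in[n]$ and $I\subseteq[n]$ with $\mu(A\setminus 1_{x\to I})\le O(\delta^{-2})n^{-2/3}$, and with $\mu(A_{x\to i})\ge n^{-1/3}$ for each $i\in I$. Renaming, assume $x=1$. The set $I$ here is exactly the one in the statement, so I must verify $|I|^2\le 10n\log n$; this will come out of the application of Lemma \ref{lem:bootstrapping lemma 1} below.

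Next I would set up the triple to feed into Lemma \ref{lem:bootstrapping lemma 1}. Since $A$ is product-free, the triple $(A,A,A)$ is product-free; take $B=C=A$, the common centre $x=1$, and the same index sets $I=J$. For each $i\in I$ we have $\mu(A_{1\to i})\ge n^{-1/3}$, so the hypothesis of Lemma \ref{lem:bootstrapping lemma 1} holds with $\epsilon=n^{-1/3}$; note $\epsilon=n^{-1/3}>\frac{1}{\delta\sqrt n}$ for large $n$, and the density hypothesis $\alpha\ge\frac{1}{\delta\epsilon n}=\frac{1}{\delta n^{2/3}}$ is satisfied since $\mu(A)\ge\delta^{-2}n^{-2/3}\ge\delta^{-1}n^{-2/3}$. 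The conclusion gives $|I||J|=|I|^2\le 10n\log n$, establishing the cardinality bound, and
\[
\mu\bigl(A\setminus 1_{I\to\overline I}\bigr)\le\frac{|I|^2\log^{O(1)}(1/\epsilon)}{\epsilon n^2}\le\frac{10n\log n\cdot\log^{O(1)}n}{n^{-1/3}n^2}=n^{-2/3}\log^{O(1)}n.
\]

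Finally I would intersect the two approximations. Since $F_I^x=1_{x\to I}\cap 1_{I\to\overline I}$ (a permutation lies in $F_I^x$ iff it sends $x$ into $I$ and sends $I$ into its complement), we have
\[
A\setminus F_I^x\subseteq \bigl(A\setminus 1_{x\to I}\bigr)\cup\bigl(A\setminus 1_{I\to\overline I}\bigr),
\]
so by the union bound
\[
\mu\bigl(A\setminus F_I^x\bigr)\le O(\delta^{-2})n^{-2/3}+n^{-2/3}\log^{O(1)}n.
\]
The second term is $O(\delta^{-2})n^{-2/3}$ provided $R$ (in $\delta=\log^{-R}n$) is chosen larger than the implied constant in that $\log^{O(1)}n$ factor, which is exactly the role of the large constant $R$ fixed in Section \ref{sec:dictate}. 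This yields the desired bound $\mu(A\setminus F_I^x)\le O(\delta^{-2})n^{-2/3}$. The main point requiring care is tracking that the various $\log^{O(1)}n$ factors accumulated in Lemma \ref{lem:bootstrapping lemma 1} (via Corollary \ref{cor:unbalanced eberhard} and Lemma \ref{lem:eberhard for restrictions}) are all absorbed by $\delta^{-2}=\log^{2R}n$; there is no genuine obstacle here since $R$ was chosen at the outset to dominate all such constants, but one must make sure $1_{x\to I}$ being an (inverse) star is handled consistently — if Proposition \ref{prop:Explanation for product-freeness} returned an inverse star, one works with $A^{-1}$ throughout, which is also product-free, giving the ``up to inversion'' clause.
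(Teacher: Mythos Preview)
Your proof is correct and follows essentially the same approach as the paper: apply Proposition~\ref{prop:Explanation for product-freeness} to get the star $1_{x\to I}$, then Lemma~\ref{lem:bootstrapping lemma 1} with $A=B=C$ and $\epsilon=n^{-1/3}$ to bound $|I|^2$ and $\mu(A\setminus 1_{I\to\overline I})$, and combine via the union bound using $F_I^x=1_{x\to I}\cap 1_{I\to\overline I}$. Your extra care in verifying the hypotheses of Lemma~\ref{lem:bootstrapping lemma 1} and tracking the absorption of the $\log^{O(1)}n$ factors into $\delta^{-2}$ is welcome but not strictly necessary given the paper's standing conventions on $R$.
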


\begin{proof}
By Proposition \ref{prop:Explanation for product-freeness},
up to inversion there exist $x\in\left[n\right]$
and $I\subseteq\left[n\right]$ such that
for each $i\in I$ we have $\mu\left(A_{x\to i}\right)\ge n^{-1/3}$
and $\mu\left(A\setminus1_{x\to I}\right)\le O(\dD^{-2}) n^{-\frac{2}{3}}$.
Without loss of generality we assume this for $A$ rather than $A^{-1}$.
By Lemma \ref{lem:bootstrapping lemma 1} with $A=B=C$ and $\epsilon=n^{-\frac{1}{3}}$
we have $|I|^2 \le 10n\log n$ and
\[
\mu\left(A\setminus1_{I\to\overline{I}}\right)\le\frac{\log^{O\left(1\right)}n}{n^{2/3}}.
\]
 This shows that
\[
\mu\left(A\setminus F_{I}^{x}\right)\le\mu\left(A\setminus1_{x\to I}\right)+\mu\left(A\setminus1_{I\to\overline{I}}\right)\le O\left(\delta^{-2}\right)n^{-\frac{2}{3}}.
\]
\end{proof}

\subsection{Bootstrapping triples}

With slightly more work we are also able to prove the following
stability result for product-free triples that are somewhat sparse
(we replace a log factor by a log log factor).

\begin{thm}
\label{thm:Cross stability result}
There is an absolute constant $K$ such that if $n$ is sufficiently large
and $\left(A,B,C\right)$ is a product-free triple in $A_n$ with
\[
\min\left(\alpha\beta,\beta\gamma,\gamma\alpha\right)\ge\frac{(\log \log n)^K}{n}
\]
then up to equivalence there exist
$I,J\subseteq\left[n\right],x\in\left[n\right]$
with $\left|I\right|\left|J\right|\le 10n\log n$ such that
\[
\mu_{B}\left(1_{x\to I}\right),\mu_{C}\left(1_{x\to J}\right)\ge1-O\left(\dD^{1/4}\right)
\quad \text{ and } \quad
\mu\left(A\setminus1_{I\to\overline{J}}\right)\le \dD^{-2} n.
\]
In particular, we have
\[ \mu\left(A\setminus1_{I\to\overline{J}}\right) = o(\aA),
\quad \mu(B \sm 1_{x\to I}) = o(\bB),
\quad \mu(C \sm 1_{x\to J}) = o(\gG). \]
\end{thm}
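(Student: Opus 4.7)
The plan is to follow the bootstrapping strategy of Theorem \ref{Lem:stability result}, extended to triples and iterated once to sharpen the density constants.

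\textbf{Step 1 (initial star structure).} First I would apply Proposition \ref{prop:Explanation for cross product-freeness} to obtain a centre $x_0$ and sets $I_0, J_0 \subseteq [n]$ with $\mu_B(1_{x_0 \to I_0}), \mu_C(1_{x_0 \to J_0}) \ge 1/100$ together with pointwise dictator lower bounds $\mu(B_{x_0\to i}) \ge \epsilon_B$ and $\mu(C_{x_0\to j}) \ge \epsilon_C$ for $i \in I_0, j \in J_0$. Verifying the density hypothesis of that proposition under our weaker assumption requires a short case analysis using the six equivalent forms of the product-free triple together with Corollary \ref{cor:unbalanced eberhard} to handle extreme imbalances among $\alpha, \beta, \gamma$; the constant $K$ is chosen large enough compared to the internal constant $R$ so that the required polylogarithmic slack is available.

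\textbf{Step 2 ($A$-approximation by $1_{I_0 \to \overline{J_0}}$).} Next I would apply Lemma \ref{lem:bootstrapping lemma 1} to $(A, B, C)$ with $(x_0, I_0, J_0)$ and $\epsilon = \min(\epsilon_B, \epsilon_C)$; its hypothesis $\alpha \ge 1/(\delta\epsilon n)$ reduces to $\min(\alpha\beta, \alpha\gamma) \ge (\log\log n)^K/n$, which holds by assumption for $K$ large. This delivers $|I_0||J_0| \le 10 n \log n$ together with $\mu(A \setminus 1_{I_0 \to \overline{J_0}}) = o(\alpha)$.

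\textbf{Step 3 (upgrading to $1 - O(\delta^{1/4})$, the main obstacle).} The main difficulty is strengthening the constant $1/100$ to $1 - O(\delta^{1/4})$. My strategy is to invoke Lemma \ref{lem:product-freeness in case A has no large associated stars}, which delivers exactly this conclusion once $A$ is known to have neither large associated stars nor large associated inverse stars. To verify this globality property, I would use Step 2 to view $A$ as a small perturbation of a subset of $1_{I_0 \to \overline{J_0}}$: a direct calculation for $1_{I_0 \to \overline{J_0}}$ shows that every restriction density $\mu((1_{I_0 \to \overline{J_0}})_{i \to j})$ exceeds $\mu(1_{I_0 \to \overline{J_0}})$ by at most a factor $1 + O(|J_0|/n)$ (and vanishes for $(i, j) \in I_0 \times J_0$), and the perturbation bound from Step 2 transfers this estimate to $A$. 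Consequently every positive entry $a_{ij}$ of the linear part $1_A^{=1}$ satisfies $a_{ij} = O(\alpha |J_0|/n) \ll \epsilon_A$ in our regime, so $\matA_\star = \matA_\star' = 0$, while all significant negative contributions land safely in $\matA_-$. Lemma \ref{lem:product-freeness in case A has no large associated stars} then produces a vertex $x$ such that, with $I := L_B(x)$ and $J := L_C(x)$, we have $\mu(B \setminus 1_{x \to I}) \le O(\delta^{1/4})\beta$ and $\mu(C \setminus 1_{x \to J}) \le O(\delta^{1/4})\gamma$. A second application of Lemma \ref{lem:bootstrapping lemma 1} to the refined triple $(x, I, J)$ then yields the size bound $|I||J| \le 10 n \log n$ and the final approximation of $A$ by $1_{I \to \overline{J}}$, completing the proof.
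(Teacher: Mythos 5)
Your Steps~1 and~2 match the paper: Proposition~\ref{prop:Explanation for cross product-freeness} gives the initial star structure (after checking its density hypothesis via Corollary~\ref{cor:unbalanced eberhard}, which the paper uses to first establish $\alpha \ge \delta^{-100}/n$ and similarly for $\beta,\gamma$), and Lemma~\ref{lem:bootstrapping lemma 1} gives the $A$-approximation by $1_{I_0\to\overline{J_0}}$. The goal of Step~3 --- verify that $A$ has no large associated stars or inverse stars, and then invoke Lemma~\ref{lem:product-freeness in case A has no large associated stars} --- is exactly the paper's plan. But your mechanism for that verification has a genuine gap.

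You claim that the perturbation bound $\mu(A\setminus 1_{I_0\to\overline{J_0}}) = o(\alpha)$ together with the uniformity of restriction densities of $1_{I_0\to\overline{J_0}}$ yields $a_{ij} = O(\alpha|J_0|/n)$ for all $i,j$, i.e.\ $\mu(A_{i\to j}) \le \alpha(1+O(|J_0|/n))$. This does not follow. The set $A$ is a \emph{sparse} subset of $1_{I_0\to\overline{J_0}}$: one has $\mu(1_{I_0\to\overline{J_0}}) \gtrsim e^{-|I_0||J_0|/n}$, which is typically much larger than $\alpha$. Nothing in Steps~1--2 controls the distribution of $A$ inside $1_{I_0\to\overline{J_0}}$, so $A$ could be heavily concentrated inside a few dictators, making some $\mu(A_{i\to j})$ as large as $\min(1, n\alpha) \gg \alpha$. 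The only pointwise transfer you can extract from $\mu(A\setminus 1_{I_0\to\overline{J_0}}) \le \delta^{-2}/n$ is $\mu(A_{i\to j}) \le \mu((1_{I_0\to\overline{J_0}})_{i\to j}) + n\cdot\delta^{-2}/n = \mu((1_{I_0\to\overline{J_0}})_{i\to j}) + \delta^{-2}$, which is vacuous since $\delta^{-2} \ge 1$.

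The paper avoids this by arguing at the level of set measures rather than pointwise restriction densities. For any large associated (inverse) star $S$, the definitions give $\mu(A\cap S) \ge \delta\alpha$ and $\mu(S) \le \alpha/\epsilon_A$; on the other hand the approximation from Step~2 gives
\[
\mu(A\cap S) \le \mu(S\cap 1_{I_0\to\overline{J_0}}) + \mu(A\setminus 1_{I_0\to\overline{J_0}}) \le 2\mu(S)e^{-|I_0||J_0|/n} + \frac{1}{\delta^2 n},
\]
where the exponential factor is tiny because the star structure forces $|I_0||J_0|/n \ge \beta\gamma n/10^4 \ge (\log\log n)^K/10^4$. Combining these with the lower bound $\alpha \ge \delta^{-100}/n$ yields a contradiction, so no such $S$ exists. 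This is an aggregate-measure argument exploiting the smallness of $\mu(1_{I_0\to\overline{J_0}})$, and it cannot be replaced by a pointwise bound on the coefficients $a_{ij}$ without additional information about how $A$ sits inside $1_{I_0\to\overline{J_0}}$.
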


The idea of the proof is to start with the weaker star structure
guaranteed from Proposition \ref{prop:Explanation for cross product-freeness}.
Lemma \ref{lem:bootstrapping lemma 1} will then easily imply a weaker
variant of the theorem with $1-O\left(\dD^{1/4}\right)$ replaced
by $\frac{1}{100}.$ This will allow us to show that actually $A$
has no large associated stars and inverse stars, so instead of Proposition
\ref{prop:Explanation for cross product-freeness} we may apply the
more suitable Lemma \ref{lem:product-freeness in case A has no large associated stars}.

\begin{proof}
We claim that $\alpha\ge\frac{1}{\delta^{100}n}.$
Indeed, otherwise our assumption implies
$\eps := \min(\beta,\gamma) > (\log \log n)^K \dD^{100}$,
and then Corollary \ref{cor:unbalanced eberhard}
gives $\aA \le \frac{\log^{O(1)}(1/\eps)}{\eps n}$,
so recalling $\dD^{-1} = \log^R n$ we have
$\min(\aA\beta,\aA\gamma) < \frac{(\log \log n)^{O(1)}}{n}$,
which contradicts our assumption for $K$ large enough.
Thus we have the claimed lower bound on $\aA$.

By Proposition \ref{prop:Explanation for cross product-freeness},
up to equivalence of the triple $(A,B,C)$ there exist associated stars
$S_{B}\left(x\right)=1_{x\to I},$ $S_{C}\left(x\right)=1_{x\to J}$,
such that $\mu_{B}\left(S_{B}\left(x\right)\right)>\frac{1}{100}$
and $\mu_{C}\left(S_{C}\left(x\right)\right)\ge\frac{1}{100}.$
Our assumptions implies $\min\left(\epsilon_{B},\epsilon_{C}\right)\ge\delta$,
so by Lemma \ref{lem:bootstrapping lemma 1} we have
$\left|I\right|\left|J\right|\le 10n\log n$ and
\[
\mu\left(A\setminus1_{I\to\overline{J}}\right)\le\frac{1}{\delta^{2}n}
\]

We now assert that $A$ has no large associated star or inverse star $S$.
Indeed, for such $S$, as $\mu\left(A\cap S\right)\ge\delta\mu\left(A\right)$
we would have
\[
\mu\left(S\right)\le\frac{\mu\left(A\right)}{\epsilon_{A}}\le\frac{\mu\left(A\right)}{\delta}\le\frac{\mu\left(A\cap S\right)}{\delta^{2}}.
\]
On the other hand,
\begin{align*}
\mu\left(A\cap S\right) & \le\mu\left(S\cap1_{I\to\overline{J}}\right)+\mu\left(A\setminus1_{I\to\overline{J}}\right)\\
 & \le2\mu\left(S\right)e^{-\frac{\left|I\right|\left|J\right|}{n}}+\frac{1}{\delta^2 n}.
\end{align*}
We have
\[  \frac{|I||J|}{n^2} \ge \mu(S_B(x)) \mu(S_C(x))
\ge \frac{\bB}{100} \frac{\gG}{100} > \frac{(\log \log n)^K}{10^4 n}, \]
so
\[ \mu\left(A\right) \le \dD^{-1} \mu\left(A\cap S\right)
 \le2\mu\left(S\right)\delta^{8}+\frac{1}{\delta^3 n}
 \le2\mu\left(A\right)\delta^{6}+\frac{1}{\delta^{3}n}. \]
This contradicts the bound $\mu(A)=\alpha\ge\frac{1}{\delta^{100}n}$,
so such $S$ cannot exist.

Thus we can apply Lemma \ref{lem:product-freeness in case A has no large associated stars}
to obtain the required stronger approximation of $B$ and $C$ by associated stars,
i.e.\ $\mu_{B}\left(S_{B}\left(x\right)\right)$
and $\mu_{C}\left(S_{C}\left(x\right)\right)$ are both $1-O\left(\dD^{1/4}\right)$.
\end{proof}

\subsection{Further bootstrapping of product-free sets}

Recall from Theorem \ref{Lem:stability result}
that if $A$ is a dense product-free set
then $\mu\left(A\setminus F_{I}^{x}\right)$
is small for some $x$ and $I$.
If $A$ is extremal this implies $\mu_{F_{I}^{x}}\left(A\right) \approx 1$.
Our goal in this subsection is to show for such $A$ that
\[
\mu\left(A\setminus F_{I}^{x}\right)\le\frac{1}{2}\mu\left(F_{I}^{x}\setminus A\right)
\]
and therefore any extremal product-free $A$ must be of the form $F_{I}^{x}.$
Our proof will also work for sets that are sufficiently close to extremal.

We require various lemmas that will be applied to certain restrictions of $A$.
The first considers a product-free triple $\left(A,B,C\right)$
(which will be restrictions of the original $A$)
and shows that if two sets are dense
in sets of the form $1_{I\to\overline{I}}$ then the third must be empty.

\begin{lem}
\label{lem:relative product-freensess with large set implies that the third is empty}
Let $I_{1},I_{2},J_{1},J_{2}\subseteq\left[n\right]$
with $n$ sufficiently large and
$\left|I_{1}\right|+\left|I_{2}\right|+\left|J_{1}\right|+\left|J_{2}\right|\le40\sqrt{n}.$
Suppose that $\left(A,B,C\right)$ is product-free.
If $\mu_{I_{1}\to\overline{J_{1}}}\left(C\right)$
and $\mu_{I_{2}\to\overline{J_{2}}}\left(B\right)$
are both at least $1-e^{-2000}$ then $A$ is empty.
Similar statements hold for all permutations of $ABC$.
\end{lem}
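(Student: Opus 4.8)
The plan is to argue by contradiction: suppose $A$ is nonempty, say $a \in A$, and derive a violation of product-freeness by exhibiting $b \in B$ and $c \in C$ with $ab=c$. The key observation is that membership in a set of the form $1_{I \to \overline{J}}$ is a ``local'' constraint that restricts only the images of coordinates in $I$; such a set is huge (density $\approx e^{-|I||J|/n} \ge e^{-O(1)}$ under our size hypothesis $|I_1|+|I_2|+|J_1|+|J_2| \le 40\sqrt n$). Since $B$ is dense in $\mathcal{U}_{I_2 \to \overline{J_2}}$ up to error $e^{-2000}$, and similarly $C$ in $\mathcal{U}_{I_1 \to \overline{J_1}}$, the fix will be to find a single $b$ with $b \in B$, $b \in 1_{I_2 \to \overline{J_2}}$, and simultaneously $c := ab \in C$ and $c \in 1_{I_1 \to \overline{J_1}}$. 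The last two conditions on $c = ab$ translate back into conditions on $b$: requiring $ab \in 1_{I_1 \to \overline{J_1}}$ is the condition that $b(I_1) \cap a^{-1}(J_1) = \es$, which is again a local constraint on the images of the (at most $|I_1|$) coordinates in $I_1$ under $b$.

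First I would make this precise by introducing the set $B^* \subseteq S_n$ of permutations $b$ satisfying \emph{both} local constraints on $b$: that $b \in 1_{I_2 \to \overline{J_2}}$ and that $b(I_1) \cap a^{-1}(J_1) = \es$. This is an intersection of constraints on the images of coordinates in $I_1 \cup I_2$, a set of size at most $40\sqrt n$, so a direct product estimate (the same kind of calculation as in the proof of Lemma~\ref{lem:bootstrapping lemma 1}, $\mu(1_{I \to \overline{J}}) \ge e^{-|I||J|/n} \cdot (1-o(1))$) shows $\mu(B^*) \ge e^{-O(1)}$, and in fact $\mu(B^*)$ is bounded below by a constant much larger than $e^{-2000}$ — here I would want to check that $(|I_1|+|I_2|)(|J_1|+|J_2|+|I_1|) / n$ stays bounded by, say, a few thousand, which follows from the $40\sqrt n$ hypothesis with room to spare (the product is at most $(40\sqrt n)^2/n = 1600$). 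Then I would intersect with the two density hypotheses: $\mu_{\mathcal{U}_{I_2 \to \overline{J_2}}}(B) \ge 1 - e^{-2000}$ means $B$ misses only an $e^{-2000}$-fraction of that umvirate, and likewise for $C$ inside $\mathcal{U}_{I_1 \to \overline{J_1}}$; pulling the $C$-condition back through the bijection $b \mapsto ab$ (which is measure-preserving on $S_n$ and maps the relevant local-constraint set onto the appropriate umvirate-type set), the set of ``bad'' $b$ has measure at most $2e^{-2000}$, which is far smaller than $\mu(B^*)$. Hence some $b \in B^*$ is good, giving $b \in B$ and $ab \in C$, a product in $(A,B,C)$ — contradiction.

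The main obstacle I anticipate is bookkeeping the three interacting local constraints and verifying that the bijection $b \mapsto ab$ sends the relevant ``$1_{I_1 \to \overline{J_1}}$ after composition with $a$'' condition to an honest umvirate (or union over umvirates) so that the hypothesis $\mu_{\mathcal{U}_{I_1 \to \overline{J_1}}}(C) \ge 1 - e^{-2000}$ can be applied cleanly; one has to be careful that conditioning $b$ to lie in $B^*$ does not distort the conditional distribution of $b$ within $\mathcal{U}_{I_1 \to \overline{J_1}}$-type fibres too much — but since all constraints only touch the images of the $O(\sqrt n)$ coordinates in $I_1 \cup I_2$, the conditional measure of the bad set changes by at most a bounded multiplicative factor, still leaving $2e^{-2000} \cdot e^{O(1)} \ll \mu(B^*)$. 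The constant $e^{-2000}$ in the hypothesis is deliberately wasteful precisely to absorb these bounded distortions. Finally, the ``similar statements for all permutations of $ABC$'' follows because the defining relation $ab = c$ can be rewritten (as in the ``equivalence and inversion'' discussion of Section~\ref{sec:star}) in six equivalent ways, each of which turns a product-free triple into another one with the roles of $A,B,C$ permuted and possibly inverted, and each of the sets $1_{I \to \overline{J}}$ behaves well under inversion (an inverse of such a set is of the same local type), so the identical argument applies verbatim after relabelling.
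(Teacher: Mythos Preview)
Your proposal is correct and essentially identical to the paper's proof. Both arguments fix $a\in A$, consider the set of $b$ satisfying the two local constraints $b(I_2)\cap J_2=\es$ and $b(I_1)\cap a^{-1}(J_1)=\es$, lower-bound its measure by roughly $e^{-1600}>2e^{-2000}$ via the greedy choice on $|I_1|+|I_2|$ coordinates, and subtract the measure (at most $e^{-2000}$ each) of $b$'s landing outside $B$ or with $ab$ outside $C$. Your worry about conditional distortion is unnecessary: no conditioning is needed, only the union bound $\mu(B^*\setminus\text{good})\le \mu(1_{I_2\to\overline{J_2}}\setminus B)+\mu(a^{-1}(1_{I_1\to\overline{J_1}}\setminus C))\le 2e^{-2000}$, exactly as the paper does.
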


\begin{proof}
We only prove the first statement,
as the proofs for permutations of $ABC$ are similar.
Suppose on the contrary that there exists $\tau\in A$.
Let $\sigma\sim S_{n}$ be a random permutation.
We will derive a contraction by showing that
$\Pr\left[\sigma\in B,\tau\sigma\in C\right]>0.$
By assumption, we have
\[
\Pr\left[\sigma\in B,\tau\sigma\in C\right]\ge\Pr\left[\sigma\left(I_{1}\right)\subseteq\overline{J_{1}},\tau\sigma\left(I_{2}\right)\subseteq\overline{J_{2}}\right]-2e^{-2000}.
\]
On the other hand,
\begin{align*}
\Pr\left[\sigma\left(I_{1}\right)\subseteq\overline{J_{1}},\tau\sigma\left(I_{2}\right)\subseteq\overline{J_{2}}\right] & =\mu\left(1_{I_{1}\to\overline{J_{1}}}\cap1_{I_{2}\to\tau^{-1}\left(\overline{J_{2}}\right)}\right)\\
 & \ge\left(1-\frac{\left|I_{1}\right|+\left|I_{2}\right|+\left|J_{1}\right|+\left|J_{2}\right|}{n}\right)^{\left|I_{1}\right|+\left|I_{2}\right|}\\
 & >2e^{-2000},
\end{align*}
 provided that $n$ is sufficiently large. Indeed, the second inequality
follows from defining a random $\sigma$ on the indices
$i\in I_{1}\cup I_{2}$ one by one, noting that in each step
we have at least $n-|I_1|-|I_2|-|J_1|-|J_2|$ free options.
Thus $\Pr\left[\sigma\in B,\tau\sigma\in C\right]>0$,
contrary to our assumption that $(A,B,C)$ is product-free, so $A=\es$.
\end{proof}
We now show that if $\left(A,B,C\right)$ is a product-free triple,
$B$ contains almost all of $1_{I_{1}\to\overline{J_{1}}}$ and $A$
contains almost all of $F_{I}^{x}$ then $C$ is small.
\begin{lem}
\label{lem:large and  extremal means that the third is small}
Let $\zeta\in\left(0,e^{-2000}\right)$ and
let $\left(A,B,C\right)$ be a product-free triple
in $A_n$ with $n$ sufficiently large.
Suppose for some $I_{1},J_{1},I$ of size $\le10\sqrt{n}$
that $\mu_{I_{1}\to\overline{J_{1}}}\left(B\right)\ge1-\zeta$
and $\mu_{F_{I}^{x}}\left(A\right)\ge1-\frac{1}{2}e^{-2000}.$
Then
\[
\mu\left(C\right)\le\left(e^{2000}\zeta\right)^{\frac{\left|I\right|}{2}}.
\]
Moreover, if $\zeta<\frac{\left|I\right|}{2e^{2000}n}$ then $C$ is empty.
\end{lem}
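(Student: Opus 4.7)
Fix an arbitrary $c\in C$ and set $T_{c}=\{\tau\in F_{I}^{x}:\tau^{-1}c\in 1_{I_{1}\to\overline{J_{1}}}\}$. Product-freeness of $(A,B,C)$ together with the density hypotheses yields
\[
|T_{c}|\le|F_{I}^{x}\sm A|+|1_{I_{1}\to\overline{J_{1}}}\sm B|\le\tfrac{1}{2}e^{-2000}|F_{I}^{x}|+\zeta|1_{I_{1}\to\overline{J_{1}}}|, \qquad (\ast)
\]
since for every $\tau\in T_{c}\cap A$ product-freeness forces $\tau^{-1}c\in 1_{I_{1}\to\overline{J_{1}}}\sm B$ and these images are pairwise distinct as $\tau$ varies. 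Note that $\tau^{-1}c\in 1_{I_{1}\to\overline{J_{1}}}$ is equivalent to $\tau(J_{1})\cap c(I_{1})=\es$, so $|T_{c}|$ depends on $c$ only through the subset $c(I_{1})$.

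For the ``moreover'' clause, I plan to contradict $(\ast)$ via a $c$-uniform lower bound $|T_{c}|\ge e^{-100}|F_{I}^{x}|$. This follows from a direct probabilistic estimate: for $\tau\in F_{I}^{x}$ uniform, the image $\tau(J_{1})$ is approximately uniform over $|J_{1}|$-subsets of $[n]$, so $\Pr[\tau(J_{1})\cap c(I_{1})=\es]\ge (1-|I_{1}|/n)^{|J_{1}|}\ge e^{-100}$ since $|I_{1}|,|J_{1}|\le 10\sqrt{n}$. Combined with $(\ast)$ and the crude estimate $|1_{I_{1}\to\overline{J_{1}}}|/|F_{I}^{x}|\le O(\sqrt{n}/|I|)$, the hypothesis $\zeta<|I|/(2e^{2000}n)$ then yields $|T_c|<e^{-1000}|F_I^x|$, a contradiction. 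Hence $C=\es$ in this regime.

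For the general bound $\mu(C)\le(e^{2000}\zeta)^{|I|/2}$, the plan is to iterate a structural reduction over $|I|/2$ disjoint pairs in $I$. Each pair $\{i,i'\}\sub I$ supports a left action on $F_{I}^{x}$ by the transposition $(i\ i')$, which preserves $F_{I}^{x}$ and swaps the dictators $F_{I}^{x}\cap\mathcal{D}_{x\to i}$ and $F_{I}^{x}\cap\mathcal{D}_{x\to i'}$. Exploiting this symmetry and $(\ast)$, each pair contributes a multiplicative factor $e^{2000}\zeta$ to the admissible measure of $c\in C$. Concretely, the iteration is realised by repeatedly restricting $(A,B,C)$ to dictators $\mathcal{D}_{i\to j}$ for carefully chosen $(i,j)$ (after appropriate conjugation, in the spirit of Lemmas \ref{lem:eberhard for restrictions} and \ref{lem:bootstrapping lemma 1}), reducing in each step to a product-free triple on $A_{n-2}$ with $|I|$ decreased by $2$ and the density hypotheses essentially preserved.

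The main obstacle I anticipate is rigorously assembling the $|I|/2$ per-pair contributions into a clean multiplicative bound. The constraints across pairs share the common data $(I_{1},J_{1},x)$ and are therefore not strictly independent, so a careful induction on $|I|$ tracking both $\zeta$ and the approximation density of $A$ in $F_{I}^{x}$ will be needed to compound the per-step factors of $e^{2000}\zeta$ without accruing spurious losses, and to verify that each recursive invocation satisfies the density hypothesis $1-\tfrac{1}{2}e^{-2000}$.
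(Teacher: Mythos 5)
Your setup and inequality $(\ast)$ are correct, and the probabilistic lower bound $|T_c|\gtrsim e^{-O(1)}|F_I^x|$ that you sketch for the ``moreover'' clause is plausible (with the caveat that $|1_{I_1\to\overline{J_1}}|/|F_I^x|$ is $O(n/|I|)$ rather than $O(\sqrt{n}/|I|)$, which still suffices). But the main bound $\mu(C)\le(e^{2000}\zeta)^{|I|/2}$ is not proved: a single application of $(\ast)$ gives an upper bound on $|T_c|$ for a fixed $c\in C$ and says nothing at all about $|C|$, and the iterated restriction you propose to compound $|I|/2$ factors of $e^{2000}\zeta$ is precisely the step you acknowledge you cannot carry out. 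The obstacle is real: after restricting to $\mathcal{D}_{i\to j}$ the hypotheses $\mu_{I_1\to\overline{J_1}}(B)\ge 1-\zeta$ and $\mu_{F_I^x}(A)\ge 1-\tfrac12 e^{-2000}$ hold only on average over $(i,j)$, so Markov lets you control only a fraction of restrictions at each step, and compounding this over $|I|/2$ steps while keeping both densities and the parameter $\zeta$ under control would accrue losses that destroy the clean geometric factor you need.

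The paper's proof is much more direct and avoids iteration entirely. It applies Markov to define $I_2:=\{i\in I:\mu_{I\to\overline{I}}(A_{x\to i})\ge 1-e^{-2000}\}$ (so $|I_2|\ge|I|/2$) and $J:=\{j:\mu_{I_1\to\overline{J_1}}(B_{j\to x})\ge 1-e^{-2000}\}$ (so $|\overline{J}|\le e^{2000}\zeta n$), then invokes Lemma \ref{lem:relative product-freensess with large set implies that the third is empty} on the restricted triple $(A_{x\to i},B_{j\to x},C_{j\to i})$ to conclude $C_{j\to i}=\es$ for every $i\in I_2$, $j\in J$. This yields the single structural containment $C\subseteq 1_{J\to\overline{I_2}}$, whose measure is $\le(|\overline{J}|/n)^{|I_2|}\le(e^{2000}\zeta)^{|I|/2}$, and forces $C=\es$ once $|\overline{J}|<|I_2|$. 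Your $T_c$ counting never engages Lemma \ref{lem:relative product-freensess with large set implies that the third is empty} and therefore never reaches a structural statement about $C$ as a whole; without that, the exponential-in-$|I|$ bound does not follow. You should replace the iteration plan with the paper's one-shot restriction-and-containment argument.
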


\begin{proof}
Let
\[ I_{2} := \{ i \in I : \mu_{I\to\overline{I}}\left(A_{x\to i}\right)\ge1-e^{-2000} \}. \]
By definition of $F_{I}^{x}$, our assumption on $A$ and Markov's inequality
we have $\left|I_{2}\right|\ge\frac{\left|I\right|}{2}.$
Similarly, letting
\[ J := \{ j \in [n] :  \mu_{I_{1}\to\overline{J_{1}}}\left(B_{j\to x}\right)\ge1-e^{-2000} \}, \]
Markov's inequality applied to $B$ gives $|J| \ge\left(1-e^{2000}\zeta\right)n$.

For each $i\in I_{2}$ and $j\in J$ we can apply
Lemma \ref{lem:relative product-freensess with large set implies that the third is empty}
to the triple $\left(A_{x\to i},B_{j\to x},C_{j\to i}\right)$
to conclude that each such $C_{j\to i}$ is empty,
so $C\subseteq1_{J\to\overline{I_{2}}}.$ This shows that
\[
\mu\left(C\right)\le\mu\left(1_{J\to\overline{I_{2}}}\right)=\prod_{i=1}^{\left|I_{2}\right|}\frac{\left|\overline{J}\right|-i}{n-i}\le\left(1-\frac{\left|\overline{J}\right|}{n}\right)^{\left|I_{2}\right|}\le\left(e^{2000}\zeta\right)^{\left|I\right|/2}.
\]
Finally, if $\zeta<\frac{\left|I\right|}{2e^{2000}n}$
then $\left|\overline{I_{2}}\right|<\left|J\right|$,
and so $C \sub 1_{J\to\overline{I_{2}}}=\varnothing.$
\end{proof}

After the above preliminary lemmas, we now come to the main engine
of our bootstrapping approach, which shows that
if $\mu\left(A\setminus F_{I}^{x}\right)$ is somewhat small
then it is much smaller than $\mu\left(F_{I}^{x}\setminus A\right).$
\begin{lem}
\label{lem:bootstrapping lemma }
Let $A \sub A_n$ be product-free with $n$ sufficiently large.
Suppose that
\[ I := \{ i \in [n]: \mu\left(A_{1\to i}\right)\ge n^{-\frac{1}{3}} \}. \]
satisfies $\frac{\sqrt{n}}{10}<\left|I\right|\le10\sqrt{n}$.
Suppose $\mu_{F_{I}^{x}}\left(A\right)\ge1-\zeta$
with $\zeta\in\left(0,e^{-20000}\right)$.
Then
\begin{align*}
\mu\left(A\setminus F_{I}^{x}\right) & \le\zeta n^{-2/3}\log^{O\left(1\right)}n.
\end{align*}
 Moreover, if $\zeta\le\frac{1}{e^{8000}\sqrt{n}}$ then $A\subseteq F_{I}^{x}.$
\end{lem}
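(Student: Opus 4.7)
The plan is to refine Theorem \ref{Lem:stability result}'s bound of $O(\dD^{-2})n^{-2/3}$ on $\mu(A\setminus F_I^x)$ down to the tighter $\zeta n^{-2/3}\log^{O(1)}n$ by fully exploiting the strong hypothesis $\mu_{F_I^x}(A)\ge 1-\zeta$. I take $x=1$ throughout, consistent with the definition of $I$. The key observation is that the hypothesis forces almost every local density $\mu_{F_I^1 \cap \mathcal{U}_{1\to i}}(A)$, $i\in I$, to be extremely close to $1$, and this lets one rule out almost every bad restriction via the ``empty third'' Lemma \ref{lem:relative product-freensess with large set implies that the third is empty} applied in the transformed $A_{n-1}$ setting of subsection \ref{subsec:restrict}.

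First I set $I_2:=\{i\in I : \mu_{F_I^1\cap\mathcal{U}_{1\to i}}(A)\ge 1-\tfrac{1}{2}e^{-2000}\}$. Since $\mu(F_I^1\setminus A)=\sum_{i\in I}(1-\mu_{F_I^1\cap\mathcal{U}_{1\to i}}(A))\mu(F_I^1\cap\mathcal{U}_{1\to i})$ and the factors $\mu(F_I^1\cap\mathcal{U}_{1\to i})$ are all of comparable size $\sim\mu(F_I^1)/|I|$, Markov gives $|I\setminus I_2|\le O(\zeta)|I|$. Crucially, in the moreover regime $\zeta\le 1/(e^{8000}\sqrt n)$ combined with $|I|\le 10\sqrt n$, the total deviation $\sum_{i\in I}(1-\mu_{F_I^1\cap\mathcal{U}_{1\to i}}(A))\le O(\zeta|I|)\le O(e^{-8000})$ lies far below $\tfrac12 e^{-2000}$, forcing $I_2=I$.

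Now I decompose $A\setminus F_I^1 \sub \Xi_1\cup\Xi_2$ with $\Xi_1:=A\setminus 1_{I\to\overline{I}}$ and $\Xi_2:=A\cap 1_{1\to\overline{I}}$, and bound each separately. To bound $\Xi_1\sub\bigcup_{i,i'\in I}A_{i\to i'}$: for good pairs $i,i'\in I_2$, I apply the transformation from subsection \ref{subsec:restrict} to the product-free triple $(A_{i\to i'},A_{1\to i},A_{1\to i'})$, obtaining a product-free triple $(A'_{n\to n},B'_{n\to n},C'_{n\to n})$ in $A_{n-1}$. The membership of $i,i'$ in $I_2$ translates (under the transformation) to $B'_{n\to n}$ and $C'_{n\to n}$ each being $\ge 1-e^{-2000}$ close to sets of the form $1_{I_1\to\overline{J_1}}$ in $A_{n-1}$, so Lemma \ref{lem:relative product-freensess with large set implies that the third is empty} yields $A'_{n\to n}=\es$, i.e.\ $A_{i\to i'}=\es$. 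For bad pairs (at least one of $i,i'$ in $I\setminus I_2$) I apply Lemma \ref{lem:eberhard for restrictions} with $\eps=n^{-1/3}$ (valid by the defining property of $I$) to get $\mu(A_{i\to i'})\le\log^{O(1)}n\cdot n^{-2/3}$; summing over the $O(\zeta|I|^2)\le O(\zeta n\log n)$ bad pairs gives $\mu(\Xi_1)\le O(\zeta n^{-2/3}\log^{O(1)}n)$.

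I bound $\Xi_2$ by a parallel argument using the product-free triples $(A_{1\to j},A_{i\to 1},A_{i\to j})$ for $i\in I_2,\ j\notin I$, with an analogous good/bad dichotomy on $(i,j)$ obtained by applying Markov to the quality measures $\mu_{F_I^1\cap\mathcal{U}_{i\to 1}}(A)$ and $\mu_{F_I^1\cap\mathcal{U}_{i\to j}}(A)$, yielding $\mu(\Xi_2)\le O(\zeta n^{-2/3}\log^{O(1)}n)$. Combining gives the main bound. The moreover conclusion is then immediate: when $I_2=I$, every pair $(i,i')$ and every $j\notin I$ falls in the good case, so every $A_{i\to i'}$ and every $A_{1\to j}$ is empty, forcing $A\sub F_I^1$. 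The main obstacle will be the $\Xi_2$ case: unlike in $\Xi_1$, where the $1\to i$ restrictions are directly controlled by the hypothesis, the $i\to 1$ and $i\to j$ directions require more delicate bookkeeping to show that the images of $F_I^1\cap\mathcal{U}_{i\to 1}$ and $F_I^1\cap\mathcal{U}_{i\to j}$ under the prescribed transpositions really do land close to $1_{I_1\to\overline{J_1}}$-structures in $A_{n-1}$, with closeness parameter controlled by the relevant quality measures.
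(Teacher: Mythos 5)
Your treatment of $\Xi_1$ is essentially identical to the paper's handling of its first bad event ($\sigma(x)\in I$ and $\sigma(I)\cap I\ne\emptyset$): split $I\times I$ into good pairs where the empty-third lemma kills the restriction and bad pairs where Lemma \ref{lem:eberhard for restrictions} gives $\mu(A_{i\to i'})\le n^{-2/3}\log^{O(1)}n$, then union bound. That part is fine. The gap you flag at the end for $\Xi_2$ is real, and in fact fatal to the argument as sketched, for several independent reasons.

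First, the Markov budget you invoke is much weaker than in the $\Xi_1$ case. The sets $F_I^1\cap\mathcal{U}_{1\to i}$, $i\in I$, partition $F_I^1$, so each has measure $\approx\mu(F_I^1)/|I|$ and you get only $O(\zeta|I|)$ bad $i$. By contrast $F_I^1\cap\mathcal{U}_{i\to 1}$ (or $F_I^1\cap\mathcal{U}_{i\to j}$ for fixed $j$) has measure only $\approx\mu(F_I^1)/n$, so the same budget $\mu(F_I^1\setminus A)\le\zeta\mu(F_I^1)$ yields up to $O(\zeta n)$ bad $i$, which can exceed $|I|=\Theta(\sqrt n)$ whenever $\zeta\gtrsim n^{-1/2}$ — well within the range $\zeta\in(0,e^{-20000})$. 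So the good/bad dichotomy on $(i,j)$ gives nothing in the interesting regime.

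Second, even granting a good/bad dichotomy, a per-$j$ union bound cannot give $\zeta n^{-2/3}\log^{O(1)}n$. The best a priori bound for a bad $j\notin I$ is $\mu(A_{1\to j})<n^{-1/3}$, and your scheme would leave up to $O(\zeta n)$ bad $j$, giving $\mu(\Xi_2)\le\frac{1}{n}\cdot O(\zeta n)\cdot n^{-1/3}=O(\zeta n^{-1/3})$, a full $n^{1/3}$ too large.

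Third, the ``bookkeeping'' you worry about at the end is not bookkeeping but a genuine obstruction: $F_I^1\cap\mathcal{U}_{i\to 1}$ and $F_I^1\cap\mathcal{U}_{i\to j}$ both carry the constraint $\sigma(1)\in I$, which in $1_{I_1\to\overline{J_1}}$ form has $J_1=\overline{I}$ of size $\approx n$ — violating the $|I_1|+|I_2|+|J_1|+|J_2|\le 40\sqrt n$ hypothesis of Lemma \ref{lem:relative product-freensess with large set implies that the third is empty}. This is precisely why the paper never feeds restrictions of the form $A_{i\to 1}$ or $A_{i\to j}$ (for $j\notin I$) into the empty-third lemma: only restrictions $A_{x\to i}$ with $i\in I$ are used there, since conditioning on $\sigma(x)=i$ removes the large constraint and leaves the small one $\sigma(I)\cap I=\emptyset$.

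The paper instead splits $\Xi_2$ further: it picks $J\subseteq\overline{I}$ to be the $\lfloor 80e^{4000}\zeta\sqrt n\rfloor$ indices $j$ maximising $\mu(A_{x\to j})$, handles $\sigma(x)\in J$ by a trivial union bound (giving $O(\zeta n^{-5/6})$), and then shows $\mu(A_{x\to j})\le(2e^{2000}\zeta)^{\sqrt n/20}$ for all remaining $j$ via a \emph{global} contradiction: if $\mu(A_{x\to j})$ were not that small, Lemma \ref{lem:large and  extremal means that the third is small} applied to the triples $(A_{i\to j'},A_{x\to i},A_{x\to j'})$ for $i\in I_2$, $j'\in J\cup\{j\}$ would force $\mu_{F_I^x}(A_{i\to j'})\le 1-\tfrac12 e^{-2000}$ for all such pairs, and inclusion--exclusion over the $\gtrsim|I|\cdot\zeta\sqrt n$ pairs would push $\mu_{F_I^x}(\overline A)$ above $\zeta$. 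This amortised budget argument — not a per-$j$ union bound — is the missing idea, and it is why the final bound is $\zeta n^{-2/3}\log^{O(1)}n$ (dominated by the $B_1$ term) rather than $\zeta n^{-1/3}$.
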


\begin{proof}
We define $I_2 \sub I_1 \sub I$ by
\[
I_1 = \{i \in I: \mu_{I\to\overline{I}}\left(A_{x\to i}\right)\ge 1-e^{-2000} \}
\quad \text{ and } \quad
I_2 = \{i \in I_1: \mu_{I\to\overline{I}}\left(A_{x\to i}\right)\ge 1-2\zeta \}.
\]
By Markov's inequality we have
$\left|I\setminus I_{1}\right|\le e^{2000}\zeta\left|I\right|$
and $\left|I_{2}\right|\ge\frac{1}{2}\left|I\right|$.

Let $J\subseteq\overline{I}$ consist of the $\left\lfloor 80\cdot e^{4000}\zeta\sqrt{n}\right\rfloor $
indices $i \in \overline{I}$ in which $\mu\left(A_{x\to i}\right)$ is largest.
We partition $\left(A\setminus F_{I}^{x}\right)$ into the following three bad events
and bound each of their measures separately.
\begin{enumerate}
\item Let $B_1 = \{ \sS \in A: \sS(x) \in I \text{ and } \sS(I) \cap I \ne \es \}$.
\item Let $B_2 = \{ \sS \in A: \sS(x) \in J\}$.
\item Let $B_3 = \{ \sS \in A: \sS(x) \in \overline{I \cup J} \}$.
\end{enumerate}
To prove the lemma, it suffices to show for each $i \in [3]$
that $\mu(B_i) < \zeta n^{-2/3}\log^{O\left(1\right)}n$,
and if moreover $\zeta\le\frac{1}{e^{8000}\sqrt{n}}$ then $B_i=\es$.

\subsection*{Upper bounding $\mu\left(B_{1}\right)$} 

By a union bound we have
\[
\mu\left(B_{1}\right)\le\sum_{i,j\in I}\frac{\mu\left(A_{i\to j}\right)}{n}.
\]
By definition of $I_1$, for each $i,j \in I_1$ we can apply
Lemma \ref{lem:relative product-freensess with large set implies that the third is empty}
to $\left(A_{x\to j},A_{i\to j},A_{x\to i}\right)$ to obtain $A_{i\to j}=\varnothing$.
For general $i,j\in I$ we may apply Lemma \ref{lem:eberhard for restrictions}
to obtain
\[
\mu\left(A_{i\to j}\right)\le n^{-\frac{2}{3}}\log^{O\left(1\right)}n.
\]
 Therefore,
\[
\mu\left(B_{1}\right)\le\sum_{i,j\in I}\frac{\mu\left(A_{i\to j}\right)}{n}\le2\left|I\setminus I_{1}\right|\left|I\right|n^{-5/3}\log^{O\left(1\right)}n\le\zeta n^{-2/3}\log^{O\left(1\right)}n.
\]
Moreover, if $\zeta\le\frac{1}{e^{8000}\sqrt{n}}$
then $|I \sm I_1| \le e^{2000}\zeta|I| < e^{-6000}|I|/\sqrt{n}<1$,
i.e.\ $I_{1}=I$ and so $B_1 = \es$.

\subsection*{Upper bounding $\mu\left(B_{2}\right)$} 

By definition of $I$, a simple union bound gives
\[
\mu\left(B_{2}\right)\le\sum_{j\in J}\frac{\mu\left(A_{x\to j}\right)}{n}
\le\frac{\left|J\right|}{n}n^{-1/3} = O(\zeta n^{-5/6}).
\]
Moreover, if $\zeta\le\frac{1}{e^{8000}\sqrt{n}}$ then $\left|J\right|<1$,
i.e.\ $J=\es$ and so $B_2 = \es$,

\subsection*{Upper bounding $\mu\left(B_{3}\right)$} 

Here we will use the bound $\mu\left(B_{3}\right)\le\mu\left(A_{x\to j}\right)$,
where we fix $j\in\overline{I\cup J}$ to maximise $\mu\left(A_{x\to j}\right)$.

It suffices to show that
$\mu\left(A_{x\to j}\right)\le\left(2e^{2000}\zeta\right)^{\frac{\sqrt{n}}{20}}$,
and moreover that if $\zeta\le\frac{1}{e^{8000}\sqrt{n}}$ then $A_{x\to j}=\es$.
We suppose for a contradiction that this fails.

We will consider $J':=J\cup\left\{ j\right\}$, noting that
$|J'| = \left\lfloor 80\cdot e^{4000}\zeta\sqrt{n}\right\rfloor + 1$
and for each $j'\in J'$ we have
$\mu\left(A_{x\to j'}\right)\ge\mu\left(A_{x\to j}\right).$

We will apply Lemma \ref{lem:large and  extremal means that the third is small}
to the product-free triple $\left(A_{i\to j'},A_{x\to i},A_{x\to j'}\right)$
for each $i\in I_{2}$ and $j'\in J'$.
By our above assumption on $C:=A_{x\to j}$
the conclusion of this lemma does not hold,
so one of the hypotheses does not hold.
By definition of $I_2$, the hypothesis for $B:=A_{x \to i}$
is satisfied with $2\zeta$ in place of $\zeta$.
Thus the hypothesis for $A_{i \to j'}$ does not hold, so
\[
\mu_{F_{I}^{x}}\left(A_{i\to j'}\right)\le1-\tfrac12 e^{-2000}.
\]
Inclusion--Exclusion then shows that
\begin{align*}
\mu_{F_{I}^{x}}\left(\overline{A}\right)
& \ge\sum_{i\in I_{2},j'\in J'} e^{-4000} \mu_{F_{I}^{x}}\left(1_{i\to j'}\right)
-\sum_{\substack{i_{1},i_{2}\in I_{2} \\ j_{1},j_{2}\in J'}}
\mu_{F_{I}^{x}}\left(1_{i_{1}\to j_{1}}\cap1_{i_{2}\to j_{2}}\right) \\
& \ge e^{-4000} |I_2||J'|/n - 2|I_2|^2|J'|^2/n^2 \ge e^{-4000} |I_2||J'|/2n.
\end{align*}
However, by assumption $\mu_{F_{I}^{x}}\left(\overline{A}\right) \le \zeta$, so
\[ |J'| \le 2e^{4000}\zeta n/|I_2| \le 4e^{4000}\zeta n/|I| < 40e^{4000}\zeta \sqrt{n},\]
which contradicts the choice of $|J'|$. Thus the required bound
for $\mu\left(A_{x\to j}\right)$ holds, so the lemma follows.
\end{proof}
We conclude this section with the proof of Theorem \ref{thm:main+},
which implies Theorem \ref{thm:main} (our main theorem).

\begin{proof}[Proof of Theorem \ref{thm:main+}]
We introduce the absolute constant $c = e^{-9000}$.
We need to show that if $n$ is sufficiently large
and $A\sub A_{n}$ is a product-free set with
$\mu(A)>\max_{I,x}\mu\left(F_{I}^{x}\right)-\frac{c}{n}$
then up to inversion $A$ is contained in some $F_{I}^{x}$.
By Theorem \ref{thm:99=000025 Stability},
there exists some $F_{I}^{x}$ such that
$\mu\left(A\setminus F_{I}^{x}\right)\le n^{-0.66}$,
where we may assume that this holds for $A$ rather than $A^{-1}$.
We note that
$\mu(F_I^x) \sim \mu(A) \sim 1/\sqrt{2en}$.
Writing $\mu_{F_{I}^{x}}\left(A\right)=1-\zeta$
we have $\zeta = O(n^{-0.16}) < e^{-20000}$.
Thus we can apply Lemma \ref{lem:bootstrapping lemma } to obtain
\[
\mu\left(A\setminus F_{I}^{x}\right)\le\zeta n^{-2/3}\log^{O\left(1\right)}n
< \zeta \mu(F_{I}^{x})/2.
\]
We deduce
\[
\mu\left(F_{I}^{x}\right)-\frac{c}{n}\le\mu\left(A\right)\le\mu\left(F_{I}^{x}\right)\mu_{F_{I}^{x}}\left(A\right)+\mu\left(A\setminus F_{I}^{x}\right)\le\mu\left(F_{I}^{x}\right)\left(1-\zeta/2\right).
\]
By choice of $c$ this implies $\zeta\le\frac{1}{e^{8000}\sqrt{n}}$,
so we can apply Lemma \ref{lem:bootstrapping lemma } to obtain $A \sub F_I^x$.
\end{proof}

\section{Concluding remarks}

Our methods for bounding product-free subsets of the alternating group
have justified the intuitive idea that globalness
can be regarded as a pseudorandomness notion,
by showing estimates on the eigenvalues of Cayley operators
for global sets that correspond to the intuition for random sets.
Given the large literature on random Cayley graphs
inspired by the seminal paper of Alon and Roichman \cite{AR},
one natural direction for further research is whether
analogous results hold for Cayley graphs with respect to global sets.

We also propose the study of extremal problems for word maps in general groups
(see the survey of Shalev \cite{Shalev} for background). Fix any group $G$.
Any word $w=w(x_1,\dots,x_d)$ in the free group $F_d$ on $d$ generators
naturally defines a word map $w:G^d \to G$. For example,
if $d=3$, $w = x_1 x_2 x_3^{-1}$, and $A,B,C \sub G$
then $A$ is product-free iff $A^3 \cap \ker w = \es$.
Our main result therefore describes the largest $A \sub A_n$
such that $A^3 \cap \ker w = \es$,
and so suggests the following more general problem.

\begin{problem}
For any finite group $G$ and word $w \in F_d$,
what is the largest $A \sub G$ with $A^d \cap \ker w = \es$?
\end{problem}

\bibliographystyle{plain}
\bibliography{ref}

\end{document}